\numberwithin{equation}{section}
\newcommand{\N}{\mathrm{N}}
\newcommand{\K}{\widetilde{K}}
\newcommand{\realR}{\mathbb{R}}
\newcommand{\compC}{\mathbb{C}}
\newcommand{\intZ}{\mathbb{Z}}
\newcommand{\natN}{\mathbb{N}}
\newcommand{\Prob}{\mathbb{P}}
\newcommand{\bigO}{\mathcal{O}}
\newcommand{\totalalpha}{\boldsymbol{\alpha}}
\newcommand{\ii}{\mathrm{i}}
\newcommand{\dd}{\mathrm{d}}
\newcommand{\Id}{\mathbf{1}}
\newcommand{\doubleX}{X\hspace{-8pt}X}
\newcommand{\lambdatilde}{\tilde{\lambda}}
\newcommand{\sigmatilde}{\tilde{\sigma}}
\DeclareRobustCommand{\gueNE}{%
{\mathbin{\text{%
\ensuremath{\begin{tikzpicture}[scale=0.80]
  \draw [-*] (-.4em,.4em) -- (.6em,.4em);
  \draw [-] (.4em,.4em)--(.4em,-.4em);
  \draw [-] (.4em,-.4em)--(-.4em,-.4em);
  \draw [-] (-.4em,-.4em) -- (-.4em,.4em);
\end{tikzpicture}
}}}}}
\DeclareRobustCommand{\GUEupperright}{%
  \mathbin{\text{\rotatebox[origin=c]{0}{$\gueNE$}}}%
}
\DeclareRobustCommand{\GUEupperleft}{%
  \mathbin{\text{\rotatebox[origin=c]{90}{$\gueNE$}}}%
}
\DeclareRobustCommand{\GUElowerright}{%
  \mathbin{\text{\rotatebox[origin=c]{-90}{$\gueNE$}}}%
}
\DeclareRobustCommand{\GUElowerleft}{%
  \mathbin{\text{\rotatebox[origin=c]{180}{$\gueNE$}}}%
}
\DeclareRobustCommand{\Airyupperleft}{%
  \mathbin{\text{\rotatebox[origin=c]{45}{$\gueNE$}}}%
}
\DeclareRobustCommand{\Airyupperright}{%
  \mathbin{\text{\rotatebox[origin=c]{-45}{$\gueNE$}}}%
}
\DeclareRobustCommand{\Airylowerright}{%
  \mathbin{\text{\rotatebox[origin=c]{-135}{$\gueNE$}}}%
}
\DeclareRobustCommand{\Airylowerleft}{%
  \mathbin{\text{\rotatebox[origin=c]{135}{$\gueNE$}}}%
}
\newcommand{\la}{\langle}
\newcommand{\ra}{\rangle}
\renewcommand{\vec}[1]{\mathbf{#1}}
\DeclareMathOperator{\Airy}{Airy}
\DeclareMathOperator{\GUE}{GUE}
\DeclareMathOperator{\Ai}{Ai}
\DeclareMathOperator{\E}{\mathbb{E}}
\DeclareMathOperator{\var}{Var}
\DeclareMathOperator{\Gammadist}{Gamma}
\DeclareMathOperator{\Exp}{Exp}
\DeclareMathOperator{\arccot}{arccot}
\DeclareMathOperator{\outside}{out}
\DeclareMathOperator{\inside}{in}
\DeclareMathOperator{\leftside}{left}
\DeclareMathOperator{\rightside}{right}
\DeclareMathOperator{\PV}{P.V.}
\DeclareMathOperator{\as}{a.s.}
\DeclareMathOperator{\standard}{std}
\DeclareMathOperator{\scaled}{scaled}
\DeclareMathOperator{\Idmatrix}{Id}
\newcommand{\totala}{\mathbf{a}}
\newcommand{\Airya}{\Airy, \totala}
\newcommand{\GUEalpha}{\GUE, \totalalpha}
\newcommand{\Peche}{P\'{e}ch\'{e}}
\newcommand{\Painleve}{Painlev\'{e}}
\newcommand{\weakconv}{\stackrel{d}\longrightarrow}
\newtheorem{thm}{Theorem}
\newtheorem{lem}[thm]{Lemma}
\newtheorem{asm}{Assumption}
\newtheorem{pro}[thm]{Proposition}
\newtheorem{cor}[thm]{Corollary}
\theoremstyle{remark}
\newtheorem{rmk}{Remark}
\newcommand{\reservesigma}{\sigma}
\DeclareMathOperator{\upperintersect}{upper intersect}
\DeclareMathOperator{\lowerintersect}{lower intersect}
\newcommand{\round}{\circ}
\newcommand{\corner}{>}
\newcommand{\zig}{<}
\begin{document}

\allowdisplaybreaks

\begin{center}
\Large\bf
{ Eigenvector distribution in the critical regime of BBP transition} 
\end{center}

\renewcommand{\thefootnote}{\fnsymbol{footnote}}	
\vspace{1cm}
\begin{center}
 \begin{minipage}[t]{0.4\textwidth}
\begin{center}
{\large Zhigang Bao }\footnotemark[3] \\ \vspace{1ex}
\footnotesize{Hong Kong University of Science and Technology}\\
{\it mazgbao@ust.hk}
\end{center}
\end{minipage}
\hspace{8ex}
\begin{minipage}[t]{0.4\textwidth}
\begin{center}
{\large Dong Wang}\footnotemark[2]  \\ \vspace{1ex}
 \footnotesize{National University of Singapore}\\
{\it wangdong@wangd-math.xyz}
\end{center}
\end{minipage}

\footnotetext[3]{Supported by Hong Kong RGC grant GRF16300618, GRF 16301519, and NSFC 11871425}
\footnotetext[2]{Supported by Singapore AcRF grant R-146-000-217-112}

\renewcommand{\thefootnote}{\fnsymbol{footnote}}	

\end{center}
\vspace{1cm}
\begin{center}
  \begin{minipage}{0.8\textwidth}
    In this paper,   we study the random matrix model of Gaussian Unitary Ensemble (GUE) with fixed-rank (aka spiked) external source. We will focus on the critical regime of the Baik-Ben Arous-\Peche\ (BBP) phase transition and establish the distribution of the eigenvectors associated with the leading eigenvalues. The distribution is given in terms of a determinantal point process with extended Airy kernel. Our result can be regarded as an eigenvector counterpart of the BBP eigenvalue phase transition \cite{Baik-Ben_Arous-Peche05}. The derivation of the distribution makes use of the recently re-discovered {\it eigenvector-eigenvalue identity}, together with the determinantal point process representation of the  GUE minor process with external source.
  \end{minipage}
\end{center}

\section{Introduction}

In this paper, we consider the Gaussian Unitary Ensemble (GUE) with fixed-rank external source, also known as the spiked GUE in the literature, denoted by 
\begin{equation} \label{20040301}
  G_{\totalalpha} \equiv G^{(N)}_{\totalalpha} := G + \sum^k_{i=1} \alpha_i \vec{e}_i \vec{e}^*_i,
\end{equation}
where $G=(g_{ij})_{N,N}$ is a standard $N$-dimensional GUE, i.e., $g_{ii}\sim \N(0,1)$ $(1\leq i\leq N)$; $g_{ij}\sim \N(0,\frac12)+\ii \N(0,\frac12)$ $(1\leq i<j\leq N)$ are independent random variables with standard real/complex normal distributions, and $g_{ji}=\overline{g_{ij}}$.  
Here $\totalalpha=(\alpha_1, \ldots, \alpha_k)\in\mathbb{R}^k$ is a deterministic vector with fixed dimension $k$, and $\{\vec{e}_i\}$ is the standard basis of $\mathbb{R}^N$.  The entire discussion in this paper works under the following more general setting
\begin{equation} \label{20040302}
  G_{\totalalpha, \vec{v}}= G+\sum_{i=1}^k \alpha_i \vec{v}_i\vec{v}_i^*
\end{equation}
with any deterministic orthonormal vectors $\vec{v}_i\in \mathbb{C}^N$. Nevertheless, due to the unitary invariance of GUE, it would be sufficient to focus on the model in \eqref{20040301}. 

Throughout the paper, we will be focusing on the critical regime of the well-known Baik-Ben Arous-\Peche\ (BBP) phase transition \cite{Baik-Ben_Arous-Peche05}, and thus make the following assumption on $\alpha_i$'s
\begin{asm} \label{main assum}
  There exist fixed constants $a_1, \ldots, a_k\in \mathbb{R}$ such that 
  \begin{equation}
    \alpha_i = \sqrt{N} + N^{\frac{1}{6}} a_{k - i + 1}, \quad i = 1, \dotsc, k.
  \end{equation}
 \end{asm}
 We emphasize here that  $\alpha_i$'s  are unordered and may be identical. Whenever the ordered parameters are needed in some local discussion, we will use $\alpha_{(j)}$ to denote the $j$-th largest $\alpha_i$.
 
We further denote the ordered eigenvalues of $G_{\totalalpha}$ by \footnote{Throughout this paper, for all the random matrices we consider, the eigenvalues are distinct with probability $1$. Hence we always assume the simplicity of the eigenvalues without further explanation.}
\begin{equation} \label{20040810}
  \reservesigma_1 > \dotsb > \reservesigma_N
\end{equation}
and set
\begin{equation} \label{20040811}
  \vec{x}_i = (x_{i1}, \ldots, x_{iN})^{\top}
\end{equation}
to be the unit eigenvector associated with $\reservesigma_i$ \footnote{Since the eigenvalues are assumed to be distinct, $\vec{x}_i$ are unique up to an angular factor. We ignore the angular factor since we consider only the moduli of the components throughout the paper.}. In this paper, we are primarily interested in the limiting  distribution of $|x_{ij}|^2$'s with bounded $i$, after appropriate normalization, as the dimension $N \to \infty$. Observe that $|x_{ij}|^2$ can be understood as the square of the projection of eigenvector $\vec{x}_i$ onto the direction $\vec{e}_j$. Due to the unitary invariance, our results can also  be applied to the projection $|\la \vec{x}_i^{\vec{v}}, \vec{v}_j\ra|^2$, where we used $\vec{x}_i^{\vec{v}}$ to denote the $i$-th eigenvector of $G_{\totalalpha,\vec{v}}$ in \eqref{20040302}.  Before we state the main results, we first give a brief review of the literature on the  eigenvalue and eigenvector of  random matrices with fixed-rank deformation, in Section \ref{s.1.1}, and then we present the definition of the extended Airy kernel in Section \ref{s.1.2}, with which we will then state our main results  in Section \ref{s.1.3}.

\subsection{Random matrix with fixed-rank deformation} \label{s.1.1}

Our model in \eqref{20040301} is in the category of  the random matrices with fixed-rank deformation, which also includes  the spiked sample covariance matrix and the signal-plus-noise model as  typical examples. A vast amount of work has been devoted to  understanding  the limiting behavior of the extreme eigenvalues and the associated eigenvectors of the deformed models.  Since the seminal work of Baik, Ben Arous and \Peche\ \cite{Baik-Ben_Arous-Peche05}, it is now well-understood that the extreme eigenvalues undergo a so-called BBP phase transition along with the change of the strength of the deformation.  Specifically, there exists a critical threshold such that the extreme eigenvalue of the deformed matrix will stick to the right end point of the limiting spectral distribution if the strength of the deformation is less than or equal to the threshold, and will otherwise jump out of the support of the limiting spectral distribution. In the latter case we often call the extreme eigenvalue as an {\it outlier}. Moreover, the fluctuation of the extreme eigenvalues in different regimes (subcritical, critical and supercritical) are also identified in \cite{Baik-Ben_Arous-Peche05} for the complex spiked covariance matrix.  Particularly,  for the deformed GUE in (\ref{20040301}),  the phase transition takes place on the scale $\alpha_i = \sqrt{N} + \bigO(N^{1/6})$.  Hence, for the deformed GUE, more specifically, the regimes $\alpha_i < \sqrt{N} - N^{1/6 + \varepsilon}$,  $\alpha_i = \sqrt{N} + \bigO(N^{1/6})$ and $\alpha_i > \sqrt{N} + N^{1/6 + \varepsilon}$  will be referred to as subcritical, critical and supercritical, respectively, in the sequel. We also refer to \cite{Bai-Yao12}, \cite{Baik-Silverstein06}, \cite{Benaych_Georges-Nadakuditi11}, \cite{Benaych_Georges-Nadakuditi12}, \cite{Capitaine-Donati_Martin17}, \cite{Ding20}, \cite{Knowles-Yin13}, \cite{Paul07} and the reference therein for the first-order limit of the extreme eigenvalue of various related models. The fluctuation of the extreme eigenvalues of various models has been considered in \cite{Bai-Yao08}, \cite{Bai-Yao12}, \cite{Baik-Wang10a}, \cite{Baik-Wang10}, \cite{Benaych_Georges-Guionnet-Maida11}, \cite{Bertola-Buckingham-Lee-Pierce11}, \cite{Bertola-Buckingham-Lee-Pierce11a}, \cite{Bloemendal-Virag11}, \cite{Bloemendal-Virag11a}, \cite{Capitaine-Donati_Martin-Feral09}, \cite{Capitaine-Donati_Martin-Feral12}, \cite{El_Karoui07}, \cite{Feral-Peche07}, \cite{Knowles-Yin13}, \cite{Knowles-Yin14}, \cite{Paul07}, \cite{Peche05}, \cite{Pizzo-Renfrew-Soshnikov13}, \cite{Wang08}, \cite{Wang11}. 

In parallel to the results of the extreme eigenvalues, there are some corresponding results on eigenvectors in the literature. Suppose $G_{\totalalpha}$ is given in \eqref{20040301} while $\alpha_1, \dotsc, \alpha_k$ are significantly away from the critical threshold, say, $\min_j|\alpha_j-\sqrt{N}|\geq \varepsilon\sqrt{N}$ for some positive constant $\varepsilon$, it is known that
\begin{enumerate*}[label=(\roman*)]
\item
  if $\alpha_{(i)} \leq (1 - \varepsilon)\sqrt{N}$, then $\vec{x}_i$, the eigenvector associated with the $i$-th largest eigenvalue $\reservesigma_i$, is delocalized in the sense $\lVert \vec{x}_i \rVert_{\infty} \leq N^{-1/2 + \delta}$ for any small constant $\delta>0$ with high probability;
\item
  if $\alpha_{(i)} \geq (1 + \varepsilon)\sqrt{N}$, then $\vec{x}_i$ has an order one bias on the direction of the deformation $\vec{e}_{(i)}$. 
\end{enumerate*}
Here we used $\alpha_{(i)}$ to denote the $i$-th largest value of all $\alpha_j$'s and $\mathbf{e}_{(i)}$ is the  canonical basis vector with the corresponding index. 
In \cite{Benaych_Georges-Nadakuditi11}, \cite{Benaych_Georges-Nadakuditi12}, \cite{Capitaine18}, \cite{Ding20}, \cite{Paul07}, the behavior of the extreme eigenvectors has been studied on the level of the first order limit. A detailed discussion of eigenvector behavior in the full subcritical regime  and supercritical regime  can be found in the recent work \cite{Bloemendal-Knowles-Yau-Yin16}, which was done for the spiked sample covariance matrix. Especially, the discussion in \cite{Bloemendal-Knowles-Yau-Yin16} indicates that in case there is an $\alpha_j$ close to the critical threshold, i.e,  $\alpha_j=(1+o(1))\sqrt{N}$, for any fixed $i$ such that $\reservesigma_i$ is not an outlier, $\vec{x}_i$ will have a  bias of small order towards the direction of $\vec{e}_j$.    On the level of the fluctuation, the limiting behavior of the extreme eigenvectors has  not been  fully studied yet.  By establishing a general universality result of the eigenvectors of the sample covariance matrix in the null case, the authors of \cite{Bloemendal-Knowles-Yau-Yin16} are able to establish  the law of the  eigenvectors of the spiked covariance matrices  in the subcritical regime. In this regime, the eigenvector distribution is similar  to (up to appropriate scaling) that of the bulk and edge  regime of Wigner matrices without spikes; see  \cite{Bourgade-Yau17}, \cite{Knowles-Yin13a}, \cite{Tao-Vu12}, \cite{Benigni19} and  \cite{Marcinek-Yau20} for  instance. More specifically, in the subcritical regime, the limiting distribution of the square of eigenvector components (after appropriate scaling) is given by $\chi^2$ distribution, which tells the asymptotic Gaussianity of the eigenvector components themselves (without taking square). Although the result was established for sample covariance matrix only in \cite{Bloemendal-Knowles-Yau-Yin16}, it can be extended to deformed Wigner without essential difference. In the supercritical regime, the fluctuation of the eigenvectors was recently studied in \cite{Bao-Ding-Wang18}, \cite{Bao-Ding-Wang-Wang20}, \cite{Capitaine-Donati_Martin18} for various models with generally distributed matrix entry. For generally distributed deformed Wigner matrix, the leading eigenvector distribution is non-universal in the supercritical regime; see \cite{Capitaine-Donati_Martin18}. However, if one restricts the discussion to the deformed GUE, then  the limiting distribution of the square of eigenvector components (after appropriate centering and scaling) is given by Gaussian. Although the discussion in \cite{Capitaine-Donati_Martin18} has only covered the regime $\alpha_i\geq (1+\varepsilon)\sqrt{N}$, one can use the approach in \cite{Bao-Ding-Wang-Wang20} to extend the result to the full supercritical regime $\alpha_i \geq \sqrt{N} + N^{1/6 + \varepsilon}$. 

The aforementioned works leave the eigenvector distribution in the critical regime undiscussed. In this paper, we will establish the eigenvector distribution in the critical regime, i.e., $\alpha_i = \sqrt{N} + \bigO(N^{1/6})$. Here although we are dealing with the deformed GUE only, our methodology and result reveal certain universality of eigenvector distribution of random matrices with fixed-rank deformation in the critical regime of BBP transition, within the class of unitary invariant ensemble. Especially, our discussion can apply similarly to the fixed-rank deformed Laguerre Unitary Ensemble (LUE) that is also known as spiked Wishart ensemble or spiked sample covariance matrices in statistics and on which the BBP transition is most intensively studied, and the fixed-rank deformed Jacobi Unitary Ensemble (JUE, aka MANOVA ensemble in statistics) by using the corresponding correlation kernel formulas in \cite{Adler-van_Moerbeke-Wang11}. We expect universal asymptotic results in these models. We also remark here that the universality has not yet been proved or disproved for generally distributed Wigner matrices with fixed-rank deformation in the critical regime of BBP transition, even on the eigenvalue level. 

At last, we remark that for both Hermitian type (complex) random matrices and real symmetric type random matrices, the BBP transition may happen under fixed rank deformations. The study of eigenvalues there shows that in the critical regime the two types of random matrices have different universal behaviours and are usually investigated by different methods (except for \cite{Bloemendal-Virag11}, \cite{Bloemendal-Virag11a}), while in the the supercritical and subcritical regimes, the behaviours of the two types of random matrices have more common features and are usually investigated together, by some perturbative approaches which can often reduce the problems to those of the non-perturbed models. The previous research of eigenvectors in BBP transition, which is only in the supercritical and subcritical regimes, generally works for both types of random matrices and yields similar results for them. Our approach in the critical regime, however, is non-perturbative and  works only for the Hermitian type random matrices, because it depends on the determinantal property that is not available for the real symmetric ones. The study of the eigenvectors in the critical regime of BBP transition for real symmetric type random matrices is more challenging and is out of the scope of the current paper.

\subsection{Extended Airy kernel} \label{s.1.2}

In order to state our main results, we need to first introduce the extended Airy kernel in this subsection. Recall the Airy kernel that defines the celebrated Tracy-Widom distribution that is often seen in random matrix theory and interacting particle systems of the Kardar-Parisi-Zhang (KPZ) universality class, see \cite{Anderson-Guionnet-Zeitouni10}, \cite{Corwin11} and \cite{Quastel12}, and references therein,
\begin{equation} \label{eq:defn_Airy_kernel}
  K_{\Airy}(x, y) = \frac{1}{(2\pi \ii)^2} \int_{\sigma} \dd u \int_{\gamma} \dd v \frac{e^{\frac{u^3}{3} - xu}}{e^{\frac{v^3}{3} - yv}} \frac{1}{u - v},
\end{equation}
where the contours $\gamma$ and $\sigma$ are as in Figure \ref{fig:gamma_L}. They are nonintersecting and infinite contours; $\gamma$ goes from $e^{-2\pi \ii/3} \cdot \infty$ to $e^{2\pi \ii/3} \cdot \infty$, and $\sigma$ goes from $e^{-\pi \ii/3} \cdot \infty$ to $e^{\pi \ii/3} \cdot \infty$. We then define the extended Airy kernel depending on real parameters $a_1, a_2, \dotsc$, which is the correlation kernel of a determinantal point process at discrete time $t \in \intZ_{\geq 0}$. For any $m_1, m_2 \in \intZ_{\geq 0}$, we let
\begin{align}
  K^{m_1, m_2}_{\Airya}(x, y) = {}& -\Id(m_1 < m_2) \Id(x < y) \frac{1}{2\pi \ii} \oint \frac{e^{(y - x)w}}{\prod^{m_2}_{j = m_1 + 1} (w - a_j)} \dd w + \K^{m_1, m_2}_{\Airya}(x, y), \label{eq:form_ext_K} \\
  \shortintertext{where}
  \K^{m_1, m_2}_{\Airya}(x, y)  = {}& \frac{1}{(2\pi \ii)^2} \int_{\sigma} \dd u \int_{\gamma} \dd v \frac{e^{\frac{u^3}{3} - xu}}{e^{\frac{v^3}{3} - yv}} \frac{\prod^{m_1}_{j = 1} (u - a_j)}{\prod^{m_2}_{j = 1} (v - a_j)} \frac{1}{u - v}, \label{eq:form_ext_K_tilde}
\end{align}
such that the contour in \eqref{eq:form_ext_K} encloses all the poles $a_{m_1 + 1}, \dotsc, a_{m_2}$ and in \eqref{eq:form_ext_K_tilde} all the poles $a_1, \dotsc, a_{m_2}$ of $v$ are to the left of $\gamma$. We note that in the special $m_1 = m_2 = 0$ case, the correlation kernel $K^{m_1, m_2}_{\Airya}(x, y)$ is reduced to $K_{\Airy}(x, y)$.

The Airy kernel $K_{\Airy}$ defines a determinantal point process, with infinitely many particles, ordered as
\begin{equation} \label{eq:Airy_proc}
  +\infty > \xi_1 > \xi_2 > \dotsb,
\end{equation}
and the $n$-point correlation function
\begin{equation} \label{eq:n_corr_Airy}
  R_n(x_1, \dotsc, x_n) = \det(K_{\Airy}(x_i, x_j))^n_{i, j = 1}. 
\end{equation}
Analogously, for each $m \geq 0$, the extended Airy kernel $K^{m, m}_{\Airya}$ also defines a determinantal point process, with infinitely many particles, ordered as
\begin{equation} \label{20040401}
  +\infty > \xi^{(m)}_1 > \xi^{(m)}_2 > \dotsb,
\end{equation}
with $\xi^{(0)}_i\equiv \xi_i$ in \eqref{eq:Airy_proc}. Furthermore, if we put all the $\xi^{(m)}_i$'s ($m \geq 0$, $i \geq 1$) together, they form a determinantal point process living in space $\realR$ and time $m \in \intZ_{\geq 0}$. The probability meaning of the extended Airy kernel is that it defines a determinantal point process with infinitely many species of particles, $\xi_i^{(m)}$ (particle index $i = 1, 2, \dotsc$, species index $m = 0, 1, \dotsc$), such that the marginal distribution of $m$-species particles is given by the correlation kernels $K^{m, m}_{\Airya}$, and further the mixed correlation function is given by
\begin{equation} \label{eq: mixed corr}
  \begin{split}
     R_n(x_1, m_1; x_2, m_2; \dotsc; x_n, m_n) 
    := {}& \lim_{\Delta x \to 0} \frac{1}{\Delta x^n} \Prob \left(
      \begin{gathered}
        \text{there exists a particle in $[x_i, x_i + \Delta x)$ at time $m_i$ for $i = 1, \dotsc, n$} 
      \end{gathered}
    \right) \\
    = {}& \det \left( K^{m_i, m_j}_{\Airya}(x_i, x_j) \right)^n_{i, j = 1}.
  \end{split}
\end{equation}

\begin{figure}[htb]
  \begin{minipage}[t]{4cm}
    \centering
    \includegraphics{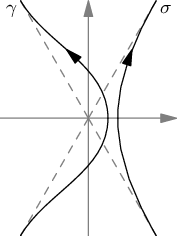}
    \caption{Contours $\gamma$ and $\sigma$.}
    \label{fig:gamma_L}
  \end{minipage}
  \hspace{\stretch{1}}
  \begin{minipage}[t]{4cm}
    \centering
    \includegraphics{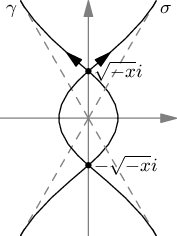}
    \caption{Double contour $\mathsf{X}$ consisting of $\gamma$ and $\sigma$ that are deformed through $\pm\sqrt{-x}i$.}
    \label{fig:X}
  \end{minipage}
  \hspace{\stretch{1}}
  \begin{minipage}[t]{4cm}
    \centering
    \includegraphics{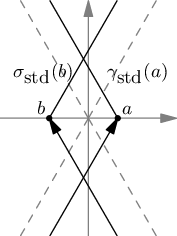}
    \caption{$\gamma_{\standard}(a)$ and $\sigma_{\standard}(b)$ that are standardized deformations of $\gamma$ and $\sigma$.}
    \label{fig:gamma_sigma_standard}
  \end{minipage}
  \hspace{\stretch{1}}
  \begin{minipage}[t]{4cm}
    \centering
    \includegraphics{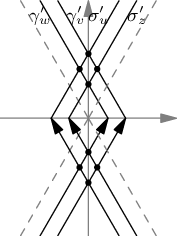}
    \caption{$4$-fold contour $\mathsf{\doubleX}$ consisting of two pairs of $\gamma$ and $\sigma$ in standardized deformation.}
    \label{fig:double_X}
  \end{minipage}
\end{figure}
\begin{rmk}
  It is not easy to check directly that the correlation functions in \eqref{eq: mixed corr} are well-defined, since the kernel functions are generally non-Hermitian. (The necessary and sufficient condition for a Hermitian kernel to define a probabilistic determinantal point process is given in \cite[Theorem 3]{Soshnikov00}.) However, since we know in Lemma \ref{lem:Airy_limit} (see also \cite{Adler-van_Moerbeke-Wang11}) that the extended Airy kernel is the limit of the correlation kernels of the GUE minor process with external source, we conclude thereby that the correlation functions in \eqref{eq: mixed corr} are well-defined. Then it is not hard to see that the rightmost particle exists for each species. Also we have that the point process consisting of finitely many species of particles is simple (by \cite[Remark 4]{Hough-Krishnapur-Peres-Virag06}).
\end{rmk}
 
\subsection{Main results} \label{s.1.3} 

Recall the point process in \eqref{20040401} with any fixed parameter sequence $\totala=(a_1,\ldots, a_k)$. Define for any integers $n>j$ the random variable
\begin{equation} \label{20040410}
  \Xi^{(k)}_j(\totala; n) := n^{\frac{1}{3}} \prod^{j-1}_{i=1} \frac{\xi^{(k)}_j - \xi^{(k-1)}_i}{\xi^{(k)}_j - \xi^{(k)}_i} \prod^n_{i=j+1} \frac{\xi^{(k)}_j - \xi^{(k-1)}_{i-1}}{\xi^{(k)}_j - \xi^{(k)}_i}. 
\end{equation}
Our first result is on the existence of the limit of $ \Xi_{j}^{(k)}(\totala;n)$ as $n\to \infty$. 
\begin{thm} \label{thm:main_thm_1}
  For any $\totala=(a_1, \ldots, a_k)\in \mathbb{R}^k$ with fixed components, and $j$ a fixed positive integer,
  \begin{equation} \label{eq:defn_Xi^infty}
    \Xi_{j}^{(k)}(\totala;\infty):=\lim_{n\to \infty}   \Xi_{j}^{(k)}(\totala;n)
  \end{equation}
  exists almost surely.  
\end{thm}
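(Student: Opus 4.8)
The plan is to pass to logarithms and reduce the statement to the almost-sure convergence of an explicit improper integral against the point process, which is then controlled by edge rigidity of the extended Airy process together with the interlacing between consecutive species.

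First I would put $\Xi^{(k)}_j(\totala;n)$ in closed form. Telescoping the two products in \eqref{20040410},
\[
  \Xi^{(k)}_j(\totala;n)=n^{1/3}\,\frac{\prod_{l=1}^{n-1}\bigl(\xi^{(k)}_j-\xi^{(k-1)}_l\bigr)}{\prod_{1\le i\le n,\ i\ne j}\bigl(\xi^{(k)}_j-\xi^{(k)}_i\bigr)}
  =P_1\cdot n^{1/3}\prod_{l=j}^{n-1}\frac{\xi^{(k)}_j-\xi^{(k-1)}_l}{\xi^{(k)}_j-\xi^{(k)}_{l+1}},
\]
where $P_1:=\prod_{i=1}^{j-1}\tfrac{\xi^{(k)}_j-\xi^{(k-1)}_i}{\xi^{(k)}_j-\xi^{(k)}_i}$ is a fixed, a.s.\ finite and nonzero random factor independent of $n$ (using simplicity of the process, so that all particles are a.s.\ distinct). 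Since the extended Airy process is the limit of the GUE minor process with external source (Lemma~\ref{lem:Airy_limit}), consecutive species interlace, $\xi^{(k)}_{l+1}\le\xi^{(k-1)}_l\le\xi^{(k)}_l$; in particular each factor above lies in $(0,1]$. So it is enough to prove that $R_n:=n^{1/3}\prod_{l=j}^{n-1}\tfrac{\xi^{(k)}_j-\xi^{(k-1)}_l}{\xi^{(k)}_j-\xi^{(k)}_{l+1}}$ converges a.s.

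Take logarithms. Using $\log\tfrac{\xi^{(k)}_j-\xi^{(k-1)}_l}{\xi^{(k)}_j-\xi^{(k)}_{l+1}}=\log\tfrac{\xi^{(k)}_j-\xi^{(k)}_l}{\xi^{(k)}_j-\xi^{(k)}_{l+1}}-\int_{\xi^{(k-1)}_l}^{\xi^{(k)}_l}\tfrac{dt}{\xi^{(k)}_j-t}$ and telescoping the first term, one gets (for $n$ large)
\[
  \log R_n=\tfrac13\log n-\log\bigl(\xi^{(k)}_j-\xi^{(k)}_n\bigr)+\int_{\xi^{(k)}_n}^{\ \xi^{(k)}_{j+1}}\frac{\nu_\infty(t)}{\xi^{(k)}_j-t}\,dt+O(1),
\]
where $\nu_\infty(t):=\#\{i:\xi^{(k)}_i>t\}-\#\{l:\xi^{(k-1)}_l>t\}$ is the discrepancy between the two counting functions — a process taking only the values $0$ and $1$, by interlacing — and $O(1)$ is a fixed finite random constant. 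Edge rigidity gives $\log(\xi^{(k)}_j-\xi^{(k)}_n)=\tfrac23\log n+c+o(1)$ a.s., the remainder being a.s.\ convergent since the fluctuation of $\xi^{(k)}_n$ about its classical location $\asymp n^{2/3}$ is of lower order. Writing $\nu_\infty=\tfrac12+(\nu_\infty-\tfrac12)$ and integrating the constant part produces a further $\tfrac12\log(\xi^{(k)}_j-\xi^{(k)}_n)=\tfrac13\log n+c'+o(1)$; thus the divergent $\tfrac13\log n$ terms cancel exactly, and
\[
  \log R_n=\int_{\xi^{(k)}_n}^{\ \xi^{(k)}_{j+1}}\frac{\nu_\infty(t)-\tfrac12}{\xi^{(k)}_j-t}\,dt+(\text{an a.s.\ convergent sequence}).
\]
Hence the theorem reduces to showing that the improper integral $\int_{-\infty}^{\xi^{(k)}_{j+1}}\tfrac{\nu_\infty(t)-1/2}{\xi^{(k)}_j-t}\,dt$ converges almost surely (recall $\xi^{(k)}_n\to-\infty$ a.s.). Two ingredients close the argument. (I)~$\E[\nu_\infty(t)]\to\tfrac12$ polynomially fast as $t\to-\infty$: since $\nu_\infty(t)\in\{0,1\}$, $\E[\nu_\infty(t)]=\int_t^\infty\bigl(K^{k,k}_{\Airya}(u,u)-K^{k-1,k-1}_{\Airya}(u,u)\bigr)\,du$, and a steepest-descent analysis of \eqref{eq:form_ext_K_tilde} (the finitely many parameters $a_1,\dots,a_k$ being irrelevant to the leading behavior deep in the spectrum) shows this density difference is absolutely integrable with total mass $\tfrac12$ — i.e.\ the expected count-difference between two interlacing species stabilizes at $\tfrac12$. (II)~$\var\bigl(\int_{-T}^{-R}\tfrac{\nu_\infty(t)-\E\nu_\infty(t)}{\xi^{(k)}_j-t}\,dt\bigr)$ is bounded uniformly in $T$ and $\to0$ as $R\to\infty$: using the two-point correlation function built from \eqref{eq:form_ext_K}–\eqref{eq:form_ext_K_tilde}, and that deep in the spectrum the kernels are asymptotically of sine type (so the covariance of $\nu_\infty$ decays like the squared reciprocal distance in units of the local spacing $\asymp|t|^{-1/2}$), integration against the bounded weight $(\xi^{(k)}_j-t)^{-1}$ on $(-T,-R)$ yields a variance $\lesssim\int_R^\infty t^{-5/2}\,dt$. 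Given (I) the deterministic part of the integrand is integrable near $-\infty$; given (II) the centered integral is $L^2$-Cauchy as $T\to\infty$, and since $|\nu_\infty-\tfrac12|\le\tfrac12$ controls the oscillation within a dyadic block, a Borel–Cantelli argument along $T_m=2^m$ upgrades this to a.s.\ convergence of the improper integral.

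The main obstacle is ingredient (II): quantifying the decorrelation of the discrepancy process $\nu_\infty$ deep in the spectrum. The interlacing bound $0\le\nu_\infty\le1$ makes pointwise fluctuations trivial, but the two-point estimate requires genuine asymptotic analysis of the non-Hermitian extended Airy kernels in \eqref{eq:form_ext_K}–\eqref{eq:form_ext_K_tilde} at large negative arguments. The most economical route is probably to carry out this variance estimate — and likewise ingredient (I) — at the level of the finite-$N$ GUE-minor-with-external-source model, where the needed sine-kernel asymptotics deep in the bulk are classical, and then pass to the limit via Lemma~\ref{lem:Airy_limit}.
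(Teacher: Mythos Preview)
Your reduction is structurally the same as the paper's: both pass to logarithms, represent the tail product as an integral against a random density built from the interlacing particles, and use edge rigidity (Lemma~\ref{lem:non-asy_est_Airy}) to cancel the $\tfrac13\log n$. Your discrepancy $\nu_\infty(t)$ is exactly the paper's $N_t$ (see \eqref{eq:defn_N_x_Airy}), and on $(-\infty,\xi^{(k-1)}_1]$ it is complementary to the density $\phi$ of \eqref{20041001}, so the two integral representations coincide up to a harmless boundary term. (Two bookkeeping slips: the sign on your integral in the identity for $\log\frac{\xi^{(k)}_j-\xi^{(k-1)}_l}{\xi^{(k)}_j-\xi^{(k)}_{l+1}}$ is wrong, and that identity is singular at $l=j$ where $\xi^{(k)}_j-\xi^{(k)}_l=0$; start the telescoping from $l=j+1$ and absorb the $l=j$ factor into the $O(1)$.)

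The substantive difference is in how the fluctuation is controlled. You aim at (I) a pointwise rate $\E\nu_\infty(t)\to\tfrac12$ and (II) the two-point covariance of $\nu_\infty$ deep in the spectrum, with a sine-kernel heuristic and a possible detour through finite $N$. The paper bypasses both by a single integration by parts: setting $M(x)=\int_x^\infty\phi$ and $F=\E M$, the random part of your integral becomes boundary terms plus $\int\frac{M(x)-F(x)}{(\xi^{(k)}_j-x)^2}\,dx$, so that only the \emph{one-point} quantities are needed. The paper's Proposition~\ref{prop:Airy_measure},
\[
  F(x)=\tfrac{-x}{2}+\bigO(1),\qquad \var M(x)=\tfrac{2}{\pi}\sqrt{-x}+\bigO(|x|^{1/4})\qquad(x\to-\infty),
\]
is obtained by direct steepest-descent on the extended Airy kernels (Section~\ref{pf.lem_Airy}), not via the finite-$N$ model; note that the mean statement is only at the \emph{integrated} level, weaker than your (I) but sufficient after IBP, and that the variance of the cumulative $M(x)$ stands in for your two-point covariance. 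With this in hand the a.s.\ convergence is the Markov/Borel--Cantelli argument along a lattice that you already sketch. So your plan is correct, but your ingredients (I)--(II) are harder targets than necessary; the IBP step is the simplification you are missing, and the key analytic input is proved directly at the Airy level rather than by transferring bulk sine-kernel asymptotics from finite $N$.
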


Our second result is on the distribution of the first $k$ components of the eigenvectors associated with the largest eigenvalues. The theorem states for the first component, and see Remark \ref{rmk:main_thm_1_related} for the $2$-nd, \dots, $k$-th components.
\begin{thm} \label{thm:main_thm_2}
  Under Assumption \ref{main assum}, for any fixed $j$, we have  
  \begin{equation}
    N^{\frac{1}{3}} \left\lvert x_{j1} \right\rvert^2 \weakconv \left( \frac{3\pi}{2} \right)^{\frac{1}{3}} \Xi_{j}^{(k)}(\totala; \infty), \qquad \text{as  }N\to \infty.
  \end{equation}
\end{thm}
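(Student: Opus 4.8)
The plan is to use the eigenvector--eigenvalue identity to express $|x_{j1}|^2$ through the eigenvalues of $G_{\totalalpha}$ and of its first principal minor, to recognise the resulting object as a truncated version of $\Xi^{(k)}_j$, and to read off the limit from the extended Airy description of the GUE minor process with external source provided by Lemma~\ref{lem:Airy_limit}; the only genuinely delicate point will be the $\Theta(N)$--fold tail of the relevant product. Concretely, let $M_1$ be the $(N-1)\times(N-1)$ matrix obtained from $G_{\totalalpha}$ by deleting the first row and column, with eigenvalues $\mu_1>\dots>\mu_{N-1}$. Since the rank-one piece $\alpha_1\vec e_1\vec e_1^*$ lives entirely on the $(1,1)$ entry while $\alpha_i\vec e_i\vec e_i^*$ $(2\le i\le k)$ becomes $\alpha_i\vec e_{i-1}\vec e_{i-1}^*$, the matrix $M_1$ is a GUE of size $N-1$ carrying $k-1$ spikes satisfying Assumption~\ref{main assum} up to the negligible replacement of $\sqrt N$ by $\sqrt{N-1}$; by permutation invariance of the GUE the pair $(M_1,G_{\totalalpha})$ is distributed as two consecutive matrices of a GUE minor process with external source, with spike counts $k-1$ and $k$. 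The eigenvector--eigenvalue identity gives $|x_{j1}|^2=\prod_{l=1}^{N-1}(\sigma_j-\mu_l)\big/\prod_{i\ne j}(\sigma_j-\sigma_i)$, and pairing $\sigma_j-\mu_i$ with $\sigma_j-\sigma_i$ for $i<j$ and $\sigma_j-\mu_{i-1}$ with $\sigma_j-\sigma_i$ for $i>j$ (legitimate by the strict Cauchy interlacing $\sigma_i>\mu_i>\sigma_{i+1}$, valid a.s.) yields
\[
 |x_{j1}|^2=\prod_{i=1}^{j-1}\frac{\sigma_j-\mu_i}{\sigma_j-\sigma_i}\;\prod_{i=j+1}^{N}\frac{\sigma_j-\mu_{i-1}}{\sigma_j-\sigma_i},
\]
which is exactly the structure of $\Xi^{(k)}_j(\totala;n)$ in \eqref{20040410}, with $\sigma_i$ playing the role of $\xi^{(k)}_i$, $\mu_i$ that of $\xi^{(k-1)}_i$, and the product carried to $N$ rather than stopped at $n$.

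Next I would rescale at the spectral edge, $\widehat\sigma_i:=N^{1/6}(\sigma_i-2\sqrt N)$ and $\widehat\mu_l:=N^{1/6}(\mu_l-2\sqrt N)$ (centring the $\widehat\mu$'s by $2\sqrt N$ rather than $2\sqrt{N-1}$ costs only $O(N^{-1/3})$). By Lemma~\ref{lem:Airy_limit} (see also \cite{Adler-van_Moerbeke-Wang11}) the pair of point processes $(\{\widehat\mu_l\},\{\widehat\sigma_i\})$ converges jointly to the time-$(k-1)$ and time-$k$ marginals $(\{\xi^{(k-1)}_l\},\{\xi^{(k)}_i\})$ of the extended Airy process with parameters $\totala$. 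Since each factor $\tfrac{\sigma_j-\mu_i}{\sigma_j-\sigma_i}=\tfrac{\widehat\sigma_j-\widehat\mu_i}{\widehat\sigma_j-\widehat\sigma_i}$ is invariant under the affine rescaling and the relevant differences are a.s.\ nonzero in the limit, the continuous mapping theorem gives, for every fixed $n\ge j$,
\[
 A_{n,N}:=n^{1/3}\prod_{i=1}^{j-1}\frac{\widehat\sigma_j-\widehat\mu_i}{\widehat\sigma_j-\widehat\sigma_i}\prod_{i=j+1}^{n}\frac{\widehat\sigma_j-\widehat\mu_{i-1}}{\widehat\sigma_j-\widehat\sigma_i}\ \weakconv\ \Xi^{(k)}_j(\totala;n),\qquad N\to\infty,
\]
while by Theorem~\ref{thm:main_thm_1} one has $\Xi^{(k)}_j(\totala;n)\to\Xi^{(k)}_j(\totala;\infty)$ a.s.\ as $n\to\infty$.

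Now write $N^{1/3}|x_{j1}|^2=A_{n,N}\,B_{n,N}$, where $B_{n,N}:=(N/n)^{1/3}\prod_{i=n+1}^{N}\big(1-\tfrac{\mu_{i-1}-\sigma_i}{\sigma_j-\sigma_i}\big)$; the heart of the proof is to show $B_{n,N}\to(3\pi/2)^{1/3}$ in the iterated limit $N\to\infty$ then $n\to\infty$, with enough uniformity in $n$ to combine it with the previous step. For this one invokes rigidity of the eigenvalues of the deformed GUE $G_{\totalalpha}$ and of its minor $M_1$ --- in the bulk the finite-rank deformation is irrelevant, but near the edge it must be tracked, since there the spike eigenvalues do not separate cleanly from the bulk --- so that $\mu_{i-1}$ and $\sigma_i$ may be replaced by their classical semicircle locations up to controlled errors. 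Expanding $\log B_{n,N}=\tfrac13\log(N/n)+\sum_{i=n+1}^N\log\big(1-\tfrac{\mu_{i-1}-\sigma_i}{\sigma_j-\sigma_i}\big)$, a Riemann-sum computation against the semicircle density --- using that the $i$-th classical location near the right edge is $2\sqrt N-(\tfrac{3\pi i}{2})^{2/3}N^{-1/6}+o(N^{-1/6})$, the same exponent that dictates the normalisation $n^{1/3}$ in \eqref{20040410} --- shows that the logarithmic divergence $\tfrac13\log(N/n)$ is exactly cancelled, leaving the constant $\tfrac13\log(3\pi/2)$; the higher-order terms in the expansion of the logarithm, the fluctuations of the summands about their means, and the error from replacing the edge of $M_1$ by a plain GUE edge are all $o(1)$ uniformly once $n$ is large, the needed tail bounds being of the same nature as --- and partly supplied by --- the proof of Theorem~\ref{thm:main_thm_1}.

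Finally, fixing $n$ and letting $N\to\infty$ gives $A_{n,N}\weakconv\Xi^{(k)}_j(\totala;n)$ and $B_{n,N}\to$ a limit that differs from $(3\pi/2)^{1/3}$ by an amount tending to $0$ as $n\to\infty$; combining this with $\Xi^{(k)}_j(\totala;n)\to\Xi^{(k)}_j(\totala;\infty)$ a.s.\ and controlling the $n$-tails uniformly (again through the estimates behind Theorem~\ref{thm:main_thm_1}) yields $N^{1/3}|x_{j1}|^2\weakconv(3\pi/2)^{1/3}\Xi^{(k)}_j(\totala;\infty)$. The main obstacle is Step~3: extracting the exact power $N^{1/3}$ and the exact constant $(3\pi/2)^{1/3}$ from a product of $\Theta(N)$ near-unit factors requires sharp near-edge rigidity for both $G_{\totalalpha}$ and its minor together with a careful interchange of the limits $N\to\infty$ and $n\to\infty$; the remainder of the argument is comparatively soft, resting on the eigenvector--eigenvalue identity and the already available determinantal asymptotics.
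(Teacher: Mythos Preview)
Your overall architecture matches the paper's: the eigenvector--eigenvalue identity, the split $N^{1/3}|x_{j1}|^2=A_{n,N}B_{n,N}$ with the head $A_{n,N}$ handled by Lemma~\ref{lem:Airy_limit} and Theorem~\ref{thm:main_thm_1}, and the identification of the tail $B_{n,N}\to(3\pi/2)^{1/3}$ as the crux, are exactly Section~\ref{s. GUE limit} (there the tail is $\mathcal G^{(k)}_{j,L}(\totalalpha;N)$ and your $n$ is $L$). The deterministic constant also arises in the paper from an explicit semicircle computation against $E_N(x)$.

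The genuine gap is your Step~3. You propose to ``invoke rigidity\dots so that $\mu_{i-1}$ and $\sigma_i$ may be replaced by their classical semicircle locations'' and to control fluctuations by estimates ``of the same nature as the proof of Theorem~\ref{thm:main_thm_1}''. Neither works as written. Rigidity (Lemma~\ref{lem:non-asy_est_GUE}\ref{enu:lem:non-asy_est_GUE:3}) bounds $|\sigma_i-\Upsilon_i|$ by $N^{-1/6+\varepsilon}i^{-1/3}$, which is $N^{\varepsilon}$ times \emph{larger} than the spacing $\sigma_{i-1}-\sigma_i$; it therefore says nothing about where $\mu_{i-1}$ sits inside $(\sigma_i,\sigma_{i-1})$, i.e.\ about the ratios $(\mu_{i-1}-\sigma_i)/(\sigma_j-\sigma_i)$ that your factors are built from, and the accumulated uncertainty over $\Theta(N)$ terms diverges. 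What the paper actually needs --- and supplies as Proposition~\ref{prop:large_small_deviation_mu^k_N} --- is mean and variance control of the random measure $\mu_N$ with density $\mathbf 1_{\bigcup_i(\sigma_i,\mu_{i-1}]}$, showing it behaves like half the Lebesgue measure; this is obtained not from rigidity but from exact formulas for the \emph{difference of two correlated linear statistics} of the GUE minor process, evaluated by a separate multi-contour saddle-point analysis (all of Section~\ref{pf.lem_GUE}). The Airy-process estimates behind Theorem~\ref{thm:main_thm_1} do not cover the bulk portion of the sum and cannot be borrowed; the finite-$N$ proposition is a genuinely new input. With Proposition~\ref{prop:large_small_deviation_mu^k_N} in hand, the paper further splits the tail at $N_0\sim N^{1/10}$ into an edge piece (handled exactly like \eqref{eq:alt_technical_main_thm_1}) and a bulk piece (handled by \eqref{eq:not_Airy_part}); your outline does not anticipate this two-regime structure either.
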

\begin{rmk}
  The construction of the  random variable $\Xi^{(k)}_j(\totala; n)$ in (\ref{20040410})  is inspired by the {\it eigenvector-eigenvalue identity} (c.f. (\ref{20041121})) and the weak convergence of the edge eigenvalues of the the spiked GUE (c.f. (\ref{20041120})). This explains heuristically why we shall expect the weak convergence in Theorem \ref{thm:main_thm_2}.  More detailed illustration will be given in Section \ref{subsec:proof_strategy}.
\end{rmk}
\begin{rmk} \label{rmk:main_thm_1_related}
  Since our $\alpha_1, \dotsc, \alpha_k$ are unordered, the first component corresponding to the $\alpha_1 \vec{e}_1 \vec{e}^*_1$ deformation has nothing special compared with the $2$-nd, \dots, $k$-th components, and we state the result for the first component only for notational simplicity. The result of Theorem \ref{thm:main_thm_1} can be adapted for  $x_{jl}$ with any $l = 2, \dotsc, k$ as follows: We consider, instead of $G_{\totalalpha}$, the random matrix $\tilde{G}_{\totalalpha}$ which is a conjugate of $G_{\totalalpha}$ by switching the first row/column and the $l$-th row/column. Then
  \begin{equation}
    x_{jl} = \tilde{x}_{j1},
  \end{equation}
  where $\tilde{x}_{mn}$ means the $n$-th component of the normalized eigenvector of $\tilde{G}_{\totalalpha}$ associated with $\reservesigma_m$ (the $m$-th largest eigenvalue of $G_{\totalalpha}$, which is also the $m$-th largest eigenvalue of $\tilde{G}_{\totalalpha}$).
\end{rmk}
\begin{rmk} We further remark here that the result in Theorem \ref{thm:main_thm_2} holds for any fixed $j$. Particularly, $j$ can be even larger than $k$ (but fixed). Note that $\reservesigma_j$ is not an outlier or a critical spiked eigenvalue in case $j>k$. Hence, the result in Theorem \ref{thm:main_thm_2} shows that a critical spike can even cause a bias of the eigenvectors associated with those non-outliers towards the direction of the spikes, since $x_{j1}$ is typically of order $N^{-1/6}$ here while it would be of order $N^{-1/2}$ in case the deformation $\sum^k_{i = 1} \alpha_i \vec{e}_i \vec{e}^*_i$ was absent, for instance. Such a phenomenon was previously observed in  \cite{Bloemendal-Knowles-Yau-Yin16} when the spike is in the subcritical or supercritical regime, but sufficiently close to the critical regime. As $j \gg k$, this bias eventually peters out, and the decay speed deserves further study.
\end{rmk}
A consequence of Theorem \ref{thm:main_thm_1} and Remark \ref{rmk:main_thm_1_related} is as follows:
\begin{cor} \label{cor:to_thm_2}
  Under Assumption \ref{main assum}, for a fixed $j$, the moduli of components $\lvert x_{j, k + 1} \rvert, \lvert x_{j, k + 2} \rvert, \dotsc, \lvert x_{jN} \rvert$ have the same distribution, and we have, with $l > k$,
  \begin{equation}
    N \lvert x_{j\ell} \rvert^2 \weakconv \frac{1}{2} \chi^2(2) , \qquad \text{as  }N\to \infty, 
  \end{equation} 
  where $\chi^2(2)$ is the $\chi^2$ random variable with parameter $2$, which can be equivalently expressed as $\Gammadist(1,1)$ or $\Exp(1)$.
\end{cor}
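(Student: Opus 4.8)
The plan is to combine a block unitary-invariance argument for the ``bulk'' coordinates of $\vec{x}_j$ with the a priori smallness of its first $k$ coordinates, which is already supplied by Theorem~\ref{thm:main_thm_2} (and Theorem~\ref{thm:main_thm_1}, which guarantees that the limiting law there is a genuine, a.s.\ finite, distribution) together with Remark~\ref{rmk:main_thm_1_related}. Write $Y := (x_{j1}, \dotsc, x_{jk})^{\top}$ and $Z := (x_{j,k+1}, \dotsc, x_{jN})^{\top}$, so that $\lVert Y \rVert^2 + \lVert Z \rVert^2 = 1$ since $\vec{x}_j$ is a unit vector.

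First I would record the invariance. For any $U' \in \mathrm{U}(N-k)$, conjugating $G_{\totalalpha}$ by $U := \Idmatrix_k \oplus U'$ fixes each $\alpha_i \vec{e}_i \vec{e}_i^*$ and sends $G$ to $U G U^* \overset{d}{=} G$, so $U G_{\totalalpha} U^* \overset{d}{=} G_{\totalalpha}$; moreover the $j$-th eigenvector of $U G_{\totalalpha} U^*$ is $U \vec{x}_j$ up to an irrelevant phase, which does not affect moduli. Fixing $U'$ first and only then taking it Haar-distributed and independent of $G$, one obtains $(\lvert Y \rvert, \lvert Z \rvert) \overset{d}{=} (\lvert Y \rvert, \lvert U' Z \rvert)$ (moduli taken componentwise) with $U'$ Haar and independent of $(Y, Z)$. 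Hence, conditionally on $\vec{x}_j$, the vector $U' Z$ equals $\lVert Z \rVert = \sqrt{1 - \lVert Y \rVert^2}$ times a uniform point $\vec{u}$ on the unit sphere of $\compC^{N-k}$, with $\vec{u}$ independent of $\lVert Z \rVert$. This already yields the exchangeability of $\lvert x_{j,k+1} \rvert, \dotsc, \lvert x_{jN} \rvert$, and the representation
\begin{equation}
  N \lvert x_{j\ell} \rvert^2 \overset{d}{=} \frac{N}{N-k} \bigl( 1 - \lVert Y \rVert^2 \bigr) \cdot (N-k) \lvert u_1 \rvert^2, \qquad \ell > k .
\end{equation}

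It then remains to pass to the limit in the three factors. By Theorem~\ref{thm:main_thm_2} and Remark~\ref{rmk:main_thm_1_related}, each $\lvert x_{ji} \rvert^2$ with $i = 1, \dotsc, k$ is $O(N^{-1/3})$ in probability, so $\lVert Y \rVert^2 \to 0$ and $1 - \lVert Y \rVert^2 \to 1$ in probability, while $N/(N-k) \to 1$ trivially. For the last factor I would invoke the classical fact that a single squared coordinate of a uniform point on the complex unit sphere in $\compC^{n}$ is $\mathrm{Beta}(1, n-1)$-distributed (realize $\vec{u} = \vec{z}/\lVert \vec{z} \rVert$ for i.i.d.\ standard complex Gaussians $z_i$, each $\lvert z_i \rvert^2 \sim \Exp(1)$), so that $\Prob(n \lvert u_1 \rvert^2 > x) = (1 - x/n)^{n-1} \to e^{-x}$, i.e.\ $(N-k) \lvert u_1 \rvert^2 \weakconv \Exp(1)$ with $n = N - k$. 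Since $\Exp(1) = \Gammadist(1,1) = \tfrac{1}{2} \chi^2(2)$, Slutsky's theorem gives $N \lvert x_{j\ell} \rvert^2 \weakconv \tfrac{1}{2}\chi^2(2)$, as claimed.

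The argument is essentially soft, so I do not expect a real obstacle; the only points that need care are the bookkeeping around the eigenvector phase and the ``conditionally on $\vec{x}_j$'' independence of the Haar rotation (handled by fixing $U'$ before randomizing it), and the observation that the nontrivial input $\lVert Y \rVert^2 \to 0$ is unavoidable — it is precisely the $O(N^{-1/3})$ bias of the spiked coordinates from Theorem~\ref{thm:main_thm_2} that makes the $N^{-1}$ (rather than $N^{-1/3}$) normalization of the remaining coordinates the correct one.
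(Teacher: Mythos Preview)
Your proposal is correct and follows essentially the same approach as the paper: both arguments exploit the invariance of $G_{\totalalpha}$ under conjugation by $\Idmatrix_k \oplus U'$ to reduce the last $N-k$ coordinates to a scaled Haar vector on the sphere, and then use Theorem~\ref{thm:main_thm_2} (with Remark~\ref{rmk:main_thm_1_related}) to show $\lVert Y \rVert^2 = O(N^{-1/3})$ in probability. Your write-up is in fact more explicit than the paper's, spelling out the $\mathrm{Beta}(1,n-1)$ law and the Slutsky step.
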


Finally, the integrable property of the  distribution of $\Xi_{j}^{(k)}(\totala;\infty)$ will be the subject of further study, and here we only present the first step towards  this direction: the non-degeneracy of the distribution.

\begin{thm}\label{thm.nondegeneracy} Under the assumption of Theorem \ref{thm:main_thm_1}, the  limit $ \Xi_{j}^{(k)}(\totala;\infty)$ is nondegenerate, i.e,  
$$\mathbb{P}( \Xi_{j}^{(k)}(\totala;\infty)=x)<1$$
 for any $x\in \mathbb{R}$. 
\end{thm}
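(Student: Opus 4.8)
The plan is to prove the stronger statement that the law of $\Xi_j^{(k)}(\totala;\infty)$ is non-atomic; nondegeneracy is a formal consequence. The structural reason is a linear dependence: in \eqref{20040410} each particle $\xi_\ell^{(k-1)}$ of the second species enters exactly one factor, and linearly. Taking $\ell=1$ and passing to the limit by Theorem~\ref{thm:main_thm_1},
\[
 \Xi_j^{(k)}(\totala;\infty) = \big(\xi_j^{(k)}-\xi_1^{(k-1)}\big)\,R,
\]
where $R$ is measurable with respect to $\mathcal G:=\sigma(\text{all particles other than }\xi_1^{(k-1)})$ and, almost surely, $\{R=0\}=\{\Xi_j^{(k)}(\totala;\infty)=0\}$ (by strict interlacing $\xi_1^{(k-1)}\neq\xi_j^{(k)}$). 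Conditioning on $\mathcal G$ gives $\Prob(\Xi_j^{(k)}(\totala;\infty)=x)=\Id(x=0)\Prob(\Xi_j^{(k)}(\totala;\infty)=0)$ as soon as the conditional law of $\xi_1^{(k-1)}$ given $\mathcal G$ is almost surely non-atomic (on $\{R\neq 0\}$ the equation reads $\xi_1^{(k-1)}=\xi_j^{(k)}-x/R$, whose conditional probability then vanishes, the right-hand side being $\mathcal G$-measurable). Hence it suffices to prove (a) this conditional non-atomicity and (b) $\Prob(\Xi_j^{(k)}(\totala;\infty)=0)<1$. Statement (b) is easy: every factor in \eqref{20040410} is strictly positive by the strict interlacing $\xi_{i+1}^{(k)}<\xi_i^{(k-1)}<\xi_i^{(k)}$ inherited from the GUE minor process, and one checks that the limit lies in $(0,\infty)$ almost surely (this is implicit in the proof of Theorem~\ref{thm:main_thm_1}); so in fact $\Xi_j^{(k)}(\totala;\infty)>0$ a.s., which also upgrades the conclusion to full non-atomicity.

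For (a): if the space-time kernel \eqref{eq:form_ext_K} were Hermitian, this would be immediate, since conditioning a determinantal process on its restriction to the complement of a bounded window yields a determinantal process on the window, and the ($\mathcal G$-measurable) window $(\xi_2^{(k)},\xi_1^{(k)})$ contains, by interlacing, exactly one particle of species $k-1$ — which therefore has an absolutely continuous conditional law. As the extended Airy kernel is non-Hermitian, I would instead descend to the pre-limit and prove (a) together with a quantitative anti-concentration bound uniform in $N$. Let $M_1$ be the principal minor of $G_\totalalpha$ obtained by deleting the first row and column, a rank-$(k-1)$ spiked GUE of dimension $N-1$, with eigenvalues $\nu_1>\dotsb>\nu_{N-1}$ strictly interlacing $\reservesigma_1>\dotsb>\reservesigma_N$. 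The eigenvector-eigenvalue identity gives
\[
 |x_{j1}|^2=(\reservesigma_j-\nu_1)\,Q_N,\qquad Q_N:=\frac{\prod_{i=2}^{N-1}(\reservesigma_j-\nu_i)}{\prod_{i\neq j}(\reservesigma_j-\reservesigma_i)},
\]
with $Q_N\neq 0$ measurable with respect to $\mathcal A_N:=\sigma(\reservesigma_1,\dotsc,\reservesigma_N,\nu_2,\dotsc,\nu_{N-1})$. Conditionally on $\mathcal A_N$ the law of $\nu_1$ on $(\reservesigma_2,\reservesigma_1)$ is absolutely continuous — the joint law of the eigenvalues of a Hermitian matrix with absolutely continuous entries and of its principal minor is absolutely continuous on the interlacing region, with positive continuous density on the interior (for the spiked GUE this also follows from the two-level density of the associated minor process). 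The key estimate is that, off an event of probability tending to $0$ as a truncation level $T\to\infty$ uniformly in $N$, this conditional density is $\bigO_T\big((\reservesigma_1-\reservesigma_2)^{-1}\big)$; here one invokes the tightness of the rescaled top eigenvalues of $G_\totalalpha$ and $M_1$ and the absence of a limiting atom at $0$ for the edge gap $\reservesigma_1-\reservesigma_2\asymp N^{-1/6}$. Plugging this into $N^{1/3}|x_{j1}|^2=N^{1/3}(\reservesigma_j-\nu_1)Q_N$ together with the edge rescaling behind Theorem~\ref{thm:main_thm_2}, a short computation yields, for every fixed $\eta>0$,
\[
 \lim_{\varepsilon\to 0}\ \limsup_{N\to\infty}\ \sup_{y\geq\eta}\ \Prob\big(N^{1/3}|x_{j1}|^2\in(y-\varepsilon,y+\varepsilon)\big)=0.
\]
By Theorem~\ref{thm:main_thm_2} and the portmanteau theorem, $\Prob(\Xi_j^{(k)}(\totala;\infty)=x)=0$ for every $x\neq 0$; with (b), and with $\Xi_j^{(k)}(\totala;\infty)>0$ a.s., this is the asserted nondegeneracy, indeed non-atomicity.

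The main obstacle is exactly the displayed density estimate for the interlacing eigenvalue $\nu_1$: one must rule out that its conditional law concentrates on a scale finer than the edge gap $\reservesigma_1-\reservesigma_2\asymp N^{-1/6}$. For the plain GUE this is transparent (the corner process is uniform on the interlacing polytope given the top level), but for the spiked GUE one must control, on the $N^{-1/6}$ edge scale, the Harish-Chandra-Itzykson-Zuber-type weight in the two-level minor density, and keep all truncated bounds uniform in $N$. (For nondegeneracy alone one could instead show that $\Xi_j^{(k)}(\totala;\infty)$ is unbounded above, exploiting the behaviour of \eqref{20040410} on the positive-probability event that the leading particles are far out, but the anti-concentration route gives the sharper statement. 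The fully non-perturbative approach via conditioning the extended Airy process directly seems blocked by the non-Hermiticity of its space-time kernel.)
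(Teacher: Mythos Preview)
Your strategy is genuinely different from the paper's, and the overall architecture is sound: the factorization $\Xi_j^{(k)}(\totala;\infty)=(\xi_j^{(k)}-\xi_1^{(k-1)})R$ with $\mathcal G$-measurable $R$ is correct (each $R_n$ is $\mathcal G$-measurable and $R=\lim_n R_n$), $\Xi>0$ a.s.\ does follow from the proof of Theorem~\ref{thm:main_thm_1} (since $\log\Xi$ is an a.s.\ finite limit), and the reduction to anti-concentration of $\lambda_1$ in the pre-limit is legitimate. The paper also descends to the GUE model and transfers back via Theorem~\ref{thm:main_thm_2}, but it chooses a different ``free'' variable: instead of fixing $(\sigma,\lambda_{\geq 2})$ and varying $\lambda_1$, it fixes $\omega=(\lambda_1,\dotsc,\lambda_{N-1},\sigma_1,\dotsc,\widehat{\sigma_j},\dotsc,\sigma_N)$ and varies $\sigma_j$. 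This is the decisive simplification. By \cite[Theorem~1]{Adler-van_Moerbeke-Wang11} the conditional density of $\sigma_j$ given $\omega$ is an explicit one-dimensional log-gas density, formula~\eqref{eq:distr_sigma_1} and its $j>1$ analogue, in which the minor eigenvalues enter \emph{only} through the interlacing constraint $\lambda_j<\sigma_j<\lambda_{j-1}$. The paper then bounds $\tilde f''_{\omega,a_k}$ uniformly via Lemma~\ref{lem:C_max} (a one-point function estimate), hence bounds $|\tilde f'_{\omega,a_k}|$ on a good event, which prevents $\sigma_j$ from over-concentrating; combined with a pointwise lower bound on $\dd F_j/\dd\sigmatilde$ this yields a uniform conditional variance lower bound for $\log(N^{1/3}|x_{j1}|^2)$ on a truncated range (Lemma~\ref{lem:tough_lem}). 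Only nondegeneracy is obtained, not non-atomicity.

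Your acknowledged obstacle --- bounding the conditional density of $\lambda_1$ on $(\sigma_2,\sigma_1)$ by $\bigO_T((\sigma_1-\sigma_2)^{-1})$ uniformly in $N$ --- is real, and is exactly what the paper's choice of conditioning sidesteps. Under your conditioning the density is (up to the indicator) proportional to $e^{-\alpha_1\lambda_1}$ times the $\lambda_1$-dependence of the rank-$(k-1)$ spiked GUE density at level $N-1$; the latter carries both the Vandermonde interaction $\prod_{i\geq 2}(\lambda_1-\lambda_i)$ and a confluent HCIZ determinant in $\alpha_2,\dotsc,\alpha_k$, and on an interval of width $\asymp N^{-1/6}$ near $2\sqrt N$ the individual pieces of this expression vary by factors of order $e^{cN^{1/3}}$. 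One expects the combination to be tame on the edge scale, but showing the ratio $\max/\min$ is $\bigO(1)$ on a good event is a separate asymptotic analysis; it does not follow from tightness of the rescaled edge eigenvalues and positivity of the limiting gap, which is all you invoke. The paper trades this for the much softer input that $\tilde f''_{\omega,a_k}$ is a sum of inverse squares whose expectation is controlled by the one-point function. Your route would give the stronger conclusion (non-atomicity), but at the price of the harder estimate; the paper's route gives less but closes cleanly.
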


In the end of Section \ref{s.nondegeneracy}, we also present some simulation results, which show some features of the eigenvector distribution numerically.

\subsection{Proof strategy} \label{subsec:proof_strategy}

In this section, we provide a brief description on our proof strategy. We start with Theorem \ref{thm:main_thm_1}. From Lemma \ref{lem:interlacing_Airy}, it will be clear that the two species $\xi_i^{(k)}$'s and $\xi_{i}^{(k-1)}$'s are interlacing. However, in light of the definition of $\Xi_j^{(k)}(\totala;n)$ (c.f. (\ref{20040410})), to  prove Theorem \ref{thm:main_thm_1}, we need to know the interlacing more quantitatively, namely, we shall know more precisely how a particle of the $k-1$ species sits between two neighboring particles of the $k$ species. To this end, we turn to study the logarithm of  $\Xi_{j}^{(k)}(\totala;n)$ in \eqref{20040410}, which can be further written as an integral of $1/(\xi_j^{(k)}-x)$ against a random measure $\mu$,  with the lower limit of the integral given by $\xi_n^{(k)}$; see \eqref{20041001} and \eqref{19100302}. Specifically,  the measure $\mu$ has density $\phi$ taking $1$ on intervals $(\xi_i^{(k)}, \xi_{i-1}^{(k-1)}]$ for all $i$ and $0$ elsewhere. Hence, the measure $\mu$ characterizes the interlacing more quantitatively. It is also clear that $\xi_n^{(k)}\to -\infty$ almost surely as $n\to\infty$. Hence, in order to show that the left tail of the integral in \eqref{19100302} is negligible, i.e., the integral is convergent, it suffices to study the property of the measure $\mu((x, \infty))$ when $x\to -\infty$.  A key technical step in this part is Proposition \ref{prop:Airy_measure}, where we show that the random measure $\mu$ behaves like a halved Lebesgue measure when it acts on the interval $(x, \infty)$ with $x\to -\infty$. Heuristically,  Proposition \ref{prop:Airy_measure} can be interpreted as: when $i$ is large, $\xi_{i}^{(k)}$ is typically sitting neutrally between $\xi_{i}^{(k-1)}$ and $\xi_{i-1}^{(k-1)}$ and does not favor either side. 

Technically, the complementary distribution function, $\mu((x, +\infty))$, can be expressed (approximately) in terms of the difference between two linear statistics: one of the species $\{\xi_i^{(k)}\}$ with the test function $h_x(t)=(-t+x)\mathbf{1}(t>x)$, and the other of the species $\{\xi_i^{(k-1)}\}$ with the same test function; see \eqref{eq:mu_S_N_Airy}. The computation of the linear statistics of determinantal point processes, especially those from random matrix models, is an extensively studied area, and results are abundant. The tricky part in our case is that up to the leading term, the asymptotics of the two linear statistics are the same. We take the advantage of our model that both the mean and variance of the difference of the two linear statistics have an exact formula; see \eqref{eq:int_over_X_and_bar} and \eqref{eq:var_3parts_Airy}. For our purpose, it suffices  to take limits of the mean and variance formulas. This is done in the proof of Proposition \ref{prop:Airy_measure} via a rather delicate saddle point analysis involving a series of contour deformations. We also refer to the recent work \cite{Erdos-Schroder18}  for a study on the fluctuation of the difference of linear eigenvalue statistics of a Wigner matrix and its minor, where two strongly correlated linear statistics have a significant cancellation.  Although the result in \cite{Erdos-Schroder18} is more on the bulk regime of random matrices, while here we are discussing two interlacing species of the extended Airy process,  both works indicate that exploiting the true size of the difference between the linear statistics of two interlacing point processes is normally more delicate than that of a single linear statistic.

For the proof of Theorem \ref{thm:main_thm_2}, we start from the celebrated {\it eigenvector-eigenvalue identity}. We refer the interested readers to the recent survey \cite{Denton-Parke-Tao-Zhang19} and reference therein for a detailed discussion and a history of this identity. Here we cite the identity directly from \cite{Denton-Parke-Tao-Zhang19} with slight modification as the following proposition.
\begin{pro} \label{thm. eigenvector-eigenvalue}
  Let $A\in \mathbb{C}^{n\times n}$ be an Hermitian matrix and let $M_j\in \mathbb{C}^{(n-1)\times (n-1)}$ be its minor obtained by deleting the $j$-th column and row from $A$.  Denote by $\lambda_1(A) > \dotsb > \lambda_n(A)$ the ordered eigenvalues of $A$ and  by  $\lambda_1(M_j) > \dotsb > \lambda_{n-1}(M_j)$ the ordered eigenvalues of $M_j$. Furthermore, let $\vec{u}_i=(u_{i1},\ldots, u_{in})^{\top}$ be the eigenvector of $A$, associated with $\lambda_i(A)$. Then one has 
  \begin{equation}
    \lvert u_{ij} \rvert^2 = \frac{\prod^{n-1}_{\ell=1} (\lambda_i(A) - \lambda_\ell(M_j))}{\prod_{\ell \in \{ 1, \dotsc, n \} \setminus \{ i \}} (\lambda_i(A) - \lambda_\ell(A))}.
  \end{equation}
\end{pro}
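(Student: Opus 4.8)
The plan is to derive the identity from the resolvent (Green's function) of $A$ by computing its $(j,j)$ entry in two different ways and then comparing residues at $z=\lambda_i(A)$. Set $R(z) = (zI_n - A)^{-1}$ for $z\in\compC$ outside the spectrum of $A$. The whole argument is short, and the simplicity of eigenvalues assumed throughout the paper is exactly what makes the residue extraction clean.

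First I would use the spectral decomposition. Since $A$ is Hermitian with simple eigenvalues, it admits an orthonormal eigenbasis $\vec{v}_1, \dotsc, \vec{v}_n$, so that $A = \sum_{\ell=1}^n \lambda_\ell(A)\, \vec{v}_\ell \vec{v}_\ell^*$ and hence
\[
  R(z) = \sum_{\ell=1}^n \frac{1}{z - \lambda_\ell(A)}\, \vec{v}_\ell \vec{v}_\ell^* , \qquad \big(R(z)\big)_{jj} = \sum_{\ell=1}^n \frac{\lvert v_{\ell j}\rvert^2}{z - \lambda_\ell(A)}.
\]
Since we consider only $\lvert v_{ij}\rvert^2 = \vec{v}_i^*(\vec{e}_j\vec{e}_j^*)\vec{v}_i$, the phase ambiguity of the eigenvectors is irrelevant here.

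Second I would compute the same entry by Cramer's rule: $\big(R(z)\big)_{jj}$ equals the $(j,j)$ cofactor of $zI_n - A$ divided by $\det(zI_n - A)$. Because the sign $(-1)^{j+j}$ is $+1$, this cofactor is exactly the determinant of the matrix obtained from $zI_n - A$ by deleting its $j$-th row and column; since $I_n$ is diagonal, that matrix is $zI_{n-1} - M_j$. Writing both determinants through their eigenvalues, $\det(zI_n - A) = \prod_{\ell=1}^{n}(z-\lambda_\ell(A))$ and $\det(zI_{n-1} - M_j) = \prod_{\ell=1}^{n-1}(z-\lambda_\ell(M_j))$, so
\[
  \big(R(z)\big)_{jj} = \frac{\prod_{\ell=1}^{n-1}\bigl(z - \lambda_\ell(M_j)\bigr)}{\prod_{\ell=1}^{n}\bigl(z-\lambda_\ell(A)\bigr)}.
\]

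Finally I would equate the two formulas for $\big(R(z)\big)_{jj}$ and take the residue at the pole $z = \lambda_i(A)$, which is simple by the eigenvalue simplicity assumption. On the spectral side only the $\ell=i$ term survives, giving residue $\lvert v_{ij}\rvert^2$; on the rational side the residue is $\prod_{\ell=1}^{n-1}\bigl(\lambda_i(A) - \lambda_\ell(M_j)\bigr)\big/\prod_{\ell\neq i}\bigl(\lambda_i(A) - \lambda_\ell(A)\bigr)$. Matching the two residues yields the asserted identity. There is no genuine obstacle: the statement is the classical eigenvector–eigenvalue identity, and the only points to check are that deleting the $j$-th row and column of $zI_n - A$ produces $zI_{n-1} - M_j$, and that simplicity of $\lambda_i(A)$ makes the residue computation isolate $\lvert v_{ij}\rvert^2$. (A purely linear-algebraic alternative—expanding $\det(zI_n-A)$ along the $j$-th row and evaluating derivatives at $z=\lambda_i(A)$—works too, but the resolvent route is cleaner.)
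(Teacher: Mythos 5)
Your argument is correct and complete. The paper itself does not prove this proposition; it is cited verbatim (with minor notational changes) from the survey of Denton, Parke, Tao and Zhang, so there is no in-paper proof to compare against. Your resolvent route is one of the standard derivations of the eigenvector--eigenvalue identity and all the steps check out: the spectral expansion $\bigl(R(z)\bigr)_{jj}=\sum_{\ell}\lvert v_{\ell j}\rvert^2/(z-\lambda_\ell(A))$ uses only the orthonormal eigenbasis, the adjugate/Cramer computation correctly identifies the $(j,j)$ entry with $\det(zI_{n-1}-M_j)/\det(zI_n-A)$ since the cofactor sign is $(-1)^{2j}=1$ and deleting the $j$-th row and column of $zI_n-A$ indeed yields $zI_{n-1}-M_j$, and the residue extraction at the simple pole $z=\lambda_i(A)$ is legitimate because the eigenvalues of $A$ are assumed distinct. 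One small remark worth keeping in mind: by Cauchy interlacing it can happen that $\lambda_i(A)$ coincides with an eigenvalue of $M_j$, in which case the pole on the rational side is removable; this causes no trouble, since the residue formula $P(\lambda)/Q'(\lambda)$ at a simple zero of $Q$ remains valid and both sides of the identity are then zero, consistent with $v_{ij}=0$.
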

Applying the eigenvector-eigenvalue identity to our random matrix $G_{\totalalpha} = G^{(N)}_{\totalalpha}$ and its minor $G^{(N - 1)}_{\totalalpha}$ which is constructed by removing the first column and first row of $G_{\totalalpha}$, we can write 
\begin{equation} \label{eq:alter_expr}
  \lvert x_{j1} \rvert^2 = \prod^{j-1}_{i = 1} \frac{\reservesigma_j - \lambda_i}{\reservesigma_j - \reservesigma_i} \prod^N_{i = j + 1} \frac{\reservesigma_j - \lambda_{i - 1}}{\reservesigma_j - \reservesigma_i},
\end{equation}
where $\reservesigma_1 > \reservesigma_2 > \dotsb > \reservesigma_N$ are eigenvalues of $G_{\totalalpha}$, $\lambda_1 > \dotsb > \lambda_{N - 1}$ are eigenvalues of $G^{(N - 1)}_{\totalalpha}$, and $x_{j1}$ is the first component of the normalized eigenvector of $G_{\totalalpha}$ associated with $\reservesigma_j$. Equivalently, we have
\begin{equation} \label{20041121}
  N^{\frac{1}{3}} \lvert x_{j1} \rvert^2 = N^{\frac{1}{3}} \prod^{j - 1}_{i = 1} \frac{\reservesigma_j - \lambda_i}{\reservesigma_j - \reservesigma_i} \prod^N_{i = j + 1} \frac{\reservesigma_j - \lambda_{i-1}}{\reservesigma_j - \reservesigma_i},
\end{equation}
which resembles formally $\Xi^{(k)}_{j}(\totala; N)$ in \eqref{20040410}. 

First, for the fixed $i$ terms in \eqref{20041121}, we apply the result of GUE minor process with external source in \cite{Adler-van_Moerbeke-Wang11} which shows that the two species of point process $\{\reservesigma_i\}$ and $\{\lambda_i\}$ together form a determinantal point process (see also \cite{Ferrari-Frings12}). We first show in Lemma \ref{lem:Airy_limit} that the edge scaling limit of this point process is given by a determinantal point process with extended Airy kernel defined in \eqref{eq: mixed corr}, under Assumption \ref{main assum}. Particularly, for $\tilde{\sigma}_{i} = N^{1/6}(\sigma_i - 2\sqrt{N})$ and $\tilde{\lambda}_{i} = N^{1/6}(\lambda_i - 2\sqrt{N})$, one can use Lemma \ref{lem:Airy_limit} to show that $(\tilde{\sigma}_{1}, \dotsc, \tilde{\sigma}_{L}, \tilde{\lambda}_{1}, \dotsc, \tilde{\lambda}_{L})$ converges weakly to $(\xi^{(k)}_1, \dotsc, \xi^{(k)}_L, \xi^{(k - 1)}_1, \dotsc, \xi^{(k - 1)}_L)$ for any fixed $L$ as $N \to \infty$.  Consequently, we have 
\begin{align}
\frac{\reservesigma_j-\lambda_i}{\reservesigma_j-\reservesigma_i} \weakconv \frac{\xi_j^{(k)}-\xi^{(k-1)}_i}{\xi_j^{(k)}-\xi_i^{(k)}} \label{20041120}
\end{align}
for any fixed $i\neq j$. This gives an indication of the connection between $N^{1/3} \lvert x_{j1} \rvert^2$ and the limit $\Xi^{(k)}_j(\totala;\infty)$. However, the weak convergence in \eqref{20041120} cannot be applied directly to the double limit case when $i = i(N) \to \infty$ as $N\to \infty$. Hence, besides Lemma \ref{lem:Airy_limit}, we need to analyze the product of the large $i$ terms in \eqref{20041121}. To this end, we mimic the idea of the proof of Theorem \ref{thm:main_thm_1}.  Again, we turn to consider the logarithm of the product in \eqref{20041121} (but over the large $i$ terms only), which can be written as an integral of $1/(\reservesigma_j-x)$ against a random measure $\mu_N$ with the lower limit of the integral given by $\reservesigma_N$; see \eqref{20041101} and \eqref{eq:GUE_P_n_integral}. Specifically, the measure $\mu_N$ has the density $\phi_N$ taking $1$ on $(\reservesigma_i, \lambda_{i-1}]$ for all $i =  2, \dotsc, N$ and $0$ elsewhere. Since the domain of the integral \eqref{eq:GUE_P_n_integral}, i.e., $[\reservesigma_N, \reservesigma_L]$ contains both the edge and bulk regimes of the semicircle law, and the point process $\{\xi^{(k)}_i\}$  only approximates the matrix eigenvalues in the edge regime (with an effective extension to certain order of intermediate regime), we need to further decompose the domain $[\reservesigma_N, \reservesigma_L]$ into two parts: $[\reservesigma_N, \reservesigma_L]= [\reservesigma_N, \reservesigma_{N_0}) \cup [\reservesigma_{N_0}, \reservesigma_L]$, with $N_0:= \lfloor 2/(3\pi)N^{1/10} \rfloor$.  We will then show that under appropriate scaling, the random measure $\mu_N$ can be well approximated by $\mu$ on the domain $[\reservesigma_{N_0}, \reservesigma_L]$. Furthermore, a detailed analysis of the measure $\mu_N$ on  $ [\reservesigma_N, \reservesigma_{N_0}]$ shows that the integral $1/(\reservesigma_j-x)$ over this domain is approximately deterministic, and thus does not contribute to the randomness of the limit of $N^{1/3} \lvert x_{j1} \rvert^2$. 

Especially, our result Theorem \ref{thm:main_thm_2} and its proof show that the randomness of $N^{1/3} \lvert x_{j1} \rvert^2$ essentially  depends only on the local edge regime of the eigenvalues of $G_{\totalalpha}$ and $G_{\totalalpha}^{(N-1)}$,  and the bulk eigenvalues  contribute a deterministic factor. Similar phenomenon has also shown up in the limiting theorem of some other eigenvalue statistics  in the literature, which inspires our current work; for instance, see \cite{Landon-Sosoe19},  \cite{Wu19} for some related discussions. 

For the proof of the nondegeneracy of the distribution of $ \Xi_{j}^{(k)}(\totala;\infty)$, i.e.,  Theorem \ref{thm.nondegeneracy}, we will apply the weak convergence in Theorem \ref{thm:main_thm_2} to translate the question to $N^{1/3}|x_{j1}|^2$. An advantage of the latter is that the eigenvalue distribution admits a log-gas representation which can facilitate our analysis. More specifically, we will show that a bounded truncation (from both below and above) of $\log (N^{1/3}|x_{j1}|^2)$ has a lower bound for variance, uniformly in $N$. This will finally lead to the conclusion of Theorem \ref{thm.nondegeneracy}.

Finally, we remark here that in this paper we do not consider the deformed GOE since the corresponding results on the minor process used for GUE here is not available for GOE so far. But many discussions in this paper work for the deformed GOE as well.

 \subsection{Organization and notation}
 
The rest of the paper is organized as follows.  In Section \ref{s.pre}, we state some preliminary results which will be used in the later sections. The main results of the paper, Theorems \ref{thm:main_thm_1} and \ref{thm:main_thm_2} are proved respectively in Sections \ref{s.airy limit} and \ref{s. GUE limit}, based on the key technical estimates in Propositions \ref{prop:Airy_measure} and \ref{prop:large_small_deviation_mu^k_N}, whose proofs will be stated respectively in Sections \ref{pf.lem_Airy}  and \ref{pf.lem_GUE}.  In addition, Corollary \ref{cor:to_thm_2} is an easy consequence of Theorem \ref{thm:main_thm_2} and Remark \ref{rmk:main_thm_1_related}, and its proof will be stated at the end of Section \ref{s. GUE limit}. Section \ref{s.nondegeneracy} is then devoted to the proof of Theorem \ref{thm.nondegeneracy}. Finally, some proofs of technical lemmas are collected in Appendix \ref{sec:proofs_in_sec_2}.

Throughout the paper, we will use $\bigO(\cdot)$ and $o(\cdot)$ for the standard big-O and little-o notations. We use $C, C', C_1 $, etc. to denote positive constants (independent of $N$). For any set $\mathcal{I}$, the notation $|\mathcal{I}|$ stands for its cardinality. We use the shorthand notation $\llbracket a, b \rrbracket = [a, b] \cap \intZ$, for all $a, b \in \realR$.
 
\section{Collection of results for spiked GUE  and extended Airy process} \label{s.pre}

In this section, we first review the determinantal point process representation of the eigenvalue distribution of $G_{\totalalpha}$ and its minors, and then state some preliminary results to be used in the later sections. Some lemmas in this section are direct consequences of existing results in the literature, while for others, only the proof strategy exists in the literature. We give the proofs and/or the references of the lemmas in Appendix \ref{sec:proofs_in_sec_2}.
 
\subsection{Determinantal point process representation for eigenvalues of $G_{\totalalpha}$}

In this subsection, we re-denote the eigenvalues of $G_{\totalalpha} = G^{(N)}_{\totalalpha}$ and $G_{\totalalpha}^{(N-1)}$ by 
\begin{equation}
  \lambda_i^{(N)}:= \reservesigma_i,\quad \lambda_\ell^{(N-1)}:=\lambda_\ell, \quad \text{where} \quad i \in \llbracket 1, N \rrbracket, \quad \ell \in \llbracket 1, N - 1 \rrbracket 
\end{equation}
and further we denote by $\lambda_1^{(N-j)}> \dotsb > \lambda_{N-j}^{(N-j)}$ the ordered eigenvalues of the matrix $G_{\totalalpha}^{(N-j)}$, which is obtained from $G_{\totalalpha}$ by deleting its first $j$  rows and  columns. 

For each $j$, the eigenvalues of $G^{(N - j)}_{\totalalpha}$ form a determinantal point process. In addition, the eigenvalues of all $G^{(N - j)}_{\totalalpha}$, $j=0, \ldots, k$ together form a determinantal point process, called the \emph{GUE minor process with external source} \cite{Adler-van_Moerbeke-Wang11}. The correlation kernel of these eigenvalues is given as follows: For $j_1, j_2 \in \llbracket 0\ldots, k \rrbracket$, 
\begin{align}
  K^{j_1, j_2}_{\GUEalpha}(x, y) = {}& -\Id(j_1 > j_2) \Id(x < y)\frac{1}{2\pi \ii} \int_{\Gamma} \frac{e^{(y - x)w}}{\prod^{j_1}_{i = j_2 + 1} (w - \alpha_j)} \dd w + \K^{j_1, j_2}_{\GUEalpha}(x, y), \label{eq:kernel_GUE} \\
  \shortintertext{where}
  \K^{j_1, j_2}_{\GUEalpha}(x, y)  = {}& \frac{1}{(2\pi \ii)^2} \int_{\Sigma} \dd z \int_{\Gamma} \dd w \frac{e^{\frac{z^2}{2} - xz} z^{N - k}}{e^{\frac{w^2}{2} - yw} w^{N - k}} \frac{\prod^k_{i = j_1 + 1} (z - \alpha_i)}{\prod^k_{i = j_2 + 1} (w - \alpha_i)} \frac{1}{z - w}. \label{20041005}
\end{align}
Here $\Gamma$ is a circular contour wrapping $0$ and $\alpha_1, \dotsc, \alpha_k$ positively, and $\Sigma$ is an infinite contour from $-\ii \cdot \infty$ to $\ii \cdot \infty$, to the right of $\Gamma$; see Figure \ref{fig:Gamma_Sigma}.

The relation between the correlation kernel $K^{j_1, j_2}_{\GUEalpha}(x, y)$ and the correlation kernel $K^{k_1, k_2}_{\Airya}(x, y)$ defined in (\ref{eq:form_ext_K}) is as follows:
\begin{lem} \label{lem:Airy_limit}
  Let $\alpha_1, \dotsc, \alpha_k$ be given in Assumption \ref{main assum}. If we denote, with $j_i = k - k_i \in \llbracket 0, k \rrbracket$ ($i = 1, 2$),
  \begin{equation}
    K^{j_1, j_2}_{N, \scaled}(x, y) = N^{-\frac{1}{6}} N^{\frac{j_1 - j_2}{6}} e^{N^{\frac{1}{3}}(x - y)} K^{j_1, j_2}_{\GUEalpha}(2\sqrt{N} + N^{-\frac{1}{6}}x, 2\sqrt{N} + N^{-\frac{1}{6}}y),
  \end{equation}
  then
  \begin{enumerate}
  \item
    For all $x, y$ in a compact subset of $\realR$, uniformly 
    \begin{equation}
      \lim_{N \to \infty} K^{j_1, j_2}_{N, \scaled}(x, y) = K^{k_1, k_2}_{\Airya}(x, y).
    \end{equation}
  \item
    Let $\epsilon > \max(a_1, \dotsc, a_k)$. For any $C \in \realR$, as operators on $L^2([C, +\infty))$, the ones with kernel $e^{\epsilon(x - y)} K^{j_1, j_2}_{N, \scaled}(x, y)$ converge, in trace norm, to the one with kernel $e^{\epsilon(x - y)} K^{k_1, k_2}_{\Airya}(x, y)$. 
  \end{enumerate}
 Consequently, the above statements imply the weak convergence of the joint distribution of $\{N^{1/6}(\lambda^{(N - j)}_i - 2\sqrt{N}) \mid 0 \leq j \leq k, \, 1 \leq i \leq K \}$ to that of $\{\xi^{(k - j)}_i \mid 0 \leq j \leq k, \, 1 \leq i \leq K \}$ for any fixed positive integer $K$, where $\xi^{(k - j)}_i$'s are particles in the determinantal point process given by \eqref{eq: mixed corr}.
\end{lem}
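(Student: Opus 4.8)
Both kernels split into a single contour integral carrying an $\Id(x<y)$ factor (the ``correction term'') and a double contour integral, and I would handle these parts separately. For the correction term nothing asymptotic is needed: the integrand in \eqref{eq:kernel_GUE} is rational in $w$ with poles only at $\alpha_{j_2+1},\dots,\alpha_{j_1}$, so one contracts $\Gamma$ to small circles around these, substitutes $w=\sqrt N+N^{1/6}\omega$ (using $\alpha_i-\sqrt N=N^{1/6}a_{k-i+1}$), and checks that after multiplication by the normalizing prefactor of $K^{j_1,j_2}_{N,\scaled}$ every power of $N$, as well as the factor $e^{N^{1/3}(x-y)}$, cancels \emph{exactly}, producing the correction term of $K^{k_1,k_2}_{\Airya}$ with $k_i=k-j_i$; here one also uses $\Id(j_1>j_2)=\Id(k_1<k_2)$ and $\{\,k-i+1 : j_2<i\le j_1\,\}=\{\,k_1+1,\dots,k_2\,\}$. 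Thus the analytic content of the lemma is entirely in the convergence of the rescaled double integral $\K^{j_1,j_2}_{\GUEalpha}$ to $\K^{k_1,k_2}_{\Airya}$, a saddle-point statement largely contained in \cite{Adler-van_Moerbeke-Wang11}, which I would carry out as follows.

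Write the $z$-factor of the integrand of \eqref{20041005} as $e^{F_N(z)}\,e^{-N^{-1/6}xz}\prod_{i=j_1+1}^k(z-\alpha_i)$ with $F_N(z)=\tfrac12z^2-2\sqrt N\,z+(N-k)\log z$ ($x$ now the rescaled variable), and similarly in $w$ with $y$. The roots $\sqrt N\pm\sqrt k$ of $F_N'$ collapse, under $z=\sqrt N+N^{1/6}u$, to a single cubic (Airy-type) saddle at $u=0$, while the poles $\alpha_i$ lie at $u=a_{k-i+1}=\bigO(1)$, away from it. I would deform $\Sigma,\Gamma$ to steepest-descent contours: near $\sqrt N$, $\Sigma$ follows the image of the $\sigma$-shape (tangent directions $\pm\pi/3$) and $\Gamma$ the image of the $\gamma$-shape (tangent directions $\pm2\pi/3$, so the cluster of $\alpha_i$ lies on the enclosed side); away from $\sqrt N$, $\Gamma$ still winds once around $0$ and the $\alpha_i$, and $\Sigma$ still runs from $-\ii\infty$ to $\ii\infty$ to the right of $\Gamma$, and there $\operatorname{Re}F_N$ is strictly below its value at the saddle, so those portions are $o(1)$. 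On the local portions, substituting $z=\sqrt N+N^{1/6}u$, $w=\sqrt N+N^{1/6}v$ and Taylor-expanding $\log(\sqrt N+N^{1/6}u)$ shows the $N^{2/3}u$- and $N^{1/3}u^2$-terms cancel against the $2\sqrt N$-shift, leaving $\tfrac13u^3-xu$ plus an $N$-dependent constant whose only $x$-dependent part is $-N^{1/3}x$; this is cancelled by the analogous $w$-side constant $-N^{1/3}y$ together with the prefactor $e^{N^{1/3}(x-y)}$, while $\prod_{i=j_1+1}^k(z-\alpha_i)=N^{(k-j_1)/6}\prod_{l=1}^{k_1}(u-a_l)$, $1/(z-w)=N^{-1/6}/(u-v)$ and the Jacobian $N^{1/3}$ account for precisely the $N^{-1/6}N^{(j_1-j_2)/6}$ normalization. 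Dominated convergence — the cubic phase on the $\sigma$- and $\gamma$-contours gives uniform Gaussian tails, and the $\bigO(N^{-1/3}u^4)$ phase corrections are uniformly small on compacta in $(u,v)$ — then gives part (1), uniformly for $x,y$ in a compact set (one fixed pair of contours works for all of them).

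For part (2), the exact matching of correction terms shows $e^{\epsilon(x-y)}\bigl(K^{j_1,j_2}_{N,\scaled}-K^{k_1,k_2}_{\Airya}\bigr)$ equals $e^{\epsilon(x-y)}$ times the difference of the \emph{double-integral} parts only; and after conjugation by $e^{\epsilon(x-y)}$ with $\epsilon>\max_j a_j$ these double-integral parts decay fast enough in both variables on $[C_0,+\infty)$ to be trace class (the condition $\epsilon>\max_j a_j$ is exactly what tames the $e^{a_l y}$-type growth present when $k_1<k_2$; the bare correction terms, by contrast, are merely bounded, which is why the statement should be read as convergence of the difference to $0$ in trace norm). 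To upgrade the pointwise limit of part (1) to trace-norm convergence I would prove a uniform-in-$N$ envelope: $e^{\epsilon(x-y)}$ times the double-integral part of $K^{j_1,j_2}_{N,\scaled}(x,y)$ is bounded by $Ce^{-(x+y)/C}$ on $[C_0,+\infty)^2$. For $x,y$ up to a small power of $N$ this comes from re-running the saddle analysis with the $u$- and $v$-contours translated rightward by an amount of order $\sqrt{|x|\vee|y|\vee1}$, producing a gain $e^{-c(x^{3/2}+y^{3/2})}$; for $x$ or $y$ beyond that, a separate crude estimate of the contour integral — equivalently, edge rigidity for $G^{(N-j)}_{\totalalpha}$ (whose top eigenvalues do not exceed $2\sqrt N+N^{-1/6+\delta}$ with overwhelming probability) — shows the kernel is $N$-independently negligible there. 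Generalized dominated convergence for Hilbert--Schmidt norms, together with the usual factorization of a fast-decaying kernel into a product of Hilbert--Schmidt operators, then gives $\lVert e^{\epsilon(x-y)}(K^{j_1,j_2}_{N,\scaled}-K^{k_1,k_2}_{\Airya})\rVert_1\to0$. For the concluding assertion: the joint law of the $N^{1/6}(\lambda^{(N-j)}_i-2\sqrt N)$ is governed by the matrix kernel $(K^{j_1,j_2}_{\GUEalpha})_{0\le j_1,j_2\le k}$, and conjugating the $m$-th species by a multiplier $x\mapsto e^{c_m x}$ with $\max_j a_j<c_0<c_1<\dots<c_k$ — which leaves the point process unchanged but makes the combined matrix kernel trace class on $L^2([C_0,+\infty))\otimes\compC^{k+1}$ — turns the block-wise convergence just proved into trace-norm convergence of this matrix kernel to that of the extended Airy process; hence all its mixed Fredholm determinants converge, hence the gap probabilities on $[C_0,+\infty)$ converge for every $C_0$, and since for fixed $K$ the top $K$ particles of each species lie in $[C_0,+\infty)$ with probability $\to1$ as $C_0\to-\infty$ uniformly in $N$ (again by edge rigidity), this is exactly the stated weak convergence.

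I expect the main obstacle to be the uniform-in-$N$ decay bound underlying part (2): the pointwise saddle analysis of part (1) is essentially routine and already available, but controlling the rescaled double-integral kernel by a single $N$-independent trace-class envelope over the \emph{entire} ray $[C_0,+\infty)$ forces spatially adapted contours and the patching of the edge-scaling estimate onto a genuinely different argument in the regime where the spatial variable is a growing power of $N$ and the Airy picture degrades. The contour bookkeeping in part (1) — routing $\Gamma$ around $0$ and the cluster of $\alpha_i$ while keeping it steepest-descent near $\sqrt N$ and disjoint from $\Sigma$ — is delicate but does not present a genuine difficulty.
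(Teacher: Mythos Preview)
Your proposal is correct, and your treatment of the saddle point for part~(1) and of the exact matching of the correction terms is essentially the same as (indeed, more explicit than) what the paper does. Where you diverge is in the mechanism for part~(2). The paper, following \cite{Peche05} and \cite{Baik-Ben_Arous-Peche05}, does not work with a two-variable envelope at all: it invokes the identity $\tfrac{1}{z-w}=\int_0^\infty e^{-t(z-w)}\,\dd t$ (valid once $\Sigma$ is deformed to the right of $\Gamma$) to factor the conjugated double-integral kernel explicitly as
\[
-\int_0^\infty H_{N,j_2}(x+t)\,J_{N,j_1}(y+t)\,\dd t,
\]
where $H_{N,j_2}$ and $J_{N,j_1}$ are single contour integrals in $z$ and $w$ separately. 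One then proves \emph{single-variable} estimates of the form $|Z_{N,j_2}H_{N,j_2}(x)-H_{\infty,k_2}(x)|\le CN^{-1/3}e^{-cx}$ (and similarly for $J$), uniformly on any ray $[y_0,\infty)$; this already displays the kernel as a product of two Hilbert--Schmidt operators converging in Hilbert--Schmidt norm, so trace-norm convergence follows with no separate large-$x$ regime to patch. Your route via a uniform two-variable envelope and an unspecified ``usual factorization'' is also workable, but the explicit $\int H\cdot J$ decomposition is precisely the device that dissolves the obstacle you flagged as the main difficulty. Your side remark---that for $k_1\neq k_2$ the individual conjugated kernels are not themselves trace class, only their difference is, because the correction terms cancel exactly---is a valid subtlety that the paper leaves implicit.
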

We note that although we discussed the joint distribution of $\lambda^{(N - j)}_i$ for all $j\in\llbracket 0, k \rrbracket$, to prove Theorem \ref{thm:main_thm_2},  we only need the $j = 0, 1$ cases. Hence we recycle the notations  $\reservesigma_i$ and  $\lambda_i$ for $\lambda^{(N)}_i$ and $\lambda^{(N-1)}_i$ respectively in the sequel, for simplicity.

\subsection{Useful estimates}

By the well-known Weyl's inequality, we have that
\begin{equation} \label{eq:interlacing_GUE}
  \reservesigma_1 > \lambda_1 > \reservesigma_2 > \lambda_2 > \dotsb > \lambda_{N - 1} > \reservesigma_N, \quad \text{and more generally} \quad \lambda^{(N - j)}_1 > \lambda^{(N - j - 1)}_1 > \lambda^{(N - j)}_2 > \dotsb > \lambda^{(N - j)}_{N-j}
\end{equation}
for all $j = 0, 1, \dots, N-1$ \footnote{ Hereafter we ignore the probability $0$ event that some $\lambda^{(N - j - 1)}_i$ is identical to $\lambda^{(N - j)}_i$ or $\lambda^{(N - j)}_{i + 1}$.}. In parallel, for the particles in the extended Airy process defined in (\ref{eq: mixed corr}), we  also have the following interlacing property. 
\begin{lem} \label{lem:interlacing_Airy}
  Let $j = 1, \dotsc, k$. With probability $1$, the particles $\xi^{(j)}_i$ and $\xi^{(j - 1)}_i$ ($i = 1, 2, \dotsc$) whose joint distribution is given by \eqref{eq: mixed corr} satisfy the interlacing inequality
  \begin{equation} \label{eq:interlacing_Airy}
    +\infty > \xi^{(j)}_1 > \xi^{(j - 1)}_1 > \xi^{(j)}_2 > \xi^{(j - 1)}_2 > \dotsb.
  \end{equation}
\end{lem}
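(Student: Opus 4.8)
\textbf{Proof proposal for Lemma \ref{lem:interlacing_Airy}.}

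The plan is to deduce the interlacing \eqref{eq:interlacing_Airy} for the extended Airy process from the interlacing \eqref{eq:interlacing_GUE} for the spiked GUE minor process, using the weak convergence provided in Lemma \ref{lem:Airy_limit}. The point is that interlacing is a closed condition, so it is preserved under weak limits, while strict interlacing holds in the limit because the limiting process is simple (as recorded in the Remark following \eqref{eq: mixed corr}) and its particles are almost surely distinct. Concretely, fix $j \in \{1, \dotsc, k\}$. Since by assumption the Weyl interlacing \eqref{eq:interlacing_GUE} gives, for the matrices $G^{(N-(k-j))}_{\totalalpha}$ and $G^{(N-(k-j)-1)}_{\totalalpha}$, the deterministic (probability $1$) relations
\begin{equation*}
  \lambda^{(N-(k-j))}_1 > \lambda^{(N-(k-j)-1)}_1 > \lambda^{(N-(k-j))}_2 > \lambda^{(N-(k-j)-1)}_2 > \dotsb,
\end{equation*}
rescaling by $x \mapsto N^{1/6}(x - 2\sqrt{N})$ preserves these inequalities, so for every fixed $K$ the rescaled finite vector $(N^{1/6}(\lambda^{(N-(k-j))}_i - 2\sqrt{N}), N^{1/6}(\lambda^{(N-(k-j)-1)}_i - 2\sqrt{N}))_{1 \le i \le K}$ lies almost surely in the closed set
\begin{equation*}
  \mathcal{C}_K = \{ (s_1, \dotsc, s_K; t_1, \dotsc, t_K) : s_1 \ge t_1 \ge s_2 \ge t_2 \ge \dotsb \ge s_K \ge t_K \}.
\end{equation*}

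The key step is then to invoke the weak convergence statement at the end of Lemma \ref{lem:Airy_limit}: the joint law of $\{N^{1/6}(\lambda^{(N-(k-j))}_i - 2\sqrt{N})\}_{1 \le i \le K} \cup \{N^{1/6}(\lambda^{(N-(k-j)-1)}_i - 2\sqrt{N})\}_{1 \le i \le K}$ converges weakly to the joint law of $\{\xi^{(j)}_i\}_{1 \le i \le K} \cup \{\xi^{(j-1)}_i\}_{1 \le i \le K}$. Since $\mathcal{C}_K$ is closed and the pre-limit measures are supported on $\mathcal{C}_K$, the portmanteau theorem (closed-set form: $\limsup_N \Prob_N(\mathcal{C}_K) \le \Prob(\mathcal{C}_K)$, combined with $\Prob_N(\mathcal{C}_K) = 1$) forces the limiting vector $(\xi^{(j)}_1, \dotsc, \xi^{(j)}_K; \xi^{(j-1)}_1, \dotsc, \xi^{(j-1)}_K)$ to lie in $\mathcal{C}_K$ almost surely, i.e.\ the non-strict interlacing
\begin{equation*}
  \xi^{(j)}_1 \ge \xi^{(j-1)}_1 \ge \xi^{(j)}_2 \ge \dotsb \ge \xi^{(j)}_K \ge \xi^{(j-1)}_K
\end{equation*}
holds. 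Letting $K \to \infty$ and taking a countable union over $K$ (or equivalently over all the finitely many inequalities involved) gives non-strict interlacing of the two full species with probability $1$. To upgrade to strict inequalities, I would use that the point process formed by the two species $\{\xi^{(j)}_i\} \cup \{\xi^{(j-1)}_i\}$ is simple (again by the Remark after \eqref{eq: mixed corr}, via \cite[Remark 4]{Hough-Krishnapur-Peres-Virag06}), so with probability $1$ no two of these points coincide; combined with the non-strict chain this yields \eqref{eq:interlacing_Airy}. One also needs that $\xi^{(j)}_i \to -\infty$ as $i \to \infty$, so that the labelling by index is consistent — this follows from the existence of the rightmost particle of each species plus local finiteness, also noted in that Remark.

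The main obstacle is a bookkeeping subtlety rather than a deep one: one must be careful that the weak convergence in Lemma \ref{lem:Airy_limit} is stated for the joint distribution of finitely many particles $1 \le i \le K$ of each species indexed consistently from the right (largest particle first), and that the limit correctly matches species $j$ with species $j-1$ (i.e.\ $\lambda^{(N-(k-j))}$ corresponds to $\xi^{(k-(k-j))} = \xi^{(j)}$). Once the index matching is set up correctly, the argument is a routine application of the portmanteau theorem on the closed cone $\mathcal{C}_K$ together with simplicity of the limiting process. No saddle-point analysis or estimate is needed here; the lemma is genuinely a soft consequence of the hard convergence result already established.
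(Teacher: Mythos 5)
Your proposal is correct and follows essentially the same route as the paper's own proof: weak convergence of the finite-dimensional marginals from Lemma \ref{lem:Airy_limit} transfers the Weyl interlacing \eqref{eq:interlacing_GUE} to non-strict interlacing of the limit particles (the paper leaves the portmanteau/closed-set step implicit, which you spell out), and simplicity of the limiting determinantal point process upgrades the inequalities to strict ones. The index bookkeeping you flag ($\lambda^{(N-(k-j))} \leftrightarrow \xi^{(j)}$) is exactly the matching the paper uses.
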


Next, we list some rigidity result for the particles in the extended Airy process and also the eigenvalues of $G_{\totalalpha}$.
\begin{lem} \label{lem:non-asy_est_Airy}
  Let $\xi_j^{(k)}$, $j=1,\dotsc$ be the particles defined in \eqref{eq: mixed corr}. Then we have the following estimates. 
  \begin{enumerate}
  \item \label{enu:lem:non-asy_est_Airy:1} 
    For any fixed $j$, there exists a numerical $C > 0$, such that for all $t > 0$
    \begin{equation}
      \Prob(\xi^{(k)}_j > t) < C e^{-t/C}, \quad \Prob(\xi^{(k)}_j < -t) < C e^{-t/C}.
    \end{equation}
  \item \label{enu:lem:non-asy_est_Airy:2}
    For all $n \geq 2$, there exists a numerical $c > 0$, such that
    \begin{equation} \label{eq:ineq:xi_fluc}
      \Prob \left( \left\lvert \xi^{(k)}_n + \left( \frac{3\pi n}{2} \right)^{2/3} \right\rvert > n^{\frac{3}{5}} \right) \leq c n^{-\frac{6}{5}} \log n.
  \end{equation}
  \end{enumerate}
\end{lem}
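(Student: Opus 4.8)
The two ingredients are the almost sure interlacing of Lemma~\ref{lem:interlacing_Airy} and a handful of by-now-standard properties of the ordinary Airy point process \eqref{eq:Airy_proc}, which is exactly the species-$0$ marginal $\{\xi_i\}=\{\xi^{(0)}_i\}$ of the process \eqref{eq: mixed corr} (recall $K^{0,0}_{\Airya}=K_{\Airy}$). Iterating the interlacings $\xi^{(j)}_i>\xi^{(j-1)}_i$ and $\xi^{(j-1)}_i>\xi^{(j)}_{i+1}$ downward from $j=k$ gives, almost surely,
\begin{equation} \label{eq:plan:sandwich}
  \xi_n \;=\; \xi^{(0)}_n \;\le\; \xi^{(k)}_n \;\le\; \xi^{(0)}_{n-k} \;=\; \xi_{n-k} \quad (n>k), \qquad\text{and}\qquad \xi^{(k)}_j \;\ge\; \xi_j \ \ \text{for every } j .
\end{equation}
Hence the lower-tail bound for $\xi^{(k)}_j$ follows from $\Prob(\xi^{(k)}_j<-t)\le\Prob(\xi_j<-t)$ together with the classical Airy lower tail, while the second assertion reduces — the shift $-(3\pi(n-k)/2)^{2/3}=-(3\pi n/2)^{2/3}+\bigO(n^{-1/3})$ being negligible against $n^{3/5}$, and the finitely many $2\le n\le k$ costing nothing after enlarging $c$ — to the Airy rigidity estimate
\begin{equation} \label{eq:plan:rigidity}
  \Prob\!\left( \left| \xi_m + (3\pi m/2)^{2/3} \right| > \tfrac12 m^{3/5} \right) \;\le\; c\, m^{-6/5}\log m , \qquad m\ge 2 .
\end{equation}
The one place where \eqref{eq:plan:sandwich} gives nothing is the \emph{upper} tail of $\xi^{(k)}_j$ — no Airy particle sits above it — and there we argue directly from the kernel.

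For that upper tail, Markov's inequality applied to the count $\#\{i:\xi^{(k)}_i>t\}$, whose mean is $\int_t^\infty K^{k,k}_{\Airya}(x,x)\,\dd x$ (the diagonal of the kernel being the one-point intensity, nonnegative since the process is well defined), gives $\Prob(\xi^{(k)}_j>t)\le\int_t^\infty K^{k,k}_{\Airya}(x,x)\,\dd x$, so it suffices to show $K^{k,k}_{\Airya}(x,x)\le C e^{-x/C}$ for $x\ge0$. This we obtain from \eqref{eq:form_ext_K_tilde} (the indicator term in \eqref{eq:form_ext_K} vanishing for $m_1=m_2=k$): deform $\sigma$ into a half-plane $\{\operatorname{Re}u\ge M_1\}$ and $\gamma$ into $\{\operatorname{Re}v\le -M_0\}$, with $M_0,M_1$ fixed and larger than $\max_j|a_j|$ (the cubic terms keep the tail integrals convergent); on these contours $|e^{u^3/3-xu}|\le|e^{u^3/3}|e^{-M_1 x}$, $|e^{-(v^3/3-xv)}|\le|e^{-v^3/3}|e^{-M_0 x}$, $|u-v|\ge M_0+M_1$, and the rational factors are bounded, so the double integral is $\bigO(e^{-M_1 x})$. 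Pushing $\gamma$ leftward past the fixed poles $v=a_1,\dots,a_k$ picks up residues; the residue at $v=a_j$ carries a factor $e^{x a_j}$, but the accompanying $u$-integral over $\sigma$ is again $\bigO(e^{-M_1 x})$ (the apparent pole at $u=a_j$ cancels against the numerator), so each residue term is $\bigO(e^{(a_j-M_1)x})=\bigO(e^{-x})$ once $M_1>\max_j a_j$. Thus $K^{k,k}_{\Airya}(x,x)\le Ce^{-cx}$ for $x$ large, and continuity covers bounded $x$. (The steepest-descent choice $M_0=M_1=\sqrt x$ would even give $e^{-\frac43 x^{3/2}}$, but plain exponential decay is all we need.) For the lower tail, $\{\xi_j<-t\}=\{\#\{i:\xi_i>-t\}\le j-1\}$; since $K_{\Airy}$ is a Hermitian projection kernel the count is a sum of independent Bernoulli variables with mean $\int_{-t}^\infty K_{\Airy}(x,x)\,\dd x\ge c\, t^{3/2}$ (for $t$ large), so a Chernoff bound yields $\Prob(\xi_j<-t)\le e^{-c' t^{3/2}}\le Ce^{-t/C}$ (this classical estimate may equally be cited).

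For \eqref{eq:plan:rigidity}, put $\gamma_m:=-(3\pi m/2)^{2/3}$ and $N(s):=\#\{i:\xi_i>-s\}$, so that $\{\xi_m<\gamma_m-\tfrac12 m^{3/5}\}=\{N(s_+)<m\}$ and $\{\xi_m>\gamma_m+\tfrac12 m^{3/5}\}=\{N(s_-)\ge m\}$ with $s_\pm=(3\pi m/2)^{2/3}\pm\tfrac12 m^{3/5}$. The expected particle count $\E N(s)=\int_{-s}^\infty K_{\Airy}(x,x)\,\dd x$ equals $\tfrac{2}{3\pi}s^{3/2}$ up to lower-order terms, and $(\tfrac12 m^{3/5})/(3\pi m/2)^{2/3}\asymp m^{-1/15}$, so $\E N(s_\pm)=m\pm c_1 m^{14/15}(1+o(1))$ with $c_1>0$; thus each of the two events forces a deviation $|N(s_\pm)-\E N(s_\pm)|\gg m^{14/15}$. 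Invoking the standard variance bound $\var N(s)\le C\log s$ for the Airy point process ($\ll m^{2/3}\log m$ here), Chebyshev's inequality gives $\Prob\big(|N(s_\pm)-\E N(s_\pm)|\gg m^{14/15}\big)\le C m^{-28/15}\log m\le c\, m^{-6/5}\log m$, which is \eqref{eq:plan:rigidity}; feeding it, at indices $n$ and $n-k$, into \eqref{eq:plan:sandwich} and absorbing the $\bigO(n^{-1/3})$ shift completes the second assertion. (One could instead use the Bernoulli-sum/Chernoff bound for $N(s)$, which even gives $e^{-c m^{13/15}}$, so the stated $m^{-6/5}\log m$ is generously slack; the $\log m$ in it merely records the logarithmic variance of $N(s)$.)

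There is no single deep step here — the lemma is an assembly of standard inputs — but two points need genuine if routine care: the contour estimate $K^{k,k}_{\Airya}(x,x)\le Ce^{-x/C}$, where the only subtlety is tracking the residues at $v=a_j$ and verifying their $e^{x a_j}$ growth is killed by the $e^{-M_1 x}$ coming from the companion $u$-integral; and importing a strong enough concentration bound for $N(s)$ — the crude $\var N(s)\le\E N(s)\asymp s^{3/2}$ is \emph{not} sufficient, whereas $\var N(s)\le C\log s$ (classical, via the sine-kernel asymptotics of $K_{\Airy}$ in the bulk) is, with a wide margin. An alternative to \eqref{eq:plan:rigidity} is to transfer GUE eigenvalue rigidity through Lemma~\ref{lem:Airy_limit}, but that needs extra uniformity and appears no simpler.
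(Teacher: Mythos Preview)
Your proposal is correct and follows the same overall architecture as the paper's proof: the interlacing sandwich \eqref{eq:plan:sandwich} reduces the lower tail and the rigidity to the species-$0$ Airy process, while the upper tail is handled directly via a first-moment bound on the kernel. The rigidity argument is essentially identical to the paper's (both invoke Soshnikov's mean and $\log$-variance estimates for the Airy counting function and finish by Chebyshev).

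The two places where you diverge are both simplifications. For the upper tail, the paper carries out a genuine saddle-point analysis with contours through $\pm\sqrt{t}$ and obtains the sharp $\bigO(t^{-3/2}e^{-\frac{4}{3}t^{3/2}})$; you instead shift $\sigma,\gamma$ to fixed contours and track the residues at $v=a_j$, obtaining only $e^{-ct}$ --- cruder but entirely adequate for the statement, and with less work. For the lower tail, the paper goes through the Tracy--Widom machinery, expressing $\Prob(\xi^{(0)}_j<-t)$ via the generating function $D(-t,\lambda)$ and the Ablowitz--Segur Painlev\'e~II solution, then citing its asymptotics; your route --- $K_{\Airy}$ is a Hermitian projection, so the count on $(-t,\infty)$ is a sum of independent Bernoullis with mean $\asymp t^{3/2}$, and Chernoff finishes --- is genuinely more elementary and avoids Painlev\'e entirely. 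Both approaches give bounds far stronger than the stated $Ce^{-t/C}$.
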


\begin{lem} \label{lem:non-asy_est_GUE}
  Suppose $N > k$ is large enough, then:
  \begin{enumerate}
  \item \label{enu:lem:non-asy_est_GUE:1}
    For $j \in  \llbracket1, k \rrbracket$, there exists a numerical $C > 0$, such that
    \begin{align}
      \Prob(\reservesigma_j > 2\sqrt{N} + t N^{-\frac{1}{6}}) < {}& C e^{-t/C}, & \Prob(\reservesigma_N < -2\sqrt{N} - t N^{-\frac{1}{6}}) < {}& C e^{-t/C}, && \text{for all $t > 0$}, \label{eq:largest_right_tail} \\
      \Prob(\reservesigma_j < 2\sqrt{N} - t N^{-\frac{1}{6}}) < {}& C e^{-t/C}, & \Prob(\reservesigma_N > -2\sqrt{N} + t N^{-\frac{1}{6}}) < {}& C e^{-t/C}, && \text{for all $2 \leq t \leq 2N^{2/3}$}. \label{eq:largest_left_tail}
    \end{align}
  \item \label{enu:lem:non-asy_est_GUE:2}
    For any positive constant $C>0$, there exists a numerical $c > 0$, such that for all $2 \leq n \leq CN^{1/10}$,
    \begin{equation} \label{eq:ineq:xi_fluc_GUE}
      \Prob \left( \left\lvert N^{\frac{1}{6}} (\reservesigma_n - 2\sqrt{N}) + \left( \frac{3\pi n}{2} \right)^{2/3} \right\rvert > n^{\frac{3}{5}} \right) \leq c n^{-\frac{6}{5}} \log n.
    \end{equation}
  \item \label{enu:lem:non-asy_est_GUE:3}
    Given any (small) $c \in (0, 1)$, for all $i\in \llbracket 1, (1 - c)N \rrbracket $,  for any (small) constant $\varepsilon>0$ and (large) constant $D>0$ 
    \begin{equation} \label{eq:rigidity_Yau}
      \Prob \left( \lvert \reservesigma_i - \Upsilon_i \rvert \geq N^{-\frac{1}{6} + \varepsilon} i^{-\frac{1}{3}} \right) \leq N^{-D}.
    \end{equation}
    Here $\Upsilon_i$ is the scaled quantile of semicircle law defined by 
    \begin{equation} \label{eq:defn_Upsilon}
      \int^{2}_{\Upsilon_i/\sqrt{N}} \rho_{sc}(x) \dd  x= \frac{N-i+\frac12}{N}, \quad \rho_{sc}(x)=\frac{1}{2\pi}\sqrt{(4-x^2)_+}. 
    \end{equation}
  \end{enumerate}
\end{lem}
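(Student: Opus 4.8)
\textbf{Proof proposal for Lemma \ref{lem:non-asy_est_GUE}.}

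The plan is to treat the three items separately, each by transferring a known estimate for the extended Airy process (Lemma \ref{lem:non-asy_est_Airy}) or for the standard GUE to the spiked model $G_{\totalalpha}$, using the contour-integral representations in \eqref{eq:kernel_GUE}--\eqref{20041005} together with the scaling limit in Lemma \ref{lem:Airy_limit}. The key point throughout is that all the relevant probabilities are gap probabilities of a determinantal point process, hence Fredholm determinants of explicit kernels, so pointwise/trace-norm kernel bounds translate directly into the claimed tail bounds.

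For item \ref{enu:lem:non-asy_est_GUE:1}, I would argue as follows. The upper tail $\Prob(\reservesigma_j > 2\sqrt N + tN^{-1/6})$ and $\Prob(\reservesigma_N < -2\sqrt N - tN^{-1/6})$ are gap probabilities on a half-line; using the trace-norm convergence in part (2) of Lemma \ref{lem:Airy_limit} (taken with $j_1 = j_2 = 0$, so the limiting kernel is the genuine Airy kernel) I would get, for $N$ large, a uniform bound by the corresponding Airy tail, for which part \ref{enu:lem:non-asy_est_Airy:1} of Lemma \ref{lem:non-asy_est_Airy} gives $Ce^{-t/C}$ for $t\le N^{\varepsilon_0}$; for the remaining range $t > N^{\varepsilon_0}$ one needs a crude deterministic-type bound, obtained directly by estimating the kernel $\K^{0,0}_{\GUEalpha}$ on the deformed contours (steepest descent through the saddle at $\pm\sqrt N$), which decays super-exponentially in $t$ and so is controlled by $Ce^{-t/C}$ with room to spare. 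The left-tail estimate \eqref{eq:largest_left_tail}, i.e.\ $\Prob(\reservesigma_j < 2\sqrt N - tN^{-1/6})$ for $2\le t\le 2N^{2/3}$, is a large-deviation-type lower tail; here I would use that this event forces at least $j$ eigenvalues of $G_{\totalalpha}$ below $2\sqrt N - tN^{-1/6}$ and then invoke the standard rigidity bound for GUE (perturbed by the rank-$k$ deformation, whose effect on the bulk is negligible by eigenvalue interlacing \eqref{eq:interlacing_GUE} applied $k$ times) — concretely, a one-point function bound $\int K^{0,0}_{\GUEalpha}$ over the relevant window combined with a union bound, which is where the constraint $t \le 2N^{2/3}$ enters (it keeps the cut-off point inside the semicircle support away from $-2\sqrt N$). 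The symmetric statements for $\reservesigma_N$ follow from the $x\mapsto -x$ symmetry of GUE (the deformation only shifts a bounded-rank correction, irrelevant for the smallest eigenvalue's tail up to adjusting $C$).

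For item \ref{enu:lem:non-asy_est_GUE:2}, the goal is a fluctuation bound for $\reservesigma_n$ around the Airy-quantile $-(3\pi n/2)^{2/3}$, valid in the intermediate-index regime $2\le n\le CN^{1/10}$. I would mirror the proof of part \ref{enu:lem:non-asy_est_Airy:2} of Lemma \ref{lem:non-asy_est_Airy}: write $\Prob(N^{1/6}(\reservesigma_n - 2\sqrt N) > -(3\pi n/2)^{2/3} + n^{3/5})$ as a probability that a half-line contains fewer than $n$ particles and bound it via a Chebyshev/variance estimate for the counting function $\mathcal N_N([s,\infty))$ of the rescaled eigenvalues, using the formulas $\E \mathcal N_N = \int_s^\infty K^{0,0}_{N,\scaled}(x,x)\,\dd x$ and $\var \mathcal N_N = \int_s^\infty\!\!\int_{-\infty}^s |K^{0,0}_{N,\scaled}(x,y)|^2\,\dd x\,\dd y$; the analogous estimate for the counting of the other end is symmetric. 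The diagonal kernel $K^{0,0}_{N,\scaled}(x,x)$ behaves like the Airy density $\sim\frac{1}{2\pi}\sqrt{-x}$ in this window, giving $\E\mathcal N_N([s,\infty)) \approx \frac{2}{3\pi}(-s)^{3/2}$, hence matching $n$ when $s \approx -(3\pi n/2)^{2/3}$; the variance is $\bigO(\log(-s)) = \bigO(\log n)$ by the usual off-diagonal kernel decay; Chebyshev then yields the bound $cn^{-6/5}\log n$ after choosing the displacement $n^{3/5}$ (note $n^{3/5}$ is much larger than the $\bigO(n^{1/3})$ spacing scale, which is what makes the counting argument effective). The restriction $n\le CN^{1/10}$ is exactly what guarantees the rescaled point $-(3\pi n/2)^{2/3}$ stays in the region where $K^{0,0}_{N,\scaled}$ is uniformly close to the Airy density (the edge scaling is valid up to $n \ll N^{something}$; the conservative $N^{1/10}$ is convenient and is the same threshold used to define $N_0$ in the proof strategy).

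For item \ref{enu:lem:non-asy_est_GUE:3}, the claim is the (essentially optimal) rigidity estimate for eigenvalues of $G_{\totalalpha}$ in the bulk and up to index $(1-c)N$. This is a standard consequence of the rigidity theory for Wigner/GUE matrices: the finite-rank deformation $\sum_{i=1}^k \alpha_i \vec e_i\vec e_i^*$ shifts each eigenvalue by an amount that, by the interlacing \eqref{eq:interlacing_GUE} (applied $k$ times), is bounded by the spacing of the $k$-th neighbour, i.e.\ by $\bigO(N^{-1/6}i^{-1/3})$ in the relevant window — the same order as the claimed error — so it suffices to have the rigidity bound \eqref{eq:rigidity_Yau} for the undeformed GUE, which is the well-known result of Erd\H{o}s--Yau--Yin (for GUE one even has the sharper Gaussian-type bounds). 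I would cite this and then note that the deformation is absorbed into the constants.

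\textbf{Main obstacle.} The routine parts are items \ref{enu:lem:non-asy_est_GUE:1} and \ref{enu:lem:non-asy_est_GUE:3}. The genuinely delicate step is item \ref{enu:lem:non-asy_est_GUE:2}: one must control the one-point and two-point functions of the rescaled spiked GUE kernel \emph{uniformly} for arguments ranging over an unbounded window $[-(3\pi n/2)^{2/3} - n^{3/5}, \,0]$ with $n$ up to $N^{1/10}$, which is well outside the fixed-compact-set regime covered by Lemma \ref{lem:Airy_limit}(1). This requires a quantitative steepest-descent analysis of the double contour integral \eqref{20041005} — locating the saddle points of $z^2/2 - xz + (N-k)\log z$ as $x$ moves into the intermediate regime, deforming $\Gamma$ and $\Sigma$ through them, and tracking the error terms with explicit $N$- and $x$-dependence — together with a matching analysis of the Airy density asymptotics $K_{\Airy}(x,x) = \frac{\sqrt{-x}}{2\pi}(1 + \bigO(|x|^{-3/2}))$ as $x\to-\infty$. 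The spike factors $\prod(z-\alpha_i)/\prod(w-\alpha_i)$ with $\alpha_i = \sqrt N + \bigO(N^{1/6})$ sit near the saddle and must be shown to contribute only lower-order corrections to both the density and the variance in this window; verifying this uniformly is the crux of the argument.
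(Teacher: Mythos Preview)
Your outline for the right tail in item \ref{enu:lem:non-asy_est_GUE:1} and for item \ref{enu:lem:non-asy_est_GUE:3} matches the paper's approach (for item \ref{enu:lem:non-asy_est_GUE:3} the paper also just cites interlacing plus Erd\H{o}s--Yau--Yin). Two places, however, diverge from the paper in ways worth flagging.

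\textbf{Left tail in item \ref{enu:lem:non-asy_est_GUE:1}.} Your proposed mechanism---``this event forces at least $j$ eigenvalues below the threshold, then a one-point function bound $\int K^{0,0}_{\GUEalpha}$ plus union bound''---does not produce the exponential estimate $Ce^{-t/C}$. The event $\{\reservesigma_j < 2\sqrt N - tN^{-1/6}\}$ says that \emph{fewer than $j$} eigenvalues lie in the interval $(2\sqrt N - tN^{-1/6},\infty)$, where one expects order $t^{3/2}$ of them; this is a lower-tail large deviation for a counting statistic, and neither the one-point function nor a Chebyshev bound (which would need a variance estimate and would only give polynomial decay) handles it. The paper instead uses interlacing $k$ times to replace $\reservesigma_j$ by the $j$-th largest eigenvalue of the \emph{unspiked} $(N-k)$-dimensional GUE, and then bounds $\Prob(\lambda^{(N-k)}_j < 2\sqrt{N-k} - t(N-k)^{-1/6})$ via the Fredholm determinant $\det(\Idmatrix - \lambda K)$, following Ledoux and Widom: one writes $\sum_{n<j} E(n;\cdot) \le (1-\lambda)^{1-j}\det(\Idmatrix-\lambda K)$ and then bounds the determinant by $\exp(-\lambda N\mu^N(\cdot))$. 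This is the step your proposal is missing.

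\textbf{Item \ref{enu:lem:non-asy_est_GUE:2}.} What you label the ``main obstacle''---a uniform steepest-descent analysis of the spiked kernel $K^{0,0}_{N,\scaled}$ in the intermediate window, tracking the spike factors $\prod(z-\alpha_i)/\prod(w-\alpha_i)$---is in fact bypassed entirely by the paper. Just as in the Airy case (part \ref{enu:lem:non-asy_est_Airy:2} of Lemma \ref{lem:non-asy_est_Airy}), interlacing sandwiches $\reservesigma_n$ between $\lambda^{(N-k)}_{n-k}$ and $\lambda^{(N-k)}_n$, so the rigidity of $\reservesigma_n$ reduces to the mean and variance of the counting function of the \emph{pure} $(N-k)$-dimensional GUE in a half-line, for which Gustavsson's estimates $\E v_1^{(n)}(T) = \tfrac{2}{3\pi}T^{3/2} + \bigO(1)$ and $\var v_1^{(n)}(T) = \bigO(\log T)$ (valid uniformly for $T_0 \le T \ll n$) are available off the shelf. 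No analysis of the spiked kernel in the intermediate regime is needed; the restriction $n\le CN^{1/10}$ simply keeps $T$ in the range where Gustavsson applies. Your direct approach would also work in principle, but it is substantially more laborious and unnecessary here.
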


All the lemmas stated in this section are proved in Appendix \ref{sec:proofs_in_sec_2}.

\section{Existence of the limit} \label{s.airy limit}

In this section, we prove Theorem \ref{thm:main_thm_1}. Recall \eqref{20040410}. For any $1 \leq j \leq k < m < n$, we set the quantity 
\begin{equation} \label{eq:tilde_P_Airya}
  \Xi_{j}^{(k)}(\totala; n) = n^{\frac{1}{3}} \prod_{i=1}^{j-1}\frac{\xi^{(k)}_j - \xi^{(k - 1)}_i}{\xi^{(k)}_j - \xi^{(k)}_i} \prod^m_{i = j + 1} \frac{\xi^{(k)}_j - \xi^{(k - 1)}_{i - 1}}{\xi^{(k)}_j - \xi^{(k)}_i} \mathcal{A}^{(k)}_{j, m}(\totala; n), \quad \text{where} \quad \mathcal{A}^{(k)}_{j, m}(\totala; n) := \prod^n_{i = m + 1} \frac{\xi^{(k)}_j - \xi^{(k - 1)}_{i - 1}}{\xi^{(k)}_j - \xi^{(k)}_i}.
\end{equation}
In order to prove Theorem \ref{thm:main_thm_1}, it suffices to show that for any  given $m > k$, $\log \mathcal{A}^{(k)}_{j, m}(\totala; n)+\frac{1}{3} \log n$ converges almost surely as $n\to \infty$, or equivalently, by Cauchy's criterion,
\begin{equation} \label{eq:technical_main_thm_1}
  \limsup_{m \to \infty} \sup_{n > m}  \left\lvert \log \mathcal{A}^{(k)}_{j, m}(\totala; n) + \frac{1}{3} \log \frac{n}{m} \right\rvert = 0, \quad \as. 
\end{equation}
In order to study 
\begin{equation} \label{eq:log_A_expr}
  \log \mathcal{A}_{j,m}^{(k)}(\totala;n)  = \sum^n_{i = m + 1}\Big( \log(\xi^{(k)}_j - \xi^{(k - 1)}_{i - 1}) - \log(\xi^{(k)}_j - \xi^{(k)}_i)\Big), 
\end{equation}
we define a random measure $\mu = \mu^{(k)}$ on $\realR$ that is absolutely continuous with respect to the Lebesgue measure, and is given by a random density function $\phi(x)$ 
\begin{equation} \label{20041001}
  \phi(x) :=
  \begin{cases}
    1, & \xi^{(k)}_i < x \leq \xi^{(k - 1)}_{i - 1} \text{ for all } i > 1, \\ 
    0, & \text{otherwise}.
  \end{cases}
\end{equation}
As we mentioned earlier in Section \ref{subsec:proof_strategy}, the measure $\mu$ describes the interlacing of two species $\xi^{(k)}_i$'s and $\xi^{(k-1)}_i$'s quantitatively. Since $\xi^{(k)}_1 < +\infty$ almost surely, we have that
\begin{equation} \label{eq:defn_M(x)}
  M(x) := \mu((x, +\infty)) = \int^{\infty}_x \phi(t) \dd t
\end{equation}
is a well defined random, continuous and piecewise linear function of $x \in \realR$. 
Then we write
\begin{equation} \label{19100302}
  \log \mathcal{A}_{j,m}^{(k)}(\totala;n) = \int^{\xi^{(k - 1)}_m}_{\xi^{(k)}_n} \frac{-1}{\xi^{(k)}_j - x} \dd \mu(x) = \int^{\xi^{(k - 1)}_m}_{\xi^{(k)}_n} \frac{1}{\xi^{(k)}_j - x} \dd M(x)=\int^{\xi^{(k)}_m}_{\xi^{(k)}_n} \frac{1}{\xi^{(k)}_j - x} \dd M(x),
\end{equation}
where in the last step we used the fact that $\dd M(x)=0$ on $(\xi^{(k - 1)}_m, \xi^{(k)}_m]$ by definition. 
We also define the deterministic function
\begin{equation} \label{eq:defn_F(x)}
  F(x) := \E M(x).
\end{equation}
It is clear that as $x \to +\infty$, $M(x) \to 0$ almost surely, and $F(x) \to 0$. Furthermore, since it is clear that $\xi^{(k)}_n\to -\infty$ almost surely as $n\to \infty$ (c.f. (\ref{eq:ineq:xi_fluc})), in light of  (\ref{19100302}), the key to show the convergence of $\log \mathcal{A}_{j,m}^{(k)}(\totala;n)$ is the asymptotic behaviour of the measure $M(x)$ when $x\to -\infty$. We have the following estimate of $F(x)$ and $M(x)$, whose proof will be given in Section \ref{pf.lem_Airy}.
\begin{pro} \label{prop:Airy_measure}
  As the negative parameter $x \to -\infty$, we have
  \begin{equation} \label{eq:mean_variance_measure_Airy}
    F(x) = \frac{-x}{2} + \bigO(\lvert x \rvert^{\frac{1}{2}}), \quad \var M(x) = \E M^2(x) - F^2(x) = \frac{2}{\pi} \sqrt{-x} + \bigO(\lvert x \rvert^{\frac{1}{4}}).
  \end{equation}
\end{pro}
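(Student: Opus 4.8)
The plan is to reduce Proposition~\ref{prop:Airy_measure} to the asymptotics, as $x\to-\infty$, of the mean and variance of a random variable $T(x)$ that is a \emph{difference of two linear statistics}, and then to extract those from exact determinantal formulas via steepest descent. Writing $g_x(t):=(t-x)_+$ and using the interlacing $\xi^{(k)}_1>\xi^{(k-1)}_1>\xi^{(k)}_2>\cdots$ of Lemma~\ref{lem:interlacing_Airy}, one checks that for a.e.\ $t$ the density $\phi$ of \eqref{20041001} equals $\Id\bigl(t\le\xi^{(k)}_1\bigr)-\bigl(\#\{i:\xi^{(k)}_i>t\}-\#\{i:\xi^{(k-1)}_i>t\}\bigr)$, the parenthesized count taking values in $\{0,1\}$. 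Integrating from $x$ to $+\infty$ gives, with all sums finite a.s.,
\[
  M(x)=(\xi^{(k)}_1-x)_+ + T(x),\qquad T(x):=\sum_{i\ge1}g_x(\xi^{(k-1)}_i)-\sum_{i\ge1}g_x(\xi^{(k)}_i).
\]
By the exponential tail bound of Lemma~\ref{lem:non-asy_est_Airy}, for $x\to-\infty$ the term $(\xi^{(k)}_1-x)_+$ equals $\xi^{(k)}_1-x$ off an event of probability $\bigO(e^{-c|x|})$, so it contributes $-x+\bigO(1)$ to the mean and $\bigO(1)$ to the variance, and its covariance with $T(x)$ is $\bigO\bigl((\var T(x))^{1/2}\bigr)=\bigO(|x|^{1/4})$ by Cauchy--Schwarz. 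Hence it suffices to prove $\E T(x)=\tfrac{x}{2}+\bigO(1)$ and $\var T(x)=\tfrac{2}{\pi}\sqrt{-x}+\bigO(|x|^{1/4})$.

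Since $\{\xi^{(k)}_i\}\cup\{\xi^{(k-1)}_i\}$ is a determinantal point process with the block kernel $\bigl(K^{m_1,m_2}_{\Airya}\bigr)_{m_1,m_2\in\{k-1,k\}}$ by \eqref{eq: mixed corr}, the standard linear-statistics identities give exact formulas. With $\Delta(t):=K^{k,k}_{\Airya}(t,t)-K^{k-1,k-1}_{\Airya}(t,t)$ one has $\E T(x)=\int_x^\infty(x-t)\,\Delta(t)\,\dd t$, while $\var T(x)$ is a ``three-part'' expression built from the diagonal term $\int_x^\infty(x-t)^2\bigl(K^{k,k}_{\Airya}(t,t)+K^{k-1,k-1}_{\Airya}(t,t)\bigr)\dd t$, the two double integrals $\iint_{s,t>x}(x-s)(x-t)\,K^{m,m}_{\Airya}(s,t)K^{m,m}_{\Airya}(t,s)\,\dd s\,\dd t$ ($m\in\{k-1,k\}$, sign $-$), and the cross double integral involving $K^{k-1,k}_{\Airya}$ and $K^{k,k-1}_{\Airya}$ (sign $+2$). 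Into each I would substitute the double-contour representation \eqref{eq:form_ext_K_tilde} of $\K^{m_1,m_2}_{\Airya}=K^{m_1,m_2}_{\Airya}$ (the indicator term in \eqref{eq:form_ext_K} is absent when $m_1=m_2$, and for $K^{k-1,k}_{\Airya}$, where it is present, its contribution to the double integral is treated as a separate, lower-order piece). All $s,t$-dependence then sits in exponentials $e^{-s(u-v)}$, so the $s$- and $t$-integrals over $(x,\infty)$ against the polynomial weights can be done in closed form on the original contours (where $\sigma$ lies to the right of $\gamma$, so $u-v$ has positive real part), producing explicit rational factors such as $-e^{-x(u-v)}/(u-v)^2$. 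Moreover in $\Delta(t)$ the two kernels differ in one factor and
\[
  \frac{\prod_{j=1}^{k}(u-a_j)}{\prod_{j=1}^{k}(v-a_j)}-\frac{\prod_{j=1}^{k-1}(u-a_j)}{\prod_{j=1}^{k-1}(v-a_j)}=\frac{\prod_{j=1}^{k-1}(u-a_j)}{\prod_{j=1}^{k-1}(v-a_j)}\cdot\frac{u-v}{v-a_k},
\]
so the singular factor $1/(u-v)$ cancels. After these reductions $\E T(x)$ becomes a double contour integral over a $\sigma/\gamma$-pair (the contour $X$ of Figure~\ref{fig:X}) and $\var T(x)$ a four-fold one over $\doubleX$ (Figure~\ref{fig:double_X}).

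The final step is the saddle-point analysis. Rescaling $u,v=\sqrt{-x}\,\hat u,\sqrt{-x}\,\hat v$ (and the variance variables likewise) turns the phases into $(-x)^{3/2}\bigl(\tfrac{\hat u^3}{3}+\hat u\bigr)$, whose steepest-descent contours coalesce at the saddles $\hat u=\pm\ii$, where the combined phase vanishes; deforming to the standardized shapes $\gamma_{\standard},\sigma_{\standard}$ of Figure~\ref{fig:gamma_sigma_standard} and localizing near the saddles reduces everything to Gaussian-type (and pinch) integrals. The parameters $a_1,\dots,a_k$ enter only through factors tending to $1$ with $\bigO(1/\sqrt{-x})$ corrections, which is why they drop out of the leading order. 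For the mean this yields $\int_{\realR}\Delta(t)\,\dd t=\tfrac12$ and enough decay of $\int_t^\infty\Delta$ to $\tfrac12$ as $t\to-\infty$ to get $\E T(x)=\tfrac{x}{2}+\bigO(1)$ — matching the heuristic that $\xi^{(k)}_i$ sits asymptotically midway in $(\xi^{(k-1)}_i,\xi^{(k-1)}_{i-1})$ — and for the variance it produces the four-fold integral asymptotics $\tfrac{2}{\pi}\sqrt{-x}+\bigO(|x|^{1/4})$.

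I expect the variance asymptotics in this last step to be the main obstacle. Since $T(x)$ is a difference of two linear statistics that are very strongly correlated through the interlacing, the naive leading terms of $\var\sum g_x(\xi^{(k)}_i)$ and $\var\sum g_x(\xi^{(k-1)}_i)$ cancel, so the surviving $\tfrac{2}{\pi}\sqrt{-x}$ term, with its exact constant, must be pulled out of the \emph{subleading} behavior of a four-fold contour integral. This requires a careful simultaneous deformation of the two pairs of contours through the coalescing saddles — keeping them non-intersecting, controlling the pinching at $u=v$ and the genuinely $x$-dependent contributions — together with delicate bookkeeping of the cross terms; it is the extended-Airy analogue of the cancellation phenomenon analyzed in \cite{Erdos-Schroder18}. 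The accompanying error analysis (tails of the $s,t$-integrals away from the saddles, the extra indicator piece of $K^{k-1,k}_{\Airya}$, and oscillatory-integral estimates converting the raw saddle output into the stated $\bigO$-terms) is routine but not automatic.
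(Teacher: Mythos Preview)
Your proposal is correct and follows the same route as the paper: your $T(x)$ is exactly the paper's $S_x+xN_x$ (identity \eqref{eq:mu_S_N_Airy}), and the contour-integral representations together with the saddle-point analysis at $\pm\sqrt{-x}\,\ii$ on the contours $X$ and $\doubleX$ are precisely what the paper carries out in Section~\ref{pf.lem_Airy}.

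One correction on the variance: the indicator piece of $K^{k-1,k}_{\Airya}$ is \emph{not} lower order. In the paper's organization it is grouped with the diagonal term (this pairing cancels the $(u-v)^{-4}$ singularities down to $(u-v)^{-2}$, see \eqref{eq:part_1_var_trans}--\eqref{eq:part_2_var_trans}), and the resulting two-fold integral contributes $\tfrac{4}{\pi}\sqrt{-x}$ at leading order; the remaining genuine four-fold integral \eqref{eq:four_contour_Airy} then contributes $-\tfrac{2}{\pi}\sqrt{-x}$ after the long cascade of residue deformations (Steps I--V). Both pieces are leading order and only their sum gives the stated $\tfrac{2}{\pi}\sqrt{-x}$, so you should not expect the clean decoupling you describe.
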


Note that \eqref{eq:technical_main_thm_1} is equivalent to
\begin{equation} \label{080701}
  \lim_{m \to \infty} \sup_{n_2 > n_1 > m} D_{n_1, n_2} = 0, \quad \as, \quad \text{where} \quad D_{n_1, n_2} = \left\lvert \int^{\xi^{(k)}_{n_1}}_{\xi^{(k)}_{n_2}} \frac{1}{\xi^{(k)}_j - x} \dd M(x) + \frac{1}{3} \log \frac{n_2}{n_1} \right\rvert.
\end{equation}
Or else, we can use $F(x)$ to rewrite \eqref{080701} as
\begin{equation}
  \lim_{m \to \infty} \sup_{n_2 > n_1 > m} \tilde{D}_{n_1, n_2} = 0, \quad \as, \quad \text{where} \quad \tilde{D}_{n_1, n_2} = \left\lvert \int^{\xi^{(k)}_{n_1}}_{\xi^{(k)}_{n_2}} \frac{1}{\xi^{(k)}_j - x} \dd M(x) - \int^{-\left( 3\pi n_1/2 \right)^{2/3}}_{-\left( 3\pi n_2/2 \right)^{2/3}} \frac{1}{0 - x} \dd F(x) \right\rvert,
\end{equation}
in light of the first identity in \eqref{eq:mean_variance_measure_Airy}. It is equivalent to show: for any $\epsilon_1, \epsilon_2 > 0$, there exists $m_{\epsilon_1, \epsilon_2}$, such that for all $m > m_{\epsilon_1, \epsilon_2}$, $\Prob \left( \sup_{n_2 > n_1 > m} \tilde{D}_{n_1, n_2} > \epsilon_1 \right) < \epsilon_2$. Furthermore, we see that it suffices to show, under the same assumption,
\begin{equation} \label{eq:alt_technical_main_thm_1}
  \Prob \left( \sup_{n > m} \tilde{D}_{m, n} > \epsilon_1 \right) = \Prob \left( \sup_{n > m} \left\lvert \int^{\xi^{(k)}_m}_{\xi^{(k)}_n} \frac{1}{\xi^{(k)}_j - x} \dd M(x) - \int^{-\left( 3\pi m/2 \right)^{2/3}}_{-\left( 3\pi n/2 \right)^{2/3}} \frac{1}{0 - x} \dd F(x) \right\rvert > \epsilon_1 \right) < \epsilon_2.
\end{equation}
Here we note that by Lemma \ref{lem:non-asy_est_Airy}, the constants $0$, $-(3\pi m/2)^{2/3}$ and $-(3\pi n/2)^{2/3}$ approximate the values of $\xi^{(k)}_j$, $\xi^{(k)}_m$ and $\xi^{(k)}_n$ respectively.

Therefore, in order to prove Theorem \ref{thm:main_thm_1}, it suffices to show  \eqref{eq:alt_technical_main_thm_1} in the sequel.
\begin{rmk} \label{rmk:Airy_ingredients}
  Here we remark that the proof of \eqref{eq:alt_technical_main_thm_1} relies only on the following three ingredients: 
  \begin{enumerate*}[label=(\roman*)]
  \item 
    Proposition \ref{prop:Airy_measure} on the properties of $F(x)$ and $M(x)$,
  \item
    Lemma \ref{lem:non-asy_est_Airy} on the fluctuation (rigidity) of $\xi^{(k)}_j$ and $\xi^{(k)}_n$, and
  \item
    the property that both $\dd F(x)$ and $\dd M(x)$ are dominated by the Lebesgue measure.
  \end{enumerate*}
 Our proof below could be potentially simplified. Nevertheless, for coherence,  we keep the current presentation, since it can be easily adapted in the later proof of \eqref{eq:transformed_to_G_jL_alt2}.
\end{rmk}

\begin{proof}[Proof of \eqref{eq:alt_technical_main_thm_1}]
  We carry out the proof in three steps. To facilitate the proof, for $j\in \llbracket 1, k\rrbracket$ and $m > k$, we denote by $\Omega^{(k)}_{j,m}$ the events that $\xi^{(k)}_m$ and $\xi^{(k)}_j $ satisfy the following rigidity properties
  \begin{equation} \label{022102}
    \Omega^{(k)}_{j, m} := \left\{ \omega \; \middle\vert \; \lvert \xi^{(k)}_j \rvert \leq  m^{\frac{3}{5}}, \; \left\lvert \xi^{(k)}_m + \left( 3\pi m/2 \right)^{\frac{2}{3}} \right\rvert \leq m^{\frac{3}{5}} \right \},
  \end{equation}
  and also denote $\Omega^{(k)}_{j, m, n} = \Omega^{(k)}_{j, m} \cap \Omega^{(k)}_{j, n}$.
  \begin{enumerate}
  \item
    We first note that given any $\epsilon_1 > 0$, there exists $m_{\epsilon_1}$ such that for all $n > m > m_{\epsilon_1}$,
    \begin{equation} \label{eq:statement_1_proof_Airy}
      \begin{split}
        &  \left\lvert \int^{\xi^{(k)}_m}_{\xi^{(k)}_n} \frac{1}{\xi^{(k)}_j - x} \dd F(x) - \int^{-\left( 3\pi m/2 \right)^{2/3}}_{-\left( 3\pi n/2 \right)^{2/3}} \frac{1}{0 - x} \dd F(x) \right\rvert \Id(\Omega^{(k)}_{j,m,n}) \\
        \leq {}& \left\lvert \int^{\xi^{(k)}_m}_{\xi^{(k)}_n} \frac{1}{\xi^{(k)}_j - x} \dd F(x) - \int^{\xi^{(k)}_m}_{\xi^{(k)}_n} \frac{1}{0 - x} \dd F(x) \right\lvert \Id(\Omega^{(k)}_{j,m,n}) \\
        & + \left\lvert \int^{\xi^{(k)}_m}_{\xi^{(k)}_n} \frac{1}{0 - x} \dd F(x) - \int^{-\left( 3\pi m/2 \right)^{2/3}}_{-\left( 3\pi n/2 \right)^{2/3}} \frac{1}{0 - x} \dd F(x) \right\rvert \Id(\Omega^{(k)}_{j,m,n}) \\
        < {}& \frac{\epsilon_1}{3} + \frac{\epsilon_1}{3} = \frac{2\epsilon_1}{3}.
      \end{split}
    \end{equation}
  \item
    We next show that given any $\epsilon_1, \epsilon_2 > 0$, there exists sufficiently large positive integer $m'_{\epsilon_1, \epsilon_2}$ such that for all $m > m'_{\epsilon_1, \epsilon_2}$ 
    \begin{equation} \label{19101501}
      \Prob \left( \sup_{n > m} \left\lvert \int^{\xi^{(k)}_m}_{\xi^{(k)}_n} \frac{1}{\xi_j^{(k)} - x} \dd (M(x) - F(x)) \Id(\Omega^{(k)}_{j,m,n}) \right\rvert > \frac{\epsilon_1}{3} \right) < \frac{\epsilon_2}{2}.
    \end{equation}
    To see it, using integration by parts, we write
    \begin{equation} \label{eq:IbP}
      \int^{\xi^{(k)}_m}_{\xi^{(k)}_n} \frac{1}{\xi_j^{(k)} - x} \dd (M(x) - F(x)) = \frac{M(\xi^{(k)}_m) - F(\xi^{(k)}_m)}{\xi^{(k)}_j - \xi^{(k)}_m} - \frac{M(\xi^{(k)}_n) - F(\xi^{(k)}_n)}{\xi^{(k)}_j - \xi^{(k)}_n} - \int^{\xi^{(k)}_m}_{\xi^{(k)}_n} \frac{M(x) - F(x)}{(\xi^{(k)}_j - x)^2} \dd x.
    \end{equation}
    So to prove \eqref{19101501}, we only need to show that for a large enough $m'_{\epsilon_1, \epsilon_2}$, if $m > m'_{\epsilon_1, \epsilon_2}$,
    \begin{align}
      \Prob \left( \left\lvert \frac{M(\xi^{(k)}_m) - F(\xi^{(k)}_m)}{\xi^{(k)}_j - \xi^{(k)}_m} \Id(\Omega^{(k)}_{j,m}) \right\rvert > \frac{\epsilon_1}{9} \right) < {}& \frac{\epsilon_2}{6}, \label{eq:BC_app_1} \\
       \Prob \left( \sup_{n > m} \left\lvert \int^{\xi^{(k)}_m}_{\xi^{(k)}_n} \frac{M(x) - F(x)}{(\xi^{(k)}_j - x)^2} \dd x \Id(\Omega^{(k)}_{j,m,n}) \right\rvert > \frac{\epsilon_1}{9} \right) < {}& \frac{\epsilon_2}{6}. \label{eq:BC_app_2}
    \end{align}
    \begin{enumerate}
    \item 
      For \eqref{eq:BC_app_1}, we note that since $(\xi^{(k)}_j - \xi^{(k)}_m)/(3\pi m/2)^{2/3} = 1 + \bigO(m^{-1/15})$ on $\Omega^{(k)}_{j, m}$, it suffices to show
      \begin{equation} \label{eq:BC_app_1_simple}
        \Prob \left( \frac{\lvert M(\xi^{(k)}_m) - F(\xi^{(k)}_m) \rvert}{(3 \pi m/2)^{2/3}} \Id(\Omega^{(k)}_{j, m}) > \frac{\epsilon_1}{9} \right) < \frac{\epsilon_2}{6}.
      \end{equation}
      To prove \eqref{eq:BC_app_1_simple}, we first set
      \begin{equation} \label{eq:defn_ell}
        \ell_i = \left\lceil \frac{2}{3\pi} i^{3/2} \right\rceil.
      \end{equation}
      Then for any $m \in \llbracket \ell_i, \ell_{i + 1} \rrbracket$, $\lvert M(\xi^{(k)}_m) - F(\xi^{(k)}_m) \rvert \leq A_i + B_i(m) + C_i(m)$, where
      \begin{equation} 
        A_i = \lvert M(-i) - F(-i) \rvert, \quad B_i(m) = \lvert M(\xi^{(k)}_m) - M(-i) \rvert, \quad C_i(m) = \lvert F(\xi^{(k)}_m) - F(-i) \rvert.
      \end{equation}
      By the estimate in Proposition \ref{prop:Airy_measure} and Markov's inequality, we have that if $i$ is big enough, then for any $\epsilon > 0$
      \begin{equation} \label{eq:appl_Markov_1}
        \Prob \left( \lvert M(-i) - F(-i) \rvert \geq \epsilon i \right) < \epsilon^{-2} i^{-3/2},
      \end{equation}
      and we conclude that there exists a sufficiently large positive integer $N_{\epsilon_1,\epsilon_2}$ such that
      \begin{equation} \label{eq:est_A_alone}
        \Prob \left( \sup_{i > N_{\epsilon_1,\epsilon_2}} \frac{A_i}{(3\pi \ell_i/2)^{2/3}} > \frac{\epsilon_1}{27} \right) < \frac{\epsilon_2}{6}.
      \end{equation}
      Using the properties that $\lvert F(x_1) - F(x_2) \rvert \leq \lvert x_1 - x_2 \rvert$ and $\lvert M(x_1) - M(x_2) \rvert \leq \lvert x_1 - x_2 \rvert$, we have that there exists $N'_{\epsilon_1, \epsilon_2}$ such that if $i > N'_{\epsilon_1, \epsilon_2}$, then for all $m \in \llbracket \ell_i, \ell_{i + 1} \rrbracket$, we have that on $\Omega^{(k)}_{j, m}$,
      \begin{equation} \label{eq:est_B_and_C}
        \frac{\lvert \star_i(m) \rvert}{(3\pi m/2)^{2/3}} \leq \frac{\lvert \xi^{(k)}_m - (-i) \rvert}{(3\pi m/2)^{2/3}} \leq \frac{\lvert [-(3\pi \ell_{i + 1}/2)^{2/3} - \ell^{3/5}_{i + 1}] - [-(3\pi \ell_i/2)^{2/3}] \rvert}{(3\pi m/2)^{2/3}} < \frac{\epsilon_1}{27}, \quad \text{$\star = B$ or $C$}.
      \end{equation}
      Hence we have that if $i > \max(N_{\epsilon_1, \epsilon_2}, N'_{\epsilon_1, \epsilon_2})$ and $m \in \llbracket \ell_i, \ell_{i + 1} \rrbracket$, then by \eqref{eq:est_B_and_C}
      \begin{equation}
        \frac{\lvert M(\xi^{(k)}_m) - F(\xi^{(k)}_m) \rvert}{(3\pi m/2)^{2/3}} \Id(\Omega^{(k)}_{j, m}) \leq \frac{A_i}{(3\pi \ell_i/2)^{2/3}} + \frac{2}{27}\epsilon_1.
      \end{equation}
      Then by \eqref{eq:est_A_alone}, we prove \eqref{eq:BC_app_1_simple}, which further implies \eqref{eq:BC_app_1}.
    \item 
      To show \eqref{eq:BC_app_2} holds, we note that since on $\Omega^{(k)}_{j, m, n}$, $|(\xi^{(k)}_j - x)/x| = 1 + \bigO(m^{-1/15})$ if $x \geq \xi^{(k)}_m$, it suffices to show that if $m$ is large enough,
      \begin{equation} \label{eq:eq:BC_app_2_middle}
        \Prob \left( \sup_{n > m} \int^{\xi^{(k)}_m}_{\xi^{(k)}_n} \frac{\lvert M(x) - F(x) \rvert}{x^2} \dd x \Id(\Omega^{(k)}_{j,m,n})  > \frac{\epsilon_1}{9} \right) < \frac{\epsilon_2}{6}.
      \end{equation}
      Also, since $\xi^{(k)}_m$ goes to $-\infty$ monotonically as $m \to \infty$, $\xi^{(k)}_m < -(\frac{3}{2}\pi m)^{2/3} + m^{3/5}$ on $\Omega^{(k)}_{j, m}$, and the integrand of \eqref{eq:eq:BC_app_2_middle} is non-negative, we find that it suffices to prove that there exists $K_{\epsilon_1, \epsilon_2}$, such that for all positive integers $k_2 > k_1 > K_{\epsilon_1, \epsilon_2}$,
      \begin{equation} \label{eq:part_3_as_Airy_simplify}
        \Prob \left( \sup_{k_2 > k_1}   \int^{-k_1}_{-k_2} \frac{\lvert M(x) - F(x) \rvert}{x^2} \dd x \geq \frac{\epsilon_1}{9} \right) < \frac{\epsilon_2}{6}.
      \end{equation}
      We define the auxiliary functions with integer parameters $k_1 < k_2$
      \begin{equation} \label{eq:defn_ABC_k_1k_1}
        \begin{gathered}
          A_{k_1, k_2} = \sum^{k_2}_{i = k_1} \frac{\lvert M(-i) - F(-i) \rvert}{(i - 1)i}, \quad B_{k_1, k_2} = \sum^{k_2}_{i = k_1} \frac{\lvert M(-i) - F(-i) \rvert}{i(i + 1)}, \\
          C_{k_1, k_2} = 2 \sum^{k_2 - 1}_{i = k_1 + 1} \frac{F(-i)}{(i - 1)i(i + 1)} - \frac{F(-k_1)}{k_1(k_1 + 1)} + \frac{F(-k_2)}{(k_2 - 1)k_2}.
        \end{gathered}
      \end{equation}
      Since for $x \in [-i, -(i - 1)]$, we have, by the monotonicity of $F(x)$ and $M(x)$,
      \begin{equation}
        M(-(i - 1)) - F(-i) \leq M(x) - F(x) \leq M(-i) - F(-(i - 1)),
      \end{equation}
      and the inequality
      \begin{equation} \label{eq:ineq:ABC}
        \begin{split}
          \int^{-k_1}_{-k_2} \frac{\lvert M(x) - F(x) \rvert}{ x^2} \dd x \leq {}& \sum^{k_2}_{i = k_1 + 1} \int^{-(i - 1)}_{-i} \frac{\lvert M(-i) - F(-(i - 1)) \rvert}{x^2} + \frac{\lvert M(-(i - 1)) - F(-i) \rvert}{x^2} \dd x \\
          = {}& \sum^{k_2}_{i = k_1 + 1} \frac{\lvert M(-i) - F(-(i - 1)) \rvert}{(i - 1)i} + \sum^{k_2 - 1}_{i = k_1} \frac{\lvert M(-i) - F(-(i + 1)) \rvert}{i(i + 1)} \\
          \leq {}& (A_{k_1 + 1, k_2} + C_{k_1, k_2}) + (B_{k_1, k_2 - 1} + C_{k_1, k_2}) \\
          \leq {}& A_{k_1, k_2} + B_{k_1, k_2} + 2C_{k_1, k_2}.
        \end{split}
      \end{equation}
      Hence to prove \eqref{eq:part_3_as_Airy_simplify}, it suffices to show that there exists $K_{\epsilon_1, \epsilon_2}$ such that for $ k_1 > K_{\epsilon_1, \epsilon_2}$,
      \begin{equation} \label{eq:part_3_as_Airy_simplify_more}
        \Prob \left( \sup_{k_2 > k_1} A_{k_1, k_2} > \frac{\epsilon_1}{27} \right) < \frac{\epsilon_2}{12}, \quad \Prob \left( \sup_{k_2 > k_1} B_{k_1, k_2} > \frac{\epsilon_1}{27} \right) < \frac{\epsilon_2}{12}, \quad \text{and} \quad \sup_{k_2 > k_1} 2\lvert C_{k_1, k_2} \rvert < \frac{\epsilon_1}{27}.
      \end{equation}
      By the estimate of $F(x)$ in Proposition \ref{prop:Airy_measure}, we can easily verify the $C_{k_1, k_2}$ part of \eqref{eq:part_3_as_Airy_simplify_more} with large enough $k_1 > K_{\epsilon_1, \epsilon_2}$. On the other hand, analogous to \eqref{eq:appl_Markov_1}, we have that for large enough $i$,
      \begin{equation}
        \Prob \left( \lvert M(-i) - F(-i) \rvert \geq i^{4/5} \right) < i^{-\frac{11}{10}}.
      \end{equation}
      Also if $k_1 > 270\epsilon^{-1}_1 + 1$, we have
      \begin{equation}
        \sum^{\infty}_{i = k_1} i^{-6/5} < \int^{\infty}_{k_1 - 1} x^{-6/5} \dd x < \frac{\epsilon_1}{54}.
      \end{equation}
      Hence if $k_1 > 270\epsilon^{-1}_1 + 1$, noting that $i(i + 1)^2 > (i - 1)i \geq i^2/2$ for all $i \geq k_1 \geq 2$, we have
      \begin{equation} \label{062601}
        \begin{split}
          \Prob \left( \sup_{k_2 > k_1} B_{k_1, k_2} > \frac{\epsilon_1}{27} \right) \leq {} &\Prob \left( \sup_{k_2 > k_1} A_{k_1, k_2} > \frac{\epsilon_1}{27} \right) \\
          \leq {}& \Prob \left( \sum^{\infty}_{i = k_1} \frac{\lvert M(-i) - F(-i) \rvert}{i^2} > \frac{\epsilon_1}{54} \right) \\
          \leq {}& \Prob \left(\bigcup^\infty_{i=k_1} \left\{ \frac{\lvert M(-i) - F(-i) \rvert}{i^2} > i^{-\frac{6}{5}} \right\} \right) \\
          \leq {}& \sum^{\infty}_{i = k_1} \Prob \left( \frac{\lvert M(-i) - F(-i) \rvert}{i^2} > i^{-\frac{6}{5}} \right) \\
          \leq {}& \sum^{\infty}_{i = k_1} i^{-\frac{11}{10}} < \int^{\infty}_{k_1 - 1} x^{-\frac{11}{10}} \dd x = 10(k_1 - 1)^{-\frac{1}{10}},
        \end{split}
      \end{equation}
      and conclude the proof of the $A_{k_1, k_2}$ and $B_{k_1, k_2}$ parts of \eqref{eq:part_3_as_Airy_simplify_more}. Now \eqref{eq:part_3_as_Airy_simplify_more} is proved, and so are \eqref{eq:part_3_as_Airy_simplify} and \eqref{eq:BC_app_2}.
    \end{enumerate}
    
    Thus we finish the proof of  \eqref{19101501}. The constant $m'_{\epsilon_1, \epsilon_2}$ can be deduced from $N_{\epsilon_1,\epsilon_2}$, $N'_{\epsilon_1, \epsilon_2}$, $K_{\epsilon_1, \epsilon_2}$ above.
  \item
    Combining \eqref{eq:statement_1_proof_Airy} and \eqref{19101501} we arrives at that for $m > \max(m_{\epsilon_1}$, $m'_{\epsilon_1, \epsilon_2})$,
    \begin{equation}
      \Prob \left( \sup_{n > m} \left\{ \left\lvert \int^{\xi^{(k)}_m}_{\xi^{(k)}_n} \frac{1}{\xi^{(k)}_j - x} \dd M(x) - \int^{-\left( 3\pi m/2 \right)^{2/3}}_{-\left( 3\pi n/2 \right)^{2/3}} \frac{1}{0 - x} \dd F(x) \right\rvert \Id(\Omega^{(k)}_{j,m,n}) \right\} > \epsilon_1 \right) < \frac{\epsilon_2}{2}.
    \end{equation}
    To complete the proof of \eqref{eq:technical_main_thm_1}, it suffices to find $m_{\epsilon_1, \epsilon_2} > \max(m_{\epsilon_1}$, $m'_{\epsilon_1, \epsilon_2})$ such that for all $m > m_{\epsilon_1, \epsilon_2}$
    \begin{equation} \label{022101}
      \Prob \left( \bigcup_{n \geq m} \big(\Omega^{(k)}_{j, m, n}\big)^c \right) < \frac{\epsilon_2}{2}.
    \end{equation}
    By \eqref{eq:ineq:xi_fluc} in Lemma \ref{eq:ineq:xi_fluc}, we have that if $m$ is large enough, then $\Prob(\lvert \xi^{(k)}_m + \left( 3\pi m/2 \right)^{2/3} \rvert > m^{3/5}) < c m^{-6/5} \log m$, and the estimate also holds if $m$ is replaced by $n$. Also by part \ref{enu:lem:non-asy_est_Airy:1} of Lemma \ref{lem:non-asy_est_Airy}, we have $\lim_{m \to \infty} \Prob \left( \lvert \xi^{(k)}_j \rvert \leq  m^{3/5} \right) = 0$. Hence it is straightforward to check that \eqref{022101} holds if $m$ is large enough, and the desired $m_{\epsilon_1, \epsilon_2}$ exists.
  \end{enumerate}
  
  Finally we complete the proof of \eqref{eq:alt_technical_main_thm_1}.
\end{proof} 

\section{Analysis of random measure $\mu$: proof of Proposition \ref{prop:Airy_measure}} \label{pf.lem_Airy}

In this section, we prove Proposition \ref{prop:Airy_measure}. We will show that the estimate of the mean and variance of $M(x)$ can be transformed to the mean and variance of a linear statistic of the extended Airy process, or more precisely, the difference between the linear statistics of two species, as mentioned in Section \ref{subsec:proof_strategy}. We observe that the mean and variance of the linear statistic have (multi)-contour integral representations. Then we prove the said estimate, first of the mean and then of the variance, via delicate saddle point analysis of the contour integrals.

For any given $x\in \mathbb{R}$, define the almost surely finite random subsets of $\natN$
\begin{equation}
  \mathcal{I}_x := \{ i \in \natN \mid \xi^{(k)}_i \in (x, +\infty) \}, \quad \mathcal{J}_x := \{ i \in \natN \mid \xi^{(k-1)}_i \in (x, +\infty) \}.
\end{equation}
and the random variable
\begin{equation} \label{eq:defn_N_x_Airy}
  N_x = \lvert \mathcal{I}_x \rvert - \lvert \mathcal{J}_x \rvert.
\end{equation}
By the interlacing property stated in Lemma \ref{lem:interlacing_Airy}, we see that $N_x$ is a Bernoulli random variable for a given $x$. Hence,
\begin{equation}
  \E N_x = \Prob(N_x = 1).
\end{equation}
We also consider the random variable
\begin{equation} \label{eq:defn_S_x_Airy}
  S_x = -\sum_{i\in \mathcal{I}_x} \xi^{(k)}_i + \sum_{i\in \mathcal{J}_x} \xi^{(k - 1)}_i.
\end{equation}
We observe that if $\xi^{(k)}_1 \leq x$, then $N_x = S_x = M(x) = 0$. Under the condition that $\xi^{(k)}_1 > x$, if $N_x = 1$, then $S_x = M(x) - \xi^{(k)}_1$, otherwise $S_x = M(x) - \xi^{(k)}_1 + x$. We conclude that (noting that $N_x \Id(\xi^{(k)}_1 > x) = N_x$)
\begin{equation} \label{eq:mu_S_N_Airy}
  M(x) = S_x + (\xi^{(k)}_1 - x(1 - N_x)) \Id(\xi^{(k)}_1 > x) = S_x + xN_x + (\xi^{(k)}_1 - x) \Id(\xi^{(k)}_1 > x).
\end{equation}
By the estimate in Lemma \ref{lem:non-asy_est_Airy}, we find that as $x \to -\infty$,  $\E (\xi^{(k)}_1 - x) \Id(\xi^{(k)}_1 > x) = -x + \bigO(1)$ and $\var (\xi^{(k)}_1 - x) \Id(\xi^{(k)}_1 > x) = \bigO(1)$. Hence to prove  Proposition \ref{prop:Airy_measure}, we only need the following estimates of the mean and variance of $S_x + xN_x$:
\begin{align}
  \E(S_x + xN_x) = {}& \frac{x}{2} - \frac{2}{\pi} a_k \sqrt{-x} + \bigO(1), \label{eq:mean_of_measure} \\
  \var(S_x + xN_x) = {}& \frac{2}{\pi} \sqrt{-x} + a_k + \bigO(\lvert x \rvert^{-1/2}), \label{eq:var_of_measure}
\end{align}
in light of the linearity of  expectation and the trivial inequality 
\begin{equation} \label{rmk:general_CS}
  |\var(X + Y)-(\var(X) + \var(Y))|\leq 2\sqrt{\var(X) \var(Y)}.
\end{equation} 

Below we prove \eqref{eq:mean_of_measure} and \eqref{eq:var_of_measure} separately. For the proofs, we define the function
  \begin{equation} \label{eq:defn_h_x(t)}
    h_x(t) =
    \begin{cases}
      0, & t \leq x, \\
      -t + x, & t > x.
    \end{cases}
  \end{equation}
We emphasize here, all integrals in the sequel, unless the integral domain is specified, are on $\realR$.

\paragraph{Proof of \eqref{eq:mean_of_measure}}

We have, by the standard formula for linear statistics of a determinantal point process, that
\begin{equation} \label{eq:mean_formula_Airy}
  \begin{split}
     \E (S_x + xN_x) 
    = {}& \E \bigg( \sum_{i\in \mathcal{J}_x} \xi^{(k - 1)}_i \bigg) - \E \bigg( \sum_{i\in \mathcal{I}_x} \xi^{(k)}_i \bigg) + x\Big(\E|\mathcal{I}_x| - \E|\mathcal{J}_x|\Big) \\
    = {}& \int^{\infty}_x t K^{k - 1, k - 1}_{\Airya}(t, t)\dd t - \int^{\infty}_x t K^{k, k}_{\Airya}(t, t)\dd t \\
    {}&\qquad+ x \left( \int^{\infty}_x K^{k, k}_{\Airya}(t, t)\dd t - \int^{\infty}_x K^{k - 1, k - 1}_{\Airya}(t, t)\dd t \right) \\
    = {}& \int h_x(t) \Big(K^{k, k}_{\Airya}(t, t) - K^{k - 1, k - 1}_{\Airya}(t, t)\Big)\dd t.
  \end{split}
\end{equation}
According to  \eqref{eq:form_ext_K} and \eqref{eq:form_ext_K_tilde}, we have 
\begin{equation} \label{20040501}
  K^{k, k}_{\Airya}(t, t) - K^{k - 1, k - 1}_{\Airya}(t, t) = \frac{1}{(2\pi \ii)^2} \int_{\sigma} \dd u \int_{\gamma} \dd v \frac{e^{\frac{u^3}{3} - tu}}{e^{\frac{v^3}{3} - tv}} \left( \prod^{k - 1}_{j = 1} \frac{u - a_j}{v - a_j} \right) \frac{1}{v - a_k}.
\end{equation}
Plugging \eqref{20040501} into \eqref{eq:mean_formula_Airy}, we have 
\begin{equation} \label{eq:double_contour_mean_Airy}
  \begin{split}
    \E (S_x + xN_x) = {}& \frac{-1}{(2\pi \ii)^2} \int_{\sigma} \dd u \int_{\gamma} \dd v \frac{e^{\frac{u^3}{3}}}{e^{\frac{v^3}{3}}} \left( \prod^{k - 1}_{j = 1} \frac{u - a_j}{v - a_j} \right) \frac{1}{v - a_k} \int^{+\infty}_x (t - x) e^{-(u - v)t}\dd t \\
    = {}& \frac{-1}{(2\pi \ii)^2} \int_{\sigma} \dd u \int_{\gamma} \dd v \frac{e^{\frac{u^3}{3} - xu}}{e^{\frac{v^3}{3} - xv}} \left( \prod^{k - 1}_{j = 1} \frac{u - a_j}{v - a_j} \right) \frac{1}{(v - a_k)(u - v)^2}.
  \end{split}
\end{equation}
By some standard residue calculation, we have that if $x < 0$, then
\begin{subequations} \label{eq:int_over_X_and_bar}
  \begin{align}
    & \E (S_x + xN_x) \notag \\
    = {}& \frac{-1}{(2\pi \ii)^2} \iint_{\mathsf{X}} \dd u \dd v \frac{e^{\frac{u^3}{3} - xu}}{e^{\frac{v^3}{3} - xv}} \left( \prod^{k - 1}_{j = 1} \frac{u - a_j}{v - a_j} \right) \frac{1}{(v - a_k)(u - v)^2} \label{eq:int_over_X_and_bar:X} \\
    & + \frac{-1}{2\pi \ii} \int^{\sqrt{-x}\ii}_{-\sqrt{-x}\ii} \Big(\frac{v^2 - x}{v - a_k}  + \sum^{k - 1}_{j = 1} \frac{1}{(v - a_j)(v - a_k)}\Big) \dd v, \label{eq:int_over_X_and_bar:bar}
  \end{align}
\end{subequations}
where the contour $\mathsf{X}$ means that the two deformed contours $\sigma$ and $\gamma$ intersect at the two saddle points $\pm\sqrt{-x} \ii$, see Figure \ref{fig:X}, and the integral is understood as the Cauchy principal value, and the contour in \eqref{eq:int_over_X_and_bar:bar} goes by the right of all $a_i$'s. By direct computation, we evaluate
\begin{equation} \label{eq:new_est_bar}
  \begin{split}
    \eqref{eq:int_over_X_and_bar:bar} = {}& \frac{-1}{2\pi \ii} \int^{\sqrt{-x}\ii}_{-\sqrt{-x}\ii} v + a_k + \frac{a_k - x}{v - a_k} + \sum^{k - 1}_{j = 1} \frac{1}{a_j - a_k} \left( \frac{1}{v - a_j} - \frac{1}{v - a_k} \right) \dd v \\
    = {}& \frac{-1}{2\pi \ii} \left[ 2a_k \sqrt{-x} \ii + (a^2_k - x) \ii \left( \pi + 2\arctan \frac{a_k}{\sqrt{-x}} \right) + \sum^{k - 1}_{j = 1} \frac{2 \ii}{a_j - a_k} \left( \arctan \frac{a_j}{\sqrt{-x}} - \arctan \frac{a_k}{\sqrt{-x}} \right) \right] \\
    = {}& \frac{1}{2} (x - a^2_k) - \frac{2}{\pi} a_k \sqrt{-x} + \bigO(\lvert x \rvert^{-\frac{1}{2}}).
  \end{split}
\end{equation}
The above derivation can be easily justified by l'H\^{o}pital's rule when certain $a_j$ equals $a_k$. 
 To evaluate \eqref{eq:int_over_X_and_bar:X}, we define two types of infinite contours:
\begin{equation} \label{eq:defn_standard_gamma_sigma}
  \gamma_{\standard}(a) = \{ e^{\frac{2\pi \ii}{3}} t + a \mid t \geq 0 \} \cup \{ e^{\frac{\pi \ii}{3}} t + a \mid t \leq 0 \}, \quad \sigma_{\standard}(b) = \{ e^{\frac{\pi \ii}{3}} t + b \mid t \geq 0 \} \cup \{ e^{\frac{2\pi \ii}{3}} t + b \mid t \leq 0 \},
\end{equation}
both oriented upward; see Figure \ref{fig:gamma_sigma_standard}. Assuming that $-x$ is large enough, we deform the contour $\mathsf{X}$ such that $u$ is on $\sigma_{\standard}(-\sqrt{-x/3})$, and $v$ is on $\gamma_{\standard}(\sqrt{-x/3})$. Direct computation shows that $\Re(u^3/3 - xu)$ attains its maximum along $\sigma$ at $\pm \sqrt{-x} \ii$, and $\Re(v^3/3 - xv)$ attains its minimum along $\gamma$ at the same two points. Hence $\pm \sqrt{-x} \ii$ are the saddle points. We divide the double contour $\mathsf{X}$ into three disjoint subsets:
\begin{enumerate}[label=(\roman*)]
\item
  $\mathsf{X}_1 = \{ u, v \in \mathsf{X} \mid \lvert u - \sqrt{-x} \ii \rvert < 1, \, \lvert v - \sqrt{-x} \ii \rvert < 1 \}$, 
\item 
  $\mathsf{X}_2 = \{ u, v \in \mathsf{X} \mid \lvert u + \sqrt{-x} \ii \rvert < 1, \, \lvert v + \sqrt{-x} \ii \rvert < 1 \}$, 
\item
  $\mathsf{X}_3 = \mathsf{X} \setminus (\mathsf{X}_1 \cup \mathsf{X}_2)$.
\end{enumerate}
We first estimate the integral over $\mathsf{X}_1$. By Taylor's expansion, if we denote $s = \lvert x \rvert^{1/4} (u - \sqrt{-x} \ii)$ and $t = \lvert x \rvert^{1/4} (v - \sqrt{-x} \ii)$, then on $\mathsf{X}_1$,
\begin{equation} \label{eq:saddle_first}
  \frac{e^{\frac{u^3}{3} - xu}}{e^{\frac{v^3}{3} - xv}} = e^{\ii (s^2 - t^2)} e^{\frac{1}{3} \lvert x \rvert^{-3/4} (s^3 - t^3)},
\end{equation}
and then, according to the definition of the Cauchy principal value of the contour integral,
\begin{align}
  \eqref{eq:int_over_X_and_bar:X} \text{ over } \mathsf{X}_1 = \frac{\ii}{\sqrt{-x}} \frac{1}{(2\pi \ii)^2} \left( \int^{e^{\pi \ii/3} \lvert x \rvert^{1/4}}_0 \dd s + \int^0_{e^{-2\pi \ii/3} \lvert x \rvert^{1/4}} \dd s \right) \left[ \int^{e^{-\pi \ii/3} \lvert x \rvert^{1/4}}_{e^{2\pi \ii/3} \lvert x \rvert^{1/4}} \dd t e^{\ii (s^2 - t^2)} \frac{f(s)}{g(t)} \frac{1}{(s - t)^2} \right],  \label{041901}
\end{align}
where all the integral $\int^b_a$ is over the line segment from $a$ to $b$, and
\begin{equation}
  f(s) = \prod^{k - 1}_{j = 1} \left( 1 + \ii \left( \frac{a_j}{\lvert x \rvert^{\frac{1}{2}}} - \frac{s}{\lvert x \rvert^{\frac{3}{4}}} \right) \right) e^{\frac{1}{3} \lvert x \rvert^{-3/4} s^3}, \quad g(t) = \prod^k_{j = 1} \left( 1 + \ii \left( \frac{a_j}{\lvert x \rvert^{\frac{1}{2}}} - \frac{t}{\lvert x \rvert^{\frac{3}{4}}} \right) \right) e^{\frac{1}{3} \lvert x \rvert^{-3/4} t^3}.
\end{equation}
We see that $f(s)$ and $g(t)^{-1}$ are an analytic and bounded function for all $\lvert s \rvert, \lvert t \rvert = \bigO(\lvert x \rvert^{1/4})$. Then
\begin{equation} \label{eq:not_so_standard}
  \begin{split}
    & \int^{e^{\pi \ii/3} \lvert x \rvert^{1/4}}_0 \dd s \int^{e^{-\pi \ii/3} \lvert x \rvert^{1/4}}_{e^{2\pi \ii/3} \lvert x \rvert^{1/4}} \dd t e^{\ii (s^2 - t^2)} \frac{f(s)}{f(t)} \frac{1}{(s - t)^2} \\
    = {}& -\int^{e^{\pi \ii/3} \lvert x \rvert^{1/4}}_0 f(s) e^{\ii s^2} \frac{\dd G(s)}{\dd s} \dd s, \\
    = {}& f(0) G(0) - f(e^{\pi \ii/3} \lvert x \rvert^{1/4}) e^{e^{7\pi \ii/6} \lvert x \rvert^{1/2}} G(e^{\pi \ii/3} \lvert x \rvert^{1/4}) + \int^{e^{\pi \ii/3} \lvert x \rvert^{1/4}}_0 G(s) (f'(s) + 2\ii sf(s)) e^{\ii s^2} \dd s,
  \end{split}
\end{equation}
where
\begin{equation}
  G(s) = \int^{e^{-\pi \ii/3} \lvert x \rvert^{1/4}}_{e^{2\pi \ii/3} \lvert x \rvert^{1/4}} \dd t e^{-\ii t^2} \frac{1}{g(t)} \frac{1}{s - t}
\end{equation}
and $G(0)$ is understood as the limit of $G(s)$ as $s$ moves to $0$ along the line segment from $e^{\pi \ii/3} \lvert x \rvert^{1/4}$ to $0$. By the analyticity and boundedness of $g(t)^{-1}$, we have that $G(s)$ is also bounded on the line segment from $0$ to $e^{\pi \ii/3} \lvert x \rvert^{1/4}$, so the integral on the left-hand side of \eqref{eq:not_so_standard} is $\bigO(1)$ as $x \to -\infty$. Similarly, we conclude that
\begin{equation} \label{eq:saddle_last}
  \int^{e^{\pi \ii/3} \lvert x \rvert^{1/4}}_0 \dd s \int^{e^{-\pi \ii/3} \lvert x \rvert^{1/4}}_{e^{2\pi \ii/3} \lvert x \rvert^{1/4}} \dd t e^{\ii (s^2 - t^2)} \frac{f(s)}{g(t)} \frac{1}{(s - t)^2} = \bigO(1).
\end{equation}
Hence, plugging the above estimates into (\ref{041901}), we see that the part of integral \eqref{eq:int_over_X_and_bar:X} over $\mathsf{X}_1$ is $\bigO(\lvert x \rvert^{-1/2})$. Similarly, the part of integral \eqref{eq:int_over_X_and_bar:X} over $\mathsf{X}_2$ is also $\bigO(\lvert x \rvert^{-1/2})$. The part of integral \eqref{eq:int_over_X_and_bar:X} over $\mathsf{X}_3$ is easier to estimate, because the factor $e^{\frac{u^3}{3} - xu}/e^{\frac{v^3}{3} - xv}$ vanishes uniformly and the factor $(u - v)^{-2}$ is bounded. By standard method, we find this part of integral \eqref{eq:int_over_X_and_bar:X} is $o(\lvert x \rvert^{-1/2})$. This together with the estimate of \eqref{eq:int_over_X_and_bar:bar} above completes the proof of \eqref{eq:mean_of_measure}.

\paragraph{Proof of \eqref{eq:var_of_measure}}

We can write, with $h_x(t)$ defined in \eqref{eq:defn_h_x(t)},
\begin{multline} \label{eq:variance_formula_Airy}
  \E[(S_x + xN_x)^2] = \int h^2_x(t) K^{k, k}_{\Airya}(t, t)\dd t + \int h^2_x(t) K^{k - 1, k - 1}_{\Airya}(t, t)\dd t \\
  \begin{aligned}[b]
    + {}& \iint h_x(s)h_x(t) 
    \begin{vmatrix}
      K^{k, k}_{\Airya}(s, s) & K^{k, k}_{\Airya}(s, t) \\
      K^{k, k}_{\Airya}(t, s) & K^{k, k}_{\Airya}(t, t)
    \end{vmatrix}
    \dd s\dd t \\
    + {}& \iint h_x(s)h_x(t)
    \begin{vmatrix}
      K^{k - 1, k - 1}_{\Airya}(s, s) & K^{k - 1, k - 1}_{\Airya}(s, t) \\
      K^{k - 1, k - 1}_{\Airya}(t, s) & K^{k - 1, k - 1}_{\Airya}(t, t)
    \end{vmatrix}
    \dd s\dd t \\
    - {}& \iint h_x(s)h_x(t)
    \begin{vmatrix}
      K^{k, k}_{\Airya}(s, s) & K^{k, k - 1}_{\Airya}(s, t) \\
      K^{k - 1, k}_{\Airya}(t, s) & K^{k - 1, k - 1}_{\Airya}(t, t)
    \end{vmatrix}
    \dd s\dd t \\
    - {}& \iint h_x(s)h_x(t)
    \begin{vmatrix}
      K^{k - 1, k - 1}_{\Airya}(s, s) & K^{k - 1, k}_{\Airya}(s, t) \\
      K^{k, k - 1}_{\Airya}(t, s) & K^{k, k}_{\Airya}(t, t)
    \end{vmatrix}
    \dd s\dd t.
  \end{aligned}
\end{multline}
Combining \eqref{eq:variance_formula_Airy} with \eqref{eq:mean_formula_Airy} and using \eqref{eq:form_ext_K} we get 
\begin{subequations} \label{eq:var_3parts_Airy}
  \begin{align}
    & \var\big[ S_x + xN_x \big] \notag \\
    = {}& \int h_x^2(t) \Big(K^{k, k}_{\Airya}(t, t) + K^{k - 1, k - 1}_{\Airya}(t, t)\Big)\dd t - 2\int h_x(t) \left( \int^{\infty}_t h_x(s) e^{a_k(s - t)} K^{k, k - 1}_{\Airya}(s, t) \dd s \right)\dd t \label{eq:var_part_2} \\
    &
      \begin{aligned}
        & - \iint h_x(s)h_x(t) \Big(K^{k, k}_{\Airya}(s, t) K^{k, k}_{\Airya}(t, s) + K^{k - 1, k - 1}_{\Airya}(s, t) K^{k - 1, k - 1}_{\Airya}(t, s) \\
        & \phantom{\smash{-\iint h_x(s)}h_x(t)} - K^{k, k - 1}_{\Airya}(s, t) \widetilde{K}^{k - 1, k}_{\Airya}(t, s) - \widetilde{K}^{k - 1, k}_{\Airya}(s, t) K^{k, k - 1}_{\Airya}(t, s)\Big) \dd s\dd t.
      \end{aligned} \label{eq:var_part_3}
  \end{align}
\end{subequations}

First, we consider the integrals in \eqref{eq:var_part_2}. A simple change of order of integration yields
\begin{equation} \label{eq:part_1_var_trans}
  \int h^2_x(t) \Big(K^{k, k}_{\Airya}(t, t) + K^{k - 1, k - 1}_{\Airya}(t, t)\Big)\dd t  
  = \frac{1}{(2\pi \ii)^2} \int_{\sigma} \dd u \int_{\gamma} \dd v \frac{e^{\frac{u^3}{3} - xu}}{e^{\frac{v^3}{3} - xv}} \frac{2}{(u - v)^4} \left( \prod^{k - 1}_{j = 1} \frac{u - a_j}{v - a_j} \right) \frac{u + v - 2a_k}{v - a_k},
\end{equation}
\begin{equation} \label{eq:part_2_var_trans}
  \int h_x(t) \left( \int^{\infty}_t h_x(s) e^{a_k(s - t)} K^{k, k - 1}_{\Airya}(s, t) \dd s \right)\dd t 
  =\frac{1}{(2\pi \ii)^2} \int_{\sigma} \dd u \int_{\gamma} \dd v \frac{e^{\frac{u^3}{3} - xu}}{e^{\frac{v^3}{3} - xv}} \frac{1}{(u - v)^4} \left( \prod^{k - 1}_{j = 1} \frac{u - a_j}{v - a_j} \right) \frac{(3u - v - 2a_k)}{u - a_k}.
\end{equation}
So \eqref{eq:var_part_2} becomes
\begin{equation}
  \frac{2}{(2\pi \ii)^2} \int_{\sigma} \dd u \int_{\gamma} \dd v \frac{e^{\frac{u^3}{3} - xu}}{e^{\frac{v^3}{3} - xv}} \left( \prod^{k - 1}_{j = 1} \frac{u - a_j}{v - a_j} \right) \frac{1}{(u - v)^2 (u - a_k)(v - a_k)}.
\end{equation}
Similarly to \eqref{eq:int_over_X_and_bar}, when $x < 0$, this integral can be written as
\begin{subequations} 
  \begin{align}
    & \frac{2}{(2\pi \ii)^2} \iint_{\mathsf{X}} \dd u \dd v \frac{e^{\frac{u^3}{3} - xu}}{e^{\frac{v^3}{3} - xv}} \left( \prod^{k - 1}_{j = 1} \frac{u - a_j}{v - a_j} \right) \frac{1}{(u - v)^2 (u - a_k)(v - a_k)} \label{eq:variance_Airy_X} \\
    + {}& \frac{2}{2\pi \ii} \int^{\sqrt{-x}\ii}_{-\sqrt{-x}\ii} \left( v^2 - x + \sum^{k - 1}_{j = 1} \frac{1}{v - a_j} - \frac{1}{v - a_k} \right) \frac{1}{(v - a_k)^2} \dd v \label{eq:variance_Airy_bar} \\
    + {}& \prod^{k - 1}_{j = 1} (a_k - a_j) e^{\frac{a^3_k}{3} - a_k x} \frac{2}{2\pi \ii} \int_{\gamma} \frac{1}{e^{\frac{v^3}{3} - xv}} \left( \prod^{k - 1}_{j = 1} \frac{1}{v - a_j} \right) \frac{1}{(v - a_k)^3} \dd v, \label{eq:single_int_gamma_L}
  \end{align}
\end{subequations}
where the contour $\mathsf{X}$ in \eqref{eq:variance_Airy_X} and the contour from $-\sqrt{-x}\ii$ to $\sqrt{-x}\ii$ in \eqref{eq:variance_Airy_bar} are the same as those in \eqref{eq:int_over_X_and_bar}. We have that analogous to \eqref{eq:int_over_X_and_bar:bar},
\begin{equation}
  \begin{split}
    \eqref{eq:variance_Airy_bar} = {}& \frac{2}{2\pi \ii} \int^{\sqrt{-x}\ii}_{-\sqrt{-x}\ii} \left[ 1 + \frac{2a_k}{v - a_k} + \frac{a^2_k - x}{(v - a_k)^2} \right] + \sum^{k - 1}_{j = 1} \left( \frac{1}{v - a_j} - \frac{1}{v - a_k} \right) \frac{1}{(v - a_k)^2} \dd v \\
    = {}& \frac{2}{2\pi \ii} \left[ 2\sqrt{-x} \ii + 2a_k \ii \left( \pi - 2 \arctan \frac{a_k}{\sqrt{-x}} \right) + (a^2_k - x) \frac{2\sqrt{-x} \ii}{a^2_k - x} \right] + \bigO(\lvert x \rvert^{-1}) \\
    = {}& \frac{4\sqrt{-x}}{\pi} + 2a_k + \bigO(\lvert x \rvert^{-\frac{1}{2}}),
  \end{split}
\end{equation}
and the integral in \eqref{eq:variance_Airy_X} is $\bigO( \lvert x \rvert^{-1})$. The saddle point analysis is omitted since it is analogous to that for \eqref{eq:int_over_X_and_bar}. The integral in \eqref{eq:single_int_gamma_L} will be cancelled out later, by a term in (\ref{eq:third_transform}).

On the other hand, the double integral in \eqref{eq:var_part_3} can be expressed as
\begin{equation} \label{eq:four_contour_Airy}
  \frac{1}{(2\pi \ii)^4} \int_{\sigma} \dd u \int_{\gamma} \dd v \int_{\sigma}\dd z \int_{\gamma} \dd w \frac{e^{\frac{u^3}{3} - xu}}{e^{\frac{v^3}{3} - xv}} \frac{e^{\frac{z^3}{3} - xz}}{e^{\frac{w^3}{3} - xw}} \prod^{k - 1}_{j = 1} \frac{u - a_j}{v - a_j} \frac{z - a_j}{w - a_j} 
  \frac{1}{(u - v)(z - w) (u - w)(z - v) (v - a_k)(w - a_k)}.
\end{equation}
In order to estimate the integral above, we will perform several steps of contour deformation.
\begin{enumerate}[label=(\Roman*)]
\item
  We first deform the contour $\gamma$ for $w$ and $v$ to $\gamma^{\outside}_w \cup \gamma^{\inside}_w$ and $\gamma^{\outside}_v \cup \gamma^{\inside}_v$, respectively, as in Figure \ref{fig:w_v_separation}, such that all $a_j$ ($j = 1, \dotsc, k$) are enclosed in $\gamma^{\inside}_w$, and then also in $\gamma^{\inside}_v$. We also slightly deform the contour $\sigma$ for $u$ and $z$ into $\sigma_u$ and $\sigma_z$, respectively, as shown in Figure \ref{fig:w_v_separation}.
  
  \begin{figure}[htb]
    \begin{minipage}[t]{5cm}
      \centering
      \includegraphics{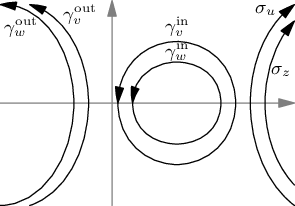}
      \caption{Deformation of contour $\gamma$ for $w$ and $v$, and contour $\sigma$ for $u$ and $z$.}
      \label{fig:w_v_separation}
    \end{minipage}
    \hspace{\stretch{1}}
    \begin{minipage}[t]{5cm}
      \centering
      \includegraphics{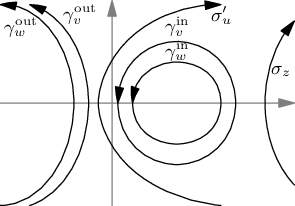}
      \caption{$\sigma_u$ is deformed into $\sigma'_u$.}
      \label{fig:gamma_u_deformed}
    \end{minipage}
    \hspace{\stretch{1}}
    \begin{minipage}[t]{5cm}
      \centering
      \includegraphics{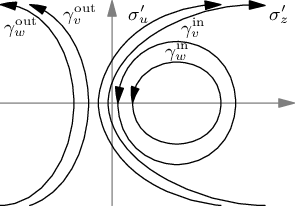}
      \caption{$\sigma_z$ is deformed into $\sigma'_z$.}
      \label{fig:gamma_z_deformed}
    \end{minipage}
  \end{figure}
  
\item
  We then further deform the contour $\sigma_u$ such that it goes between $\gamma^{\outside}_v$ and $\gamma^{\inside}_v$, and thus also goes between $\gamma^{\outside}_w$ and $\gamma^{\inside}_w$. We denote by $\sigma'_u$ the deformed $\sigma_u$; see Figure \ref{fig:gamma_u_deformed}. By residue calculation, we write \eqref{eq:four_contour_Airy} as
  \begin{subequations} \label{eq:first_transform}
    \begin{multline} \label{eq:first_transform_main}
      \frac{1}{(2\pi \ii)^4} \int_{\sigma'_u} \dd u \int_{\gamma^{\outside}_v \cup \gamma^{\inside}_v} \dd v \int_{\sigma_z} \dd z \int_{\gamma^{\outside}_w \cup \gamma^{\inside}_w} \dd w \frac{e^{\frac{u^3}{3} - xu}}{e^{\frac{v^3}{3} - xv}} \frac{e^{\frac{z^3}{3} - xz}}{e^{\frac{w^3}{3} - xw}} \prod^{k - 1}_{j = 1} \frac{u - a_j}{v - a_j} \frac{z - a_j}{w - a_j} \\
      \times \frac{1}{(u - v)(z - w) (u - w)(z - v) (v - a_k)(w - a_k)}
    \end{multline}
    \begin{flalign}
      && + {}& \frac{1}{(2\pi \ii)^2} \int_{\sigma_z} \dd z \int_{\gamma^{\inside}_w} \dd w \frac{e^{\frac{z^3}{3} - xz}}{e^{\frac{w^3}{3} -xw}} \left( \prod^{k - 1}_{j = 1} \frac{z - a_j}{w - a_j} \right) \frac{1}{(w - a_k)(z - a_k) (z - w)^2} \label{eq:first_addition_Airy} \\
      && + {}& \frac{1}{(2\pi \ii)^2} \int_{\gamma^{\inside}_v} \dd v \int_{\sigma_z} \dd z \frac{e^{\frac{z^3}{3} - xz}}{e^{\frac{v^3}{3} -xv}} \left( \prod^{k - 1}_{j = 1} \frac{z - a_j}{v - a_j} \right) \frac{-1}{(v - a_k)^2(z - a_k) (z - v)} \label{eq:first_addition_Airy_v} \\
      && + {}& \frac{1}{(2\pi \ii)^2} \int_{\gamma^{\outside}_v} \dd v \int_{\sigma_z} \dd z \frac{e^{\frac{z^3}{3} - xz}}{e^{\frac{v^3}{3} - xv}} \left( \prod^{k - 1}_{j = 1} \frac{z - a_j}{v - a_j} \right) \frac{-1}{(v - a_k)^2 (z - a_k)(z - v)} \label{eq:first_transform_a1} \\
      && + {}& \frac{1}{(2\pi \ii)^2} \int_{\gamma^{\outside}_w} \dd v \int_{\sigma_z} \dd z \frac{e^{\frac{z^3}{3} - xz}}{e^{\frac{w^3}{3} - xw}} \left( \prod^{k - 1}_{j = 1} \frac{z - a_j}{w - a_j} \right) \frac{-1}{(w - a_k)^2 (z - a_k)(z - w)}. \label{eq:first_transform_a2}
    \end{flalign}
  \end{subequations}
  \begin{proof}[Derivation of \eqref{eq:first_addition_Airy}--\eqref{eq:first_transform_a2}] 
    First we integrate $u$ over $\sigma_u - \sigma'_u$ that is equivalent to a positively oriented closed contour enclosing $\gamma^{\inside}_v$ (and also $\gamma^{\inside}_w$), and the result is a $3$-fold integral in $v, w, z$:
    \begin{multline} \label{eq:added_3-integral}
      \frac{1}{(2\pi \ii)^3} \int_{\gamma^{\outside}_v \cup \gamma^{\inside}_v} \dd v \int_{\sigma_z}\dd z \int_{\gamma^{\outside}_w \cup \gamma^{\inside}_w} \dd w \Bigg(\frac{e^{\frac{z^3}{3} - xz}}{e^{\frac{w^3}{3} - xw}} \prod^{k - 1}_{j = 1} \frac{z - a_j}{w - a_j} \frac{1}{(z - w) (v - w)(z - v) (v - a_k)(w - a_k)} \Id(v \in \gamma^{\inside}_v) \\
      + \frac{e^{\frac{z^3}{3} - xz}}{e^{\frac{v^3}{3} - xv}} \prod^{k - 1}_{j = 1} \frac{z - a_j}{v - a_j} \frac{1}{(w - v)(z - w) (z - v) (v - a_k)(w - a_k)} \Id(w \in \gamma^{\inside}_w)\Bigg).
    \end{multline}
    The integral domain of $z$ is always $\sigma_z$ so we leave it alone, and divide the integral domain of $v, w$ into $4$ subdomains:
    \begin{enumerate*}[label=(\roman*)]
    \item \label{enu:step_2_1}
      $\gamma^{\outside}_v \times \gamma^{\outside}_w$,
    \item \label{enu:step_2_2}
      $\gamma^{\outside}_v \times \gamma^{\inside}_w$,
    \item \label{enu:step_2_3}
      $\gamma^{\inside}_v \times \gamma^{\outside}_w$ and
    \item \label{enu:step_2_4}
      $\gamma^{\inside}_v \times \gamma^{\inside}_w$.  
    \end{enumerate*}
    Integrating $v, w$ on subdomain \ref{enu:step_2_1}, the result is $0$; integrating $w$ on subdomain \ref{enu:step_2_2}, the result is \eqref{eq:first_transform_a1}; integrating $v$ on subdomain \ref{enu:step_2_3}, the result is \eqref{eq:first_transform_a2}; on subdomain \ref{enu:step_2_4}, it is more complicated: the integrand can be divided into two parts, such that when we integrate one part with respect to $v$, we get \eqref{eq:first_addition_Airy}, and when we integrate the other part with respect to $w$, we get \eqref{eq:first_addition_Airy_v}.
  \end{proof}
  
\item
  Next, similarly to the previous step, we further  deform the contour $\sigma_z$ such that it goes between $\gamma^{\outside}_v$ and $\gamma^{\inside}_v$, and thus also goes between $\gamma^{\outside}_w$ and $\gamma^{\inside}_w$. Hence $\sigma_z$ becomes $\sigma'_z$; see Figure \ref{fig:gamma_z_deformed}. By residue calculation, the quantity in \eqref{eq:first_transform} becomes
  \begin{subequations} \label{eq:second_transform}
    \begin{multline} \label{eq:2nd_transform}
      \frac{1}{(2\pi \ii)^4} \int_{\sigma'_u} \dd u \int_{\gamma^{\outside}_v \cup \gamma^{\inside}_v} \dd v \int_{\sigma'_z}\dd z \int_{\gamma^{\outside}_w \cup \gamma^{\inside}_w} \dd w \frac{e^{\frac{u^3}{3} - xu}}{e^{\frac{v^3}{3} - xv}} \frac{e^{\frac{z^3}{3} - xz}}{e^{\frac{w^3}{3} - xw}} \prod^{k - 1}_{j = 1} \frac{u - a_j}{v - a_j} \frac{z - a_j}{w - a_j} \\
      \times \frac{1}{(u - v)(z - w) (u - w)(z - v) (v - a_k)(w - a_k)}
    \end{multline}
    \begin{flalign}
      && + {}& \frac{1}{(2\pi \ii)^2} \int_{\sigma'_u} \dd u \int_{\gamma^{\inside}_w} \dd w \frac{e^{\frac{u^3}{3} - xu}}{e^{\frac{w^3}{3} -xw}} \left( \prod^{k - 1}_{j = 1} \frac{u - a_j}{w - a_j} \right) \frac{1}{(w - a_k)(u - a_k) (u - w)^2} \label{eq:gamma'_u_and_w_in} \\
      && + {}& \frac{1}{(2\pi \ii)^2} \int_{\sigma'_u} \dd u \int_{\gamma^{\inside}_v} \dd v \frac{e^{\frac{u^3}{3} - xu}}{e^{\frac{v^3}{3} -xv}} \left( \prod^{k - 1}_{j = 1} \frac{u - a_j}{v - a_j} \right) \frac{-1}{(v - a_k)^2(u - a_k) (u - v)} \label{eq:first_addition_Airy_v_u} \\
      && + {}& \frac{1}{(2\pi \ii)^2} \int_{\sigma'_u} \dd u \int_{\gamma^{\outside}_v} \dd v \frac{e^{\frac{u^3}{3} - xu}}{e^{\frac{v^3}{3} - xv}} \left( \prod^{k - 1}_{j = 1} \frac{u - a_j}{v - a_j} \right) \frac{-1}{(v - a_k)^2 (u - a_k)(u - v)} \label{eq:int_out_v_and_u} \\
      && + {}& \frac{1}{(2\pi \ii)^2} \int_{\sigma'_u} \dd u \int_{\gamma^{\outside}_w} \dd w \frac{e^{\frac{u^3}{3} - xu}}{e^{\frac{w^3}{3} - xw}} \left( \prod^{k - 1}_{j = 1} \frac{u - a_j}{w - a_j} \right) \frac{-1}{(w - a_k)^2 (u - a_k)(u - w)} \label{eq:int_out_w_and_u} \\
      && + {}& \frac{1}{(2\pi \ii)^2} \int_{\sigma'_z}\dd z \int_{\gamma^{\inside}_w} \dd w \frac{e^{\frac{z^3}{3} - xz}}{e^{\frac{w^3}{3} -xw}} \left( \prod^{k - 1}_{j = 1} \frac{z - a_j}{w - a_j} \right) \frac{1}{(w - a_k)(z - a_k) (z - w)^2} \label{eq:gamma'_z_and_w_in} \\
      && + {}& \frac{1}{2\pi \ii} \int_{\gamma^{\inside}_w} \dd w \left( w^2 - x + \sum^{k - 1}_{j = 1} \frac{1}{w - a_j} - \frac{1}{w - a_k} \right) \frac{1}{(w - a_k)^2} \label{eq:gamma'_z_and_w_in_add} \\
      && + {}& \prod^{k - 1}_{j = 1} (a_k - a_j) e^{\frac{a^3_k}{3} - a_k x} \frac{1}{2\pi \ii} \int_{\gamma^{\inside}_w} \dd w \frac{1}{e^{\frac{w^3}{3} - xw}} \left( \prod^{k - 1}_{j = 1} \frac{1}{w - a_j} \right) \frac{1}{(w - a_k)^3} \label{eq:gamma'_z_and_w_in_add_2} \\
      && + {}& \frac{1}{(2\pi \ii)^2} \int_{\gamma^{\inside}_v} \dd v \int_{\sigma'_z}\dd z \frac{e^{\frac{z^3}{3} - xz}}{e^{\frac{v^3}{3} -xv}} \left( \prod^{k - 1}_{j = 1} \frac{z - a_j}{v - a_j} \right) \frac{-1}{(v - a_k)^2(z - a_k) (z - v)} \label{eq:first_addition_Airy_v_out} \\
      && + {}& \prod^{k - 1}_{j = 1} (a_k - a_j) e^{\frac{a^3_k}{3} - a_k x} \frac{1}{2\pi \ii} \int_{\gamma^{\inside}_v} \dd v \frac{1}{e^{\frac{v^3}{3} - xv}} \left( \prod^{k - 1}_{j = 1} \frac{1}{v - a_j} \right) \frac{1}{(v - a_k)^3} \label{eq:gamma'_z_and_w_in_add_2_v} \\
      && + {}& \frac{1}{(2\pi \ii)^2} \int_{\gamma^{\outside}_w} \dd w \int_{\sigma'_z}\dd z \frac{e^{\frac{z^3}{3} - xz}}{e^{\frac{w^3}{3} - xw}} \left( \prod^{k - 1}_{j = 1} \frac{z - a_j}{w - a_j} \right) \frac{-2}{(w - a_k)^2 (z - a_k)(z - w)} \label{eq:int_out_w_and_z_times_2} \\
      && + {}& \prod^{k - 1}_{j = 1} (a_k - a_j) e^{\frac{a^3_k}{3} - a_k x} \frac{1}{2\pi \ii} \int_{\gamma^{\outside}_w} \dd w \frac{1}{e^{\frac{w^3}{3} - xw}} \left( \prod^{k - 1}_{j = 1} \frac{1}{w - a_j} \right) \frac{2}{(w - a_k)^3}. \label{eq:int_out_w_and_z_times_2_new}
    \end{flalign}
  \end{subequations}
  \begin{proof}[Derivation of \eqref{eq:second_transform}]
    \begin{enumerate}[label=(\roman*)]
    \item
      \eqref{eq:first_transform_main} is transformed to the sum of \eqref{eq:2nd_transform}--\eqref{eq:int_out_w_and_u}, by an argument similar to that used to  transform  \eqref{eq:four_contour_Airy}  to the sum  of \eqref{eq:first_transform_main}--\eqref{eq:first_transform_a2}.
    \item
      \eqref{eq:first_addition_Airy} is the sum of \eqref{eq:gamma'_z_and_w_in}, \eqref{eq:gamma'_z_and_w_in_add} and \eqref{eq:gamma'_z_and_w_in_add_2}. To see it, we express the contours $\sigma_z = \sigma'_z + \gamma^{\inside}_v$, and check that if the integral domain of \eqref{eq:first_addition_Airy} is changed into $(z, w) \in \gamma^{\inside}_v \times \gamma^{\inside}_w$, by integrating $z$ first, we obtain \eqref{eq:gamma'_z_and_w_in_add} plus \eqref{eq:gamma'_z_and_w_in_add_2}.
    \item
      \eqref{eq:first_addition_Airy_v} is the sum of \eqref{eq:first_addition_Airy_v_out} and \eqref{eq:gamma'_z_and_w_in_add_2_v}. To see it, we express the contours $\sigma_z = \sigma'_z + \gamma^{\inside}_z$, where $\gamma^{\inside}_z$ encloses $\gamma^{\inside}_v$, and then find that the part of \eqref{eq:first_addition_Airy_v} where $\sigma_z$ is replaced by $\sigma'_z$ becomes \eqref{eq:first_addition_Airy_v_out} and the part where $\sigma_z$ is replaced by $\gamma^{\inside}_z$ becomes \eqref{eq:gamma'_z_and_w_in_add_2_v}.
    \item 
      \eqref{eq:first_transform_a1} and \eqref{eq:first_transform_a2} are equal,  and their sum is equal to the sum of \eqref{eq:int_out_w_and_z_times_2} and \eqref{eq:int_out_w_and_z_times_2_new}.
    \end{enumerate}
  \end{proof}

  \begin{figure}[htb]
  \end{figure}
\item \label{enu:step_deform_v}
  For further deformation of the contours, we introduce the following shorthand notations
  \begin{equation} \label{four dots}
    \Airyupperright := \sqrt{-x}\ii + \frac{1}{\sqrt{-x}}, \quad \Airyupperleft := \sqrt{-x}\ii + \frac{\sqrt{3} \ii}{\sqrt{-x}}, \quad \Airylowerright := \sqrt{-x}\ii + \frac{-\sqrt{3} \ii}{\sqrt{-x}}, \quad \Airylowerleft := \sqrt{-x}\ii + \frac{-1}{\sqrt{-x}}.
  \end{equation}
  For the $4$-fold integral \eqref{eq:2nd_transform}, we perform the following operations:
  \begin{enumerate}[label=(\roman*)]
  \item 
    deform $\sigma'_u$ such that it passes $\Airyupperleft$ and $\overline{\Airyupperleft}$;
  \item
    deform $\sigma'_z$ such that it passes $\Airyupperright$ and $\overline{\Airyupperright}$;
  \item 
    deform $\gamma^{\outside}_v$ such that it goes from $e^{-2\pi \ii/3} \cdot \infty$ to $\overline{\Airyupperleft}$, then goes along the left side of $\sigma'_u$ until it reaches $\Airyupperleft$, and then goes from $\Airyupperleft$ to $e^{2\pi \ii/3} \cdot \infty$;
  \item 
    deform $\gamma^{\inside}_v$ such that it goes from $\Airyupperright$ to $\overline{\Airyupperright}$ along the right side of $\sigma'_z$, then wraps around all $a_j$'s, and finally goes back to $\Airyupperright$; 
  \item
    and at last add an additional contour for $v$, on which the contour integral vanishes: the contour goes from $\overline{\Airyupperright}$ to $\Airyupperright$ along the left side of $\sigma'_z$, then goes from $\Airyupperright$ to $\Airyupperleft$, and further goes  from $\Airyupperleft$ to $\overline{\Airyupperleft}$ along the right side of $\sigma'_u$, and finally goes from $\overline{\Airyupperleft}$ to $\overline{\Airyupperright}$.
  \end{enumerate}
  See Figure \ref{fig:v_cross_uz} for the deformation of contours.

  Now we define the infinite contour $\gamma'_v$ as in Figure \ref{fig:v_cross_uz} that goes from $e^{-2\pi \ii/3} \cdot \infty$ to $\overline{\Airyupperleft}$, then to $\overline{\Airyupperright}$, then wraps $a_j$'s until it reaches $\Airyupperright$, and then goes to $\Airyupperleft$, and finally goes to $e^{2\pi \ii/3} \cdot \infty$. Hence the $4$-fold integral \eqref{eq:2nd_transform} can be simplified by the residue theorem with $\gamma^{\outside}_v \cup \gamma^{\inside}_v$ replaced by $\gamma'_v$. Then the formula \eqref{eq:second_transform} becomes
  \begin{subequations} \label{eq:third_transform}
    \begin{multline} \label{eq:4-fold_PV_v}
      \frac{1}{(2\pi \ii)^4} \int_{\sigma'_u} \dd u \int_{\sigma'_z}\dd z \int_{\gamma^{\outside}_w \cup \gamma^{\inside}_w} \dd w \PV \int_{\gamma'_v} \dd v \frac{e^{\frac{u^3}{3} - xu}}{e^{\frac{v^3}{3} - xv}} \frac{e^{\frac{z^3}{3} - xz}}{e^{\frac{w^3}{3} - xw}} \prod^{k - 1}_{j = 1} \frac{u - a_j}{v - a_j} \frac{z - a_j}{w - a_j} \\
      \times \frac{1}{(u - v)(z - w) (u - w)(z - v) (v - a_k)(w - a_k)}
    \end{multline}
    \begin{flalign}
      && + {}& \frac{1}{(2\pi \ii)^3} \int_{\sigma'_z}\dd z \int_{\gamma^{\outside}_w \cup \gamma^{\inside}_w} \dd w \int^{\Airyupperleft}_{\overline{\Airyupperleft}} \dd u \frac{e^{\frac{z^3}{3} - xz}}{e^{\frac{w^3}{3} - xw}} \left( \prod^{k - 1}_{j = 1} \frac{z - a_j}{w - a_j} \right) \frac{1}{(z - w) (u - w)(z - u) (u - a_k)(w - a_k)} \label{eq:u_out} \\
      && + {}& \frac{1}{(2\pi \ii)^3} \int_{\sigma'_u} \dd u \int_{\gamma^{\outside}_w \cup \gamma^{\inside}_w} \dd w  \int^{\Airyupperright}_{\overline{\Airyupperright}} \dd z \frac{e^{\frac{u^3}{3} - xu}}{e^{\frac{w^3}{3} - xw}} \left( \prod^{k - 1}_{j = 1} \frac{u - a_j}{w - a_j} \right) \frac{1}{(u - z)(z - w) (u - w) (z - a_k)(w - a_k)} \label{eq:u_in} \\
      && + {}& \frac{1}{(2\pi \ii)^2} \int_{\gamma^{\outside}_w} \dd w \int_{\sigma'_z}\dd z \frac{e^{\frac{z^3}{3} - xz}}{e^{\frac{w^3}{3} - xw}} \left( \prod^{k - 1}_{j = 1} \frac{z - a_j}{w - a_j} \right) \frac{-4}{(w - a_k)^2 (z - a_k)(z - w)} \label{eq:int_out_w_and_z_times_4} \\
    && + {}& \prod^{k - 1}_{j = 1} (a_k - a_j) e^{\frac{a^3_k}{3} - a_k x} \frac{1}{2\pi \ii} \int_{\gamma^{\inside}_w \cup \gamma^{\outside}_w} \dd w \frac{1}{e^{\frac{w^3}{3} - xw}} \left( \prod^{k - 1}_{j = 1} \frac{1}{w - a_j} \right) \frac{2}{(w - a_k)^3} \label{eq:int_in_w_out_w_twice} \\
    && + {}& \frac{1}{(2\pi \ii)^2} \int_{\sigma'_z}\dd z \int_{\gamma^{\inside}_w} \dd w \frac{e^{\frac{z^3}{3} - xz}}{e^{\frac{w^3}{3} -xw}} \left( \prod^{k - 1}_{j = 1} \frac{z - a_j}{w - a_j} \right) \frac{2(2w - z - a_k)}{(w - a_k)^2 (z - a_k) (z - w)^2} \label{eq:gamma'_z_and_w_in_twice} \\
    && + {}& 2a_k. \notag
  \end{flalign}
  \end{subequations}
  \begin{proof}[Derivation of \eqref{eq:third_transform}]
    \begin{enumerate}[label=(\roman*)]
    \item
      \eqref{eq:2nd_transform} is equal to the sum of \eqref{eq:4-fold_PV_v}--\eqref{eq:u_in}.
    \item
      The sum of \eqref{eq:int_out_v_and_u}, \eqref{eq:int_out_w_and_u}, and \eqref{eq:int_out_w_and_z_times_2} is equal to \eqref{eq:int_out_w_and_z_times_4}.
    \item
      The sum of \eqref{eq:gamma'_z_and_w_in_add_2} \eqref{eq:gamma'_z_and_w_in_add_2_v} and \eqref{eq:int_out_w_and_z_times_2_new} is equal to \eqref{eq:int_in_w_out_w_twice}.
    \item
      The sum of \eqref{eq:gamma'_u_and_w_in}, \eqref{eq:first_addition_Airy_v_u}, \eqref{eq:gamma'_z_and_w_in}, \eqref{eq:first_addition_Airy_v_out} is equal to \eqref{eq:gamma'_z_and_w_in_twice}.
    \item
      \eqref{eq:gamma'_z_and_w_in_add} is equal to $2a_k$.
    \end{enumerate}
  \end{proof}
  
\item
  Now we deform the contour $\gamma^{\outside}_w \cup \gamma^{\inside}_w$ for $w$ in the way similar to our deformation of $\gamma^{\outside}_v \cup \gamma^{\inside}_v$ for $v$ in Step \ref{enu:step_deform_v}. We perform the following operations:
  \begin{enumerate}[label=(\roman*)]
  \item
    deform $\sigma'_u$ such that it passes $\Airylowerleft$ and $\overline{\Airylowerleft}$ and meanwhile still passes $\Airyupperleft$ and $\overline{\Airyupperleft}$;
  \item
    deform $\sigma'_z$ such that it passes $\Airylowerright$ and $\overline{\Airylowerright}$ and meanwhile still passes $\Airyupperright$ and $\overline{\Airyupperright}$;
  \item 
    deform $\gamma^{\outside}_w$ such that it goes from $e^{-2\pi \ii/3} \cdot \infty$ to $\overline{\Airylowerleft}$, then goes along the left side of $\sigma'_u$ to $\Airylowerleft$, and finally goes from $\Airylowerleft$ to $e^{2\pi \ii/3} \cdot \infty$;
  \item 
    deform $\gamma^{\inside}_w$ such that it goes from $\Airylowerright$ to $\overline{\Airylowerright}$ along the right side of $\sigma'_z$, then wraps around all  $a_j$'s, and finally goes back to $\Airylowerright$;
  \item
    and at last add an additional contour for $w$, on which the contour integral vanishes: the contour goes from $\overline{\Airylowerright}$ to $\Airylowerright$ along the left-side of $\sigma'_z$, then goes from $\Airylowerright$ to $\Airylowerleft$, then from $\Airylowerleft$ to $\overline{\Airylowerleft}$ along the right side of $\sigma'_u$, and finally goes from $\overline{\Airylowerleft}$ to $\overline{\Airylowerright}$.
  \end{enumerate}
  \begin{figure}[htb]
    \begin{minipage}[t]{8cm}
      \centering
      \includegraphics{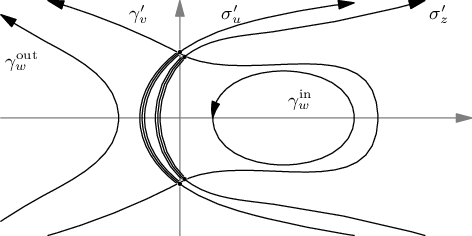}
      \caption{The deformed contours for $v$, $u$ and $z$. The four dots are $\Airyupperleft$, $\Airyupperright$, $\overline{\Airyupperleft}$ and $\overline{\Airyupperright}$.  }
      \label{fig:v_cross_uz}
    \end{minipage}
    \hspace{\stretch{1}}
    \begin{minipage}[t]{8cm}
      \centering
      \includegraphics{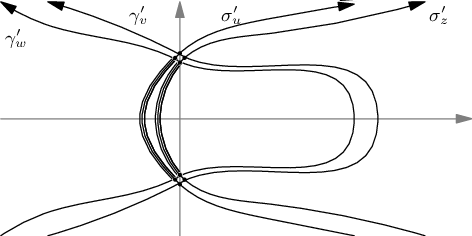}
      \caption{The deformed contours for $w$, $u$ and $z$. The four dots on top are $\Airyupperleft$, $\Airyupperright$, $\Airylowerleft$ and $\Airylowerright$, and the four dots on bottom are their complex conjugates.}
      \label{fig:w_cross_uz}
    \end{minipage}
  \end{figure}
  We have the result in Figure \ref{fig:w_cross_uz}. Similar to  $\gamma'_v$ in Figure \ref{fig:v_cross_uz}, now we define the infinite contour $\gamma'_w$  that goes from $e^{-2\pi \ii/3} \cdot \infty$ to $\overline{\Airylowerleft}$, then to $\overline{\Airylowerright}$, then wraps all $a_j$'s until it reaches $\Airylowerright$, and then goes to $\Airylowerleft$, and finally goes to $e^{2\pi \ii/3} \cdot \infty$. Contour $\gamma'_w$ is parallel to and to the left of  $\gamma'_v$.  Hence by the residue theorem, the $4$-fold integral \eqref{eq:4-fold_PV_v} can be simplified with $\gamma^{\outside}_w \cup \gamma^{\inside}_w$ replaced by $\gamma'_w$, and then formula \eqref{eq:third_transform} becomes
  \begin{subequations} \label{eq:collective_last_Airy}
    \begin{multline} \label{eq:4-fold_PV_vw}
      \frac{1}{(2\pi \ii)^4} \int_{\sigma'_u} \dd u \int_{\sigma'_z}\dd z \PV \int_{\gamma'_w} \dd w \PV \int_{\gamma'_v} \dd v \frac{e^{\frac{u^3}{3} - xu}}{e^{\frac{v^3}{3} - xv}} \frac{e^{\frac{z^3}{3} - xz}}{e^{\frac{w^3}{3} - xw}} \prod^{k - 1}_{j = 1} \frac{u - a_j}{v - a_j} \frac{z - a_j}{w - a_j} \\
      \times \frac{1}{(u - v)(z - w) (u - w)(z - v) (v - a_k)(w - a_k)}
    \end{multline}
    \begin{flalign}
      && + {}& \frac{1}{(2\pi \ii)^3} \int_{\sigma'_z}\dd z \PV \int_{\gamma'_v} \dd v \int^{\Airylowerleft}_{\overline{\Airylowerleft}} \dd u \frac{e^{\frac{z^3}{3} - xz}}{e^{\frac{v^3}{3} - xv}} \left( \prod^{k - 1}_{j = 1} \frac{z - a_j}{v - a_j} \right) \frac{1}{(u - v)(z - u) (z - v) (v - a_k)(u - a_k)} \label{eq:int_vz_u} \\
      && + {}& \frac{1}{(2\pi \ii)^3} \int_{\sigma'_u} \dd u \PV \int_{\gamma'_v} \dd v \int^{\Airylowerright}_{\overline{\Airylowerright}} \dd z \frac{e^{\frac{u^3}{3} - xu}}{e^{\frac{v^3}{3} + xv}} \left( \prod^{k - 1}_{j = 1} \frac{u - a_j}{v - a_j} \right) \frac{1}{(u - v)(u - z) (z - v) (v - a_k)(z - a_k)} \label{eq:int_vu_z} \\
      && + {}& \frac{1}{(2\pi \ii)^2} \int_{\gamma^{\inside}_w \cup \gamma^{\outside}_w} \dd w \int_{\sigma'_z}\dd z \frac{e^{\frac{z^3}{3} - xz}}{e^{\frac{w^3}{3} - xw}} \left( \prod^{k - 1}_{j = 1} \frac{z - a_j}{w - a_j} \right) \frac{-4}{(w - a_k)^2 (z - a_k)(z - w)} \label{eq:int_out_w_and_in_w_z_times_4} \\
      && + {}& \frac{1}{(2\pi \ii)^2} \int_{\sigma'_z}\dd z \int_{\gamma^{\inside}_w} \dd w \frac{e^{\frac{z^3}{3} - xz}}{e^{\frac{w^3}{3} -xw}} \left( \prod^{k - 1}_{j = 1} \frac{z - a_j}{w - a_j} \right) \frac{2}{(w - a_k)^2 (z - w)^2} \label{eq:gamma'_z_and_w_in_twice_remainder} \\
      && + {}& \eqref{eq:u_out} + \eqref{eq:u_in} + \eqref{eq:int_in_w_out_w_twice} + 2a_k.
    \end{flalign}
  \end{subequations}

  \begin{proof}[Derivation of \eqref{eq:collective_last_Airy}]
    \begin{enumerate*}[label=(\roman*)]
    \item
      \eqref{eq:4-fold_PV_v} is the sum of \eqref{eq:4-fold_PV_vw}--\eqref{eq:int_vu_z}, and
    \item 
      the sum of \eqref{eq:int_out_w_and_z_times_4} and \eqref{eq:gamma'_z_and_w_in_twice} is equal to the sum of \eqref{eq:int_out_w_and_in_w_z_times_4} and \eqref{eq:gamma'_z_and_w_in_twice_remainder}.
    \end{enumerate*}
  \end{proof}
\end{enumerate}

Now we can compute the contour integrals by saddle point method. Recall the contours $\sigma_{\standard}$ and $\gamma_{\standard}$ defined in \eqref{eq:defn_standard_gamma_sigma}). To facilitate the analysis, we fix the shape of the contours as $\sigma'_u = \sigma_{\standard}(-\sqrt{-x/3} - 1/\sqrt{-x})$, $\sigma'_z = \sigma_{\standard}(-\sqrt{-x/3} + 1/\sqrt{-x})$, $\gamma'_v = \gamma_{\standard}(\sqrt{-x/3} + 1/\sqrt{-x})$ and $\gamma'_w = \gamma_{\standard}(\sqrt{-x/3} - 1/\sqrt{-x})$, and call the $4$-fold contour consisting of them $\mathsf{\doubleX}$. Also when we consider contour integrals in the form of $\int^{C}_{\overline{C}}$ in \eqref{eq:collective_last_Airy} and \eqref{eq:third_transform} where $C \in \compC_+$, we fix the shape of the contour to be the part of $\sigma_{\standard}(a)$ between $\overline{C}$ and $C$, where $a=\Re(C) - \Im(C)/\sqrt{3}$. Below we also consider contour integrals denoted as $\int^C_{\overline{C}, \rightside}$, whose contour is the part of $\gamma_{\standard}(b)$ between $\overline{C}$ and $C$, where $b = \Re(C) + \Im(C)/\sqrt{3}$. (For symmetry, $\int^C_{\overline{C}}$ should be expressed $\int^C_{\overline{C}, \leftside}$. But since it occurs more often than $\int^C_{\overline{C}, \rightside}$, we omit the $\leftside$ subscript for simplicity.)
\begin{enumerate}[label=(\arabic*)]
\item
  The $4$-fold integral \eqref{eq:4-fold_PV_vw} can be estimated in the same way as the $2$-fold integral \eqref{eq:int_over_X_and_bar:X}. To perform the saddle point analysis, we denote the subsets of $\mathsf{\doubleX}$
  \begin{enumerate}[label=(\roman*)]
  \item
    $\mathsf{\doubleX}_1 = \{ u, v, z, w \in \mathsf{\doubleX} \mid \lvert u - \sqrt{-x} \ii \rvert < 1, \, \lvert v - \sqrt{-x} \ii \rvert < 1, \, \lvert z - \sqrt{-x} \ii \rvert < 1, \, \lvert w - \sqrt{-x} \ii \rvert < 1 \}$, 
  \item 
    $\mathsf{\doubleX}_2 = \{ u, v, z, w \in \mathsf{\doubleX} \mid \lvert u + \sqrt{-x} \ii \rvert < 1, \, \lvert v + \sqrt{-x} \ii \rvert < 1, \, \lvert z + \sqrt{-x} \ii \rvert < 1, \, \lvert w + \sqrt{-x} \ii \rvert < 1 \}$, 
  \item
    $\mathsf{\doubleX}_3 = \mathsf{\doubleX} \setminus (\mathsf{\doubleX}_1 \cup \mathsf{\doubleX}_2)$.
  \end{enumerate}
  By standard saddle point method, we have that the parts of integral \eqref{eq:4-fold_PV_vw} over $\mathsf{\doubleX}_1$ and $\mathsf{\doubleX}_2$ are both $\bigO(\lvert x \rvert^{-1})$, and the part of integral \eqref{eq:4-fold_PV_vw} over $\mathsf{\doubleX}_3$ is $o(\lvert x \rvert^{-1})$. Hence we have that \eqref{eq:4-fold_PV_vw} is $\bigO(\lvert x \rvert^{-1})$.
\item
  Both of the $3$-fold integrals \eqref{eq:int_vz_u} and \eqref{eq:int_vu_z} can be evaluated similarly, and they are $\bigO(\lvert x \rvert^{-1})$.
\item
  The $3$-fold integral \eqref{eq:u_out} can be written as the sum of
  \begin{subequations}
    \begin{align}
      & \frac{1}{(2\pi \ii)^3} \int_{\sigma'_z}\dd z \PV \int_{\gamma''_w} \dd w \int^{\Airyupperleft}_{\overline{\Airyupperleft}} \dd u \frac{e^{\frac{z^3}{3} - xz}}{e^{\frac{w^3}{3} - xw}} \left( \prod^{k - 1}_{j = 1} \frac{z - a_j}{w - a_j} \right)   \frac{1}{(z - w) (u - w)(z - u) (u - a_k)(w - a_k)} \label{eq:first_trivial} \\
      + {}& \frac{1}{(2\pi \ii)^2} \int^{\Airyupperleft}_{\overline{\Airyupperleft}} \dd u \int^{\sqrt{-x} \ii + e^{\pi \ii/6} \sqrt{-3/x}}_{-\sqrt{-x} \ii + e^{-\pi \ii/6} \sqrt{-3/x}} \dd z \frac{-1}{(u - z)^2 (u - a_k)(z - a_k)} \label{eq:last_added_1} \\
      + {}& \frac{1}{(2\pi \ii)^2} \int_{\sigma''_z} \dd z \int^{\Airyupperleft}_{\overline{\Airyupperleft}} \dd u \frac{e^{\frac{z^3}{3} - xz}}{e^{\frac{u^3}{3} - xu}} \left( \prod^{k - 1}_{j = 1} \frac{z - a_j}{u - a_j} \right) \frac{1}{(z - u)^2(u - a_k)^2} \label{eq:gamma''_z_and_u} \\
      + {}& \frac{1}{2\pi \ii} \int^{\Airyupperleft}_{\overline{\Airyupperleft}} \dd u \left( (u^2 - x) + \sum^{k - 1}_{j = 1} \frac{1}{u - a_j} \right) \frac{1}{(u - a_k)^2}, \label{eq:last_done}
    \end{align}
  \end{subequations}
  where the contours $\gamma''_w = \gamma_{\standard}(\sqrt{-x/3} + 2/\sqrt{-x})$ and $\sigma''_z = \sigma_{\standard}(-\sqrt{-x/3} - 2/\sqrt{-x})$. We note that $\sigma''_z$ is similar to $\sigma'_z$ but keeps the contour for $u$ in \eqref{eq:gamma''_z_and_u} on its right, and the contour $\gamma''_w$ is similar to $\gamma'_w$ but intersects $\sigma'_z$ at $\sqrt{-x} \ii + e^{\pi \ii/6} \sqrt{-3/x}$ and $-\sqrt{-x} \ii + e^{-\pi \ii/6} \sqrt{-3/x}$. On the other hand, the $3$-fold \eqref{eq:u_in} can be written as the sum of 
  \begin{subequations}
    \begin{align}
      & \frac{1}{(2\pi \ii)^3} \int_{\sigma'_u} \dd u \PV \int_{\gamma''_w} \dd w  \int^{\Airyupperright}_{\overline{\Airyupperright}} \dd z \frac{e^{\frac{u^3}{3} - xu}}{e^{\frac{w^3}{3} - xw}} \left( \prod^{k - 1}_{j = 1} \frac{u - a_j}{w - a_j} \right) \frac{1}{(u - z)(z - w) (u - w) (z - a_k)(w - a_k)} \\
      + {}& \frac{1}{(2\pi \ii)^2} \int^{\sqrt{-x} \ii + (1 + 3\sqrt{3} \ii))/\sqrt{-4x}}_{-\sqrt{-x} \ii + (1 - 3\sqrt{3} \ii))/\sqrt{-4x}} \dd u \int^{\Airyupperright}_{\overline{\Airyupperright}} \dd z \frac{-1}{(u - z)^2 (u - a_k)(z - a_k)} \label{eq:last_added_2_Airy} \\
      + {}& \frac{1}{(2\pi \ii)^2} \int_{\sigma'_u} \dd u \int^{\Airyupperright}_{\overline{\Airyupperright}} \dd z \frac{e^{\frac{u^3}{3} - xu}}{e^{\frac{z^3}{3} - xz}} \left( \prod^{k - 1}_{j = 1} \frac{u - a_j}{z - a_j} \right) \frac{1}{(u - z)^2(z - a_k)^2}, \label{eq:two_cancellation_early}
    \end{align}
  \end{subequations}
  where the contour $\sigma''_w = \sigma_{\standard}(-\sqrt{-x/3} - 2/\sqrt{-x})$ is the same as $\sigma''_z$ in \eqref{eq:gamma''_z_and_u}, and it intersects $\sigma'_u$ at $\sqrt{-x} \ii + (1 + 3\sqrt{3} \ii))/\sqrt{-4x}$ and $-\sqrt{-x} \ii + (1 - 3\sqrt{3} \ii))/\sqrt{-4x}$.
\item
  The $2$-fold integral \eqref{eq:int_out_w_and_in_w_z_times_4} can be written as the sum
  \begin{subequations}
    \begin{align}
      & \frac{1}{(2\pi \ii)^2} \iint_{\mathsf{X}} \dd v\dd z \frac{e^{\frac{z^3}{3} - xz}}{e^{\frac{w^3}{3} - xw}} \left( \prod^{k - 1}_{j = 1} \frac{z - a_j}{w - a_j} \right) \frac{-4}{(w - a_k)^2 (z - a_k)(z - w)} \\
      + {}& \frac{1}{2\pi \ii} \int^{\sqrt{-x}\ii}_{-\sqrt{-x}\ii}\dd z \frac{4}{(z - a_k)^3}, \label{eq:another_single_contour}
    \end{align}
  \end{subequations}
  by the same transform as \eqref{eq:double_contour_mean_Airy} is transformed into \eqref{eq:int_over_X_and_bar}, and the double contour $\mathsf{X}$ and the single contour in \eqref{eq:another_single_contour} are the same as in \eqref{eq:int_over_X_and_bar}. Note that the contour \eqref{eq:int_over_X_and_bar}  is to the right of all $a_k$'s while  the contour  in \eqref{eq:another_single_contour} is to the left of all $a_k$'s.
\item
  The $2$-fold integral \eqref{eq:gamma'_z_and_w_in_twice_remainder} can be written as the sum of  
  \begin{subequations}
    \begin{align}
      & \frac{1}{(2\pi \ii)^2} \int_{\sigma''_z} \dd z \int^{\Airyupperleft}_{\overline{\Airyupperleft}} \dd w \frac{e^{\frac{z^3}{3} - xz}}{e^{\frac{w^3}{3} -xw}} \left( \prod^{k - 1}_{j = 1} \frac{z - a_j}{w - a_j} \right) \frac{-1}{(w - a_k)^2 (z - w)^2} \label{eq:one_cancellation} \\
      + {}& \frac{1}{(2\pi \ii)^2} \int_{\sigma''_z} \dd z \int^{\Airyupperleft}_{\overline{\Airyupperleft}, \rightside} \dd w \frac{e^{\frac{z^3}{3} - xz}}{e^{\frac{w^3}{3} -xw}} \left( \prod^{k - 1}_{j = 1} \frac{z - a_j}{w - a_j} \right) \frac{1}{(w - a_k)^2 (z - w)^2} \\
      + {}& \frac{1}{(2\pi \ii)^2} \int_{\sigma''_z} \dd z \int^{\Airyupperright}_{\overline{\Airyupperright}} \dd w \frac{e^{\frac{z^3}{3} - xz}}{e^{\frac{w^3}{3} -xw}} \left( \prod^{k - 1}_{j = 1} \frac{z - a_j}{w - a_j} \right) \frac{-1}{(w - a_k)^2 (z - w)^2} \label{eq:two_cancellation} \\
      + {}& \frac{1}{(2\pi \ii)^2} \int_{\sigma''_z} \dd z \int^{\Airyupperright}_{\overline{\Airyupperright}, \rightside} \dd w \frac{e^{\frac{z^3}{3} - xz}}{e^{\frac{w^3}{3} -xw}} \left( \prod^{k - 1}_{j = 1} \frac{z - a_j}{w - a_j} \right) \frac{1}{(w - a_k)^2 (z - w)^2}, \label{eq:last_trivial}
    \end{align}
  \end{subequations}
  where $\sigma''_z$ is defined the same as $\sigma''_z$ in \eqref{eq:gamma''_z_and_u}. We note that \eqref{eq:one_cancellation} cancels with \eqref{eq:gamma''_z_and_u}, and \eqref{eq:two_cancellation} cancels with \eqref{eq:two_cancellation_early}.
\item
  The $1$-fold integral \eqref{eq:int_in_w_out_w_twice}, after taking into account of the negative sign in \eqref{eq:var_part_3}, cancels with \eqref{eq:single_int_gamma_L}.
\item
  \eqref{eq:last_done} can be evaluated similarly to \eqref{eq:variance_Airy_bar}, and it is $2\sqrt{-x}/\pi - a_k + \bigO(\lvert x \rvert^{-1/2})$.
\item
  \eqref{eq:last_added_1} and \eqref{eq:last_added_2_Airy} are $\bigO(\lvert x \rvert^{-1/2} \log \lvert x \rvert)$, and all the other  integrals from \eqref{eq:first_trivial} to \eqref{eq:last_trivial} not mentioned above, are $\bigO(\lvert x \rvert^{-1/2})$, as $x \to -\infty$. Since the evaluations are all by standard saddle point analysis, we omit the details.
\end{enumerate}
Hence we obtain the final proof of \eqref{eq:var_of_measure}.

\section{Eigenvector distribution for  GUE with external source} \label{s. GUE limit}
 In this section, we prove Theorem \ref{thm:main_thm_2}, and also Corollary \ref{cor:to_thm_2}, based on   Proposition \ref{prop:large_small_deviation_mu^k_N}. 
Recall the notations $\sigma_i$ in \eqref{20040810} and $\vec{x}_i$ in \eqref{20040811} for the eigenvalues and eigenvectors of $G_{\totalalpha} = G^{(N)}_{\totalalpha}$.  Let $\lambda_1 > \lambda_2 > \dotsb > \lambda_{N - 1}$ be the ordered eigenvalues of $G^{(N-1)}_{\totalalpha}$, which is obtained by removing the first column and first row of $G_{\totalalpha}$.

In order to prove Theorem \ref{thm:main_thm_2}, we define, analogous to \eqref{20041001}, \eqref{eq:defn_M(x)} and \eqref{eq:defn_F(x)}, the random measure $\mu_N = \mu^{(k)}_N$ with the random density function $\phi_N(x)$, (random) complementary distribution function $M_N(x)$ for $\mu_N$, and the mean of $M_N(x)$ such that
\begin{equation} \label{20041101}
  \phi_N(x) =
  \begin{cases}
    1 & \sigma_i < x \leq \lambda_{i - 1} \text{ for all }  i \in \llbracket 1,  N\rrbracket \\
    0 & \text{otherwise},
  \end{cases}
  \quad \text{and} \quad
  \begin{aligned}
    M_N(x) = {}& \mu_N((x, +\infty)) = \int^{\infty}_x \phi_N(t) \dd t, \\
    F_N(x) = {}& \E M_N(x).
  \end{aligned}
\end{equation}
Then for any $L \in \llbracket j, N \rrbracket$,
\begin{equation} \label{eq:defn_P_GUE_tilde}
  \lvert x_{j1} \rvert^2 = \prod^{j - 1}_{i = 1} \frac{\sigma_j - \lambda_i}{\sigma_j - \sigma_i} \prod^L_{i = j + 1} \frac{\sigma_j - \lambda_{i - 1}}{\sigma_j - \sigma_i} \mathcal{G}^{(k)}_{j, L}(\totalalpha; N), \quad \text{where} \quad \mathcal{G}^{(k)}_{j, L}(\totalalpha;N) := \prod^N_{i = L + 1} \frac{\sigma_j - \lambda_{i - 1}}{\sigma_j - \sigma_i}.
\end{equation}
Here the superscript $k$ in the notation $\mathcal{G}_{j,L}^{(k)}(\totalalpha;N)$ reminds us that the external source is of rank $k$; see $\alpha_1, \dotsc, \alpha_k$ in Assumption \ref{main assum}. Analogous to \eqref{eq:log_A_expr} and \eqref{19100302}, we write
\begin{equation} \label{eq:GUE_P_n_integral}
  \log \mathcal{G}_{j, L}^{(k)}(\totalalpha;N) =\int^{\lambda_L}_{\sigma_N} \frac{1}{\sigma_j - x} \dd M_N(x)= \int^{\sigma_L}_{\sigma_N} \frac{1}{\sigma_j - x} \dd M_N(x),
\end{equation}
where in the last step we used the fact that $\dd M_N(x) = 0$ on $(\lambda_L, \sigma_L]$ by definition. Analogous to Proposition \ref{prop:Airy_measure}, we have the following key technical result on the estimate of $F_N(x)$ and $\var M_N(x)$, and we state it for $x$ either in the bulk or on the edge of the distribution of $\sigma_1, \dotsc, \sigma_N$. The proof will be given in Section \ref{pf.lem_GUE}.
\begin{pro} \label{prop:large_small_deviation_mu^k_N}
  \begin{enumerate}
  \item (Bulk)
    Let $\epsilon > 0$ be any small (but fixed) constant and $N$ be large enough. For $x \in ((-2 + \epsilon)\sqrt{N}, 2\sqrt{N} - N^{-1/10})$ , we have
    \begin{gather}
      F_N(x) = E_N(x) + \bigO(N^{-\frac{1}{12}} (2\sqrt{N} - x)^{\frac{1}{2}}), \quad E_N(x) = \frac{1}{2\pi} \left( -\sqrt{4N - x^2} + x \arccos \frac{x}{2\sqrt{N}} \right) + \sqrt{N} - \frac{x}{2}, \label{eq:GUE_measure_mean} \\
      \var M_N(x) =  V_N(x) + \bigO(N^{-\frac{7}{24}} (2\sqrt{N} - x)^{\frac{1}{4}}), \quad V_N(x) = \frac{1}{\pi} \left( \sqrt{1 - \frac{x^2}{4N}} + \arccos  \frac{x}{2\sqrt{N}} \right). \label{eq:GUE_measure_variance}
    \end{gather}
  \item (Edge) \label{enu:prop:large_small_deviation_mu^k_N:2}
    Let $C$ be a large enough positive constant and $N$ be large enough. For $x \in [2\sqrt{N} - N^{-1/10}, 2\sqrt{N} - CN^{-1/6}]$, we have, with $\xi = N^{1/6} (x - 2\sqrt{N})$,
    \begin{align}
      F_N(x) = {}& N^{-1/6} \left( \frac{-\xi}{2} + \bigO(1) \right), \label{eq:GUE_measure_mean_local} \\
      \var M_N(x) = {}& N^{-1/3} \left( \frac{2}{\pi} \sqrt{-\xi} + \bigO( \lvert \xi \rvert^{1/4} ) \right). \label{eq:GUE_measure_variance_local}
    \end{align}
  \end{enumerate}
\end{pro}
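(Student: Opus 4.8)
The strategy is to mirror the Airy-level computations carried out in Sections \ref{s.airy limit}--\ref{pf.lem_Airy}, replacing the extended Airy kernel by the exact finite-$N$ correlation kernel $K^{j_1,j_2}_{\GUEalpha}$ of the GUE minor process with external source recorded in \eqref{eq:kernel_GUE}--\eqref{20041005}, and then running a saddle point analysis around the relevant point of the semicircle. Concretely, introduce the random variables
\begin{equation} \label{eq:plan_NxSx}
  N_x^{(N)} = \lvert \{ i : \sigma_i > x \} \rvert - \lvert \{ i : \lambda_i > x \} \rvert, \qquad S_x^{(N)} = -\sum_{\sigma_i > x} \sigma_i + \sum_{\lambda_i > x} \lambda_i,
\end{equation}
so that, exactly as in \eqref{eq:mu_S_N_Airy}, $M_N(x) = S_x^{(N)} + x N_x^{(N)} + (\sigma_1 - x)\Id(\sigma_1 > x)$, the last term being negligible in mean and variance by part \ref{enu:lem:non-asy_est_GUE:1} of Lemma \ref{lem:non-asy_est_GUE} when $x$ is a macroscopic or mesoscopic distance below $2\sqrt N$. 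Then $\E(S_x^{(N)} + xN_x^{(N)})$ and $\var(S_x^{(N)} + xN_x^{(N)})$ have \emph{exact} single-, double- and four-fold contour-integral representations obtained just as in \eqref{eq:mean_formula_Airy}, \eqref{eq:double_contour_mean_Airy}, \eqref{eq:variance_formula_Airy}--\eqref{eq:var_3parts_Airy}: one replaces the Airy integrand $e^{u^3/3 - xu}$ by $e^{z^2/2 - xz} z^{N-k}$ (and the $a_j$'s by $\alpha_j$'s), with $\Sigma$ and $\Gamma$ the contours of Figure \ref{fig:Gamma_Sigma}. The test function remains $h_x(t)$ from \eqref{eq:defn_h_x(t)}, and the difference of the two species again produces the factor $1/(w-\alpha_k)$ together with $\prod_{j<k}(z-\alpha_j)/(w-\alpha_j)$.

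For part (1) (the macroscopic/mesoscopic regime $x \in ((-2+\epsilon)\sqrt N,\, 2\sqrt N - N^{-1/10})$), the phase function is $g(z) = z^2/2 - xz + (N-k)\log z$, whose saddle points solve $z^2 - xz + (N-k) = 0$, i.e. $z_\pm = \frac{x}{2} \pm \sqrt{\frac{x^2}{4} - (N-k)}$; for $x$ in this range these are a complex-conjugate pair $z_\pm = \frac{x}{2} \pm \ii\sqrt{N - \frac{x^2}{4} + \bigO(1)}$, the finite-$N$ analogue of $\pm\sqrt{-x}\,\ii$. I would deform $\Sigma$ through the saddles and $\Gamma$ through them as well (a principal-value / residue bookkeeping as in \eqref{eq:int_over_X_and_bar}), extracting a single $\Gamma$-contour integral between the saddles (the analogue of \eqref{eq:int_over_X_and_bar:bar}) plus a remainder double integral that is subleading. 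Evaluating the single-contour piece by residues gives the closed forms $E_N(x)$ and $V_N(x)$: the $-\sqrt{4N-x^2} + x\arccos(x/2\sqrt N)$ combination is precisely what $\int$ of $z^2 - x$ (resp. the variance analogue) over the arc between $z_-$ and $z_+$ produces, while the extra $\sqrt N - x/2$ comes from the $(\sigma_1 - x)\Id$ term and the $\alpha$-dependent residues at $w=\alpha_k$. The $\alpha$-dependence drops out to the stated order because $\alpha_j = \sqrt N + \bigO(N^{1/6})$ sits far (distance $\asymp N^{1/6}$ below, but on the scale of the saddles $\asymp \sqrt N$) from the saddle locus; a careful bookkeeping of this is needed for the error terms $\bigO(N^{-1/12}(2\sqrt N - x)^{1/2})$ and $\bigO(N^{-7/24}(2\sqrt N - x)^{1/4})$, which reflect the shrinking saddle-point scale $(N-x^2/4)^{-1/4}$ multiplied by powers of $N$.

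For part (2) (the mesoscopic edge regime $2\sqrt N - N^{-1/10} \le x \le 2\sqrt N - CN^{-1/6}$, i.e. $-N^{1/6 - 1/10} \le \xi \le -C$) I would instead use the \emph{edge rescaling} of Lemma \ref{lem:Airy_limit}: with $x = 2\sqrt N + N^{-1/6}\xi$ the kernel $K^{j_1,j_2}_{N,\scaled}$ converges (with trace-norm control, part (2) of that lemma) to $K^{k_1,k_2}_{\Airya}$, so $N^{1/6}(S_x^{(N)} + xN_x^{(N)})$ and its variance should be close to $\E(S_\xi + \xi N_\xi)$ and $\var(S_\xi + \xi N_\xi)$ from Proposition \ref{prop:Airy_measure}, evaluated at the Airy parameter $\xi$; the ranges are chosen so that $\xi \to -\infty$ and \eqref{eq:mean_of_measure}--\eqref{eq:var_of_measure} apply, giving $-\xi/2 + \bigO(1)$ and $\frac{2}{\pi}\sqrt{-\xi} + \bigO(|\xi|^{1/4})$, hence \eqref{eq:GUE_measure_mean_local}--\eqref{eq:GUE_measure_variance_local} after multiplying by $N^{-1/6}$ and $N^{-1/3}$. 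The point here is that for $|\xi|$ as large as $N^{1/6-1/10}$ one cannot just cite the fixed-$\xi$ convergence; one needs a quantitative version. I expect the genuine obstacle to be exactly this quantitative matching in the intermediate zone — showing the finite-$N$ contour integrals are uniformly close to their Airy limits for $\xi$ growing with $N$, and simultaneously that the part-(1) asymptotics remain valid down to $x = 2\sqrt N - N^{-1/10}$ so that the two regimes overlap. This is handled by the same series of contour deformations as in Section \ref{pf.lem_Airy} but now tracking the error in the $z^{N-k} = e^{(N-k)\log z}$ factor via $\log z = \log(\sqrt N) + (\text{shift})/\sqrt N - (\text{shift})^2/(2N) + \cdots$, so that the GUE phase reduces to the cubic Airy phase plus controlled lower-order corrections; the rest of the saddle point work (splitting the multi-fold contours into neighborhoods of $z_\pm$ and a remainder, and estimating the many residue terms analogous to \eqref{eq:third_transform}--\eqref{eq:collective_last_Airy}) is routine and I would only sketch it, as the excerpt does for the Airy case.
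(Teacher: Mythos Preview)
Your proposal is correct and follows essentially the same route as the paper's proof in Section \ref{pf.lem_GUE}: reduce $M_N(x)$ to $S_x+xN_x$ plus the $(\sigma_1-x)\Id(\sigma_1>x)$ tail (handled by Lemma \ref{lem:non-asy_est_GUE}\ref{enu:lem:non-asy_est_GUE:1}), write $\E(S_x+xN_x)$ and $\var(S_x+xN_x)$ as contour integrals in $K^{j_1,j_2}_{\GUEalpha}$, deform through the saddles $\tfrac{x}{2}\pm\tfrac{\ii}{2}\sqrt{4N-x^2}$, and read off $E_N$, $V_N$ from the single-contour residue piece while bounding the remaining multi-fold integrals. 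One clarification: for part \ref{enu:prop:large_small_deviation_mu^k_N:2} the paper does \emph{not} invoke the trace-norm convergence of Lemma \ref{lem:Airy_limit} at all (as you anticipated, that would not be quantitative enough for $|\xi|$ up to $N^{1/15}$); it proceeds exactly as you describe in your final paragraph, namely by the change of variables $u=N^{-1/6}(z-\sqrt N)$ which turns the phase into $\tfrac{u^3}{3}-\xi u + f_N(u)$ with $f_N(u)=(N-k)\log(1+N^{-1/3}u)-N^{2/3}u+\tfrac12 N^{1/3}u^2-\tfrac13 u^3=\bigO(N^{-1/3}u^4)$, and then repeats the Airy-level saddle point argument verbatim with this controlled perturbation.
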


To prove Theorem \ref{thm:main_thm_2}, we first define $\tilde{\sigma}_{i} = N^{1/6}(\sigma_i - 2\sqrt{N})$ and $\tilde{\lambda}_{i} = N^{1/6}(\lambda_i - 2\sqrt{N})$ for $i = 1, \dotsc, L$, given a fixed $L$. Then we use Lemma \ref{lem:Airy_limit}, and find that as a random vector, $(\tilde{\sigma}_{1}, \dotsc, \tilde{\sigma}_{L}, \tilde{\lambda}_{1}, \dotsc, \tilde{\lambda}_{L})$ converge weakly to $(\xi^{(k)}_1, \dotsc, \xi^{(k)}_L, \xi^{(k - 1)}_1, \dotsc, \xi^{(k - 1)}_L)$ as $N \to \infty$. Hence for any fixed $L > j$,
\begin{equation}
  \begin{split}
    \lim_{N \to \infty} \prod^{j - 1}_{i = 1} \frac{\sigma_j - \lambda_i}{\sigma_j - \sigma_i} \prod^L_{i = j + 1} \frac{\sigma_j - \lambda_{i - 1}}{\sigma_j - \sigma_i} = {}& \lim_{N \to \infty} \prod^{j - 1}_{i = 1} \frac{\tilde{\sigma}_{j} - \tilde{\lambda}_{i}}{\tilde{\sigma}_{j} - \tilde{\sigma}_{i}} \prod^L_{i = j + 1} \frac{\tilde{\sigma}_{j} - \tilde{\lambda}_{i - 1}}{\tilde{\sigma}_{j} - \tilde{\sigma}_{i}} \\
    = {}& \prod^{j - 1}_{i = 1} \frac{\xi^{(k)}_j - \xi^{(k - 1)}_i}{\xi^{(k)}_j - \xi^{(k)}_i} \prod^L_{i = j + 1} \frac{\xi^{(k)}_j - \xi^{(k - 1)}_{i - 1}}{\xi^{(k)}_j - \xi^{(k)}_i} = L^{-\frac{1}{3}} \Xi_{j}^{(k)}(\totala; L).
  \end{split}
\end{equation}
Using the convergence result in Theorem \ref{thm:main_thm_1}, we only need to show that for any $\epsilon_1, \epsilon_2 > 0$, there is a sufficiently large  $ L_{\epsilon_1, \epsilon_2}>0$ such that for any $L\geq L_{\epsilon_1, \epsilon_2}$,
\begin{equation} \label{eq:transformed_to_G_jL}
  \limsup_{N \to \infty} \Prob \left( \left\lvert \log \mathcal{G}_{j, L}^{(k)}(\totalalpha;N) - \left[ \log \frac{L^{1/3}}{N^{1/3}} + \log  \left( \frac{3\pi}{2} \right)^{1/3}  \right] \right\rvert > \epsilon_1 \right) < \epsilon_2.
\end{equation}
Here the deterministic term in \eqref{eq:transformed_to_G_jL} corresponds to the model-dependent constant in Theorem \ref{thm:main_thm_2}. Below we are going to prove the equivalent estimate
\begin{equation} \label{eq:transformed_to_G_jL_alt}
  \limsup_{N \to \infty} \Prob \left( \left\lvert \int^{\sigma_L}_{\sigma_N} \frac{1}{\sigma_j - x} \dd M_N(x) - \int^{2\sqrt{N} - N^{-1/6} \left( 3\pi L/2 \right)^{2/3}}_{-2\sqrt{N}} \frac{1}{2\sqrt{N} - x} \dd E_N(x) \right\rvert > \epsilon_1 \right) < \epsilon_2.
\end{equation}
The equivalence between \eqref{eq:transformed_to_G_jL_alt} and \eqref{eq:transformed_to_G_jL} is shown by
\begin{equation}
  \begin{split}
    & \int^{2\sqrt{N} - N^{-1/6} \left( 3\pi L/2 \right)^{2/3}}_{-2\sqrt{N}} \frac{\dd E_N(x)}{2\sqrt{N} - x} \\
    = {}& \frac{1}{2\pi} \int^{1 - \frac{1}{2} \left( 3\pi LN^{-1}/2 \right)^{2/3}}_{-1} \frac{\arccos y - \pi}{1 - y} \dd y \\
    = {}& \frac{1}{2\pi} \int^1_{-1} \frac{\arccos y}{1 - y} \dd y - \frac{1}{2} \int^{1 - \frac{1}{2} \left( 3\pi LN^{-1}/2 \right)^{2/3}}_{-1} \frac{\dd y}{1 - y} - \frac{1}{2\pi} \int^1_{1 - \frac{1}{2} \left( 3\pi LN^{-1}/2 \right)^{2/3}} \frac{\arccos y}{1 - y} \dd y \\
    = {}& \log 2 - \frac{1}{2} \left( \log 2 - \log \left( \frac{1}{2} \left( \frac{3\pi L}{2N} \right)^{2/3} \right) \right) + \bigO(N^{-\frac{1}{3}}) = \log \frac{L^{1/3}}{N^{1/3}} + \log \left( \frac{3\pi}{2} \right)^{1/3}  + \bigO(N^{-\frac{1}{3}}),
  \end{split}
\end{equation}
where we used the fact that 
\begin{align}
\frac{1}{2\pi} \int^1_{-1} \frac{\arccos y}{1 - y} \dd y = -\frac{2}{\pi} \int^{\pi/2}_0 \log(\sin \theta) \dd \theta=\log 2. \label{041902}
\end{align}
Here  the first step in (\ref{041902}) is a simple consequence of integration by parts.  The second step of (\ref{041902}) is a basic integral, and the result can be  found in \cite[4.224.3]{Gradshteyn-Ryzhik07}, for instance. Nevertheless, for the reader's convenience, we  illustrate the derivation as follows: First, one notices that $\int^{\pi/2}_0 \log(\sin \theta) \dd \theta=\int^{\pi/2}_0 \log(\cos \theta) \dd \theta$ and thus $\int^{\pi/2}_0 \log(\sin \theta) \dd \theta=\frac12\int^{\pi/2}_0 \log(\sin\theta\cos \theta) \dd \theta=\frac12\int_0^{\pi/2} \log(\sin(2\theta)) \dd \theta-\frac{\pi}{4} \log 2$. Then, by a simple change of variable $2\theta\mapsto \theta'$  and the fact $\int_{0}^{\pi} \log (\sin \theta')\dd \theta'= 2\int_{0}^{\pi/2} \log (\sin\theta')\dd \theta'$, one can conclude the result.

In order to prove \eqref{eq:transformed_to_G_jL_alt}, we will mainly rely on Proposition \ref{prop:large_small_deviation_mu^k_N}. There is an apparent obstacle: By part \ref{enu:lem:non-asy_est_GUE:1} of Lemma \ref{lem:non-asy_est_GUE}, $\sigma_j$ and $\sigma_N$ are close to $2\sqrt{N}$ and $-2\sqrt{N}$, respectively, but our estimates in Proposition \ref{prop:large_small_deviation_mu^k_N} do not cover the domain $[-2\sqrt{N}, (-2 + \epsilon)\sqrt{N})$. In order to handle the integral over $[-2\sqrt{N}, (-2 + \epsilon)\sqrt{N})$, we need the observation that the measure $\dd \mu_N(x) = -\dd M(x)$ is dominated by the Lebesgue measure, according to the definition in \eqref{20041101}. To be precise, we have that for any $\epsilon_3 > 0$, if $\epsilon < \epsilon_3$, $\sigma_j > (2 - \epsilon)\sqrt{N}$ and $\sigma_N \in ((-2 - \epsilon)\sqrt{N}, (-2 + \epsilon)\sqrt{N})$, then
\begin{equation}
  \left\lvert \int^{(-2 + \epsilon)\sqrt{N}}_{\sigma_N} \frac{1}{\sigma_j - x} \dd M_N(x) \right\rvert < \left\lvert \int^{(-2 + \epsilon)\sqrt{N}}_{\sigma_N} \frac{1}{\sigma_j - x} \dd x \right\rvert < \frac{2\epsilon \sqrt{N}}{(4 - 2\epsilon)\sqrt{N}} < \epsilon_3.
\end{equation}
We also note that by part \ref{enu:lem:non-asy_est_GUE:1} of Lemma \ref{lem:non-asy_est_GUE}, the assumptions $\sigma_j > (2 - \epsilon)\sqrt{N}$ and $\sigma_N \in ((-2 - \epsilon)\sqrt{N}, (-2 + \epsilon)\sqrt{N})$ hold with high probability as $N \to \infty$. We conclude that 
\begin{equation}
  \left\lvert \int^{(-2 + \epsilon)\sqrt{N}}_{\sigma_N} \frac{1}{\sigma_j - x} \dd M_N(x) \right\rvert < \epsilon_3 \quad \text{with high probability}.
\end{equation}
On the other hand, for the same $\epsilon_3$, and a possibly smaller $\epsilon > 0$, we have that for all large enough $N$
\begin{equation} \left\lvert \int^{(-2 + \epsilon)\sqrt{N}}_{-2\sqrt{N}} \frac{1}{2\sqrt{N} - x} \dd E_N(x) \right\rvert \leq \frac{1}{(4 - \epsilon) \sqrt{N}} \left\lvert \int^{(-2 + \epsilon)\sqrt{N}}_{-2\sqrt{N}} \dd E_N(x) \right\rvert = \frac{E_N((-2 + \epsilon)\sqrt{N}) - E_N(-2\sqrt{N})}{(4 - \epsilon) \sqrt{N}} < \epsilon_3,
\end{equation}
Hence, in order to see \eqref{eq:transformed_to_G_jL_alt}, it remains to show that given any $\epsilon > 0$, $\epsilon_1, \epsilon_2 > 0$, for all large enough (but fixed) $L$, such that for all large enough $N$, one has
\begin{equation} \label{eq:transformed_to_G_jL_alt3}
  \Prob \left( \left\lvert \int^{\sigma_L}_{(-2 + \epsilon)\sqrt{N}} \frac{1}{\sigma_j - x} \dd M_N(x) - \int^{2\sqrt{N} - N^{-1/6} \left( 3\pi L/2 \right)^{2/3}}_{(-2 + \epsilon)\sqrt{N}} \frac{1}{2\sqrt{N} - x} \dd E_N(x) \right\rvert > \epsilon_1 \right) < \epsilon_2.
\end{equation}
Next, we observe that by \eqref{eq:GUE_measure_mean} and \eqref{eq:GUE_measure_mean_local}, for any $\epsilon_3 > 0$, there is a small enough $\epsilon > 0$, such that for all large enough $N$
\begin{equation}
  \left\lvert \int^{2\sqrt{N} - N^{-1/6} \left( 3\pi L/2 \right)^{2/3}}_{(-2 + \epsilon)\sqrt{N}} \frac{1}{2\sqrt{N} - x} \dd (E_N(x) - F_N(x)) \right\rvert < \epsilon_3.
\end{equation}
Hence,  instead of \eqref{eq:transformed_to_G_jL_alt3}, we only need to show, under the same assumption,
\begin{equation} \label{eq:transformed_to_G_jL_alt2}
  \Prob \left( \left\lvert \int^{\sigma_L}_{(-2 + \epsilon)\sqrt{N}} \frac{1}{\sigma_j - x} \dd M_N(x) - \int^{2\sqrt{N} - N^{-1/6} \left( 3\pi L/2 \right)^{2/3}}_{(-2 + \epsilon)\sqrt{N}} \frac{1}{2\sqrt{N} - x} \dd F_N(x) \right\rvert > \epsilon_1 \right) < \epsilon_2.
\end{equation}
Theorem \ref{thm:main_thm_2} will then follow if \eqref{eq:transformed_to_G_jL_alt2} is proved.

\begin{proof}[Proof of \eqref{eq:transformed_to_G_jL_alt2}]
  First, we set the  integer
  \begin{equation}
    N_0 = \left\lfloor \frac{2}{3\pi} N^{\frac{1}{10}} \right\rfloor.
  \end{equation}
  We note that by part \ref{enu:lem:non-asy_est_GUE:2} of Lemma \ref{lem:non-asy_est_GUE}, $\sigma_{N_0}=2\sqrt{N} - N^{-1/10} + \bigO(N^{-8/75})$ with high probability. To prove \eqref{eq:transformed_to_G_jL_alt2}, it suffices to show the following two inequalities with the same conditions:
  \begin{align}
    \Prob \left( \left\lvert \int^{\sigma_L}_{\sigma_{N_0}} \frac{1}{\sigma_j - x} \dd M_N(x) - \int^{2\sqrt{N} - N^{-1/6} \left( 3\pi L/2 \right)^{2/3}}_{2\sqrt{N} - N^{-1/10}} \frac{1}{2\sqrt{N} - x} \dd F_N(x) \right\rvert > \epsilon_1 \right) < {}& \epsilon_2, \label{eq:almost_Airy_part} \\
    \Prob \left( \left\lvert \int^{\sigma_{N_0}}_{(-2 + \epsilon)\sqrt{N}} \frac{1}{\sigma_j - x} \dd M_N(x) - \int^{2\sqrt{N} - N^{-1/10}}_{(-2 + \epsilon)\sqrt{N}} \frac{1}{2\sqrt{N} - x} \dd F_N(x) \right\rvert > \epsilon_1 \right) < {}& \epsilon_2. \label{eq:not_Airy_part}
  \end{align}

  The proof of \eqref{eq:almost_Airy_part} is similar to that of \eqref{eq:alt_technical_main_thm_1} after a change of variable $\xi = N^{1/6} (x - 2\sqrt{N})$. Actually, we can show the following stronger estimate, which is an analogue of \eqref{eq:alt_technical_main_thm_1}:
  \begin{equation} \label{almost_Airy_stronger}
    \Prob \left( \sup_{L < n \leq N^{1/10}} \left\lvert \int^{\sigma_L}_{\sigma_n} \frac{1}{\sigma_j - x} \dd M_N(x) - \int^{2\sqrt{N} - N^{-1/6} \left( 3\pi L/2 \right)^{2/3}}_{2\sqrt{N} - N^{-1/6} \left( 3\pi n/2 \right)^{2/3}} \frac{1}{2\sqrt{N} - x} \dd F_N(x) \right\rvert > \epsilon_1 \right) < \epsilon_2,
  \end{equation}
  where the supremum for $n$ is bounded by $L$ and $N^{1/10}$, rather than in \eqref{eq:alt_technical_main_thm_1} for all $n$ greater than $m$. Note that the proof of \eqref{eq:alt_technical_main_thm_1} only relies on three ingredients as explained in Remark \ref{rmk:Airy_ingredients}, and all of them have their counterparts in the proof of (\ref{almost_Airy_stronger}): the counterpart of Proposition \ref{prop:Airy_measure} is part \ref{enu:prop:large_small_deviation_mu^k_N:2} of Proposition \ref{prop:large_small_deviation_mu^k_N}; the counterpart of Lemma \ref{lem:non-asy_est_Airy} (the rigidity of Airy process) is parts \ref{enu:lem:non-asy_est_GUE:1} and \ref{enu:lem:non-asy_est_GUE:2} of Lemma \ref{lem:non-asy_est_GUE} (the rigidity of deformed GUE eigenvalues); the dominance by Lebesgue measure is the same for $\mu$ and $\mu_N$. The only slight difference is that part \ref{enu:lem:non-asy_est_GUE:2} of Lemma \ref{lem:non-asy_est_GUE} is only valid for $n$ less than $N^{1/10}$ since the Airy process approximation of the GUE minor process with external source can only be extended to such an intermediate regime.
 
  The proof of \eqref{eq:not_Airy_part} again follows the idea of the proof of \eqref{eq:alt_technical_main_thm_1}, with some necessary modifications. We describe the proof parallel to that of \eqref{eq:alt_technical_main_thm_1}, and give detailed explanations on the modifications. Analogous to \eqref{022102}, we denote the events
    \begin{equation} \label{0221022}
      \widehat{ \Omega}^{(k)}_{j, m} := \left\{ \omega \; \middle\vert \; \lvert N^{\frac{1}{6}}(\sigma_j - 2\sqrt{N}) \rvert \leq  m^{\frac{3}{5}}, \; \left\lvert \sigma_m - \Upsilon_m \right\rvert \leq N^{-\frac{1}{6}} m^{\frac{3}{5}} \right \},
    \end{equation}
    where $\Upsilon_m$ is defined in \eqref{eq:defn_Upsilon}.  
  \begin{enumerate}
  \item 
    Analogous to \eqref{eq:statement_1_proof_Airy}, we have the estimate that if $N$ is large enough, then
    \begin{equation}
      \left\lvert \int^{\sigma_{N_0}}_{(-2 + \epsilon)\sqrt{N}} \frac{1}{\sigma_j - x} \dd F_N(x)-\int^{2\sqrt{N} - N^{-1/10}}_{(-2 + \epsilon)\sqrt{N}} \frac{1}{2\sqrt{N} - x} \dd F_N(x) \right\rvert  \Id(\widehat{\Omega}^{(k)}_{j, N_0})< \frac{2\epsilon_1}{3}.
    \end{equation}
  \item 
    Next, analogous to \eqref{19101501}, we show
    \begin{equation} \label{eq:anal_19101501}
      \Prob \left( \left\lvert \int^{\sigma_{N_0}}_{(-2 + \epsilon)\sqrt{N}} \frac{1}{\sigma_j - x} \dd (M_N(x) - F_N(x)) \Id(\widehat{\Omega}^{(k)}_{j, N_0}) \right\rvert > \frac{\epsilon_1}{3} \right) < \frac{\epsilon_2}{2}.
    \end{equation}
    Using an integration by parts decomposition like \eqref{eq:IbP}, we find that it suffices to show
    \begin{align}
      \Prob \left( \left\lvert \frac{M_N(\sigma_{N_0}) - F_N(\sigma_{N_0})}{\sigma_j - \sigma_{N_0}} \Id(\widehat{\Omega}^{(k)}_{j, N_0}) \right\rvert > \frac{\epsilon_1}{9} \right) < {}& \frac{\epsilon_2}{6}, \label{eq:BC_app_1_like} \\
      \Prob \left( \left\lvert \frac{M_N((-2 + \epsilon)\sqrt{N}) - F_N((-2 + \epsilon)\sqrt{N})}{\sigma_j - (-2 + \epsilon)\sqrt{N}} \Id(\widehat{\Omega}^{(k)}_{j, N_0}) \right\rvert > \frac{\epsilon_1}{9} \right) < {}& \frac{\epsilon_2}{6}, \label{eq:BC_app_1_like_right} \\
      \Prob \left( \left\lvert \int^{\sigma_{N_0}}_{(-2 + \epsilon) \sqrt{N}} \frac{M_N(x) - F_N(x)}{(\sigma_j - x)^2} \dd x \Id(\widehat{\Omega}^{(k)}_{j, N_0}) \right\rvert > \frac{\epsilon_1}{9} \right) < {}& \frac{\epsilon_2}{6}. \label{eq:BC_app_2_like}
    \end{align}
    We first see that \eqref{eq:BC_app_1_like_right} only needs that $M_N((-2 + \epsilon)\sqrt{N})$ is close to $F_N((-2 + \epsilon)\sqrt{N})$, and it is a direct consequence of \eqref{eq:GUE_measure_variance} at $x = (-2 + \epsilon)\sqrt{N}$, by using Markov inequality. Estimate \eqref{eq:BC_app_1_like} can be verified in the same way as for \eqref{eq:BC_app_1}. On the other hand, to verify \eqref{eq:BC_app_2_like}, it suffices to show that
    \begin{equation} \label{eq:BC_app_2_like_alt}
      \Prob \left( \int^{2\sqrt{N} - N^{-1/10}}_{(-2 + \epsilon) \sqrt{N}} \frac{\lvert M_N(x) - F_N(x) \rvert}{(2\sqrt{N} - x)^2} \dd x > \frac{\epsilon_1}{9} \right) < \frac{\epsilon_2}{6},
    \end{equation}
    by the arguement like that around \eqref{eq:eq:BC_app_2_middle} and \eqref{eq:part_3_as_Airy_simplify}.  More specifically, to prove \eqref{eq:BC_app_2_like_alt}, we introduce
    \begin{equation}
      \ell_i = 2\sqrt{N} - iN^{-\frac{1}{6}}
  \end{equation}
  and analogous to \eqref{eq:defn_ABC_k_1k_1} we set
  \begin{equation} 
      \begin{gathered}
        \widehat{A}_{k_1, k_2} = \sum^{k_2}_{i = k_1} \frac{\lvert M_N(\ell_i) - F_N(\ell_i) \rvert}{(i - 1)i}, \quad \widehat{B}_{k_1, k_2} = \sum^{k_2}_{i = k_1} \frac{\lvert M_N(\ell_i) - F_N(\ell_i) \rvert}{i(i + 1)}, \\
        \widehat{C}_{k_1, k_2} = 2 \sum^{k_2 - 1}_{i = k_1 + 1} \frac{F_N(\ell_i)}{(i - 1)i(i + 1)} - \frac{F_N(\ell_{k_1})}{k_1(k_1 + 1)} + \frac{F_N(\ell_{k_2})}{(k_2 - 1)k_2}.
      \end{gathered}
  \end{equation}
  Then analogous to \eqref{eq:ineq:ABC} and \eqref{eq:part_3_as_Airy_simplify_more}, we have (without loss of generality, assuming $N^{1/15}$ and $(4 - \epsilon) N^{2/3}$ are integers)
  \begin{equation}
    \int^{2\sqrt{N} - N^{-1/10}}_{(-2 + \epsilon) \sqrt{N}} \frac{\lvert M_N(x) - F_N(x) \rvert}{(2\sqrt{N} - x)^2} \dd x\leq N^{\frac{1}{6}} (A_{N^{1/15}, (4 - \epsilon)N^{2/3}} + B_{N^{1/15}, (4 - \epsilon)N^{2/3}} + 2C_{N^{1/15}, (4 - \epsilon)N^{2/3}}),
  \end{equation}
  and need only to prove
  \begin{equation}
    \Prob \left( N^{\frac{1}{6}} A_{N^{1/15}, (4 - \epsilon)N^{2/3}} > \frac{\epsilon_1}{27} \right) < \frac{\epsilon_2}{12}, \quad \Prob \left( N^{\frac{1}{6}} B_{N^{1/15}, (4 - \epsilon)N^{2/3}} > \frac{\epsilon_1}{27} \right) < \frac{\epsilon_2}{12}, \quad 2 N^{\frac{1}{6}}C_{N^{1/15}, (4 - \epsilon)N^{2/3}} < \frac{\epsilon_1}{27},
  \end{equation}
  As $N$ is large enough, it is straightforward to  show the last inequality for $C_{N^{1/15}, (4 - \epsilon)N^{2/3}}$. Also, like \eqref{062601}, we have that for large enough $N$,
  \begin{equation}
    \begin{aligned}
       \Prob \left( A_{N^{1/15}, (4 - \epsilon)N^{2/3}} > \frac{\epsilon_1}{27} N^{-\frac{1}{6}} \right) \leq {}& \Prob \left( A_{N^{1/15}, (4 - \epsilon)N^{2/3}} > \frac{\epsilon_1}{27} N^{-\frac{1}{6}} \right) \\
       \leq {}& \Prob \left( \sum^{(4 - \epsilon)N^{2/3}}_{i = N^{1/15}} \frac{\lvert M_N(\ell_i) - F_N(\ell_i) \rvert}{i^2} > \frac{\epsilon_1}{54} N^{-\frac{1}{6}} \right) \\
       \leq {}& \Prob \left( \bigcup^{(4 - \epsilon)N^{2/3}}_{i = N^{1/15}} \left\{ \frac{\lvert M_N(\ell_i) - F_N(\ell_i) \rvert}{i^2} > i^{-\frac{6}{5}} N^{-\frac{1}{6}} \right\} \right) \\
       \leq {}& \sum^{(4 - \epsilon)N^{2/3}}_{i = N^{1/15}} \Prob \left( \frac{\lvert M_N(\ell_i) - F_N(\ell_i) \rvert}{i^2} > i^{-\frac{6}{5}} N^{-\frac{1}{6}} \right) \\
       \leq {}& \sum^{(4 - \epsilon)N^{2/3}}_{i = N^{1/15}} i^{-\frac{11}{10}} < \frac{\epsilon_2}{12}.
    \end{aligned}
  \end{equation}
  We thus finish the proof of \eqref{eq:BC_app_2_like_alt}, and then \eqref{eq:BC_app_2_like}, and finally finish the proof of \eqref{eq:anal_19101501}.
\item
  At last, we need to show that the event $\widehat{\Omega}^{(k)}_{j, N_0}$ satisfies that $\Prob(\widehat{\Omega}^{(k)}_{j, N_0}) > 1 - \epsilon_2/2$ if $N$ is large enough. (This is analogous to \eqref{022101} but is much weaker.) This follows directly from the estimates of $\sigma_j$ and $\sigma_{N_0}$ in parts \ref{enu:lem:non-asy_est_GUE:1} and \ref{enu:lem:non-asy_est_GUE:2} of Lemma \ref{lem:non-asy_est_GUE}.
  \end{enumerate}
  Hence, we conclude the proof of Theorem \ref{thm:main_thm_2}. 
\end{proof}

Next, we prove Corollary \ref{cor:to_thm_2}.
\begin{proof}
  Let $U=(u_{ij})\in \mathbb{C}^{(N-k)\times (N-k)}$ be  any unitary matrix independent of $G_{\totalalpha}$, then by the unitary invariance of GUE, the random matrix $(I_k\oplus U) G_{\totalalpha}(I_k\oplus U^*)$ has the same distribution as $G_{\totalalpha}$. Hence the eigenvector $\vec{x}_j = (x_{j1}, \dotsc, x_{jN})^{\top}$ of $G_{\totalalpha}$ associated with $\sigma_j$, which is a random unit vector, satisfies that fixing $x_{j1}, \dotsc, x_{jk}$, the conditional distribution of the truncated random vector $(x_{j, k + 1}, \dotsc, x_{jN})^{\top}$ is unitarily invariant.

Furthermore, we have $\sum_{i=k+1}^N|x_{ji}|^2=1-O(kN^{-1/3})$ with high probability by Theorem \ref{thm:main_thm_2}. So the distribution of $x_{ji}$ is approximated by that of a component of a Haar distributed complex unit random vector in $\compC^{N - k}$, as $N \to \infty$. Combining the above facts we can conclude the proof of Corollary \ref{cor:to_thm_2} easily. 
\end{proof}

\section{Analysis of random measure $\mu_N$: proof of Proposition \ref{prop:large_small_deviation_mu^k_N}} \label{pf.lem_GUE}

In this section, we prove Proposition \ref{prop:large_small_deviation_mu^k_N}. To show \eqref{eq:GUE_measure_mean} and \eqref{eq:GUE_measure_mean_local}, we use  the method similar to the proof of \eqref{eq:mean_variance_measure_Airy}. We consider, analogous to \eqref{eq:defn_N_x_Airy}, the random variable (with notation abused)
\begin{equation}
  N_x = \lvert \mathcal{I}_x \rvert - \lvert \mathcal{J}_x \rvert \quad \text{where} \quad \mathcal{I}_x := \{i \in \natN \mid \sigma_i \in (x, +\infty)\}, \quad \mathcal{J}_x := \{ i \in \natN \mid \lambda_i \in (x, +\infty)\}. 
\end{equation}
By the interlacing property, we note that $N_x$ is a Bernoulli random variable. Then
\begin{equation}
  \E N_x = \Prob(N_x = 1).
\end{equation}
We also consider, analogous to \eqref{eq:defn_S_x_Airy} the random variable (with notation abused)
\begin{equation}
  S_x = -\sum_{i\in \mathcal{I}_x} \sigma_i + \sum_{i\in \mathcal{J}_x} \lambda_i.
\end{equation}
We observe that if $\sigma_1 \leq x$, then $N_x = S_x = M_N(x) = 0$. Under the condition that $\sigma_1 > x$, we have that if $N_x = 1$, then $S_x = M_N(x) - \sigma_1$, otherwise $S_x = M_N(x) - \sigma_1 + x$. Similar to \eqref{eq:mu_S_N_Airy}, we conclude that
\begin{equation} \label{eq:expr_mu^k_N}
  M_N(x) = S_x + xN_x + (\sigma_1 - x) \Id(\sigma_1 > x).
\end{equation}

For the second term in \eqref{eq:expr_mu^k_N} that only involves $\sigma_1$, by part \ref{enu:lem:non-asy_est_GUE:1} of Lemma \ref{lem:non-asy_est_GUE}, we have the following estimates that hold for all $x \in \realR$:
\begin{equation} \label{eq:mean_and_var_GUE_large}
  \E[(\sigma_1 - x) \Id(\sigma_1 > x)] = 2\sqrt{N} - x + \bigO(N^{-\frac{1}{6}}), \quad \var[(\sigma_1 - x) \Id(\sigma_1 > x)] = \bigO(N^{-\frac{1}{3}}).
\end{equation}
Then by the linearity of expectation and \eqref{rmk:general_CS}, the mean and variance estimates in Proposition \ref{prop:large_small_deviation_mu^k_N} follow from \eqref{eq:mean_and_var_GUE_large} and the four estimates \eqref{eq:GUE_measure_mean_alt}--\eqref{eq:GUE_measure_variance_local_alt} below whose proofs will be given later. First, we note that for $x \in ((-2 + \epsilon)\sqrt{N}, 2\sqrt{N} - N^{-1/10})$ 
\begin{align}
  \E(S_x + xN_x) = {}& E_N(x) - (2\sqrt{N} - x) + \bigO(N^{-\frac{1}{12}} (2\sqrt{N} - x)^{\frac{1}{2}}), \label{eq:GUE_measure_mean_alt} \\
  \var (S_x + xN_x) = {}& V_N(x) + \bigO(N^{-\frac{1}{3}}), \label{eq:GUE_measure_variance_alt}
\end{align}
and for $x \in [2\sqrt{N} - N^{-1/10}, 2\sqrt{N} - CN^{-1/6}]$, with $x = 2\sqrt{N} + N^{-1/6} \xi$, then we have the uniform convergence
\begin{align}
  N^{\frac{1}{6}} \E(S_x + xN_x) = {}& \frac{\xi}{2} -\frac{2}{\pi} a_k \sqrt{-\xi} + \bigO(1), \label{eq:GUE_measure_mean_local_alt} \\
  N^{\frac{1}{3}} \var (S_x + xN_x) = {}& \frac{2}{\pi} \sqrt{-\xi} + \bigO( \lvert \xi \rvert^{\frac{1}{4}} ). \label{eq:GUE_measure_variance_local_alt}
\end{align}

Below we prove \eqref{eq:GUE_measure_mean_alt}, \eqref{eq:GUE_measure_variance_alt} and \eqref{eq:GUE_measure_mean_local_alt}. The proof of \eqref{eq:GUE_measure_variance_local_alt} is similar to that of \eqref{eq:GUE_measure_variance_alt} and we only give a sketch.

\paragraph{Proof of \eqref{eq:GUE_measure_mean_alt}}

To faciliate the saddle point analysis below, we define several types of contours. First, we recall that $\alpha_1, \dotsc, \alpha_k$ are defined by $a_1, \dotsc, a_k$ in Assumption \ref{main assum}. We take a constant $r \in [0, +\infty)$ such that $r$ is bigger than all $a_1, \dotsc, a_k$. Then we let $\Gamma^{\round}_{\standard, N}(a)$ be the positively oriented boundary of the open set $\{ z \in \compC \mid \lvert z \rvert < \sqrt{N} + a \} \cup \{ z \in \compC \mid \lvert z - \sqrt{N} \rvert < N^{1/6} r \}$ where $a \geq 0$, such that it is almost the circle centred at $0$ with radius $\sqrt{N} + a$ and it encloses all $\alpha_1, \dotsc, \alpha_k$. (Actually, if $r = 0$, then $\Gamma$ is the circle centred at $0$ with radius $\sqrt{N} + a$.) We also define for any $b \in \realR$ the upward vertical contour
\begin{equation}
  \Sigma^{|}_{\standard, N}(b) = \{ b + \ii t \mid -\infty < t < +\infty \}.
\end{equation}
Next, for any $\theta \in (0, \pi/6)$, we define the positively oriented contour
\begin{multline}
  \Gamma^{\corner}_{\standard, N}(\theta, a) =  \Big\{ (\sqrt{N} + a) e^{(\frac{\pi}{3} - \theta) \ii} + e^{\frac{2\pi}{3} \ii} t \mid -\frac{2}{\sqrt{3}} (\sqrt{N} + a) \sin \left( \frac{\pi}{3} - \theta \right) \leq t \leq 0 \Big\} \\
  \cup \Big\{ (\sqrt{N} + a) e^{(\frac{5\pi}{3} + \theta) \ii} + e^{\frac{\pi}{3} \ii} t \mid 0 \leq t \leq \frac{2}{\sqrt{3}} (\sqrt{N} + a) \sin \left( \frac{\pi}{3} - \theta \right) \Big\} \cup \Big\{ (\sqrt{N} + a)e^{\ii t} \mid \frac{\pi}{3} - \theta \leq t \leq \frac{5\pi}{3} + \theta \Big \}.
\end{multline}
We also define for any $b < \sqrt{N}$ the upward infinite polygonal contour
\begin{multline}
  \Sigma^{\zig}_{\standard, N}(b) = \Big\{ b + e^{\frac{\pi}{3} \ii} t \mid 0 \leq t \leq 2(\sqrt{N} - b) + \sqrt{N} \Big\} \cup \Big\{ b + e^{\frac{2\pi}{3} \ii} t \mid -\sqrt{N} -  2(\sqrt{N} - b) \leq t \leq 0 \Big\} \\
  \cup \Big\{ \frac{3}{2}\sqrt{N} + \ii t \mid \lvert t \rvert \geq \sqrt{3} (\frac{3}{2}\sqrt{N} - b) \Big\}. 
\end{multline}
See Figures \ref{fig:Gamma_Sigma_standard_1} and \ref{fig:Gamma_Sigma_standard_2} for their shapes.
\begin{figure}[htb]
  \begin{minipage}[t]{0.3\linewidth}
    \centering
    \includegraphics{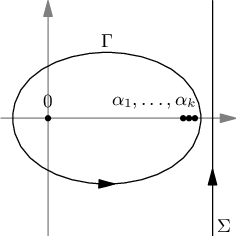}
    \caption{Contours $\Gamma$ and $\Sigma$.}
    \label{fig:Gamma_Sigma}
  \end{minipage}
  \hspace{\stretch{1}}
  \begin{minipage}[t]{0.3\linewidth}
    \centering
    \includegraphics{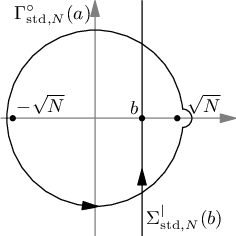}
    \caption{Shapes of $\Gamma^{\round}_{\standard, N}(a)$ and $\Sigma^{|}_{\standard, N}(b)$.}
    \label{fig:Gamma_Sigma_standard_1}
  \end{minipage}
  \hspace{\stretch{1}}
  \begin{minipage}[t]{0.3\linewidth}
    \centering
    \includegraphics{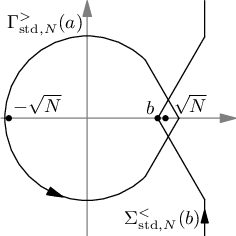}
    \caption{Shapes of $\Gamma^{\corner}_{\standard, N}(a)$ and $\Sigma^{\zig}_{\standard, N}(b)$.}
    \label{fig:Gamma_Sigma_standard_2}
  \end{minipage}
\end{figure}

We have that
\begin{equation} \label{eq:mean_GUE_by_kernel}
  \begin{split}
    \E (S_x + xN_x) = {}&  \E \left( \sum_{i \in \mathcal{J}_x} \lambda_i \right) - \E \left( \sum_{i \in \mathcal{I}_x} \sigma_i \right)  + x\E|\mathcal{I}_x| - x\E|\mathcal{J}_x| \\
    = {}& \int^{\infty}_x t K^{1, 1}_{\GUEalpha}(t, t)\dd t - \int^{\infty}_x t K^{0, 0}_{\GUEalpha}(t, t)\dd t 
     + x \left( \int^{\infty}_x K^{0, 0}_{\GUEalpha}(t, t)\dd t - \int^{\infty}_x K^{1, 1}_{\GUEalpha}(t, t)\dd t \right) \\
    = {}& -\int^{+\infty}_x (t - x) \left( K^{0, 0}_{\GUEalpha}(t, t) - K^{1, 1}_{\GUEalpha}(t, t) \right)\dd t.
  \end{split}
\end{equation}
In light of \eqref{20041005}, one has 
\begin{equation}
  K^{0, 0}_{\GUEalpha}(t, t) - K^{1, 1}_{\GUEalpha}(t, t) = \frac{1}{(2\pi \ii)^2} \int_{\Sigma}\dd z \int_{\Gamma} \dd w \frac{e^{\frac{z^2}{2} - tz} z^{N - k}}{e^{\frac{w^2}{2} - tw} w^{N - k}} \left( \prod^k_{j = 2} \frac{z - \alpha_j}{w - \alpha_j} \right) \frac{1}{w - \alpha_1},
\end{equation}
which implies
\begin{equation} \label{eq:double_contour_GUE_mean}
  \begin{split}
    \E (S_x + xN_x) = {}& \frac{-1}{(2\pi \ii)^2} \int_{\Sigma}\dd z \int_{\Gamma} \dd w \frac{e^{\frac{z^2}{2}} z^{N - k}}{e^{\frac{w^2}{2}} w^{N - k}} \left( \prod^k_{j = 2} \frac{z - \alpha_j}{w - \alpha_j} \right) \frac{1}{w - \alpha_1} \int^{\infty}_x e^{-(z - w)t} (t - x)\dd t \\
    = {}& \frac{-1}{(2\pi \ii)^2} \int_{\Sigma}\dd z \int_{\Gamma} \dd w \frac{e^{\frac{z^2}{2} - xz} z^{N - k}}{e^{\frac{w^2}{2} - xw} w^{N - k}} \left( \prod^k_{j = 2} \frac{z - \alpha_j}{w - \alpha_j} \right) \frac{1}{(w - \alpha_1)(z - w)^2}.
  \end{split}
\end{equation}
By some standard residue calculation, we have that
\begin{subequations} \label{eq:int_over_Phi_and_bar}
  \begin{align}
    & \E (S_x + xN_x) \notag \\
    = {}& \frac{-1}{(2\pi \ii)^2} \iint_{\Phi} \dd z \dd w \frac{e^{\frac{z^2}{2} - xz} z^{N - k}}{e^{\frac{w^2}{2} - xw} w^{N - k}} \left( \prod^k_{j = 2} \frac{z - \alpha_j}{w - \alpha_j} \right) \frac{1}{(w - \alpha_1)(z - w)^2} \label{eq:int_over_Phi_and_bar:Phi} \\
    & + \frac{-1}{2\pi \ii} \int^{\upperintersect}_{\lowerintersect} \frac{w - x + (N - k)/w}{w - \alpha_1} + \sum^k_{j = 2} \frac{1}{(w - \alpha_j)(w - \alpha_1)} \dd w, \label{eq:int_over_Phi_and_bar:bar}
  \end{align}
\end{subequations}
where
\begin{enumerate*}[label=(\roman*)]
\item
  the contour $\Phi$ means that the $\Sigma$ contour cuts into the $\Gamma$ contour, and divide its interior into two parts, such that they intersect at the two saddle points $\frac{1}{2}(x + \sqrt{4N - x^2}\ii)$ and $\frac{1}{2}(x - \sqrt{4N - x^2}\ii)$, which are the $\upperintersect$ and $\lowerintersect$ respectly in \eqref{eq:int_over_Phi_and_bar:bar}, and all $\alpha_1, \dotsc, \alpha_k$ are in the right part,
\item
  the integral in \eqref{eq:int_over_Phi_and_bar:Phi} is understood as the Cauchy principal value, and 
\item 
  the contour of integral in \eqref{eq:int_over_Phi_and_bar:bar} goes by the right of $0$ and all $\alpha_1, \dotsc, \alpha_k$. 
\end{enumerate*}
We then have, if all $\alpha_1, \dotsc, \alpha_k$ are distinct, then
\begin{equation} \label{eq:first_moment_GUE}
  \begin{split}
    \eqref{eq:int_over_Phi_and_bar:bar} = {}& \frac{-1}{2\pi \ii} \int^{\frac{1}{2}(x + \sqrt{4N - x^2}\ii)}_{\frac{1}{2}(x - \sqrt{4N - x^2}\ii)} 1 + \left( \alpha_1 - x + \frac{N - k}{\alpha_1} \right) \frac{1}{w - \alpha_1} - \frac{N - k}{\alpha_1} \frac{1}{w} \\
    & \phantom{\smash{\frac{-1}{2\pi \ii} \int^{\frac{1}{2}(x + \sqrt{4N - x^2}\ii)}_{\frac{1}{2}(x - \sqrt{4N - x^2}\ii)}}} + \sum^k_{j = 1} \frac{1}{\alpha_1 - \alpha_j} \left( \frac{1}{w - \alpha_1} - \frac{1}{w - \alpha_j} \right) \dd w \\
    = {}& \frac{1}{\pi} \left[-\frac{\sqrt{4N - x^2}}{2} - \left( \alpha_1 - x + \frac{N - k}{\alpha_1} \right) \arccot\frac{x - 2\alpha_1}{\sqrt{4N - x^2}} + \frac{N - k}{\alpha_1} \arccos \frac{x}{2\sqrt{N}}
      \vphantom{\sum^k_{j = 2} \frac{1}{\alpha_1 - \alpha_j} \left( \arccot \frac{2\alpha_1 - x}{\sqrt{4N - x^2}} - \arccot \frac{2\alpha_j - x}{\sqrt{4N - x^2}} \right)} \right. \\
    & \phantom{\frac{1}{\pi}} \left. - \sum^k_{j = 2} \frac{1}{\alpha_1 - \alpha_j} \left( \arccot \frac{x - 2\alpha_1}{\sqrt{4N - x^2}} - \arccot \frac{x - 2\alpha_j}{\sqrt{4N - x^2}} \right) \right] \\
    = {}& \frac{1}{\pi} \left[ -\frac{\sqrt{4N - x^2}}{2} - \frac{1}{2} \left( \alpha_1 - x - \frac{N - k}{\alpha_1} \right) \arccos \frac{x}{2\sqrt{N}} - \frac{\pi}{2} \left( \alpha_1 - x + \frac{N - k}{\alpha_1} \right)
    \vphantom{- \sum^k_{j = 2} \frac{N^{-1/6}}{a_k - a_{k + 1 - j}} \left( \arccot \frac{x - 2\sqrt{N} - 2N^{1/6} a_k}{\sqrt{4N - x^2}} - \arccot \frac{x - 2\sqrt{N} - 2N^{1/6} a_{k + 1 - j}}{\sqrt{4N - x^2}} \right)} \right. \\
    & \phantom{\frac{1}{\pi}} - \left( \alpha_1 - x + \frac{N - k}{\alpha_1} \right) \left( \arccot \frac{x - 2\sqrt{N} - 2N^{1/6} a_k}{\sqrt{4N - x^2}} - \arccot \frac{x - 2\sqrt{N}}{\sqrt{4N - x^2}} \right) \\
    & \phantom{\frac{1}{\pi}} \left. - \sum^k_{j = 2} \frac{N^{-1/6}}{a_k - a_{k + 1 - j}} \left( \arccot \frac{x - 2\sqrt{N} - 2N^{1/6} a_k}{\sqrt{4N - x^2}} - \arccot \frac{x - 2\sqrt{N} - 2N^{1/6} a_{k + 1 - j}}{\sqrt{4N - x^2}} \right) \right] \\
    = {}& \frac{1}{\pi} \left[ -\frac{\sqrt{4N - x^2}}{2} + \frac{x}{2} \arccos \frac{x}{2\sqrt{N}} - \frac{\pi}{2} (2\sqrt{N} - x) \right. \\
    & \phantom{\frac{1}{\pi}} \left. \vphantom{-\frac{\sqrt{4N - x^2}}{2} + \frac{x}{2} \arccos \frac{x}{2\sqrt{N}} - \frac{\pi}{2} (2\sqrt{N} - x)}
      - (2N^{1/6} a_k + \bigO(N^{-1/6})) \arccos \frac{x}{2\sqrt{N}} \right] + \bigO(N^{-1/5}) \\
    = {}& E_N(x) - (2\sqrt{N} - x) + \bigO(N^{-\frac{1}{12}} (2\sqrt{N} - x)^{\frac{1}{2}}),
  \end{split}
\end{equation}
where we note that $\alpha_i = \sqrt{N} + N^{1/6} a_{k-i+1}$ and $ N^{-1/10} < 2\sqrt{N} - x < (4 - \epsilon) \sqrt{N}$, and all the $\bigO$ terms are uniform in $x$. If some $a_i$ are identical, we use l'H\^{o}pital's rule if necessary, and derive the same result.

On the other hand, the integral in \eqref{eq:int_over_Phi_and_bar:Phi} has the estimate as $\bigO((4N - x^2)^{-1/2}) = \bigO(N^{-1/5})$. To see it, we deform the contours $\Gamma$ and $\Sigma$ into standard shapes depending on the value of $x$:
\begin{enumerate}[label=(\roman*)]
\item
  If $(-2 + \epsilon) \sqrt{N} < x < 1.9\sqrt{N}$, then let $\Gamma$ be $\Gamma^{\round}_{\standard, N}(0)$, and $\Sigma$ be $\Sigma^{|}_{\standard, N}(x/2)$.
\item
  If $1.9\sqrt{N} \leq x <2\sqrt{N} - N^{-1/10}$, then let $\Gamma$ be $\Gamma^{\corner}_{\standard, N}(\theta, 0)$ with $\theta = \arccos(x/\sqrt{4N})$, and $\Sigma$ be $\Sigma^{\zig}_{\standard, N}(x/2 - \sqrt{N - x^2/4}/\sqrt{3})$.
\end{enumerate}
Then we can estimate it by the standard saddle point analysis, similar to the analysis for the integral over $X$ in \eqref{eq:int_over_X_and_bar:X}. We omit the detail.

Hence we obtain \eqref{eq:GUE_measure_mean_alt}. 

\paragraph{Proof of \eqref{eq:GUE_measure_mean_local_alt}}

Like in the proof of \eqref{eq:GUE_measure_mean_alt}, we also make use of \eqref{eq:int_over_Phi_and_bar} and compute \eqref{eq:int_over_Phi_and_bar:Phi} and \eqref{eq:int_over_Phi_and_bar:bar} separately. However, now we require that $\upperintersect$ and $\lowerintersect$, the intersections of $\Sigma$ with $\Gamma$, are $\sqrt{N} \pm (2\sqrt{N} - x)^{1/2} N^{1/4} \ii$. (For $x \in [2\sqrt{N} - N^{-1/10}, 2\sqrt{N} - CN^{-1/6})$, they are very close to $\frac{1}{2}(x \pm\sqrt{4N - x^2}\ii)$.) Let $\xi = N^{1/6} (x - 2\sqrt{N})$, such that $-N^{1/15} < \xi < -C$. First we consider \eqref{eq:int_over_Phi_and_bar:bar}. We have, with $v = N^{-1/6}(w - \sqrt{N})$, if $-N^{1/15} < \xi < -C$,
\begin{equation} \label{eq:int_over_Phi_and_bar:bar_N^1/3}
  \begin{split}
    N^{\frac{1}{6}} \times \eqref{eq:int_over_Phi_and_bar:bar} = {}& \frac{-1}{2\pi \ii} \int^{\sqrt{-\xi}\ii}_{-\sqrt{-\xi}\ii} \frac{v^2 - \xi}{v - a_k} + \sum^k_{j = 2} \frac{1}{(v - a_{k + 1 - j})(v - a_k)} - N^{-\frac{1}{3}} \frac{v^3 + k}{1 - N^{-\frac{1}{3}} v}\dd v \\
    = {}& \frac{-1}{2\pi \ii} \int^{\sqrt{-\xi}\ii}_{-\sqrt{-\xi}\ii} \frac{v^2 - \xi}{v - a_k} + \sum^k_{j = 2} \frac{1}{(v - a_{k + 1 - j})(v - a_k)}\dd v + \bigO(N^{-\frac{1}{3}} \xi^3) \\
    = {}& \frac{-1}{2\pi \ii} \int^{\sqrt{-\xi}\ii}_{-\sqrt{-\xi}\ii} v + a_k + \frac{a_k - \xi}{v - a_k} + \sum^{k - 1}_{j = 1} \frac{1}{a_j - a_k} \left( \frac{1}{v - a_j} - \frac{1}{v - a_k} \right) \dd v + \bigO(N^{-\frac{1}{3}} \xi^3) \\
    = {}& \frac{1}{2}(\xi - a^2_k) - \frac{2}{\pi} a_k \sqrt{-\xi} + \bigO(\lvert \xi \rvert^{-\frac{1}{2}}).
  \end{split}
\end{equation}
Here we note that $N^{-1/3} \xi^3 = \bigO(N^{-2/15})$. This estimate is similar to that of \eqref{eq:int_over_X_and_bar:bar}, and we make use of the calculation in \eqref{eq:new_est_bar}.

Next, we consider \eqref{eq:int_over_Phi_and_bar:Phi}. We deform $\Gamma$ into $\Gamma^{\corner}_{\standard, N}(0, (2\sqrt{N} - x)^{1/2} N^{1/4}/\sqrt{3})$, and deform $\Sigma$ into $\Sigma^{\zig}_{\standard, N}((\sqrt{N} - 2\sqrt{N} - x)^{1/2} N^{1/4}/\sqrt{3})$. With the change of variables $\xi = N^{1/6} (x - 2\sqrt{N})$, $u = N^{-1/6}(z - \sqrt{N})$ and $v = N^{-1/6}(w - \sqrt{N})$, we have
\begin{equation} \label{eq:double_contour_mean_GUE_shifted}
  N^{\frac{1}{6}} \times \eqref{eq:int_over_Phi_and_bar:Phi} = \frac{-1}{(2\pi \ii)^2} \iint_{\mathsf{X}_N} \dd u \dd v \frac{e^{\frac{u^3}{3} - \xi u + f_N(u)}}{e^{\frac{v^3}{3} - \xi v + f_N(v)}} \left( \prod^k_{j = 2} \frac{u - a_{k + 1 - j}}{v - a_{k + 1 - j}} \right) \frac{1}{(v - a_k)(u - v)^2},
\end{equation}
where
\begin{equation}
  f_N(u) = (N - k)\log(1 + N^{-1/3}u) - N^{2/3}u + \frac{1}{2} N^{1/3} u^2 - \frac{1}{3} u^3,
\end{equation}
and the contour $\mathsf{X}_N$ is transformed from the deformed $\Phi = \Gamma \times \Sigma$ in (\ref{eq:int_over_Phi_and_bar:Phi}) by the change of variables. We note that in the region $u, v = o(N^{1/3})$, the contour $\mathsf{X}_N$ in \eqref{eq:double_contour_mean_GUE_shifted} overlaps with the contour $\mathsf{X}$ in \eqref{eq:int_over_X_and_bar:X}, if $\xi$ is identified with $x$ there. Hence our discussion here is analogous to \eqref{eq:int_over_X_and_bar:X}. We divide the double contour $\mathsf{X}_N$ into three disjoint subsets:
\begin{enumerate}[label=(\roman*)]
\item
  $\mathsf{X}_{1, N} = \{ u, v \in \mathsf{X}_N \mid \lvert u - \sqrt{-x} \ii \rvert < 1, \, \lvert v - \sqrt{-x} \ii \rvert < 1 \}$, 
\item 
  $\mathsf{X}_{2, N} = \{ u, v \in \mathsf{X}_N \mid \lvert u + \sqrt{-x} \ii \rvert < 1, \, \lvert v + \sqrt{-x} \ii \rvert < 1 \}$, 
\item
  $\mathsf{X}_{3, N} = \mathsf{X}_N \setminus (\mathsf{X}_{1, N} \cup \mathsf{X}_{2, N})$.
\end{enumerate}
By saddle point analysis similar to that given in \eqref{eq:saddle_first}--\eqref{eq:saddle_last}, we derive that the part of integral \eqref{eq:double_contour_mean_GUE_shifted} over $\mathsf{X}_{1, N}$ is $\bigO(\lvert \xi \rvert^{-1/2})$ for $-N^{1/15} < \xi < -C$. The same estimate holds for the part of integral \eqref{eq:double_contour_mean_GUE_shifted} over $\mathsf{X}_{2, N}$. At last, by standard method, we find the part of integral \eqref{eq:double_contour_mean_GUE_shifted} over $\mathsf{X}_{3, N}$ is $o(\lvert \xi \rvert^{-1/2})$. 

Combining \eqref{eq:int_over_Phi_and_bar} with the estimates of \eqref{eq:int_over_Phi_and_bar:bar_N^1/3} and \eqref{eq:double_contour_mean_GUE_shifted}, we prove the  identity \eqref{eq:GUE_measure_mean_local_alt}.

\paragraph{Proof of \eqref{eq:GUE_measure_variance_alt}}

We let $h_x(t)$ be defined in \eqref{eq:defn_h_x(t)}, and analogous to \eqref{eq:mean_formula_Airy}, we have
\begin{multline} \label{eq:variance_formula_GUE}
  \E[(S_x + xN_x)^2] = \int h^2_x(t) K^{0, 0}_{\GUEalpha}(t, t)\dd t + \int h^2_x(t) K^{1, 1}_{\GUEalpha}(t, t)\dd t \\
  \begin{aligned}[b]
    + {}& \iint h_x(s)h_x(t)
  \begin{vmatrix}
    K^{0, 0}_{\GUEalpha}(s, s) & K^{0, 0}_{\GUEalpha}(s, t) \\
    K^{0, 0}_{\GUEalpha}(t, s) & K^{0, 0}_{\GUEalpha}(t, t)
  \end{vmatrix}
  \dd s\dd t \\
  + {}& \iint h_x(s)h_x(t)
  \begin{vmatrix}
    K^{1, 1}_{\GUEalpha}(s, s) & K^{1, 1}_{\GUEalpha}(s, t) \\
    K^{1, 1}_{\GUEalpha}(t, s) & K^{1, 1}_{\GUEalpha}(t, t)
  \end{vmatrix}
  \dd s\dd t \\
  - {}& \iint h_x(s)h_x(t)
  \begin{vmatrix}
    K^{0, 0}_{\GUEalpha}(s, s) & K^{0, 1}_{\GUEalpha}(s, t) \\
    K^{1, 0}_{\GUEalpha}(t, s) & K^{1, 1}_{\GUEalpha}(t, t)
  \end{vmatrix}
  \dd s\dd t \\
  - {}& \iint h_x(s)h_x(t)
  \begin{vmatrix}
    K^{1, 1}_{\GUEalpha}(s, s) & K^{1, 0}_{\GUEalpha}(s, t) \\
    K^{0, 1}_{\GUEalpha}(t, s) & K^{0, 0}_{\GUEalpha}(t, t)
  \end{vmatrix}
  \dd s\dd t.
  \end{aligned}
\end{multline}
Hence, by \eqref{eq:variance_formula_GUE} and \eqref{eq:mean_GUE_by_kernel}, we have analogous to \eqref{eq:var_3parts_Airy}
\begin{subequations}
  \begin{align}
    & \var\big[ S_x + xN_x \big] \notag \\
    = {}& \int h_x^2(t) \Big(K^{0, 0}_{\GUEalpha}(t, t) + K^{1, 1}_{\GUEalpha}(t, t)\Big)\dd t - 2\int h_x(t) \left( \int^{\infty}_t h_x(s) e^{\alpha_1 (s - t)} K^{0, 1}_{\GUEalpha}(s, t) \dd s \right)\dd t \label{eq:var_part_1_GUE} \\
    &
      \begin{aligned}
        & - \iint h_x(s)h_x(t) \Big(K^{0, 0}_{\GUEalpha}(s, t) K^{0, 0}_{\GUEalpha}(t, s) + K^{1, 1}_{\GUEalpha}(s, t) K^{1, 1}_{\GUEalpha}(t, s) \\
        & \phantom{\smash{-\iint h_x(s)}} - K^{0, 1}_{\GUEalpha}(s, t) \widetilde{K}^{1, 0}_{\GUEalpha}(t, s) - \widetilde{K}^{1, 0}_{\GUEalpha}(s, t) K^{0, 1}_{\GUEalpha}(t, s)\Big) \dd s\dd t.
      \end{aligned} \label{eq:var_part_3_GUE}
  \end{align}
\end{subequations}

First we consider the integral in \eqref{eq:var_part_1_GUE}. Like \eqref{eq:part_1_var_trans} and \eqref{eq:part_2_var_trans}, we write
\begin{equation}
  \int h^2_x(t) \Big(K^{0, 0}_{\GUEalpha}(t, t) + K^{1, 1}_{\GUEalpha}(t, t)\Big)\dd t =\frac{1}{(2\pi \ii)^2} \int_{\Sigma} \dd z \int_{\Gamma} \dd w \frac{e^{\frac{z^2}{2} - xz} z^{N - k}}{e^{\frac{w^2}{2} - xw} w^{N - k}} \left( \prod^k_{j = 2} \frac{z - \alpha_j}{w - \alpha_j} \right) \frac{z + w - 2\alpha_1}{w - \alpha_1} \frac{2}{(z - w)^4},
\end{equation}
\begin{multline}
  \int h_x(t) \left( \int^{\infty}_t h_x(s) e^{\alpha_1 (s - t)} K^{0, 1}_{\GUEalpha}(s, t) \dd s \right)\dd t \\
  = \frac{1}{(2\pi \ii)^2} \int_{\Sigma} \dd z \int_{\Gamma} \dd w \frac{e^{\frac{z^2}{2} - xz} z^{N - k}}{e^{\frac{w^2}{2} - xw} w^{N - k}} \left( \prod^k_{j = 2} \frac{z - \alpha_j}{w - \alpha_j} \right) \frac{1}{(z - w)^4}  \frac{3z - w - 2\alpha_1}{z - \alpha_1}.
\end{multline}
So \eqref{eq:var_part_1_GUE} becomes
\begin{equation} \label{eq:easier_part_of_variance_GUE}
  \frac{2}{(2\pi \ii)^2} \int_{\Sigma} \dd z \int_{\Gamma} \dd w \frac{e^{\frac{z^2}{2} - xz} z^{N - k}}{e^{\frac{w^2}{2} - xw} w^{N - k}} \left( \prod^k_{j = 2} \frac{z - \alpha_j}{w - \alpha_j} \right) \frac{1}{(z - w)^2 (z - \alpha_1)(w - \alpha_1)}.
\end{equation}
Similarly to  \eqref{eq:int_over_Phi_and_bar}, when $x < 0$, this integral can be written as
\begin{subequations} \label{eq:variance_GUE_total}
  \begin{align}
    & \frac{2}{(2\pi \ii)^2} \iint_{\Phi} \dd z \dd w \frac{e^{\frac{z^2}{2} - xz} z^{N - k}}{e^{\frac{w^2}{2} - xw} w^{N - k}} \left( \prod^k_{j = 2} \frac{z - \alpha_j}{w - \alpha_j} \right) \frac{1}{(z - w)^2 (z - \alpha_1)(w - \alpha_1)} \label{eq:variance_GUE_Phi} \\
    + {}& \frac{2}{2\pi \ii} \int^{\upperintersect}_{\lowerintersect} \dd w \left( w + \frac{N - k}{w} - x + \sum^k_{j = 2} \frac{1}{w - \alpha_j} - \frac{1}{w - \alpha_1} \right) \frac{1}{(w - \alpha_1)^2} \label{eq:variance_GUE_bar} \\
    + {}& e^{\frac{\alpha^2_1}{2} - a_k x} \alpha^{N - k}_1 \prod^k_{j = 2} (\alpha_1 - \alpha_j) \frac{2}{2\pi \ii} \int_{\Gamma} \dd w \frac{1}{e^{\frac{w^2}{2} - xw} w^{N - k}} \left( \prod^k_{j = 2} \frac{1}{w - \alpha_j} \right) \frac{1}{(w - \alpha_1)^3}, \label{eq:single_int_gamma_L_GUE}
  \end{align}
\end{subequations}
where the contour $\Phi$ is the same as in \eqref{eq:int_over_Phi_and_bar}, and the contours $\Sigma$ and $\Gamma$ intersect at the two saddle points $\frac{1}{2}(x + \sqrt{4N - x^2}\ii)$ and $\frac{1}{2}(x - \sqrt{4N - x^2}\ii)$, which are the $\upperintersect$ and $\lowerintersect$ respectively. We also require that in \eqref{eq:variance_GUE_Phi} the contour $\Sigma$ lies to the left of all $\alpha_1, \dotsc, \alpha_k$, and the contour in \eqref{eq:variance_GUE_bar} lies to the right of $0$ and all $\alpha_1, \dotsc, \alpha_k$. We evaluate \eqref{eq:variance_GUE_bar} analogous to \eqref{eq:first_moment_GUE}, and have that if all $\alpha_1, \dotsc, \alpha_k$ are distinct, then
\begin{equation}
  \begin{split}
    \eqref{eq:variance_GUE_bar} = {}& \frac{2}{2\pi \ii} \int^{\frac{1}{2}(x + \sqrt{4N - x^2}\ii)}_{\frac{1}{2}(x - \sqrt{4N - x^2}\ii)} \dd w \left[ \left( 1 - \frac{N - k}{\alpha^2_1} - \sum^k_{j = 2} \frac{1}{(\alpha_1 - \alpha_j)^2} \right) \frac{1}{w - \alpha_1} + \frac{N - k}{\alpha^2_1} \frac{1}{w} + \sum^k_{j = 2} \frac{1}{(\alpha_1 - \alpha_j)^2} \frac{1}{w - \alpha_j} \right. \\
  & \phantom{\smash{\frac{2}{2\pi \ii} \int^{\frac{1}{2}(x + \sqrt{4N - x^2}\ii)}_{\frac{1}{2}(x - \sqrt{4N - x^2}\ii)} \dd w}} \left. + \left( \alpha_1 - x + \frac{N - k}{\alpha_1} + \sum^k_{j = 2} \frac{1}{\alpha_1 - \alpha_j} \right) \frac{1}{(w - \alpha_1)^2} - \frac{1}{(w - \alpha_1)^3} \right] \\
  = {}& \frac{2}{\pi} \left( \sqrt{1 - \frac{x^2}{4N}} + \arccos \frac{x}{2\sqrt{N}} \right) + \bigO(N^{-\frac{1}{3}}).
  \end{split}
\end{equation}
The case when some $\alpha_i$ are identical is similar. We also evaluate the integral in \eqref{eq:variance_GUE_Phi} by standard saddle point analysis similar to \eqref{eq:int_over_X_and_bar:X}, and find that it is $\bigO((4N - x^2)^{-1}) = \bigO(N^{-2/5})$. The integral in \eqref{eq:single_int_gamma_L_GUE} will be cancelled out later.

On the other hand, the double integral in \eqref{eq:var_part_3_GUE} can be expressed as
\begin{multline} \label{eq:four_contour_GUE}
  \frac{1}{(2\pi \ii)^4} \int_{\Sigma} \dd u \int_{\Gamma} \dd v \int_{\Sigma}\dd z \int_{\Gamma} \dd w \frac{e^{\frac{u^2}{2} - xu} u^{N - k}}{e^{\frac{v^2}{2} - xv} v^{N - k}} \frac{e^{\frac{z^2}{2} - xz} z^{N - k}}{e^{\frac{w^2}{2} - xw} w^{N - k}} \prod^k_{j = 2} \frac{u - \alpha_j}{v - \alpha_j} \frac{z - \alpha_j}{w - \alpha_j} \\
  \times \frac{1}{(u - v)(z - w) (u - w)(z - v) (v - \alpha_1)(w - \alpha_1)}.
\end{multline}
In order to estimate the integral above, we will perform several steps of contour deformation. Since the arguments are parallel to those for the contour deformations of \eqref{eq:four_contour_Airy} in Section \ref{pf.lem_Airy}, we omit most justifications. Also similar to the computation of \eqref{eq:four_contour_Airy}, below we assume that all contour integrals in the form of $\int^C_{\overline{C}}$ have the contour going between $0$ and $\min \{ \alpha_1, \dotsc, \alpha_k \}$, unless they are specially marked as $\int^C_{\overline{C}, \rightside}$, in which case the contour goes to the right of $\max \{ \alpha_1, \dotsc, \alpha_k \}$.
\begin{enumerate}[label=(\Roman*)]
\item \label{enu:step_variance_GUE:1}
  We first deform the contours for $w$ and $v$ to $\Gamma^{\outside}_w \cup \Gamma^{\inside}_w$ and $\Gamma^{\outside}_v \cup \Gamma^{\inside}_v$, respectively,  as in Figure \ref{fig:w_v_separation_GUE}, such that all $\alpha_j$ ($j = 1, \dotsc, k$) are enclosed in $\Gamma^{\inside}_w$, and then also in $\Gamma^{\inside}_v$, and $0$ is enclosed in $\Gamma^{\outside}_w$, and then also in $\Gamma^{\outside}_v$. We also slightly deform the contour $\Sigma$ and denote it by $\Sigma_u$ and $\Sigma_z$ for the contour of $u$ and $z$, respectively.  
  
\item
  We then further deform the contour $\Sigma_u$ such that it goes between $\Gamma^{\outside}_v$ and $\Gamma^{\inside}_v$, and thus also goes between $\Gamma^{\outside}_w$ and $\Gamma^{\inside}_w$. We denote by  $\Sigma'_u$ the deformed $\Sigma_u$; see Figure \ref{fig:gamma_u_deformed_GUE}. By residue calculation involving a $3$-fold contour integral like \eqref{eq:added_3-integral}, we write \eqref{eq:four_contour_GUE} as
  \begin{subequations} \label{eq:first_transform_GUE}
    \begin{multline}
      \frac{1}{(2\pi \ii)^4} \int_{\Sigma'_u} \dd u \int_{\Gamma^{\outside}_v \cup \Gamma^{\inside}_v} \dd v \int_{\Sigma_z}\dd z \int_{\Gamma^{\outside}_w \cup \Gamma^{\inside}_w} \dd w \frac{e^{\frac{u^2}{2} - xu} u^{N - k}}{e^{\frac{v^2}{2} - xv} v^{N - k}} \frac{e^{\frac{z^2}{2} - xz} z^{N - k}}{e^{\frac{w^2}{2} - xw} w^{N - k}} \prod^k_{j = 2} \frac{u - \alpha_j}{v - \alpha_j} \frac{z - \alpha_j}{w - \alpha_j} \\
      \times \frac{1}{(u - v)(z - w) (u - w)(z - v) (v - \alpha_1)(w - \alpha_1)}
    \end{multline}
    \begin{flalign}
      && + {}& \frac{1}{(2\pi \ii)^2} \int_{\Sigma_z} \dd z \int_{\Gamma^{\inside}_w} \dd w \frac{e^{\frac{z^2}{2} - xz} z^{N - k}}{e^{\frac{w^2}{2} - xw} w^{N - k}} \left( \prod^k_{j = 2} \frac{z - \alpha_j}{w - \alpha_j} \right) \frac{1}{(w - \alpha_1)(z - \alpha_1) (z - w)^2} \\ 
      && + {}& \frac{1}{(2\pi \ii)^2} \int_{\Gamma^{\inside}_v} \dd v \int_{\Sigma_z} \dd z \frac{e^{\frac{z^2}{2} - xz} z^{N - k}}{e^{\frac{v^2}{2} - xv} v^{N - k}} \left( \prod^k_{j = 2} \frac{z - \alpha_j}{v - \alpha_j} \right) \frac{-1}{(v - \alpha_1)^2(z - \alpha_1) (z - v)} \\ 
      && + {}& \frac{1}{(2\pi \ii)^2} \int_{\Gamma^{\outside}_v} \dd v \int_{\Sigma_z} \dd z \frac{e^{\frac{z^2}{2} - xz} z^{N - k}}{e^{\frac{v^2}{2} - xv} v^{N - k}} \left( \prod^k_{j = 2} \frac{z - \alpha_j}{v - \alpha_j} \right) \frac{-1}{(v - \alpha_1)^2 (z - \alpha_1)(z - v)} \\ 
      && + {}& \frac{1}{(2\pi \ii)^2} \int_{\Gamma^{\outside}_w} \dd v \int_{\Sigma_z} \dd z \frac{e^{\frac{z^2}{2} - xz} z^{N - k}}{e^{\frac{w^2}{2} - xw} w^{N - k}} \left( \prod^k_{j = 2} \frac{z - \alpha_j}{w - \alpha_j} \right) \frac{-1}{(w - \alpha_1)^2 (z - \alpha_1)(z - w)}. 
    \end{flalign}
  \end{subequations}
  
  \begin{figure}[htb]
  \begin{minipage}[t]{0.3\linewidth}
    \centering
    \includegraphics{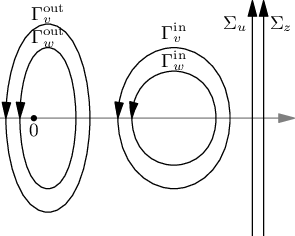}
    \caption{Separation of contours for $w$ and $v$.}
    \label{fig:w_v_separation_GUE}
  \end{minipage}
  \hspace{\stretch{1}}
    \begin{minipage}[t]{0.3\linewidth}
      \centering
      \includegraphics{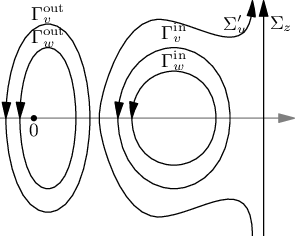}
      \caption{$\Sigma_u$ is deformed into $\Sigma'_u$.}
      \label{fig:gamma_u_deformed_GUE}
    \end{minipage}
    \hspace{\stretch{1}}
    \begin{minipage}[t]{0.3\linewidth}
      \centering
      \includegraphics{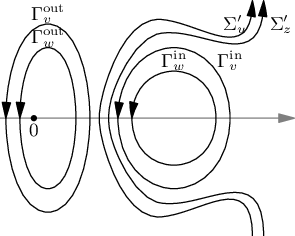}
      \caption{$\Sigma_z$ is deformed into $\Sigma'_z$.}
      \label{fig:gamma_z_deformed_GUE}
    \end{minipage}
  \end{figure}
\item
  Next, similarly to the previous step, we further  deform the contour $\Sigma_z$ such that it goes between $\Gamma^{\outside}_v$ and $\Gamma^{\inside}_v$, and thus also goes between $\Gamma^{\outside}_w$ and $\Gamma^{\inside}_w$. Hence $\Sigma_z$ becomes $\Sigma'_z$; see Figure \ref{fig:gamma_z_deformed_GUE}. By residue calculation, the quantity in \eqref{eq:first_transform_GUE} becomes
  \begin{subequations} \label{eq:second_transform_GUE}
    \begin{multline} \label{eq:2nd_transform_GUE}
      \frac{1}{(2\pi \ii)^4} \int_{\Sigma'_u} \dd u \int_{\Gamma^{\outside}_v \cup \Gamma^{\inside}_v} \dd v \int_{\Sigma'_z}\dd z \int_{\Gamma^{\outside}_w \cup \Gamma^{\inside}_w} \dd w \frac{e^{\frac{u^2}{2} - xu} u^{N - k}}{e^{\frac{v^2}{2} - xv} v^{N - k}} \frac{e^{\frac{z^2}{2} - xz} z^{N - k}}{e^{\frac{w^2}{2} - xw} w^{N - k}} \prod^k_{j = 2} \frac{u - \alpha_j}{v - \alpha_j} \frac{z - \alpha_j}{w - \alpha_j} \\
      \times \frac{1}{(u - v)(z - w) (u - w)(z - v) (v - \alpha_1)(w - \alpha_1)}
    \end{multline}
    \begin{flalign}
      && + {}& \frac{1}{(2\pi \ii)^2} \int_{\Sigma'_u} \dd u \int_{\Gamma^{\inside}_w} \dd w \frac{e^{\frac{u^2}{2} - xu} u^{N - k}}{e^{\frac{w^2}{2} -xw} w^{N - k}} \left( \prod^k_{j = 2} \frac{u - \alpha_j}{w - \alpha_j} \right) \frac{1}{(w - \alpha_1)(u - \alpha_1) (u - w)^2} \\ 
      && + {}& \frac{1}{(2\pi \ii)^2} \int_{\Sigma'_u} \dd u \int_{\Gamma^{\inside}_v} \dd v \frac{e^{\frac{u^2}{2} - xu} u^{N - k}}{e^{\frac{v^2}{2} - xv} v^{N - k}} \left( \prod^k_{j = 2} \frac{u - \alpha_j}{v - \alpha_j} \right) \frac{-1}{(v - \alpha_1)^2(u - \alpha_1) (u - v)} \\ 
      && + {}& \frac{1}{(2\pi \ii)^2} \int_{\Sigma'_u} \dd u \int_{\Gamma^{\outside}_v} \dd v \frac{e^{\frac{u^2}{2} - xu} u^{N - k}}{e^{\frac{v^2}{2} - xv} v^{N - k}} \left( \prod^k_{j = 2} \frac{u - \alpha_j}{v - \alpha_j} \right) \frac{-1}{(v - \alpha_1)^2 (u - \alpha_1)(u - v)} \\ 
      && + {}& \frac{1}{(2\pi \ii)^2} \int_{\Sigma'_u} \dd u \int_{\Gamma^{\outside}_w} \dd w \frac{e^{\frac{u^2}{2} - xu} u^{N - k}}{e^{\frac{w^2}{2} - xw} w^{N - k}} \left( \prod^k_{j = 2} \frac{u - \alpha_j}{w - \alpha_j} \right) \frac{-1}{(w - \alpha_1)^2 (u - \alpha_1)(u - w)} \\ 
      && + {}& \frac{1}{(2\pi \ii)^2} \int_{\Sigma'_z}\dd z \int_{\Gamma^{\inside}_w} \dd w \frac{e^{\frac{z^2}{2} - xz} z^{N - k}}{e^{\frac{w^2}{2} - xw} w^{N - k}} \left( \prod^k_{j = 2} \frac{z - \alpha_j}{w - \alpha_j} \right) \frac{1}{(w - \alpha_1)(z - \alpha_1) (z - w)^2} \\ 
      && + {}& \frac{1}{2\pi \ii} \int_{\Gamma^{\inside}_w} \dd w \left( w - x + \frac{N - k}{w} + \sum^k_{j = 2} \frac{1}{w - \alpha_j} - \frac{1}{w - \alpha_1} \right)\frac{1}{(w - \alpha_1)^2} \label{eq:gamma'_z_and_w_in_add_GUE} \\
      && + {}& \prod^k_{j = 2} (\alpha_1 - \alpha_j) e^{\frac{\alpha^2_1}{2} - \alpha_1 x} \alpha^{N - k}_1 \frac{1}{2\pi \ii} \int_{\Gamma^{\inside}_w} \dd w \frac{1}{e^{\frac{w^2}{2} - xw} w^{N - k}} \left( \prod^k_{j = 2} \frac{1}{w - \alpha_j} \right) \frac{1}{(w - \alpha_1)^3} \\ 
      && + {}& \frac{1}{(2\pi \ii)^2} \int_{\Gamma^{\inside}_v} \dd v \int_{\Sigma'_z}\dd z \frac{e^{\frac{z^2}{2} - xz} z^{N - k}}{e^{\frac{v^2}{2} - xv} v^{N - k}} \left( \prod^k_{j = 2} \frac{z - \alpha_j}{v - \alpha_j} \right) \frac{-1}{(v - \alpha_1)^2(z - \alpha_1) (z - v)} \\ 
      && + {}& \prod^k_{j = 2} (\alpha_1 - \alpha_j) e^{\frac{\alpha^2_1}{2} - \alpha_1 x} \alpha^{N - k}_1 \frac{1}{2\pi \ii} \int_{\Gamma^{\inside}_v} \dd v \frac{1}{e^{\frac{v^2}{2} - xv} v^{N - k}} \left( \prod^k_{j = 2} \frac{1}{v - \alpha_j} \right) \frac{1}{(v - \alpha_1)^3} \\ 
      && + {}& \frac{1}{(2\pi \ii)^2} \int_{\Gamma^{\outside}_w} \dd v \int_{\Sigma'_z}\dd z \frac{e^{\frac{z^2}{2} - xz} z^{N - k}}{e^{\frac{w^2}{2} - xw} w^{N - k}} \left( \prod^k_{j = 2} \frac{z - \alpha_j}{w - \alpha_j} \right) \frac{-2}{(w - \alpha_1)^2 (z - \alpha_1)(z - w)} \\ 
      && + {}& \prod^k_{j = 2} (\alpha_1 - \alpha_j) e^{\frac{\alpha^2_1}{2} - \alpha_1 x} \alpha^{N - k}_1 \frac{1}{2\pi \ii} \int_{\Gamma^{\outside}_w} \dd w \frac{1}{e^{\frac{w^2}{2} - xw} w^{N - k}} \left( \prod^k_{j = 2} \frac{1}{w - \alpha_j} \right) \frac{2}{(w - \alpha_1)^3}. 
    \end{flalign}
  \end{subequations}
  
\item \label{enu:step_deform_v_GUE}
  For further deformation of the contours, we introduce the shorthand notations $\GUEupperright$, $\GUEupperleft$, $\GUElowerright$ and $\GUElowerleft$, analogous to $\Airyupperright$, $\Airyupperleft$, $\Airylowerright$ and $\Airylowerleft$ defined in \eqref{four dots}. We delay the concrete assignment of their values to Remark \ref{rmk:defn_four_dots_GUE}, and only indicate that they are close to $\frac{1}{2}(x+\sqrt{4N-x^2}\ii)$, and their relative positions are shown in the subsequent figures schematically; especially see Figure \ref{fig:w_cross_uz_GUE}.

  For the $4$-fold integral \eqref{eq:2nd_transform_GUE}, we perform the following operations:
  \begin{enumerate}[label=(\roman*)]
  \item 
    deform $\Sigma'_u$ such that it passes $\GUEupperleft$ and $\overline{\GUEupperleft}$;
  \item
    deform $\Sigma'_z$ such that it passes $\GUEupperright$ and $\overline{\GUEupperright}$;
  \item 
    deform $\Gamma^{\outside}_v$ such that it goes from $\overline{\GUEupperleft}$, along the left side of $\Sigma'_u$ until it reaches $\GUEupperleft$, then wraps around $0$ (and $\Gamma^{\outside}_w$) and finally goes back to $\overline{\GUEupperleft}$;
  \item 
    deform $\Gamma^{\inside}_v$ such that it goes from $\GUEupperright$ to $\overline{\GUEupperright}$ along the right side of $\Sigma'_z$, then wraps around all $\alpha_j$'s, and finally goes back to $\GUEupperright$;
  \item
    and at last add an additional contour for $v$, on which the contour integral vanishes: the contour goes from $\overline{\GUEupperright}$ to $\GUEupperright$ along the left-side of $\Sigma'_z$, then goes from $\GUEupperright$ to $\GUEupperleft$, and further goes from $\GUEupperleft$ to $\overline{\GUEupperleft}$ along the right-side of $\Sigma'_u$, and finally goes from $\overline{\GUEupperleft}$ to $\overline{\GUEupperright}$.
  \end{enumerate}
  See Figure \ref{fig:v_cross_uz_GUE} for the deformation of contours.
  \begin{figure}[htb]
    \begin{minipage}[t]{0.45\linewidth}
      \centering
      \includegraphics{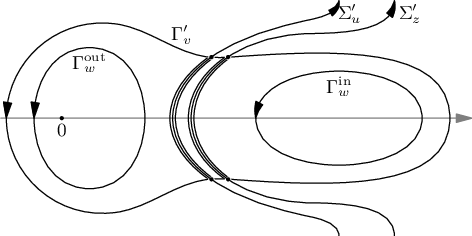}
      \caption{The deformed contours for $v$, $u$ and $z$. The four intersections are $\GUEupperleft$, $\GUEupperright$, $\overline{\GUEupperleft}$ and $\overline{\GUEupperright}$.}
      \label{fig:v_cross_uz_GUE}
    \end{minipage}
    \hspace{\stretch{1}}
    \begin{minipage}[t]{0.45\linewidth}
      \centering
      \includegraphics{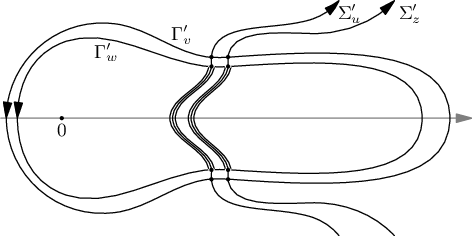}
      \caption{The deformed contours for $w$, $u$ and $z$. The four intersections on $\compC_+$ are $\GUEupperleft$, $\GUEupperright$, $\GUElowerleft$ and $\GUElowerright$, and the four intersections on $\compC_-$ are their complex conjugates.}
      \label{fig:w_cross_uz_GUE}
    \end{minipage}
  \end{figure}
  
  Now we define the contour $\Gamma'_v$ as in Figure \ref{fig:v_cross_uz_GUE} that goes from $\overline{\GUEupperleft}$ to $\overline{\GUEupperright}$, then wraps $\alpha_j$'s until it reaches $\GUEupperright$, and then goes to $\GUEupperleft$, and finally wraps $0$ and goes to $\overline{\GUEupperleft}$. Hence the $4$-fold integral \eqref{eq:2nd_transform_GUE} can be simplified by the residue theorem with $\Gamma^{\outside}_v \cup \Gamma^{\inside}_v$ replaced by $\Gamma'_v$. Then the formula \eqref{eq:second_transform_GUE} becomes
  \begin{subequations} \label{eq:third_transform_GUE}
    \begin{multline} \label{eq:4-fold_PV_v_GUE}
      \frac{1}{(2\pi \ii)^4} \int_{\Sigma'_u} \dd u \int_{\Sigma'_z}\dd z \int_{\Gamma^{\outside}_w \cup \Gamma^{\inside}_w} \dd w \PV \int_{\Gamma'_v} \dd v \frac{e^{\frac{u^2}{2} - xu} u^{N - k}}{e^{\frac{v^2}{2} - xv} v^{N - k}} \frac{e^{\frac{z^2}{2} - xz} z^{N - k}}{e^{\frac{w^2}{2} - xw} w^{N - k}} \prod^k_{j = 2} \frac{u - \alpha_j}{v - \alpha_j} \frac{z - \alpha_j}{w - \alpha_j} \\
      \times \frac{1}{(u - v)(z - w) (u - w)(z - v) (v - \alpha_1)(w - \alpha_1)}
    \end{multline}
    \begin{flalign}
      && + {}& \frac{1}{(2\pi \ii)^3} \int_{\Sigma'_z}\dd z \int_{\Gamma^{\outside}_w \cup \Gamma^{\inside}_w} \dd w \int^{\GUEupperleft}_{\overline{\GUEupperleft}} \dd u \frac{e^{\frac{z^2}{2} - xz} z^{N - k}}{e^{\frac{w^2}{2} - xw} w^{N - k}} \left( \prod^k_{j = 2} \frac{z - \alpha_j}{w - \alpha_j} \right)  \frac{1}{(z - w) (u - w)(z - u) (u - \alpha_1)(w - \alpha_1)} \label{eq:u_out_GUE} \\
      && + {}& \frac{1}{(2\pi \ii)^3} \int_{\Sigma'_u} \dd u \int_{\Gamma^{\outside}_w \cup \Gamma^{\inside}_w} \dd w  \int^{\GUEupperright}_{\overline{\GUEupperright}} \dd z \frac{e^{\frac{u^2}{2} - xu} u^{N - k}}{e^{\frac{w^2}{2} - xw} w^{N - k}} \left( \prod^k_{j = 2} \frac{u - \alpha_j}{w - \alpha_j} \right)  \frac{1}{(u - z)(z - w) (u - w) (z - \alpha_1)(w - \alpha_1)} \label{eq:u_in_GUE} \\
      && + {}&\frac{1}{(2\pi \ii)^2} \int_{\Gamma^{\outside}_w} \dd v \int_{\Sigma'_z}\dd z \frac{e^{\frac{z^2}{2} - xz} z^{N - k}}{e^{\frac{w^2}{2} - xw} w^{N - k}} \left( \prod^k_{j = 2} \frac{z - \alpha_j}{w - \alpha_j} \right) \frac{-4}{(w - \alpha_1)^2 (z - \alpha_1)(z - w)} \\ 
      && + {}& \prod^k_{j = 2} (\alpha_1 - \alpha_j) e^{\frac{\alpha^2_1}{2} - \alpha_1 x} \alpha^{N - k}_1 \frac{1}{2\pi \ii} \int_{\Gamma^{\inside}_w \cup \Gamma^{\outside}_w} \dd w \frac{1}{e^{\frac{w^2}{2} - xw} w^{N - k}} \left( \prod^k_{j = 2} \frac{1}{w - \alpha_j} \right) \frac{2}{(w - \alpha_1)^3} \label{eq:int_in_w_out_w_twice_GUE} \\ 
      && + {}&\frac{1}{(2\pi \ii)^2} \int_{\Sigma'_z}\dd z \int_{\Gamma^{\inside}_w} \dd w \frac{e^{\frac{z^2}{2} - xz} z^{N - k}}{e^{\frac{w^2}{2} - xw} w^{N - k}} \left( \prod^k_{j = 2} \frac{z - \alpha_j}{w - \alpha_j} \right) \frac{2(2w - z - \alpha_1)}{(w - \alpha_1)^2 (z - \alpha_1) (z - w)^2} \\ 
      && + {}& 1 - \frac{N - k}{\alpha^2_1}. \notag
    \end{flalign}
  \end{subequations}
  Here we remark that the $1$-fold integral \eqref{eq:gamma'_z_and_w_in_add_GUE} is equal to $1 - \frac{N - k}{\alpha^2_1}$.
\item \label{enu:step_variance_GUE:5}
  Now we deform the contour $\Gamma^{\outside}_w \cup \Gamma^{\inside}_w$ for $w$ in the way similar to our deformation of $\Gamma^{\outside}_v \cup \Gamma^{\inside}_v$ for $v$ in Step \ref{enu:step_deform_v_GUE}. We perform the following operations:
  \begin{enumerate}
  \item
    deform $\Sigma'_u$ such that it passes $\GUElowerleft$ and $\overline{\GUElowerleft}$, and meanwhile still passes $\GUEupperleft$ and $\overline{\GUEupperleft}$;
  \item
    deform $\Sigma'_z$ such that it passes $\GUElowerright$ and $\overline{\GUElowerright}$, and meanwhile still passes $\GUEupperright$ and $\overline{\GUEupperright}$;
  \item 
    deform $\Gamma^{\outside}_w$ such that it goes from $\overline{\GUElowerleft}$, along the left side of $\Sigma'_u$ until it reaches $\GUElowerleft$, then wraps around $0$ and finally goes back to $\overline{\GUElowerleft}$;
  \item 
    deform $\Gamma^{\inside}_w$ such that it goes from $\GUElowerright$ to $\overline{\GUElowerright}$ along the right-side of $\Sigma'_z$, then wraps around all $\alpha_j$, and finally goes back to $\GUElowerright$\;;
  \item
    and at last add an additional contour for $w$, on which the contour integral vanishes: the contour goes from $\overline{\GUElowerright}$ to $\GUElowerright$ along the left-side of $\Sigma'_z$, then goes from $\GUElowerright$ to $\GUElowerleft$, and further goes from $\GUElowerleft$ to $\overline{\GUElowerleft}$ along the right-side of $\Sigma'_u$, and finally goes from $\overline{\GUElowerleft}$ to $\overline{\GUElowerright}$\;.
  \end{enumerate}
  We have the result in Figure \ref{fig:w_cross_uz_GUE}. Similar to $\Gamma'_v$ in Figure \ref{fig:v_cross_uz_GUE}.  Now we define the contour $\Gamma'_ w$ that goes from $\overline{\GUElowerleft}$ to $\overline{\GUElowerright}$\;, then wraps $\alpha_j$'s until it reaches $\GUElowerright$\;, and then goes to $\GUElowerleft$, and finally wraps $0$ and goes to $\overline{\GUElowerleft}$. Hence by the residue theorem, the $4$-fold integral \eqref{eq:4-fold_PV_v_GUE} can be simplified with $\Gamma^{\outside}_w \cup \Gamma^{\inside}_w$ replaced by $\Gamma'_w$, and then formula \eqref{eq:third_transform_GUE} becomes
  \begin{subequations}
    \begin{multline} \label{eq:4-fold_PV_vw_GUE}
      \frac{1}{(2\pi \ii)^4} \int_{\Sigma'_u} \dd u \int_{\Sigma'_z}\dd z \PV \int_{\Gamma'_w} \dd w \PV \int_{\Gamma'_v} \dd v \frac{e^{\frac{u^2}{2} - xu} u^{N - k}}{e^{\frac{v^2}{2} - xv} v^{N - k}} \frac{e^{\frac{z^2}{2} - xz} z^{N - k}}{e^{\frac{w^2}{2} - xw} w^{N - k}} \prod^k_{j = 2} \frac{u - \alpha_j}{v - \alpha_j} \frac{z - \alpha_j}{w - \alpha_j} \\
      \times \frac{1}{(u - v)(z - w) (u - w)(z - v) (v - \alpha_1)(w - \alpha_1)}
    \end{multline}
    \begin{flalign}
      && + {}& \frac{1}{(2\pi \ii)^3} \int_{\Sigma'_z}\dd z \PV \int_{\Gamma'_v} \dd v \int^{\GUElowerleft}_{\overline{\GUElowerleft}} \dd u \frac{e^{\frac{z^2}{2} - xz} z^{N - k}}{e^{\frac{v^2}{2} - xv} v^{N - k}} \left( \prod^k_{j = 2} \frac{z - \alpha_j}{v - \alpha_j} \right) \frac{1}{(u - v)(z - u) (z - v) (v - \alpha_1)(u - \alpha_1)} \label{eq:int_vz_u_GUE} \\
      && + {}& \frac{1}{(2\pi \ii)^3} \int_{\Sigma'_u} \dd u \PV \int_{\Gamma'_v} \dd v \int^{\GUElowerright}_{\overline{\GUElowerright}} \dd z \frac{e^{\frac{u^2}{2} - xu} u^{N - k}}{e^{\frac{v^3}{3} + xv}} \left( \prod^k_{j = 2} \frac{z - \alpha_j}{v - \alpha_j} \right) \frac{1}{(u - v)(z - w) (z - v) (v - \alpha_1)(z - \alpha_1)}  \label{eq:int_vu_z_GUE} \\
      && + {}& \frac{1}{(2\pi \ii)^2} \int_{\Gamma^{\inside}_w \cup \Gamma^{\outside}_w} \dd w \int_{\Sigma'_z}\dd z \frac{e^{\frac{z^2}{2} - xz} z^{N - k}}{e^{\frac{w^2}{2} - xw} w^{N - k}} \left( \prod^k_{j = 2} \frac{z - \alpha_j}{w - \alpha_j} \right) \frac{-4}{(w - \alpha_1)^2 (z - \alpha_1)(z - w)} \label{eq:int_out_w_and_in_w_z_times_4_GUE} \\
      && + {}& \frac{1}{(2\pi \ii)^2} \int_{\Sigma'_z}\dd z \int_{\Gamma^{\inside}_w} \dd w \frac{e^{\frac{z^2}{2} - xz} z^{N - k}}{e^{\frac{w^2}{2} - xw} w^{N - k}} \left( \prod^k_{j = 2} \frac{z - \alpha_j}{w - \alpha_j} \right) \frac{2}{(w - \alpha_1)^2 (z - w)^2} \label{eq:gamma'_z_and_w_in_twice_remainder_GUE} \\
      && + {}& \eqref{eq:u_out_GUE} + \eqref{eq:u_in_GUE} + \eqref{eq:int_in_w_out_w_twice_GUE} + 1 - \frac{N - k}{\alpha^2_1}.
    \end{flalign}
  \end{subequations}
\end{enumerate}

Now we can compute the contour integrals by saddle point method. First we need to fix the shapes of contours $\Gamma'_v$, $\Gamma'_w$, $\Sigma'_u$ and $\Sigma'_z$.
\begin{enumerate}[label=(\roman*)]
\item 
  If $(-2 + \epsilon) \sqrt{N} < x < 1.9\sqrt{N}$, then let $\Gamma'_w$ be $\Gamma^{\round}_{\standard, N}(0)$, $\Gamma'_v$ be $\Gamma^{\round}_{\standard, N}(1)$, $\Sigma'_u$ be $\Sigma^{|}_{\standard, N}(x/2)$ and $\Sigma'_z$ be $\Sigma^{|}_{\standard, N}(x/2 + 1)$.
\item
  If $1.9\sqrt{N} \leq x<2\sqrt{N} - N^{-1/10}$, then with $\theta = \arccos(x/\sqrt{4N})$, let $\Gamma'_w$ be $\Gamma^{\corner}_{\standard, N}(\theta, 0)$, $\Gamma'_v$ be $\Gamma^{\corner}_{\standard, N}(\theta, (2 - x/\sqrt{N})^{-1/4})$, $\Sigma'_u$ be $\Sigma^{\zig}_{\standard, N}(x/2 - \sqrt{N - x^2/4}/\sqrt{3})$ and $\Sigma'_z$ be $\Sigma^{\zig}_{\standard, N}(x/2 - \sqrt{N - x^2/4}/\sqrt{3} + (2 - x/\sqrt{N})^{-1/4})$.
\end{enumerate}
\begin{rmk} \label{rmk:defn_four_dots_GUE}
  We note that in either case, the four contours have four intersections around $\frac{1}{2}(x+\sqrt{4N-x}\ii)$, and they are the desired $\GUEupperright$, $\GUEupperleft$, $\GUElowerright$ and $\GUElowerleft$.
\end{rmk}
\begin{enumerate}[label=(\arabic*)]
\item
  The $4$-fold integral \eqref{eq:4-fold_PV_vw_GUE} can be estimated in the same way as for \eqref{eq:4-fold_PV_vw}, and it is $\bigO((2\sqrt{N} - x)^{-1} N^{-1/4})$.
\item
  Both of the $3$-fold integrals \eqref{eq:int_vz_u_GUE} and \eqref{eq:int_vu_z_GUE} can be estimated in the same way as for \eqref{eq:int_vz_u} and \eqref{eq:int_vu_z}. We note that in the evaluation of \eqref{eq:int_vz_u} and \eqref{eq:int_vu_z}, we specified the shapes of the contour from $\overline{\Airylowerleft}$ to $\Airylowerleft$ for $u$ and the contour from $\overline{\Airylowerright}$ to $\Airylowerright$ for $z$. Here we can deform the contours from $\overline{\GUElowerleft}$ to $\GUElowerleft$ for $u$ and the contour from $\overline{\GUElowerright}$ to $\GUElowerright$ for $z$ in a similar way: the contour for $u$ is the part of $\Sigma'_u$ inside of $\Gamma'_v$, and the contour for $z$ is the part of $\Sigma'_z$ inside of $\Gamma'_w$. We conclude that both \eqref{eq:int_vz_u_GUE} and \eqref{eq:int_vu_z_GUE} are $\bigO((2\sqrt{N} - x)^{-1} N^{-1/4})$.
\item
  The $3$-fold integral \eqref{eq:u_out_GUE} can be written as the sum of
  \begin{subequations}
    \begin{align}
      & \frac{1}{(2\pi \ii)^3} \int_{\Sigma'_z}\dd z \PV \int_{\Gamma''_w} \dd w \int^{\GUEupperleft}_{\overline{\GUEupperleft}} \dd u \frac{e^{\frac{z^2}{2} - xz} z^{N - k}}{e^{\frac{w^2}{2} - xw} w^{N - k}} \left( \prod^k_{j = 2} \frac{z - \alpha_j}{w - \alpha_j} \right)    \frac{1}{(z - w) (u - w)(z - u) (u - \alpha_1)(w - \alpha_1)} \label{eq:first_trivial_GUE} \\
      + {}& \frac{1}{(2\pi \ii)^2} \int^{\GUEupperleft}_{\overline{\GUEupperleft}} \dd u \int^{\upperintersect'}_{\lowerintersect'} \dd z \frac{-1}{(u - z)^2 (u - \alpha_1)(z - \alpha_1)} \label{eq:last_added_1_GUE} \\
      + {}& \frac{1}{(2\pi \ii)^2} \int_{\Sigma''_z}\dd z \int^{\GUEupperleft}_{\overline{\GUEupperleft}} \dd u \frac{e^{\frac{z^2}{2} - xz} z^{N - k}}{e^{\frac{u^2}{2} - xu} u^{N - k}} \left( \prod^k_{j = 2} \frac{z - \alpha_j}{u - \alpha_j} \right) \frac{1}{(z - u)^2(u - \alpha_1)^2} \label{eq:gamma''_z_and_u_GUE} \\
      + {}& \frac{1}{2\pi \ii} \int^{\GUEupperleft}_{\overline{\GUEupperleft}} \dd u \left( (u - x + \frac{N - k}{u}) + \sum^k_{j = 2} \frac{1}{u - \alpha_j} \right) \frac{1}{(u - \alpha_1)^2}, \label{eq:last_done_GUE}
    \end{align}
  \end{subequations}
  where the contours
  \begin{align}
    \Gamma''_w = {}&
    \begin{cases}
      \Gamma^{\round}_{\standard, N}(2), & (-2 + \epsilon) \sqrt{N} < x < 1.9\sqrt{N}, \\
      \Gamma^{\corner}_{\standard, N}(\arccos(x/\sqrt{4N}), 2(2 - x/\sqrt{N})^{-1/4}), & 1.9\sqrt{N} \leq x <2\sqrt{N} - N^{-1/10},
    \end{cases} \\
    \Sigma''_z = {}&
    \begin{cases}
     \Sigma^{|}_{\standard, N}(x/2 - 1), & (-2 + \epsilon) \sqrt{N} < x < 1.9\sqrt{N}, \\
     \Sigma^{\zig}_{\standard, N}(x/2 - \sqrt{N - x^2/4}/\sqrt{3} - (2 - x/\sqrt{N})^{-1/4}), & 1.9\sqrt{N} \leq x < 2\sqrt{N} - N^{-1/10},
    \end{cases}
  \end{align}
  $\upperintersect'$ and $\lowerintersect'$ are the intersections between $\Gamma''_w$ and $\Sigma'_z$. Also we take the contours from $\overline{\GUEupperleft}$ to $\GUEupperleft$ for $u$ to be the part of $\Sigma'_u$ inside $\Gamma'_v$, the contour from $\lowerintersect'$ to $\upperintersect'$ for $z$ to be the part of $\Sigma'_z$ inside $\Gamma''_w$. On the other hand, the $3$-fold \eqref{eq:u_in_GUE} can bewritten as the sum of
  \begin{subequations}
    \begin{align}
      & \frac{1}{(2\pi \ii)^3} \int_{\Sigma'_u} \dd u \PV \int_{\Gamma''_w} \dd w  \int^{\GUEupperright}_{\overline{\GUEupperright}}\dd z \frac{e^{\frac{u^2}{2} - xu} u^{N - k}}{e^{\frac{w^2}{2} - xw} w^{N - k}} \left( \prod^k_{j = 2} \frac{u - \alpha_j}{w - \alpha_j} \right) \frac{1}{(u - z)(z - w) (u - w) (z - \alpha_1)(w - \alpha_1)} \\
      + {}& \frac{1}{(2\pi \ii)^2} \int^{\upperintersect''}_{\lowerintersect''} \dd u \int^{\GUEupperright}_{\overline{\GUEupperright}} \dd z \frac{-1}{(u - z)^2 (u - \alpha_1)(z - \alpha_1)} \label{eq:last_added_2_GUE} \\
      + {}& \frac{1}{(2\pi \ii)^2} \int_{\Sigma'_u} \dd u \int^{\GUEupperright}_{\overline{\GUEupperright}} \dd z \frac{e^{\frac{u^2}{2} - xu} u^{N - k}}{e^{\frac{z^2}{2} - xz} z^{N - k}} \left( \prod^k_{j = 2} \frac{u - \alpha_j}{z - \alpha_j} \right) \frac{1}{(u - z)^2(z - \alpha_1)^2}. \label{eq:two_cancellation_early_GUE}
    \end{align}
  \end{subequations}
  where the contour $\Gamma''_w$ is the same as $\Gamma''_w$ in \eqref{eq:first_trivial_GUE}, and it intersects $\Sigma'_u$ at $\upperintersect''$ and $\lowerintersect''$, and the contour from $\overline{\GUEupperright}$ to $\GUEupperright$ for $z$ is the part of $\Sigma'_z$ inside $\Gamma'_v$.
\item
  The $2$-fold integral \eqref{eq:int_out_w_and_in_w_z_times_4_GUE} can be written as the sum of
  \begin{subequations}
    \begin{align}
      & \frac{1}{(2\pi \ii)^2} \iint_{\Phi} \dd v\dd z \frac{e^{\frac{z^2}{2} - xz} z^{N - k}}{e^{\frac{w^2}{2} - xw} w^{N - k}} \left( \prod^k_{j = 2} \frac{z - \alpha_j}{w - \alpha_j} \right) \frac{-4}{(w - \alpha_1)^2 (z - \alpha_1)(z - w)} \\
      + {}& \frac{1}{2\pi \ii} \int^{\upperintersect}_{\lowerintersect} \dd z \frac{4}{(z - \alpha_1)^3}, \label{eq:single_contour_at_last}
    \end{align}
    by the same transform as \eqref{eq:double_contour_GUE_mean} is transformed into \eqref{eq:int_over_Phi_and_bar}, and the contour $\Phi$ and the integral limits $\upperintersect$, $\lowerintersect$ are as defined in \eqref{eq:int_over_Phi_and_bar}. Note that in \eqref{eq:single_contour_at_last} the contour is between $0$ and $\min(\alpha_1, \dotsc, \alpha_k)$ while in \eqref{eq:int_over_Phi_and_bar:bar} the contour is to the right of $0$ and all $\alpha_1, \dotsc, \alpha_k$. 
  \end{subequations}
\item
  The $2$-fold integral \eqref{eq:gamma'_z_and_w_in_twice_remainder_GUE} can be written as the sum of
  \begin{subequations}
    \begin{align}
      & \frac{1}{(2\pi \ii)^2} \int_{\Sigma''_z} \dd z \int^{\GUEupperleft}_{\overline{\GUEupperleft}} \dd w \frac{e^{\frac{z^3}{3} - xz}}{e^{\frac{w^3}{3} -xw}} \left( \prod^k_{j = 2} \frac{z - \alpha_j}{w - \alpha_j} \right) \frac{-1}{(w - \alpha_1)^2 (z - w)^2} \label{eq:one_cancellation_GUE} \\
      + {}& \frac{1}{(2\pi \ii)^2} \int_{\Sigma''_z} \dd z \int^{\GUEupperleft}_{\overline{\GUEupperleft}, \rightside} \dd w \frac{e^{\frac{z^3}{3} - xz}}{e^{\frac{w^3}{3} -xw}} \left( \prod^k_{j = 2} \frac{z - \alpha_j}{w - \alpha_j} \right) \frac{1}{(w - \alpha_1)^2 (z - w)^2} \label{eq:one_cancellation_GUE_2} \\
      + {}& \frac{1}{(2\pi \ii)^2} \int_{\Sigma''_z} \dd z \int^{\GUEupperright}_{\overline{\GUEupperright}} \dd w \frac{e^{\frac{z^3}{3} - xz}}{e^{\frac{w^3}{3} -xw}} \left( \prod^k_{j = 2} \frac{z - \alpha_j}{w - \alpha_j} \right) \frac{-1}{(w - \alpha_1)^2 (z - w)^2} \label{eq:two_cancellation_GUE} \\
      + {}& \frac{1}{(2\pi \ii)^2} \int_{\Sigma''_z} \dd z \int^{\GUEupperright}_{\overline{\GUEupperright}, \rightside} \dd w \frac{e^{\frac{z^3}{3} - xz}}{e^{\frac{w^3}{3} -xw}} \left( \prod^k_{j = 2} \frac{z - \alpha_j}{w - \alpha_j} \right) \frac{1}{(w - \alpha_1)^2 (z - w)^2}, \label{eq:last_trivial_GUE}
    \end{align}
  \end{subequations}
  where $\Sigma''_z$ is the same as $\Sigma''_z$ in \eqref{eq:gamma''_z_and_u_GUE}, the contour for $w$ in \eqref{eq:one_cancellation_GUE} is the part of $\Sigma'_u$ inside of $\Gamma'_v$, the contour for $w$ in \eqref{eq:one_cancellation_GUE_2} is the part of $\Gamma'_v$ to the right of $\Sigma'_u$, the contour for $w$ in \eqref{eq:two_cancellation_GUE} is the part of $\Sigma'_z$ inside of $\Gamma'_v$, and the contour for $w$ in \eqref{eq:last_trivial_GUE} is the part of $\Gamma'_v$ to the right of $\Sigma'_z$. We note that \eqref{eq:one_cancellation_GUE} cancels with \eqref{eq:gamma''_z_and_u_GUE}, and \eqref{eq:two_cancellation_GUE} cancels with \eqref{eq:two_cancellation_early_GUE}.
\item
  The $1$-fold integral \eqref{eq:int_in_w_out_w_twice_GUE} cancels with \eqref{eq:single_int_gamma_L_GUE}.
\item
  \eqref{eq:last_done_GUE} can be evaluated similar to \eqref{eq:variance_GUE_bar}, and it is $\frac{1}{\pi}(\sqrt{1 - x^2/(4N)} + \arccos(x/\sqrt{4N}) + \bigO(N^{-1/3})$.
\item 
  \eqref{eq:last_added_1_GUE} and \eqref{eq:last_added_2_GUE} are $\bigO(N^{-2/5} \log N)$, and all other integrals from \eqref{eq:first_trivial_GUE} to \eqref{eq:last_trivial_GUE} not mentioned above, are  $\bigO(N^{-2/5})$.
\end{enumerate}
Hence we obtain the final proof of \eqref{eq:GUE_measure_variance_alt}.

\paragraph{Sketch of proof of \eqref{eq:GUE_measure_variance_local_alt}}

Like the proof of \eqref{eq:GUE_measure_variance_alt}, we try to estimate \eqref{eq:easier_part_of_variance_GUE} and \eqref{eq:four_contour_GUE}.

For \eqref{eq:easier_part_of_variance_GUE}, we again use the decomposition \eqref{eq:variance_GUE_total}, but the precise shapes of the contours are not to be the same as in the proof of \eqref{eq:GUE_measure_mean_alt}, and the integral limits $\upperintersect, \lowerintersect$ are not to be $\frac{1}{2}(x + \sqrt{4N - x^2}\ii)$ and $\frac{1}{2}(x - \sqrt{4N - x^2}\ii)$. Instead, we deform the contour $\Phi = \Gamma \times \Sigma$ as in the proof of \eqref{eq:GUE_measure_mean_local_alt}, that is, $\Gamma$ into $\Gamma^{\corner}_{\standard, N}(0, (2\sqrt{N} - x)^{1/2} N^{1/4}/\sqrt{3})$, and deform $\Sigma$ into $\Sigma^{\zig}_{\standard, N}((\sqrt{N} - 2\sqrt{N} - x)^{1/2} N^{1/4}/\sqrt{3})$, and then let the integral limits $\upperintersect, \lowerintersect$ be $\sqrt{N} \pm (2\sqrt{N} - x)^{1/2} N^{1/4} \ii$. Then we have that in the regime that $x = N^{-1/6} \xi + 2\sqrt{N}$ and $-N^{1/15} < \xi < -C$, \eqref{eq:variance_GUE_Phi} is $\bigO(N^{-1/3} (-\xi)^{-1})$, and \eqref{eq:variance_GUE_bar} is $N^{-1/3}(\frac{4}{\pi} \sqrt{-\xi} + 2a_k) + \bigO(N^{-1/3} (-\xi)^{-1})$.

For \eqref{eq:four_contour_GUE}, we take the transforms as in Steps \ref{enu:step_variance_GUE:1} -- \ref{enu:step_variance_GUE:5}, with only one methodological difference: The intersection points $\GUEupperright$, $\GUEupperleft$, $\GUElowerright$ and $\GUElowerleft$ now should be around $\sqrt{N} + (2\sqrt{N} - x)^{1/2} N^{1/4} \ii$, in consistence with our choice of $\Phi$ and $\upperintersect, \lowerintersect$. However, practically we can still use the deformation of the contours in the regime $1.9\sqrt{N} \leq x < 2\sqrt{N} - N^{-1/10}$, because in the regime where we are working, $\frac{1}{2}(x + \sqrt{4N - x^2}\ii)$ and $\sqrt{N} + (2\sqrt{N} - x)^{1/2} N^{1/4} \ii$ are very close to each other. Hence we can still use the saddle point method that is used in the proof of \eqref{eq:GUE_measure_variance_alt}, especially that for the regime $1.9\sqrt{N} \leq x <2\sqrt{N} - N^{-1/10}$.

At last, we derive \eqref{eq:GUE_measure_variance_local_alt} by the method delineated above, with much detail omitted.

\section{Nondegeneracy of the limiting distribution} \label{s.nondegeneracy}

In this section, we prove that the random variable $\Xi_{j}^{(k)}(\totala;\infty)$ defined in \eqref{eq:defn_Xi^infty} is nondegenerate, i.e., Theorem \ref{thm.nondegeneracy}, which tells that the distribution is not supported on a single point. It is equivalent to show that $\log \Xi_{j}^{(k)}(\totala;\infty)$ is nondegenerate. It is not a trivial task, since $\log \Xi_{j}^{(k)}(\totala;\infty)$ is a non-linear functional on the externded Airy process. Our proof relies on that $\log \Xi_{j}^{(k)}(\totala;\infty)$ is the limit of $\log(N^{1/3} \lvert x_{j1} \rvert^2)$ by Theorem \ref{thm:main_thm_2} (up to a constant), which is a non-linear functional on the GUE minor process with external source. The advantage of the latter is that the eigenvalue distribution of a GUE-type matrix has a log-gas representation (c.f. (\ref{eq:distr_sigma_1})) that does not pass to the limiting processes, e.g.~an Airy-type process. We make use of the log-gas representation, and prove Lemma \ref{lem:tough_lem}, which leads to the proof of Theorem \ref{thm.nondegeneracy} in a straightforward way.

Since in the statement and proof of Lemma \ref{lem:tough_lem} we will condition on $\lambda_1, \dotsc, \lambda_{N - 1}$, $\sigma_1, \dotsc, \sigma_{j - 1}, \sigma_{j + 1}, \dotsc, \sigma_N$ and play with the randomness of $\sigma_j$ only, we will denote by $\omega = (\lambda_1, \dotsc, \lambda_{N - 1}, \sigma_1, \dotsc, \sigma_{j - 1}, \sigma_{j + 1}, \dotsc, \sigma_N)$ a generic realization of the  collection of the given eigenvalues, for notational simplicity.

\begin{lem} \label{lem:tough_lem}
  There exist $M, \epsilon, \epsilon', \epsilon'' > 0$, such that if $N$ is large enough, there exists an event $A $ of $\omega$,  with $\Prob(A) > \epsilon$, and the following estimates hold  for any fixed $\omega \in A$,
 \begin{align}
   \Prob \left( M^{-1} < N^{\frac{1}{3}} \lvert x_{j1} \rvert^2 < M \mid \omega \right) > \epsilon', \quad \text{and} \quad \var \left( \log (N^{\frac{1}{3}}  \lvert x_{j1} \rvert^2) \mid \omega \text{ and } M^{-1} < N^{\frac{1}{3}} \lvert x_{j1} \rvert^2 < M \right) > \epsilon'', \label{032401}
 \end{align}
\end{lem}

\begin{proof}[Proof of Theorem \ref{thm.nondegeneracy} by Lemma \ref{lem:tough_lem}]
  Lemma \ref{lem:tough_lem} shows that for all large enough $N$, the followings hold:
  \begin{enumerate}[label=(\roman*)]
  \item 
    $\Prob(M^{-1} < N^{1/3} \lvert x_{j1} \rvert^2 < M) > \Prob(A \cap \{ M^{-1} < N^{1/3} \lvert x_{j1} \rvert^2 < M \}) > \epsilon \epsilon'$, and
  \item
    the conditional variance
    \begin{equation}
      \begin{split}
        & \var \left( \log(N^{\frac{1}{3}} \lvert x_{j1} \rvert^2) \mid M^{-1} < N^{\frac{1}{3}} \lvert x_{j1} \rvert^2 < M \right) \\
        \geq {}& \frac{\Prob(A \cap \{ M^{-1} < N^{1/3} \lvert x_{j1} \rvert^2 < M \})}{\Prob(M^{-1} < N^{1/3} \lvert x_{j1} \rvert^2 < M)} \var \left( \log(N^{\frac{1}{3}} \lvert x_{j1} \rvert^2) \mid A \cap \{ M^{-1} < N^{\frac{1}{3}} \lvert x_{j1} \rvert^2 < M \} \right) \\
        > {}& \epsilon \epsilon' \epsilon''.
      \end{split}
    \end{equation}
  \end{enumerate}
  Using Theorem \ref{thm:main_thm_2}, we have $\Prob(M^{-1} < (3\pi/2)^{1/3} \Xi_{j}^{(k)}(\totala;\infty) < M) > \epsilon \epsilon'$ and $\var(\log((3\pi/2)^{1/3} \Xi_{j}^{(k)}(\totala;\infty)) \mid M^{-1} < (3\pi/2)^{1/3} \Xi_{j}^{(k)}(\totala;\infty) < M) > \epsilon \epsilon' \epsilon''$. Then we can conclude the proof of Theorem \ref{thm.nondegeneracy} easily.
\end{proof}

The remaining part of this section is devoted to the proof of Lemma \ref{lem:tough_lem}. To make notations simple, in this section we denote $\lambdatilde_i = N^{1/6}(\lambda_i - 2\sqrt{N})$ and $\sigmatilde_i = N^{1/6}(\sigma_i - 2\sqrt{N})$. We recall that $\sigmatilde_i$'s and $\lambdatilde_i$'s converge jointly in distribution to $\xi^{(k)}_i$'s and $\xi^{(k - 1)}_i$'s, by Lemma \ref{lem:Airy_limit}. We further denote $F_j = \log(N^{1/3} \lvert x_{j1} \rvert^2)$. 

\begin{proof}[Proof of Lemma \ref{lem:tough_lem}] 
  We discuss first the $j = 1$ case in detail, and then extend the discussion to the $j > 1$ case.
  
  \paragraph{The $j = 1$ case}
  Before we commence the detailed discussion, we first provide a heuristic outline of the construction of the event $A$ and also the proof of (\ref{032401}). We start with the first estimate in (\ref{032401}). It suffices to find an event $A$, so that for each $\omega\in A$, the conditional probability of $F_1\equiv F_1(\tilde{\sigma}_1; \omega):=\log (N^{1/3}|x_{11}|^2)$ being bounded both below and above is non-negligible. From (\ref{20041121}), it is clear that  $F_1(\tilde{\sigma}_1; \omega)$ is an increasing function of $\tilde{\sigma}_1$ in the domain $(\tilde{\lambda}_1, \infty)$, given $\omega$. We will construct an $A$, so that $F_1(\tilde{\lambda}_1+\varepsilon; \omega)$ and $F_1(M; \omega)$ are bounded below and above, respectively, and further the conditional probability of $\tilde{\sigma}_1\in [\tilde{\lambda}_1+\varepsilon, M]$ given $\omega\in A$ is greater than $1/3$. Here $\varepsilon$ and $M$ are some sufficiently small and large constants respectively, which will be specified as $\epsilon_3$ and $M_1$ in the detailed discussion. The desired event $A$ is eventually given by $A_5$ in (\ref{event A5}), via a successive construction of a sequence of events $A_5\subseteq A_4\subseteq A_3\subseteq A_2\subseteq A_1$. Among all of them, especially, on event $A_2$,  $\tilde{\sigma}_1$ is bounded above with non-negligible conditional probability, and then on  $A_3\subseteq A_2$,  $\tilde{\sigma}_1$ is further bounded below by  $\tilde{\lambda}_1+\varepsilon$ with non-negligible conditional  probability. In the final $A_5$, we also take into account the lower and upper bounds of  $F_1(\tilde{\lambda}_1+\varepsilon; \omega)$ and $F_1(M; \omega)$ respectively. In order to show the second estimate in (\ref{032401}), we shall show the following two points for $\omega\in A_5$: (i) $\tilde{\sigma}_1$ spreads fairly even in the interval $[\tilde{\lambda}_1+\varepsilon, M]$; (ii) $F_1(\tilde{\sigma};M)$ varies monotonically and significantly in the interval $[\tilde{\lambda}_1+\varepsilon, M]$. The proof of (i) will rely on an upper bound on the derivative of the conditional density function of $\tilde{\sigma}_1$ given $\omega\in A_5$ (c.f. (\ref{eq:est_f_dev})), and the proof of (ii) will rely on a lower bound of the derivative of $F_1(\tilde{\sigma};\omega)$ (c.f. (\ref{eq:difference_F_lower})). 
  
  With the above heuristics, we conduct the detailed proof in the sequel. We note that $\xi^{(k - 1)}_1 > \xi^{(k)}_2$ almost surely and they are both continuous random variables. Hence there exist some $c_1 \in \realR$ and $\epsilon_2 > 0$ such $\Prob(\xi^{(k)}_1 \in (c_1, c_1 + \epsilon_2) \text{ and } \xi^{(k - 1)}_2 < c_1 - \epsilon_2) > \epsilon_1$ for some $\epsilon_1 > 0$. Hence the event $A_1$ defined by
  \begin{equation} \label{def of A1}
    A_1 = \{ \lambdatilde_1 \in (c_1, c_1 + \epsilon_2) \text{ and } \sigmatilde_2 < c_1 - \epsilon_2 \}
  \end{equation}
  satisfies that $\Prob(A_1) > \epsilon_1$ for large enough $N$.
  
  Let $p(\omega)$ be the marginal density of $\omega$, whose formula is not relevant. If $\omega$ is fixed, the conditional density of $\sigma_1$ given $\omega$ is, by \cite[Theorem 1]{Adler-van_Moerbeke-Wang11}
  \begin{equation} \label{eq:distr_sigma_1}
    p_{\omega}(\sigma) = \frac{1}{C_{\omega}} \exp(-f_{\omega, \alpha_1}(\sigma)) \Id(\sigma > \lambda_1), \quad \text{where} \quad f_{\omega, \alpha_1}(\sigma) = \frac{\sigma^2}{2} - \alpha_1 \sigma - \sum^N_{i = 2} \log(\sigma - \sigma_i),
  \end{equation}
  for some constant $C_{\omega}$, or equivalently, the conditional density for $\sigmatilde_1$ is 
  \begin{equation} \label{def of f_tilde}
    \tilde{p}_{\omega}(\sigmatilde) = \frac{p_{\omega}(\sigma)}{N^{1/6}} = \frac{1}{\tilde{C}_{\omega, a_k}} \exp(-\tilde{f}_{\omega, a_k}(\sigmatilde)) \Id(\sigmatilde > \lambdatilde_1), \quad \text{where} \quad \tilde{f}_{\omega, a_k}(\sigmatilde) = N^{\frac{1}{3}} \sigmatilde - a_k \sigmatilde + N^{-\frac{1}{3}} \frac{\sigmatilde^2}{2} - \sum^N_{i = 2} \log(\sigmatilde - \sigmatilde_i),
  \end{equation}
  and $\tilde{C}_{\omega, a_k} = N^{-N/6} \exp(2N^{2/3} a_k) C_{\omega}$. Hence
  \begin{equation} \label{eq:derivatives_ftilde}
    \tilde{f}'_{\omega, a_k}(\sigmatilde) = N^{\frac{1}{3}} - a_k + N^{-\frac{1}{3}} \sigmatilde - \sum^N_{i = 2} \frac{1}{\sigmatilde - \sigmatilde_i}, \quad \tilde{f}''_{\omega, a_k}(\sigmatilde) = N^{-\frac{1}{3}} + \sum^N_{i = 2} \frac{1}{(\sigmatilde - \sigmatilde_i)^2}, \quad \text{and} \quad \int^{\infty}_{\lambdatilde_1} \tilde{p}_{\omega}(\sigmatilde) \dd \sigmatilde = 1.
  \end{equation}
  We note that as a random variable independent of $N$,  $\xi^{(k)}_1 < +\infty$ almost surely, and thus there exists a constant $M_1 > c_1 + \epsilon_2$ such that $\Prob(\xi^{(k)}_1 > M_1)<\frac{\epsilon_1}{6}$. Consequently,   for large enough $N$,
  \begin{equation} \label{eq:tail_est_sigmatilde_1}
    \Prob(\sigmatilde_1 > M_1) = \int p(\omega) \dd \omega \int^{\infty}_{M_1} \tilde{p}_{\omega}(\sigmatilde) \dd \sigmatilde < \frac{\epsilon_1}{6}.
  \end{equation}
  Then the event $A_2$ defined by
  \begin{equation}
    A_2 = \left\{ \omega \in A_1 \mid \int^{M_1}_{\lambdatilde_1} \tilde{p}_{\omega}(\sigmatilde) \dd\sigmatilde > \frac{2}{3} \right\}
  \end{equation}
  satisfies $\Prob(A_2) > \epsilon_1/2$ for large enough $N$. Otherwise we will have
  \begin{equation}
    \int p(\omega) \dd \omega \int^{\infty}_{M_1} \tilde{p}_{\omega}(\sigmatilde) \dd \sigmatilde \geq \int_{A_1 \setminus A_2} p(\omega) \dd \omega \int^{\infty}_{M_1} \tilde{p}_{\omega}(\sigmatilde) \dd \sigmatilde >\frac{1}{3} \Prob(A_1 \setminus A_2)  \geq \frac{\epsilon_1}{6},
  \end{equation}
  contradictory to \eqref{eq:tail_est_sigmatilde_1}.
  
  Next, since $\xi^{(k)}_1 > \xi^{(k - 1)}_1$ almost surely and $\xi^{(k)}_1, \xi^{(k - 1)}_1$ are both continuous random variables, there exist $\epsilon_3 > 0$ such that $\Prob(\xi^{(k)}_1 > \xi^{(k - 1)}_1 + \epsilon_3) > 1 - \epsilon_1/12$, and then for large enough $N$,
  \begin{equation} \label{eq:head_est_sigmatilde_1}
    \Prob(\sigmatilde_1 \leq  \lambdatilde_1 + \epsilon_3) = \int p(\omega) \dd \omega \int^{\lambdatilde_1 + \epsilon_3}_{\lambdatilde_1} \tilde{p}_{\omega}(\sigmatilde) \dd \sigmatilde < \frac{\epsilon_1}{12}.
  \end{equation}
  Then the event $A_3$ defined by
  \begin{equation}
    A_3 = \left\{ \omega \in A_2 \mid \int^{\lambdatilde_1 + \epsilon_3}_{\lambdatilde_1} \tilde{p}_{\omega}(\sigmatilde) \dd\sigmatilde < \frac{1}{3} \right\}
  \end{equation}
  satisfies $\Prob(A_3) > \epsilon_1/4$ for large enough $N$. Otherwise, we will have
  \begin{equation}
    \int p(\omega) \dd \omega \int^{\lambdatilde_1 + \epsilon_3}_{\lambdatilde_1} \tilde{p}_{\omega}(\sigmatilde) \dd \sigmatilde \geq \int_{A_2 \setminus A_3} p(\omega) \dd \omega \int^{\lambdatilde_1 + \epsilon_3}_{\lambdatilde_1} \tilde{p}_{\omega}(\sigmatilde) \dd \sigmatilde > \frac{1}{3}\Prob(A_2 \setminus A_3)  \geq  \frac{\epsilon_1}{12},
  \end{equation}
  contradictory to \eqref{eq:head_est_sigmatilde_1}. We note that if $\omega \in A_3$, then
  \begin{equation} \label{eq:comparison_int_ptilde}
    \begin{gathered}
      \Prob(\sigmatilde_1 > M_1 \mid \omega) = \int^{\infty}_{M_1} \tilde{p}_{\omega}(\sigmatilde) \dd \sigmatilde < \frac{1}{3}, \quad \Prob(\sigmatilde_1 \in (\lambdatilde_1, \lambdatilde_1 + \epsilon_3) \mid \omega) = \int^{\lambdatilde_1 + \epsilon_3}_{\lambdatilde_1} \tilde{p}_{\omega}(\sigmatilde) \dd \sigmatilde < \frac{1}{3}, \\
      \text{and} \quad \Prob(\sigmatilde_1 \in [\lambdatilde_1 + \epsilon_3, M_1] \mid \omega) = \int^{M_1}_{\lambdatilde_1 + \epsilon_3} \tilde{p}_{\omega}(\sigmatilde) \dd \sigmatilde > \frac{1}{3}.
    \end{gathered}
  \end{equation}
  
  To proceed with the construction of the event $A_4\subseteq A_3$, we will need the following lemma which is essentially used to control $\tilde{f}''_{\omega, a_k}(\tilde{\sigma})$ (c.f. (\ref{eq:derivatives_ftilde})). The control of $\tilde{f}''_{\omega, a_k}(\tilde{\sigma})$ will be needed to further control $\tilde{f}'_{\omega, a_k}(\tilde{\sigma})$, which will be necessary, as we mentioned at the beginning of the proof.  The  proof of the following lemma will be postponed to the end of this section. 
  \begin{lem} \label{lem:C_max}
  Let $L > 0$ be any constant independent of $N$. If $N$ is large enough, there is a constant $C_L$ such that for all $x \in [-L, L]$
  \begin{equation} \label{eq:est_square_inverse_sum}
    \E \left( \sum^N_{i = 1} \frac{1}{(\sigmatilde_i - x)^2 + 1} \right) < C_L.
  \end{equation}
\end{lem}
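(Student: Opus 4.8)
The plan is to express the expectation in \eqref{eq:est_square_inverse_sum} through the one-point correlation function of the determinantal point process of the rescaled eigenvalues $\{\sigmatilde_i\}_{i=1}^N$, and then estimate the resulting integral by splitting $\realR$ into a far-right region, a bounded region near the edge, and the bulk region $t\to-\infty$. Writing
\begin{equation*}
  \rho_N(t) := K^{0,0}_{N,\scaled}(t,t) = N^{-\frac16} K^{0,0}_{\GUEalpha}\big(2\sqrt N + N^{-\frac16}t,\, 2\sqrt N + N^{-\frac16}t\big)
\end{equation*}
for the density of $\{\sigmatilde_i\}$ (the indicator term in \eqref{eq:kernel_GUE} drops out for $j_1=j_2=0$), the standard linear statistics identity for determinantal processes gives
\begin{equation*}
  \E\left(\sum_{i=1}^N \frac{1}{(\sigmatilde_i - x)^2 + 1}\right) = \int_\realR \frac{\rho_N(t)}{(t-x)^2+1}\,\dd t .
\end{equation*}

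It therefore suffices to establish the two bounds, uniform in $N$ for $N$ large,
\begin{equation*}
  \rho_N(t) \leq C_0\big(1 + \sqrt{|t|}\big) \quad\text{for all } t \leq L+1, \qquad\text{and}\qquad \int_{L+1}^{+\infty}\rho_N(t)\,\dd t \leq C_0 ,
\end{equation*}
after which \eqref{eq:est_square_inverse_sum} follows by an elementary split of the integral at $t=L+1$ and $t=-2L$: on $[L+1,+\infty)$ bound $\frac{1}{(t-x)^2+1}\leq 1$ and use the second bound; on $[-2L, L+1]$ use that $\rho_N$ is bounded there (by the first bound, or directly by the uniform-on-compacts convergence $\rho_N(t)\to K^{k,k}_{\Airya}(t,t)$ from Lemma \ref{lem:Airy_limit}) together with the fact that the interval has length $\bigO(L)$; and on $(-\infty,-2L]$ use $|t-x|\geq |t|/2$ (valid since $|x|\leq L$) with the first bound to get $\int_{-\infty}^{-2L} \frac{4C_0(1+\sqrt{|t|})}{t^2}\,\dd t<\infty$, a convergent integral depending only on $L$ and $C_0$.

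The right-tail bound is immediate from the trace-norm convergence in Lemma \ref{lem:Airy_limit}: $\int_{L+1}^\infty \rho_N(t)\,\dd t$ is the trace of the scaled kernel restricted to $L^2([L+1,+\infty))$, which converges to the finite trace of $K^{k,k}_{\Airya}$ there, hence stays bounded. The bound $\rho_N(t)\lesssim 1+\sqrt{|t|}$ for $t\to-\infty$ is the substantive point; it is obtained by a saddle point analysis of the double contour integral \eqref{20041005} for $K^{0,0}_{\GUEalpha}$, entirely parallel to the computations in Section \ref{pf.lem_GUE}. For $t\in[-N^{2/3},L+1]$, deforming $\Sigma$ and $\Gamma$ through the saddle points $\sqrt N \pm (2\sqrt N - E)^{1/2}N^{1/4}\ii$ with $E=2\sqrt N + N^{-\frac16}t$ and estimating as in the proof of \eqref{eq:GUE_measure_mean_local_alt} yields $\rho_N(t) = \frac1\pi\sqrt{-t}\,(1+o(1))$; for $t$ in the deep bulk $[-4N^{2/3},-N^{2/3}]$ the crude bound $\rho_N(t)=\bigO(N^{1/3})$ suffices, as then $\sqrt{|t|}\geq N^{1/3}$ absorbs it; and for $t$ below $-4N^{2/3}-\bigO(1)$ there is no mass. (Alternatively one may bypass the saddle point computation via Weyl interlacing between $G_\totalalpha$ and the undeformed $G$, reducing to the analogous count for the Hermite kernel, for which $\E\,\#\{i:\lambda_i(G)\in[2\sqrt N - N^{-\frac16}b,\, 2\sqrt N - N^{-\frac16}a]\}=\bigO(b^{3/2})$ in the edge/intermediate regime is classical, plus the trivial $\bigO(N^{1/3})$ bound in the deep bulk.)

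The main obstacle is precisely this uniform-in-$N$ control of the bulk density, namely $\rho_N(t)\lesssim\sqrt{|t|}$ for $|t|$ up to order $N^{2/3}$: a uniform bound of the form $\rho_N(t)=\bigO(N^{1/3})$ alone would yield only $\bigO(N^{1/3})$ for the bulk contribution and is insufficient. Everything else — the linear statistics identity, the right-tail bound from trace-norm convergence, and the elementary integral split — is routine.
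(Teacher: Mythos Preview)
Your proposal is correct and follows essentially the same route as the paper: express the expectation as an integral against the one-point density and control that density in regions via saddle point analysis of the double contour integral for $K^{0,0}_{\GUEalpha}$. The paper works in the unscaled variable and records the three regimes as (i) semicircle law $\rho_N(E)\sim\frac{1}{2\pi}\sqrt{4N-E^2}$ in the bulk, (ii) $\rho_N(E)=\bigO(N^{1/4}(2\sqrt N\mp E)^{1/2})$ near the edges, and (iii) exponential decay beyond the edges; after rescaling these become exactly your $\rho_N(t)\lesssim 1+\sqrt{|t|}$ on $(-\infty,L+1]$ plus a bounded right tail. Your use of the trace-norm convergence from Lemma~\ref{lem:Airy_limit} for the right tail is a clean shortcut the paper does not take (it uses saddle point there too), and your alternative via Weyl interlacing to reduce to the undeformed Hermite kernel is a legitimate bypass of the saddle point computation that the paper does not mention.
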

  
Let $L>0$ be as in Lemma \ref{lem:C_max}. Now we choose $L $ sufficiently large such that $[-L, L] \supseteq (c_1 - \epsilon_2, M_1)$, and define
  \begin{equation}
    A_4 = \left\{ \omega \in A_3 \mid \int^{\infty}_{\lambdatilde_1} \tilde{p}_{\omega}(\sigmatilde_1) \sum^N_{i = 1} \frac{1}{(\sigmatilde_i - c_1)^2 + 1} \dd\sigmatilde_1 < \frac{C_L}{\epsilon_1/8} \right\}.
  \end{equation}
  We have that $\Prob(A_4) > \epsilon_1/8$ for all large enough $N$. Otherwise,
  \begin{equation} \label{eq:ineq_avarage_at_c1}
    \begin{split}
      \E \left( \sum^N_{i = 1} \frac{1}{(\sigmatilde_i - c_1)^2 + 1} \right) = {}& \int p(\omega) \dd \omega \int^{\infty}_{\lambdatilde_1} \tilde{p}_{\omega}(\sigmatilde_1) \sum^N_{i = 1} \frac{1}{(\sigmatilde_i - c_1)^2 + 1} \dd\sigmatilde_1 \\
      \geq {}& \int_{A_3 \setminus A_4} p(\omega) \dd \omega \int^{\infty}_{\lambdatilde_1} \tilde{p}_{\omega}(\sigmatilde_1) \sum^N_{i = 1} \frac{1}{(\sigmatilde_i - c_1)^2 + 1} \dd\sigmatilde_1 \\
      \geq {}& \Prob(A_3 \setminus A_4) \frac{C_L}{\epsilon_1/8} \\
      \geq {}& C_L,
    \end{split}
  \end{equation}
  contradictory to \eqref{eq:est_square_inverse_sum}.
  
  Now consider the function $F_1 = \log(N^{1/3} \lvert x_{11} \rvert^2)$ defined by \eqref{20041121}. For a fixed $\omega \in A_4$, $F_1$ depends on $\sigma_1$, or equivalently $\sigmatilde_1$. Below we express it as $F_1(\sigmatilde_1; \omega)$ as a function of $\sigmatilde_1$. $F_1(\sigmatilde_1; \omega)$ is an increasing function as $\sigmatilde_1 \in (\lambdatilde_1, +\infty)$. In Theorem \ref{thm:main_thm_2} we have shown that the random variable $N^{1/3}|x_{11}|^2$ converges weakly to the random variable $(3\pi/2)^{1/3} \Xi_j^{(k)}(\totala; \infty)$, so there exists $M_2 > 1$ such that for large enough $N$,
  \begin{equation}
    \Prob(F_1 < -\log M_2 \text{ or } F_1 > \log M_2) < \frac{\epsilon_1}{16}.
  \end{equation}
  Then the event $A_5$ defined by
  \begin{align}
    A_5 = \left\{ \omega \in A_4 \mid [-\log M_2, \log M_2] \cap [F_1(\lambdatilde_1 + \epsilon_3; \omega), F_1(M_1; \omega)] \neq \emptyset \right\} \label{event A5}
  \end{align}
  satisfies $\Prob(A_5) > \epsilon_1/16$ for large enough $N$. Otherwise, we will have
  \begin{equation}
    \Prob(F_1 < -\log M_2 \text{ or } F_1 > \log M_2) > \int_{A_4 \setminus A_5} p(\omega) \dd \omega \Prob(\sigmatilde_1 \in [\lambdatilde_1 + \epsilon_3, M_1] \mid \omega) \geq \frac{\epsilon_1}{16} \frac{1}{3} = \frac{\epsilon_1}{48}.
  \end{equation}
  
  We have, by \eqref{eq:defn_P_GUE_tilde} and \eqref{eq:GUE_P_n_integral},
  \begin{equation}
    F_1(\sigmatilde; \omega) = \int_{\bigcup^N_{i = 2} (\sigmatilde_i, \lambdatilde_{i - 1}]} \frac{-1}{\sigmatilde - x} \dd x + \frac{1}{3} \log N.
  \end{equation}
  Hence with $\omega \in A_4$, we have
  \begin{equation} \label{eq:difference_F_upper}
    F_1(M_1; \omega) - F_1(\lambdatilde_1 + \epsilon_3; \omega) < \int_{(-\infty, \lambdatilde_1]} \left( \frac{1}{\lambdatilde_1 + \epsilon_3 - x} - \frac{1}{M_1 - x} \right) \dd x = \log \frac{M_1 - \lambdatilde_1}{\epsilon_3} < \log \frac{M_1 - c_1}{\epsilon_3},
  \end{equation}
  and for $\sigmatilde$ between $\lambdatilde_1 + \epsilon_3$ and $M_1$,
  \begin{equation} \label{eq:difference_F_lower}
    \frac{\dd}{\dd\sigmatilde} F_1(\sigmatilde; \omega) > \int_{[c_1 - \epsilon_2, c_1]} \frac{1}{(\sigmatilde - x)^2} \dd x = \frac{1}{M_1 - c_1} - \frac{1}{M_1 - c_1 + \epsilon_2},
  \end{equation}
  where we recalled the domain in \eqref{def of A1}.
  
  By \eqref{eq:difference_F_upper}, with $M_3 = M_2 \cdot ((M_1 - c_1)/\epsilon_3)$, we have that
  \begin{equation} \label{eq:defn_A_5}
    [F_1(\lambdatilde_1 + \epsilon_3; \omega), F_1(M_1; \omega)] \subseteq [-\log M_3, \log M_3], \quad  \text{if $\omega \in A_5$}.
  \end{equation}
  Hence we have for all $\omega \in A_5$,
  \begin{equation} \label{eq:A_5_prop}
    \begin{split}
      \Prob(-\log M_3 < F_1(\sigmatilde_1; \omega) < \log M_3 \mid \omega) \geq {}& \Prob(F_1(\lambdatilde_1 + \epsilon_3; \omega) < F_1(\sigmatilde_1; \omega) < F_1(M_1; \omega) \mid \omega) \\
      = {}& \Prob(\sigmatilde_1 \in [\lambdatilde_1 + \epsilon_3, M_1] \mid \omega) > \frac{1}{3}.
    \end{split}
  \end{equation}
  Therefore, choosing $A=A_5, \epsilon'=\frac13, \epsilon=\frac{\epsilon_1}{16}$, we conclude the proof of the first estimate in (\ref{032401}).
  
 Next, we prove the second estimate in (\ref{032401}).  We note that by \eqref{eq:derivatives_ftilde}, $\tilde{f}''_{\omega, a_k}$ is positive and decreasing on $(\lambdatilde_1, +\infty)$. For $\omega \in A_5$, we have that if $\sigmatilde > \lambdatilde_1$, then for large enough $N$,
  \begin{equation} \label{bound for f''}
    \begin{split}
      0 < \tilde{f}''_{\omega, a_k}(\sigmatilde) < {}& N^{-\frac{1}{3}} + \sum^N_{i = 2} \frac{1}{(\sigmatilde_i - c_1)^2} \\
      < {}& N^{-\frac{1}{3}} + \frac{1 + \epsilon^2_2}{\epsilon^2_2} \sum^N_{i = 2} \frac{1}{(\sigmatilde_i - c_1)^2 + 1} \\
      < {}& N^{-\frac{1}{3}} + \frac{1 + \epsilon^2_2}{\epsilon^2_2} \int^{\infty}_{\lambdatilde_1} \tilde{p}_{\omega}(\sigmatilde_1) \sum^N_{i = 1} \frac{1}{(\sigmatilde_i - c_1)^2 + 1} \dd \sigmatilde_1 \\
      < {}& \frac{8 C_L}{\epsilon_1 \epsilon^2_2},
    \end{split}
  \end{equation}
  where we use the property that $\sigmatilde_i + \epsilon_2 < c_1 < \sigmatilde$ ($i = 2, \dotsc, N$).
  
  Now with the aid of \eqref{bound for f''}, we claim that for all $\omega \in A_5$, if we take $C_A$ to be a constant bigger than both $\epsilon^{-1}_3 + 8 C_L(\epsilon_1 \epsilon^2_2)^{-1} (M_1 - c_1)$ and $16 C_L (\epsilon_1 \epsilon^2_2)^{-1}(M_1 - c_1)$, then
  \begin{equation} \label{eq:est_f_dev}
    \lvert \tilde{f}'_{\omega, a_k}(\sigmatilde) \rvert < C_A \quad \text{if $\sigmatilde \in [\lambdatilde_1 + \epsilon_3, M_1]$.}
  \end{equation}
  Since $\tilde{f}'_{\omega, a_k}(\sigmatilde)$ is increasing on $(\lambdatilde_1, +\infty)$, to prove \eqref{eq:est_f_dev}, it suffices to check that
  \begin{align}
    \tilde{f}'_{\omega, a_k}(\lambdatilde_1 + \epsilon_3) > {}& -C_A, \label{eq:f_deriv_lower} \\
    \tilde{f}'_{\omega, a_k}(M_1) < {}& C_A. \label{eq:f_deriv_upper}
  \end{align}
  If \eqref{eq:f_deriv_lower} does not hold, we have that, in light of \eqref{bound for f''},
  \begin{equation}
    \begin{split}
      \tilde{f}'_{\omega, a_k}(2M_1 - c_1) = {}& \tilde{f}'_{\omega, a_k}(\lambdatilde_1 + \epsilon_3) + \int^{2M_1 - c_1}_{\lambdatilde_1 + \epsilon_3} \tilde{f}''_{\omega, a_k}(\sigmatilde) \dd \sigmatilde \\
      \leq {}& \tilde{f}'_{\omega, a_k}(\lambdatilde_1 + \epsilon_3) + [(2M_1 - c_1) - (\lambdatilde_1 + \epsilon_3)] \frac{8 C_L}{\epsilon_1 \epsilon^2_2} \\
      \leq {}& -C_A + 2(M_1 - c_1)\frac{8 C_L}{\epsilon_1 \epsilon^2_2} < 0.\\
    \end{split}
  \end{equation}
  It implies that $\tilde{f}'_{\omega, a_k}(\sigmatilde)$ is negative on $(\lambdatilde_1, 2M_1 - c_1)$, and then $\tilde{f}_{\omega, a_k}(\sigmatilde)$ is decreasing there. Consequently, by \eqref{def of f_tilde}, $\tilde{p}_{\omega}(\sigmatilde)$ is increasing there. We hence have
  \begin{equation} \label{091201}
    \frac{\Prob(\sigmatilde_1 \in (M_1, +\infty) \mid \omega)}{\Prob(\sigmatilde_1 \in [\lambdatilde_1 + \epsilon_3, M_1] \mid \omega)} \geq \frac{\Prob(\sigmatilde_1 \in (M_1, 2M_1 - c_1) \mid \omega)}{\Prob(\sigmatilde_1 \in [\lambdatilde_1 + \epsilon_3, M_1] \mid \omega)} = \frac{\int^{2M_1 - c_1}_{M_1} \tilde{p}_{\omega}(\sigmatilde) {\rm d}\sigmatilde}{\int^{M_1}_{\lambdatilde_1 + \epsilon_3} \tilde{p}_{\omega}(\sigmatilde) {\rm d}\sigmatilde} > 1,
  \end{equation}
  which is contradictory to \eqref{eq:comparison_int_ptilde}. On the other hand, if \eqref{eq:f_deriv_upper} does not hold, then
  \begin{equation}
    \tilde{f}'_{\omega, a_k}(\lambdatilde_1) = \tilde{f}'_{\omega, a_k}(M_1) - \int^{M_1}_{\lambdatilde_1} \tilde{f}''_{\omega, a_k}(\sigmatilde) \dd \sigmatilde \geq \tilde{f}'_{\omega, a_k}(M_1) - (M_1 - \lambdatilde_1) \frac{8 C_L}{\epsilon_1 \epsilon^2_2} \geq C_A - (M_1 - c_1) \frac{8 C_L}{\epsilon_1 \epsilon^2_2} \geq \epsilon^{-1}_3.
  \end{equation}
  Hence we have, by the monotonicity of $\tilde{f}'_{\omega, a_k}(\sigmatilde)$, that $\tilde{f}'_{\omega, a_k}(\sigmatilde) \geq \epsilon^{-1}_3$ for all $\sigmatilde \in [\lambdatilde_1, M_1]$. Hence we have
  \begin{equation}
    \begin{split}
      \int^{\lambdatilde_1 + \epsilon_3}_{\lambdatilde_1} \tilde{p}_{\omega}(\sigmatilde) \dd \sigmatilde = {}& \frac{1}{\tilde{C}_{\omega, a_k}} \int^{\lambdatilde_1 + \epsilon_3}_{\lambdatilde_1} \exp(-\tilde{f}_{\omega, a_k}(\sigmatilde)) \dd \sigmatilde \\
      \geq {}& \frac{1}{\tilde{C}_{\omega, a_k}} \int^{\lambdatilde_1 + \epsilon_3}_{\lambdatilde_1} \exp[-\tilde{f}_{\omega, a_k}(\lambdatilde_1 + \epsilon_3) + \epsilon^{-1}_3(\lambdatilde_1 + \epsilon_3 - \sigmatilde)] \dd \sigmatilde \\
      = {}& (e - 1) \epsilon_3 \frac{\exp[-\tilde{f}_{\omega, a_k}(\lambdatilde_1 + \epsilon_3)]}{\tilde{C}_{\omega, a_k}},
    \end{split}
  \end{equation}
  and
  \begin{equation}
    \begin{split}
      \int^{M_1}_{\lambdatilde_1 + \epsilon_3} \tilde{p}_{\omega}(\sigmatilde) \dd \sigmatilde = {}& \frac{1}{\tilde{C}_{\omega, a_k}} \int^{M_1}_{\lambdatilde_1 + \epsilon_3} \exp(-\tilde{f}_{\omega, a_k}(\sigmatilde)) \dd \sigmatilde \\
      \leq {}& \frac{1}{\tilde{C}_{\omega, a_k}} \int^{M_1}_{\lambdatilde_1 + \epsilon_3} \exp[-\tilde{f}_{\omega, a_k}(\lambdatilde_1 + \epsilon_3) - \epsilon^{-1}_3(\sigmatilde - \lambdatilde_1 - \epsilon_3)] \dd \sigmatilde \\
      < {}& \epsilon_3 \frac{\exp[-\tilde{f}_{\omega, a_k}(\lambdatilde_1 + \epsilon_3)]}{\tilde{C}_{\omega, a_k}}.
    \end{split}
  \end{equation}
  Hence we have
  \begin{equation}
    \frac{\Prob(\sigmatilde_1 \in (\lambdatilde_1, \lambdatilde_1 + \epsilon_3) \mid \omega)}{\Prob(\sigmatilde_1 \in [\lambdatilde_1 + \epsilon_3, M_1] \mid \omega)} = \frac{\int^{\lambdatilde_1 + \epsilon_3}_{\lambdatilde_1} \tilde{p}_{\omega}(\sigmatilde) \dd\sigmatilde}{\int^{M_1}_{\lambdatilde_1 + \epsilon_3} \tilde{p}_{\omega}(\sigmatilde) \dd\sigmatilde} > 1,
  \end{equation}
  which is also contradictory to \eqref{eq:comparison_int_ptilde}. Hence we have \eqref{eq:est_f_dev}.
  
  We have that if $\omega \in A_5$ and $\sigmatilde_1 \in [\lambdatilde_1 + \epsilon_3, M_1]$, then 
  \begin{enumerate*}[label=(\roman*)]
  \item
    $\sigmatilde_1$ spreads fairly even due to \eqref{eq:est_f_dev}; and
  \item
     $F_1(\sigmatilde_1; \omega)$ varies monotonically and significantly, by \eqref{eq:difference_F_lower}.
  \end{enumerate*}
  We therefore conclude that for all $\omega \in A_5$, for some $\epsilon_4 > 0$ independent of $\omega$,
  \begin{equation} \label{eq:conditional_var_first_est}
    \var(F_1(\sigmatilde_1; \omega) \mid \omega \text{ and } \sigmatilde_1 \in [\lambdatilde_1 + \epsilon_3, M_1]) > \epsilon_4.
  \end{equation}
  Then using \eqref{eq:A_5_prop}, we have
  \begin{equation} \label{eq:final_nondegeneracy}
    \begin{split}
      & \var(F_1(\sigmatilde_1; \omega) \mid \omega \text{ and } -\log M_3 < F_1(\sigmatilde_1; \omega) < \log M_3) \\
      \geq {}& \frac{\Prob(\sigmatilde_1 \in [\lambdatilde_1 + \epsilon_3, M_1] \mid \omega)}{\Prob(-\log M_3 < F_1(\sigmatilde_1; \omega) < \log M_3 \mid \omega)} \var(F_1(\sigmatilde_1; \omega) \mid \omega \text{ and } \sigmatilde_1 \in [\lambdatilde_1 + \epsilon_3, M_1]) \\
      > & \frac{1}{3} \epsilon_4.
    \end{split}
  \end{equation}

  Taking $A = A_5$, $M = M_3$, $\epsilon = \epsilon_1/16$, $\epsilon' = 1/3$ and $\epsilon'' = \epsilon_4$, we see that Lemma \ref{lem:tough_lem} is verified in the $j = 1$ case by \eqref{eq:A_5_prop} and \eqref{eq:final_nondegeneracy}.

  \paragraph{The $j > 1$ case}

  Similar to $A_1$ in the proof of the $j = 1$ case, we can define
  \begin{equation}
    B_1 = \{ \lambdatilde_j \in (d_1, d_1 + \delta_2) \text{ and } \lambdatilde_{j - 1} \in (d_2 - \delta_2, d_2) \text{ and } \sigmatilde_{j + 1} < d_1 - \delta_2 \text{ and } \sigmatilde_{j - 1} > d_2 + \delta_2 \text{ and } \sigmatilde_1 < N_1 \},
  \end{equation}
  such that $\Prob(B_1) > \delta_1$ for large enough $N$, where $d_1, d_2, N_1$ are real numbers, $\delta_1, \delta_2$ are positive numbers, and $d_1 + \delta_2 < d_2 - \delta_2$, $d_2 + \delta_2 < N_1$. We additionally require that $\delta_2 < (d_2 - d_1)/6$ for later use.

  Next, let $p(\omega)$ be the marginal density of $\omega$, and the conditional density of $\sigma_j$, as $\omega$ is fixed, is
  \begin{equation}
    p_{\omega}(\sigma) = \frac{1}{C_{\omega}} \exp(-g_{\omega, \alpha_1}(\sigma))\Id(\lambda_j < \sigma < \lambda_{j - 1}) \quad \text{where} \quad g_{\omega, \alpha_1} = \frac{\sigma^2}{2} - \alpha_1 \sigma - \sum^N_{i = j + 1} \log(\sigma - \sigma_i) - \sum^{j - 1}_{i = 1} \log(\sigma_i - \sigma),
  \end{equation}
  for some constant $C_{\omega}$, or equivalently, the conditional density for $\sigmatilde_j$ is
  \begin{equation}
    \begin{gathered}
      \tilde{p}_{\omega}(\sigmatilde) = \frac{p_{\omega}(\sigma)}{N^{1/6}} = \frac{1}{\tilde{C}_{\omega, a_k}} \exp(-\tilde{g}_{\omega, a_k}(\sigmatilde)) \Id(\lambdatilde_j < \sigmatilde < \lambdatilde_{j - 1}), \\
      \text{where} \quad \tilde{g}_{\omega, a_k}(\sigmatilde) = N^{\frac{1}{3}} \sigmatilde - a_k\sigmatilde + N^{-\frac{1}{3}} \frac{\sigmatilde^2}{2} - \sum^N_{i = j + 1} \log(\sigmatilde - \sigmatilde_i) - \sum^{j - 1}_{i = 1} \log(\sigmatilde_i - \sigmatilde),
    \end{gathered}
  \end{equation}
  and $\tilde{C}_{\omega, a_k} = N^{-N/6} \exp(2N^{2/3} a_k) C_{\omega}$. Hence
  \begin{equation} \label{eq:derivatives_gtilde}
    \begin{gathered}
      \tilde{g}'_{\omega, a_k}(\sigmatilde) = N^{\frac{1}{3}} - a_k + N^{-\frac{1}{3}} \sigmatilde - \sum^N_{i = j + 1} \frac{1}{\sigmatilde - \sigmatilde_i} + \sum^{j - 1}_{i = 1} \frac{1}{\sigmatilde_i - \sigmatilde}, \quad \tilde{g}''_{\omega, a_k}(\sigmatilde) = N^{-\frac{1}{3}} + \sum_{i\in  \llbracket1,  N \rrbracket  \setminus \{ j \}} \frac{1}{(\sigmatilde - \sigmatilde_i)^2}, \\
      \text{and} \quad \int^{\lambdatilde_{j-1}}_{\lambdatilde_j} \tilde{p}_{\omega}(\sigmatilde) \dd \sigmatilde = 1.
    \end{gathered}
  \end{equation}
  Then analogous to $A_2$ and $A_3$ in the proof of the $j = 1$ case, we can define
  \begin{equation}
    B_3 = \left\{ \omega \in B_1 \mid \int^{\lambdatilde_{j - 1}}_{\lambdatilde_{j - 1} - \delta_3} \tilde{p}_{\omega}(\sigmatilde) \dd \sigmatilde < \frac{1}{3} \text{ and } \int^{\lambdatilde_j + \delta_3}_{\lambdatilde_j} \tilde{p}_{\omega}(\sigmatilde) \dd \sigmatilde < \frac{1}{3} \right\}, \label{092310}
  \end{equation}
  such that $\Prob(B_3) > \delta_1/4$ for large enough $N$, where $\delta_3 > 0$. We additionally require that $\delta_3 < \delta_2/2$ for later use. It is straightforward to see that for all $\omega \in B_3$,
  \begin{equation} \label{eq:1/3_est_sigmatilde_j}
    \Prob(\sigmatilde_j \in [\lambdatilde_j + \delta_3, \lambdatilde_{j - 1} - \delta_3]) > \frac{1}{3}.
  \end{equation}
  
  Analogous to the definition of $A_4$, we choose $L$ in Lemma \ref{lem:C_max} sufficiently large such that $[-L, L] \supseteq (d_1 - \delta_2, d_2 + \delta_2)$, and let
  \begin{subequations}
    \begin{align}
      B_4 = {}& \left\{ \omega \in B_3 \mid \int^{\lambdatilde_{j - 1}}_{\lambdatilde_j} \tilde{p}_{\omega}(\sigmatilde_j) \sum^N_{i = 1} \frac{1}{(\sigmatilde_i - d_1)^2 + 1}  \dd \sigmatilde_j < \frac{C_L}{\delta_1/16} \right. \label{eq:B4_first_ineq} \\
      {}& \text{ and } \left. \int^{\lambdatilde_{j - 1}}_{\lambdatilde_j} \tilde{p}_{\omega}(\sigmatilde_j) \sum^N_{i = 1} \frac{1}{(\sigmatilde_i - d_2)^2 + 1}  \dd \sigmatilde_j < \frac{C_L}{\delta_1/16} \right\}. \label{eq:B4_second_ineq}
    \end{align}
  \end{subequations}
  We have that $\Prob(B_4) > \delta_1/8$ for large enough $N$. Otherwise, we have that with probability no less than $ \delta_1/8$, at least one of the two inequalities in \eqref{eq:B4_first_ineq} and \eqref{eq:B4_second_ineq} fails. Without of loss of generality, we assume that in probability $\geq \delta_1/16$ inequality \eqref{eq:B4_first_ineq} fails. Then like \eqref{eq:ineq_avarage_at_c1}, we can derive $\E \left( \sum^N_{i = 1} \frac{1}{(\sigmatilde_i - d_1)^2 + 1} \right) \geq C_L$, contradictory to \eqref{eq:est_square_inverse_sum}.
  
  Now consider the function $F_j = \log(N^{1/3} \lvert x_{j1} \rvert^2)$ defined by \eqref{20041121}. For a fixed $\omega \in B_4$, $F_j$ depends on $\sigma_j$, or equivalently $\sigmatilde_j$. Below we express it  as a function of $\sigmatilde_j$ and further decompose it into two parts $F_j(\sigmatilde_j; \omega)=F_j^{(1)}(\sigmatilde_j; \omega)+F_j^{(2)}(\sigmatilde_j; \omega)$, where 
  \begin{equation}
    F_j^{(1)}(\sigmatilde_j; \omega):= \sum^{j - 1}_{i = 1} \log \frac{\reservesigma_j - \lambda_i}{\reservesigma_j - \reservesigma_i}, \quad F_j^{(2)}(\sigmatilde_j; \omega):=\sum^N_{i = j + 1} \log \frac{\reservesigma_j - \lambda_{i-1}}{\reservesigma_j - \reservesigma_i}+\frac13\log N.
  \end{equation}
  We also note that $F_j^{(2)}(\sigmatilde; \omega)$ is an increasing function of $\sigmatilde \in (\lambdatilde_j, \lambdatilde_{j - 1})$, analogous to $F_1(\sigmatilde; \omega)$ as $\sigmatilde \in (\lambdatilde_1, +\infty)$. Similar to $A_5$ in the proof of the $j = 1$ case, we have that for a large enough $N_2 > 0$, the event
  \begin{equation}
    B_5 = \left\{ \omega \in B_4 \mid [-\log N_2, \log N_2] \cap [F_j^{(2)}(\lambdatilde_j + \delta_3; \omega), F_j^{(2)}(\lambdatilde_{j - 1} - \delta_3; \omega)] \neq \emptyset \right\}
  \end{equation}
  satisfies $\Prob(B_5) > \delta_1/16$ for large enough $N$. 
  
  We have, by \eqref{eq:defn_P_GUE_tilde} and \eqref{eq:GUE_P_n_integral},
  \begin{equation}
    F_j^{(2)}(\sigmatilde; \omega) = \int_{\bigcup^N_{i = j + 1} (\sigmatilde_i, \lambdatilde_{i - 1}]} \frac{-1}{\sigmatilde - x} \dd x + \frac{1}{3} \log N.
  \end{equation}
  Hence with $\omega \in A_4$, we have, analogous to \eqref{eq:difference_F_upper} and \eqref{eq:difference_F_lower},
  \begin{equation} \label{eq:difference_F_upper_j}
    F_j^{(2)}(\lambdatilde_{j - 1} - \delta_3; \omega) - F_j^{(2)}(\lambdatilde_j + \delta_3; \omega) < \int_{(-\infty, \lambdatilde_j] } \left( \frac{1}{\lambdatilde_j + \delta_3 - x} - \frac{1}{\lambdatilde_{j - 1} - \delta_3 - x} \right) \dd x <  \log \frac{d_2 - d_1}{\delta_3},
  \end{equation}
  By \eqref{eq:difference_F_upper_j}, with $N_3 = N_2 \cdot ((d_2 - d_1)/\delta_3)$, we have, analogous to \eqref{eq:defn_A_5},
  \begin{equation}
    [F_j^{(2)}(\lambdatilde_j + \delta_3; \omega), F_j^{(2)}(\lambdatilde_{j - 1} - \delta_3; \omega)] \subseteq [-\log N_3, \log N_3], \quad  \text{if $\omega \in B_5$}.
  \end{equation}
  On the other hand, since $\lambdatilde_{j - 1} > d_2 - \delta_2$ and $\sigmatilde_1 < N_1$, we have
  \begin{equation} \label{092301}
    - \log \frac{N_1 - d_2 + \delta_2 + \delta_3}{\delta_3} < \log \frac{\lambdatilde_{j - 1} - \sigmatilde_j}{\sigmatilde_1 - \sigmatilde_j} < F_j^{(1)}(\sigmatilde_j; \omega) < 0, \quad \text{for all } \sigmatilde_j\in [\lambdatilde_{j}+\delta_3, \lambdatilde_{j-1}-\delta_3].
  \end{equation}
  Hence by setting $N_4=N_3 \cdot ((N_1 - d_2 + \delta_2 + \delta_3)/\delta_3)$, we have  for any $\sigmatilde\in [\lambdatilde_j + \delta_3, \lambdatilde_{j - 1} - \delta_3]$,
  \begin{equation}
    [F_j(\lambdatilde_j + \delta_3; \omega), F_j(\lambdatilde_{j - 1} - \delta_3; \omega)] \subseteq [-\log N_4, \log N_4], \quad  \text{if $\omega \in B_5$}.
  \end{equation}
  We then have, analogous to \eqref{eq:A_5_prop} in the $j = 1$ case,
  \begin{equation} \label{eq:within_M_B_part}
    \Prob(-\log N_4 < F_j(\sigmatilde_j; \omega) < \log N_4 \mid \omega) > \frac{1}{3}.
  \end{equation}

  Further, note that if $\sigmatilde \in (\lambdatilde_j, d_2 - \delta_2)$, we have
  \begin{equation}
    \begin{split}
      \frac{\dd}{\dd\sigmatilde} F_j(\sigmatilde; \omega) = {}& \int_{\bigcup^N_{i = j + 1} (\sigmatilde_i, \lambdatilde_{i - 1}]} \frac{\dd x}{(\sigmatilde - x)^2} - \int_{\bigcup^{j - 1}_{i = 1} (\lambdatilde_i, \sigmatilde_i]} \frac{\dd x}{(\sigmatilde - x)^2} \\
      > {}& \int_{[\lambdatilde_j - \delta_2, \lambdatilde_j]} \frac{\dd x}{(\sigmatilde - x)^2} - \int_{[d_2 - \delta_2, +\infty)} \frac{\dd x}{(\sigmatilde - x)^2} = \frac{\delta_2}{(\sigmatilde - \lambdatilde_j)(\sigmatilde - \lambdatilde_j + \delta_2)} - \frac{1}{d_2 - \delta_2 - \sigmatilde}.
    \end{split}
  \end{equation}
  By direct computation, and based on our assumptions on $d_1, d_2, \delta_2, \delta_3$, we have that for all $\omega \in B_5$
  \begin{equation} \label{092320}
    \frac{\dd}{\dd\sigmatilde} F_j(\sigmatilde; \omega) \geq \frac{1}{6\delta_2} > 0, \quad  \sigmatilde \in [\lambda_j + \delta_3, \lambda_j + \delta_2],
  \end{equation}
  which is a counterpart of \eqref{eq:difference_F_lower}.
  
  We note that by \eqref{eq:derivatives_gtilde}, $\tilde{g}''_{\omega, a_k}$ is positive and concave on $(\lambdatilde_j, \lambdatilde_{j - 1})$, so we have for $\sigmatilde \in (\lambdatilde_j, \lambdatilde_{j - 1})$,
  \begin{equation}
    0 < \tilde{g}''_{\omega, a_k}(\sigmatilde) < \max \{ \tilde{g}''_{\omega, a_k}(\lambdatilde_j), \tilde{g}''_{\omega, a_k}(\lambdatilde_{j - 1}) \},
  \end{equation}
  and then for large enough $N$, we can derive in a way parallel to \eqref{bound for f''}
  \begin{equation}
    0 < \tilde{g}''_{\omega, a_k}(\sigmatilde) < \frac{16 C_L}{\delta_1 \delta^2_2}.
  \end{equation}
  
  Moreover, we can show that for all $\omega \in B_5$, we can find a large enough $C_B > 0$ such that 
  \begin{equation} \label{eq:boundedness_g'}
    \lvert \tilde{g}'_{\omega, a_k}(\sigmatilde) \rvert < C_B \quad \text{if $\sigmatilde \in [\lambdatilde_j + \delta_3, \lambdatilde_{j - 1} - \delta_3]$}.
  \end{equation}
  The proof relies on that $\tilde{g}_{\omega, a_k}(\sigmatilde)$ is increasing on $(\lambdatilde_j, \lambdatilde_{j - 1})$ and we can show both $\tilde{g}'_{\omega, a_k}(\lambdatilde_j + \delta_3) > -C_N$ and $\tilde{g}'_{\omega, a_k}(\lambdatilde_{j - 1} - \delta_3) < C_N$. Since the proof techniques are similar to those used for \eqref{eq:f_deriv_lower} and \eqref{eq:f_deriv_upper}, we omit the detail.
  
  We have, analogous to \eqref{eq:conditional_var_first_est},
  \begin{equation}
    \var(F_j(\sigmatilde_j; \omega) \mid \omega \text{ and } \sigmatilde_1 \in [\lambdatilde_j + \delta_3, \lambdatilde_j + \delta_2]) > \epsilon_4.
  \end{equation}
  Also because of the probability inequality \eqref{eq:1/3_est_sigmatilde_j} and the boundedness of $\tilde{g}'_{\omega, a_k}(\sigmatilde)$ given in \eqref{eq:boundedness_g'}, we have that
  \begin{equation} \label{eq:1/3_est_sigmatilde_j_narrow}
    \Prob(\sigmatilde_j \in [\lambdatilde_j + \delta_3, \lambdatilde_j + \delta_2] \mid \omega) > \frac{\delta_5}{3}.
  \end{equation}
  for some $\delta_4 > 0$. Hence we have, analogous to \eqref{eq:final_nondegeneracy},
  \begin{equation} \label{eq:last_var_est}
    \begin{split}
      & \var(F_j(\sigmatilde_j; \omega) \mid \omega \text{ and } -\log N_4 < F_j(\sigmatilde_1; \omega) < \log N_4) \\
      \geq {}& \frac{\Prob(\sigmatilde_j \in [\lambdatilde_j + \delta_3, \lambdatilde_j + \delta_2] \mid \omega)}{\Prob(-\log N_4 < F_j(\sigmatilde_j; \omega) < \log N_4 \mid \omega)} \var(F_j(\sigmatilde_j; \omega) \mid \omega \text{ and } \sigmatilde_j \in [\lambdatilde_1 + \delta_3, \delta_2]) \\
      > & \frac{\delta_5}{3} \delta_4.
    \end{split}
  \end{equation}
  
  Taking $A = B_5$, $M = N_4$, $\epsilon = \delta_1/16$, $\epsilon' = 1/3$ and $\epsilon'' = \delta_4\delta_5/3$, we see that Lemma \ref{lem:tough_lem} is verified in the $j > 1$ case by \eqref{eq:within_M_B_part} and \eqref{eq:last_var_est}.
\end{proof}

Now, we prove Lemma \ref{lem:C_max}

\begin{proof}[Proof of Lemma \ref{lem:C_max}]
  The left-hand side of \eqref{eq:est_square_inverse_sum} can be expressed as
  \begin{equation} \label{eq:square_inverse_as_linear_stat}
    \E \left( \sum^N_{i = 1} \frac{1}{(\sigmatilde_i - x)^2 + 1} \right) = N^{-\frac{1}{3}} \int^{\infty}_{-\infty} \rho_N(t) \frac{1}{(t - (2\sqrt{N} + N^{-1/6} x))^2 + N^{-1/3}}\dd t,
  \end{equation}
  where $\rho_N(t)$ is the empirical density function of $\sigma_1, \dotsc, \sigma_N$. By the property of determinantal processes (cf.~\eqref{eq: mixed corr}), $\rho_N(t) = K^{0, 0}_{\GUEalpha}(t, t) = \K^{0, 0}_{\GUEalpha}(t, t)$, where $K^{0, 0}_{\GUEalpha}$ is defined in \eqref{eq:kernel_GUE} and represented by a double integral formula in \eqref{20041005}. The estimation of $\rho_N(t)$ with $t \in ((-2 + \epsilon)\sqrt{N}, 2\sqrt{N} - N^{-1/10})$ and $t \in [2\sqrt{N} - N^{-1/10}, 2\sqrt{N} - CN^{-1/6}]$ can be done by the same saddle point analysis method as we evaluate $\E(S_x + xN_x)$ in \eqref{eq:GUE_measure_mean_alt} and \eqref{eq:GUE_measure_mean_local_alt} respectively, since $\E(S_x + xN_x)$ is expressed by a very similar double integral formula in \eqref{eq:double_contour_GUE_mean}. For $t > 2\sqrt{N} - CN^{-1/6}$, where $C > 0$, $\rho_N(t)$ can be estimated by using (\ref{091501}) in Appendix \ref{sec:proofs_in_sec_2} and then apply the standard saddle point method to $H_{N, 0}$ and $J_{N, 0}$. For $t \leq (-2 + \epsilon)\sqrt{N}$, similar methods can be applied and we omit the detail. The estimate we need is that for large enough $N$:
  \begin{enumerate}[label=(\roman*)]
  \item 
    (The semicircle law) For $t \in (-2\sqrt{N} + N^{-1/10}, 2\sqrt{N} - N^{-1/10})$, $\rho_N(t) = \frac{1}{2\pi} \sqrt{4N - t^2} (1 + o(1))$. 
  \item
    For $t \in [2\sqrt{N} - N^{-1/10}, 2\sqrt{N} - C N^{-1/6})$ and $t \in (-2\sqrt{N} + C N^{-1/6}, -2\sqrt{N} + N^{-1/10}]$, $\rho_N(t) = \bigO(N^{1/4} (2\sqrt{N} - t)^{1/2})$ and $\rho_N(t) = \bigO(N^{1/4} (2\sqrt{N} + t)^{1/2})$ respectively.
  \item
    For $t \geq 2\sqrt{N} - C N^{-1/6}$ and $t \leq -2\sqrt{N} + C N^{-1/6}$, $\rho_N(t) = \bigO(N^{1/6} e^{-cN^{1/6}(t - 2\sqrt{N})})$ and $\rho_N(t) = \bigO(N^{1/6} e^{cN^{1/6}(2\sqrt{N} + t)})$, for some $c > 0$, respectively.
  \end{enumerate}
  The estimate of $\rho_N(t)$ above and the expression \eqref{eq:square_inverse_as_linear_stat} imply the desired boundedness.
\end{proof}

In the end of this section, we state some simulation results on the distribution of the eigenvector components; see Figures \ref{fig.density} and  \ref{fig.Tail} below. The simulation is done under the following setting for $G_{\totalalpha}$ in (\ref{20040301}): $N=1000$, $k=2$, $\alpha_1= \sqrt{N}$, $\alpha_2=\sqrt{N}-N^{1/6}$.  The simulation results are based on $6000$ replications. 
In Figure \ref{fig.density}, we plot the kernel density estimates (smooth approximations of histograms), $p_X$, for $X=N^{1/3}|x_{11}|^2$, $N^{1/3}|x_{12}|^2$, and $N|x_{13}|^2$.   In Figure \ref{fig.Tail}, we plot the negative logarithm of the tail function, $-\log \mathbb{P}(X>t), t\geq 0.5$,  for $X=N^{1/3}|x_{11}|^2$, $N^{1/3}|x_{12}|^2$, and $N|x_{13}|^2$. Observe that the limiting distribution of $N|x_{13}|^2$ shall be $\Exp(1)$, in light of Corollary \ref{cor:to_thm_2}. However, the yellow curve is apparently above $1$ at $t=0$ in Figure \ref{fig.density}. This is due to a finite $N$ effect, since according to our proof of Corollary \ref{cor:to_thm_2}, the difference between the distribution of  $N|x_{13}|^2$ and the limiting one, $\Exp(1)$, is of order $O(N^{-1/3})$. We also remark here that Figure \ref{fig.Tail} shows (numerically) the difference between the tail behavior of the laws in Theorem \ref{thm:main_thm_2} and that of $\Exp(1)$. A theoretical study of the tails of these laws will  be deferred to  future study. 

\begin{figure}[htb]
    \begin{minipage}[t]{0.48\linewidth}
      \centering
      \includegraphics[width=9.5cm,height=5.5cm]{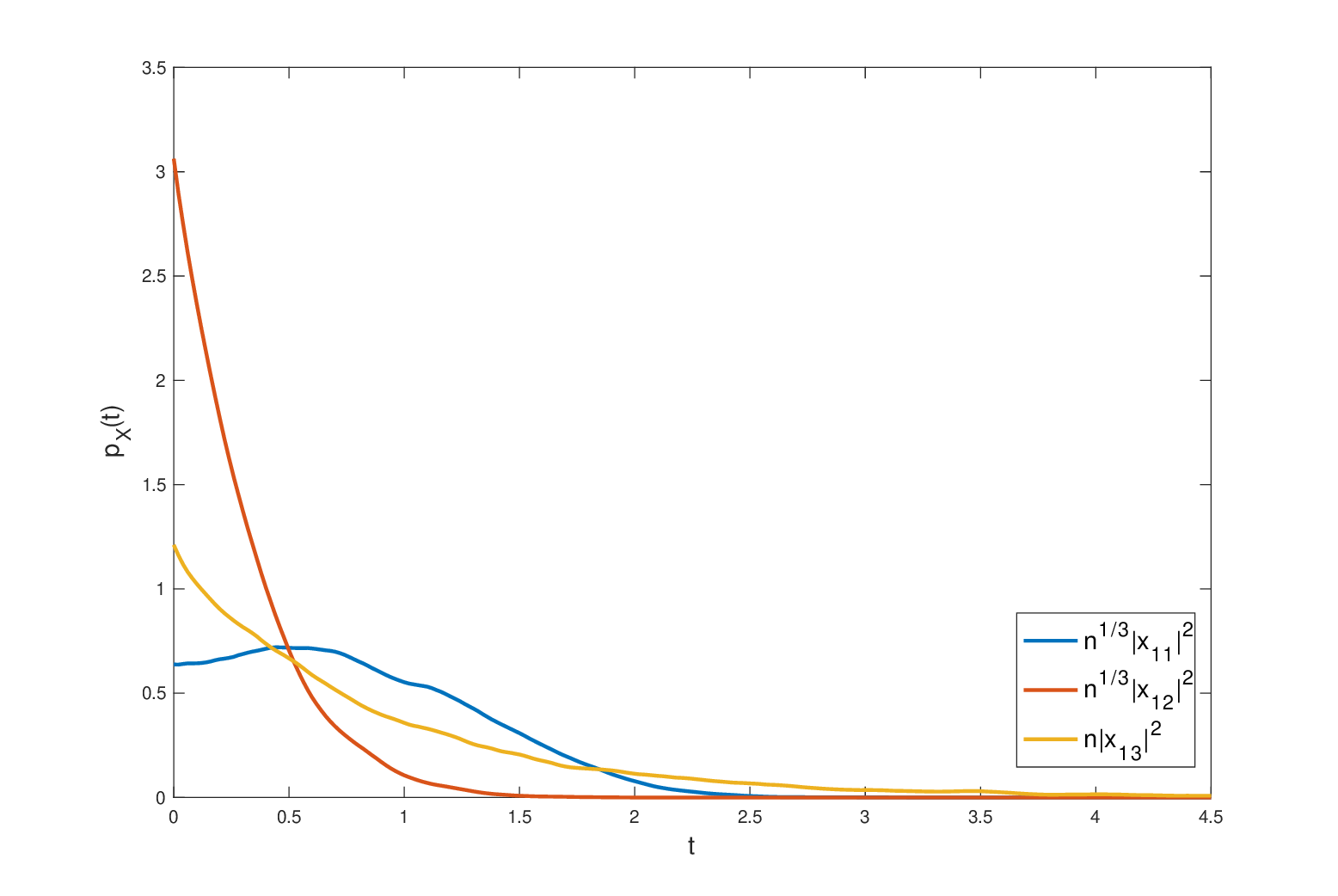}
      \caption{ Kernel Density Estimate}
      \label{fig.density}
    \end{minipage}
    \begin{minipage}[t]{0.48\linewidth}
      \centering
      \includegraphics[width=9.5cm,height=5.5cm]{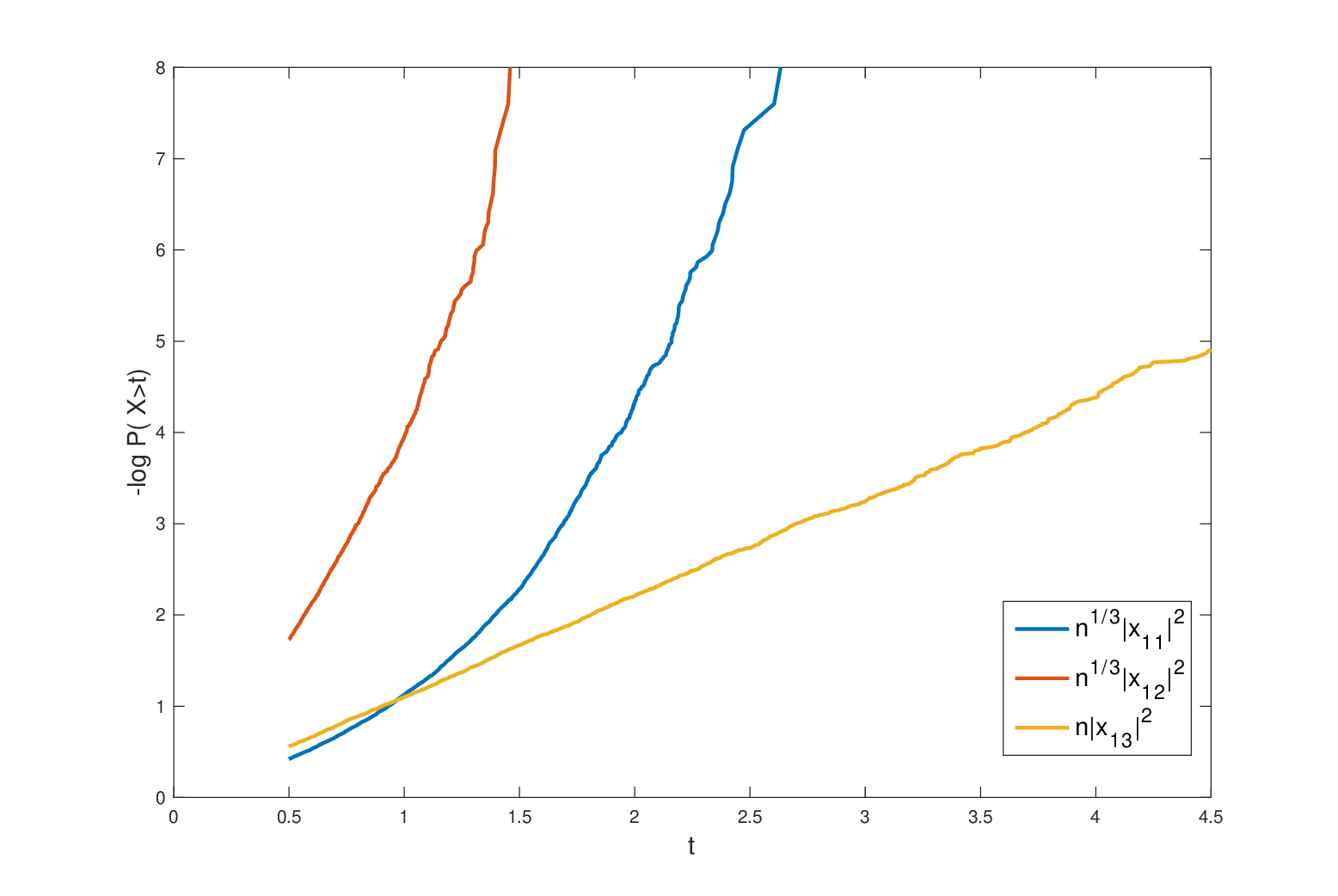}
      \caption{Negative Logarithm of Tail Probability}      
      \label{fig.Tail}
    \end{minipage}
  \end{figure}

\appendix

\section{Proof of results in Section \ref{s.pre}} \label{sec:proofs_in_sec_2}

\paragraph{Proof of Lemma \ref{lem:Airy_limit}}

We only give a sketch of the proof of Lemma \ref{lem:Airy_limit}, because the case that $j_1 = j_2$, and all $\alpha_j = \sqrt{N}$ (equivalent to $a_{k - j + 1} = 0$) for $j = 1, \dotsc, k$ has already been proved in \cite{Peche05} which follows closely the method used in \cite{Baik-Ben_Arous-Peche05}. Our proof is an adaption of that in \cite{Peche05}. The main difference between our lemma and the results in \cite{Baik-Ben_Arous-Peche05} and \cite{Peche05} is that we require $\epsilon$ to be large enough while in \cite{Baik-Ben_Arous-Peche05} and \cite{Peche05}, $\epsilon$ is only required to be positive, because they essentially assume all $a_j = 0$. Our assumption on $\epsilon$ implies that the operators $e^{\epsilon(x - y)} K^{k_1, k_2}_{\Airya}(x, y)$ and $e^{\epsilon(x - y)} K^{j_1, j_2}_{N, \scaled}(x, y)$ are both trace class.

\begin{proof}
  Below in this proof we are going to use notation in \cite{Peche05} that is quite different from the notation used elsewhere in our paper.

  We define, analogous to \cite[Formula (16)]{Peche05},
  \begin{equation} \label{eq:defn_K'_Njj}
    K'_{N, j_1, j_2}(x, y) = N^{-1/6} N^{(j_1 - j_2)/6} e^{N^{1/3}(y - x)} \K^{j_1, j_2}_{\GUEalpha}(2\sqrt{N} + N^{-1/6} y, 2\sqrt{N} + N^{-1/6} x).
  \end{equation}
  We only need to consider the convergence of $e^{\epsilon(y - x)} K'_{N, j_1, j_2}(x, y)$ to $e^{\epsilon(y - x)} \K^{k_1, k_2}_{\Airya}(y, x)$ (pointwise and in trace norm). Analogous to \cite[Formulas (18) and (19)]{Peche05}, we denote $F(z) = z^2/2 - 2z + \log z$ (see \cite[Formula (17)]{Peche05}), $\tilde{w}_c = 1 + \epsilon N^{-1/3}$ (see \cite[Formula (16)]{Peche05}), and define
  \begin{align}
    H_{N, j_2}(x) = {}& \frac{N^{\frac{1}{3}}}{2\pi} \int_{\Gamma} \left[ z^k \prod^k_{i = j_2 + 1} \frac{1}{z - \frac{\alpha_i}{\sqrt{N}}} \right] \exp(-N F(z)) \exp(N^{\frac{1}{3}} x(z - \tilde{w}_c)) \dd z, \\
    J_{N, j_1}(y) = {}& \frac{N^{\frac{1}{3}}}{2\pi} \int_{\gamma} \left[ w^{-k} \prod^k_{i = j_1 + 1} \left( w - \frac{\alpha_i}{\sqrt{N}} \right) \right] \exp(N F(w)) \exp(-N^{\frac{1}{3}} y(w - \tilde{w}_c)) \dd w,
  \end{align}
  where the contours $\Gamma$ and $\gamma$ are defined as in \cite[Formula (14)]{Peche05}. Then we have, analogous to \cite[Proposition 2.1]{Peche05},
  \begin{equation} \label{eq:decom_K'_Njj}
    N^{(j_1 - j_2)/3} e^{\epsilon(y - x)} K'_{N, j_1, j_2}(x, y) = -\int^{\infty}_0 H_{N, j_2}(x + t) J_{N, j_1}(y + t) \dd t.
  \end{equation}
  Next, analogous to \cite[Formulas (21) and (22)]{Peche05}, define
  \begin{align}
    H_{\infty, k_2}(x) = {}& \frac{\exp(-\epsilon x)}{2\pi} \int_{\Gamma_{\infty}} \exp \left( xz - \frac{z^3}{3} \right) \prod^{k_2}_{i = 1} \frac{1}{z - a_i} \dd z, \label{eq:defn_H_infty} \\
    J_{\infty, k_1}(y) = {}& \frac{\exp(\epsilon y)}{2\pi} \int_{\gamma_{\infty}} \exp \left( -yw + \frac{w^3}{3} \right) \prod^{k_1}_{i = 1} (w - a_i) \dd w, \label{eq:defn_J_infty}
  \end{align}
  where the contours $\Gamma_{\infty}$ and $\gamma_{\infty}$ are defined in \cite[Figure 1]{Peche05}. By the arguments in \cite[Sections 2.1 and 2.2]{Peche05}, we have, analogous to \cite[Proposition 2.2]{Peche05}, that for any fixed $y_0 \in \realR$, there exists $C > 0$, $c > 0$, an integer $N_0 > 0$ such that
  \begin{align}
    \left\lvert Z_{N, j_2} H_{N, j_2}(x) - H_{\infty, k_2}(x) \right\rvert \leq {}& \frac{C \exp(-cx)}{N^{1/3}}, && \text{for any $x > y_0$, $N \geq N_0$}, \label{eq:est_H_N} \\
    \left\lvert Z^{-1}_{N, j_1} J_{N, j_1}(y) - J_{\infty, k_1}(y) \right\rvert \leq {}& \frac{C \exp(-cy)}{N^{1/3}}, && \text{for any $y > y_0$, $N \geq N_0$}, \label{eq:est_J_N}
  \end{align}
  where $Z_{N, j} = N^{(j - k)/3} \exp(NF(1))$. On the other hand, we have
  \begin{equation} \label{eq:conv_expr_KAiry}
    e^{\epsilon(y - x)} \K^{k_2, k_1}_{\Airya}(y, x) = -\int^{\infty}_0 H_{\infty, k_2}(x + t) J_{\infty, k_1}(y + t) \dd t.
  \end{equation}
  The proof is finished by using the argument in \cite[Section 3.3]{Baik-Ben_Arous-Peche05}.
\end{proof}

\paragraph{Proof of Lemma \ref{lem:interlacing_Airy}}

By Lemma \ref{lem:Airy_limit}, for any $n$, the joint distribution of $\lambda^{(N - j)}_1, \lambda^{(N - j - 1)}_1, \lambda^{(N - j)}_2, \lambda^{(N - j - 1)}_2, \dotsc, \lambda^{(N - j)}_n$ converges weakly to that of $\xi^{(k - j)}_1, \xi^{(k - j - 1)}_1, \xi^{(k - j)}_2, \xi^{(k - j - 1)}_2, \dotsc, \xi^{(k - j)}_n$ up to a scaling transform. Hence we have that the interlacing inequality \eqref{eq:interlacing_GUE} implies the weak interlacing property
\begin{equation} \label{eq:weak_interlacing}
  +\infty > \xi^{(k - j)}_1 \geq \xi^{(k - j - 1)}_1 \geq \xi^{(k - j)}_2 \geq \xi^{(k - j - 1)}_2 \geq \dotsb \geq \xi^{(k - j)}_n.
\end{equation}
On the other hand, the determinantal structure requires that the point process consisting of $\xi^{(k - j)}_i$ and $\xi^{(k - j - 1)}_l$ is simple, so with probability $1$ the inequalities in \eqref{eq:weak_interlacing} are all strict. So with probability $1$ we have \eqref{eq:interlacing_Airy} by letting $n \to \infty$.

\paragraph{Proof of Lemma \ref{lem:non-asy_est_Airy}}

We prove the lemma in three steps: First the right tail estimate of $\xi^{(k)}_j$ in part \ref{enu:lem:non-asy_est_Airy:1}, then the left tail estimate of $\xi^{(k)}_j$, and at last we prove part \ref{enu:lem:non-asy_est_Airy:2} about the rigidity of $\xi^{(k)}_n$.
\begin{proof}[Proof of the right tail estimate of $\xi^{(k)}_j$]
  We note that
  \begin{equation} \label{eq:Airy_right_tail_j->1}
    \Prob(\xi^{(k)}_j > t) \leq \Prob(\xi^{(k)}_1 > t) \leq \E( \text{\# of } \xi^{(k)}_i \text{ on } [t, +\infty)) = \int^{+\infty}_t K^{k, k}_{\Airya}(x, x) \dd x.
  \end{equation}
  Then by \eqref{eq:form_ext_K},
  \begin{equation} \label{eq:int_density_Airy_k}
    \int^{+\infty}_t K^{k, k}_{\Airya}(x, x) \dd x = \frac{1}{(2\pi \ii)^2} \int_{\gamma} \dd u \int_{\sigma} \dd v \frac{e^{\frac{u^3}{3} - tu}}{e^{\frac{v^3}{3} - tv}} \frac{\prod^k_{j = 1} (u - a_j)}{\prod^k_{j = 1} (v - a_j)} \frac{1}{(u - v)^2}.
  \end{equation}
  Let $\gamma$ and $\sigma$ be deformed into $\gamma_{\standard}(\sqrt{t})$ and $\sigma_{\standard}(-\sqrt{t})$ (c.f. (\ref{eq:defn_standard_gamma_sigma})). By standard saddle point analysis, we find that as $t \to +\infty$, the integral \eqref{eq:int_density_Airy_k} concentrates on the region $u \in \gamma_{\standard}(\sqrt{t}) \cap \{ u - \sqrt{t} = \bigO(t^{-1/4}) \}$ and $v \in \sigma_{\standard}(-\sqrt{t}) \cap \{ v + \sqrt{t} = \bigO(t^{-1/4}) \}$. Then we conclude that as $t \to +\infty$,
  \begin{equation} \label{eq:est_KAiry_tail}
    \int^{+\infty}_t K^{k, k}_{\Airya}(x, x) \dd x = \bigO \left( t^{-3/2} \exp \left( -\frac{4}{3} t^{3/2} \right) \right).
  \end{equation}
  Hence by choosing $C$ properly, we have $\Prob(\xi^{(k)}_j > t) < \int^{+\infty}_t K^{k, k}_{\Airya}(x, x) dx < C e^{-t/C}$.
\end{proof}

\begin{proof}[Proof of the left tail estimate of $\xi^{(k)}_j$]
  We note that by the interlacing property in Lemma \ref{lem:interlacing_Airy}, $\Prob(\xi^{(k)}_j < -t) < \Prob(\xi^{(0)}_j < -t)$, where $\xi^{(0)}_j$ is the $j$-th particle in the determinantal point process defined by the Airy kernel \eqref{eq:defn_Airy_kernel}. Then by \cite{Tracy-Widom94}, with any $\lambda \in (0, 1)$, we have
  \begin{equation} \label{eq:several_particles_ineq_Airy}
    \Prob(\xi^{(0)}_j < -t) = \sum^{j - 1}_{n = 0} E(n; -t) < (1 - \lambda)^{1 - j} \sum^{\infty}_{n = 0} (1 - \lambda)^n E(n; -t) = 2^{j - 1} D(-t, \lambda),
  \end{equation}
  where $E(n; -t)$ is the probability that exactly $n$ particles are in $[-t, \infty)$ as denoted in \cite[Section ID]{Tracy-Widom94}, and $D(-t, \lambda)$ is defined by \cite[Formula (1.17)]{Tracy-Widom94} as
  \begin{equation} \label{eq:defn:D(s, lambda)}
    D(-t, \lambda) = \exp \left( - \int^{\infty}_{-t} (x + t) q(x; \lambda)^2 \dd x \right), \quad \text{where} \quad
    \begin{aligned}
      \frac{\dd q(s; \lambda)}{\dd s^2} = {}& sq(s; \lambda) + 2q^3(s; \lambda), \\
      q(s; \lambda) \sim {}& \sqrt{\lambda} \Ai(s) \text{ as } s \to \infty.
    \end{aligned}
  \end{equation}
  The function $q(s; \lambda)$ is the Ablowitz-Segur solution to the \Painleve\ II equation \cite{Ablowitz-Segur77}, \cite{Ablowitz-Segur81}, its asymptotics at $+\infty$ is given by the Airy function multiplied by constant $\sqrt{\lambda}$. The asymptotic behaviour of $q(s; \lambda)$ has been extensively studied, see \cite{Deift-Zhou95} for a rigorous and systematic discussion. We then derive the upper bound of $D(-t, \lambda)$ for large $t$ from the asymptotics of $q(s; \lambda)$, and finally justify the estimate $\Prob(\xi^{(k)}_j < -t) < C e^{-t/C}$ for some properly chosen $C$.
\end{proof}

\begin{proof}[Proof of the rigidity of $\xi^{(k)}_n$]
  We note that by the interlacing property \eqref{eq:interlacing_GUE}, for all $n > k$,
  \begin{multline}
    \Prob \left( \left\lvert \xi^{(k)}_n + \left( \frac{3\pi n}{2} \right)^{2/3} \right\rvert > n^{\frac{3}{5}} \right) \leq \Prob \left( \text{\# of $\xi^{(0)}_l$ in $\left( -\left( \frac{3\pi n}{2} \right)^{2/3} + n^{\frac{3}{5}}, \infty \right)$ is $\geq n - k$} \right) \\
    + \Prob \left( \text{\# of $\xi^{(0)}_l$ in $\left( -\left( \frac{3\pi n}{2} \right)^{2/3} - n^{\frac{3}{5}}, \infty \right)$ is $< n$} \right).
  \end{multline}
  Since $\xi^{(0)}_n$ are the $n$-th particle in the determinantal point process with the Airy kernel, so the problem is reduced to the rigidity of particles in this determinantal point process. The desired regidity can be deduced from the mean and variance of the number of $\xi^{(0)}_l$ in $(-T, \infty)$ and the Markov inequality. If we denote the number of $\xi^{(0)}_l$ in $(-T, \infty)$ as $v_1(T)$, in the notation of \cite{Soshnikov00b}, then 
  \begin{equation} \label{eq:Soshnikov_est}
    \E(v_1(T)) = 2T^{3/2}/(3\pi) + \bigO(1), \quad \text{and} \quad \var(v_1(T)) = \bigO(\log T)
  \end{equation}
  as $T \to +\infty$, see \cite[Theorem 1 and the paragraph above Theorem 1]{Soshnikov00b} \footnote{It is pointed out in \cite{Landon-Sosoe19} that \cite[Theorem 1]{Soshnikov00b} has a calculational error. See \cite[Theorem 6.2]{Landon-Sosoe19}. Since we only need the magnitude of the variance, this mistake does not affect our argument. We also note that the variance of $\Prob(\# \text{ of } \xi^{(0)}_l \text{ in } (-T, -\infty))$ as $T \to +\infty$ can be computed by the contour integral method that is used in the proof of our Proposition \ref{prop:Airy_measure}.}. That is enough to show that as $l \to \infty$,
  \begin{equation}
    \Prob \left( v_1 \left( \left( \frac{3\pi n}{2} \right)^{2/3} - n^{\frac{3}{5}} \right) \geq n - k \right) = \bigO \left( \frac{\log n}{n^{6/5}} \right), \quad \Prob \left( v_1 \left( \left( \frac{3\pi n}{2} \right)^{2/3} + n^{\frac{3}{5}} \right) < n \right) = \bigO \left( \frac{\log n}{n^{6/5}} \right).
  \end{equation}
  By choosing the constant $c$ properly, we obtain \eqref{eq:ineq:xi_fluc} for all $n \geq 2$.
\end{proof}

\paragraph{Proof of Lemma \ref{lem:non-asy_est_GUE}}

This lemma is analogous to Lemma \ref{lem:non-asy_est_Airy}. We prove it in four steps, with the first three steps parallel to those in the proof of Lemma \ref{lem:non-asy_est_Airy}: First, the right tail estimate of $\sigma_j$ (part \ref{enu:lem:non-asy_est_GUE:1}), next the left tail estimate of $\sigma_j$ (part \ref{enu:lem:non-asy_est_GUE:1}), and then the rigidity for $\sigma_n$ close to the edge (part \ref{enu:lem:non-asy_est_GUE:2}), and at last the rigidity of $\sigma_n$ in the bulk (part \ref{enu:lem:non-asy_est_GUE:3}). In part \ref{enu:lem:non-asy_est_GUE:1} we also need to consider $\sigma_N$, but we omit it, because the estimates for $\sigma_N$ are analogous to the estimate for $\sigma_1$.

\begin{proof}[Proof of the right tail estimate of $\sigma_j$]
  We use the same idea as in \eqref{eq:Airy_right_tail_j->1}, and write
  \begin{equation} \label{eq:GUE_right_tail_j->1}
    \Prob(\sigma_j > 2\sqrt{N} + tN^{-\frac{1}{6}}) \leq \Prob(\sigma_1 > 2\sqrt{N} + tN^{-\frac{1}{6}}) = \int^{\infty}_{2\sqrt{N} + t N^{-1/6}} K^{0, 0}_{\GUEalpha}(x, x) \dd x = \int^{\infty}_t K'_{N, 0, 0}(x, x) \dd x,
  \end{equation}
  where $K'_{N, 0, 0}(x, x)$ is defined in \eqref{eq:defn_K'_Njj}. Although we can evaluate the right-hand side of \eqref{eq:GUE_right_tail_j->1} like \eqref{eq:int_density_Airy_k}, we prefer an indirect method that relies on result and proof of Lemma \ref{lem:Airy_limit}. We recall that as a special case of \eqref{eq:decom_K'_Njj},
  \begin{equation}
    K'_{N, 0, 0}(x, x) = -\int^{\infty}_0 H_{N, 0}(x + t) J_{N, 0}(x + t) \dd t, \label{091501}
  \end{equation}
  and then by \eqref{eq:est_H_N} and \eqref{eq:est_J_N}, there exists $N_0 > 0$ and $C > 0$ such that for $x > 0$, $N > N_0$,
  \begin{equation}
    \left\lvert Z_{N, 0} H_{N, 0}(x) - H_{\infty, k}(x) \right\rvert \leq \frac{C \exp(-cx)}{N^{1/3}}, \quad \left\lvert Z^{-1}_{N, j_1} J_{N, 0}(x) - J_{\infty, k}(x) \right\rvert \leq \frac{C \exp(-cy)}{N^{1/3}},
  \end{equation}
  where $Z_{N, 0}= N^{-k/3} \exp(-3N/2)$ and $H_{\infty, k}, J_{\infty, k}$ are defined in \eqref{eq:defn_H_infty} and \eqref{eq:defn_J_infty}. Hence by the very rough estimate (whose proof is omitted) that $H_{\infty, k}(x) = \bigO(1)$ and $J_{\infty, k}(x) = \bigO(1)$ for all $x > 0$, and with the help of \eqref{eq:conv_expr_KAiry}, we have that
  \begin{equation}
    K'_{N, 0, 0}(x, x) - K^{k, k}_{\Airya}(x, x) = \bigO(N^{-1/3} e^{-cx}), \quad \text{for all $x > 0$ and $N > N_0$}.
  \end{equation}
  Therefore, the desired right tail estimate of $\sigma_j$ is implied by the estimate \eqref{eq:est_KAiry_tail} for the right tail estimate of $\xi^{(k)}_j$.
\end{proof}

\begin{proof}[Proof of the left tail estimate of $\sigma_j = \lambda^{(N)}_j$]
  We use the same idea as in the proof of the left tail estimate of $\xi^{(k)}_j$, that $\Prob(\sigma_j < 2\sqrt{N} - tN^{-1/6}) \leq \Prob(\lambda^{(N - k)}_j < 2\sqrt{N} - tN^{-1/6})$, so it is not hard to see that it suffices to prove that there exists $C > 0$ such that
  \begin{equation} \label{eq:largest_GUE_eigenvalues}
    \Prob(\lambda^{(N - k)}_j < 2\sqrt{N - k} - t(N - k)^{-\frac{1}{6}}) < C e^{-t/C}, \quad \text{for all } 2 \leq t \leq 2(N - k)^{2/3}.
  \end{equation}
  where $\lambda^{(N - k)}_j$ is the $j$-th largest eigenvalue of a GUE random matrix with dimension $N - k$. The $j = 1$ case of \eqref{eq:largest_GUE_eigenvalues} exists in literature, see \cite[Section 5.3, especially Formula (5.16)]{Ledoux07}, where a stronger version of \eqref{eq:largest_GUE_eigenvalues} is derived in a very accessible way. The $j > 1$ case of \eqref{eq:largest_GUE_eigenvalues} is not found in literature, to the best knowledge of the authors. However, we can extend the method in \cite[Section 5.3]{Ledoux07} to solve this case. To see it, we note that like \eqref{eq:several_particles_ineq_Airy}, with $\lambda \in (0, 1)$, we have
  \begin{equation}
    \begin{split}
      \Prob(\lambda^{(N - k)}_j < 2\sqrt{N - k} - t(N - k)^{-\frac{1}{6}}) = {}& \sum^{j - 1}_{n = 0} E(n; 2\sqrt{N - k} - t(N - k)^{-\frac{1}{6}}) \\
      < {}& (1 - \lambda)^{1 - j} \sum^{\infty}_{n = 0} (1 - \lambda)^n E(n; 2\sqrt{N - k} - t(N - k)^{-\frac{1}{6}}) \\
      = {}& (1 - \lambda)^{1 - j} \det \left( \Idmatrix - \lambda K \right),
    \end{split}
  \end{equation}
  where $K$ is the $N \times N$ matrix whose $(m, n)$ entry is
  \begin{equation}
    \langle P_{m - 1}, P_{n - 1} \rangle_{L^2((1 - \frac{t}{2}(N - k)^{-2/3}, \infty), \dd\mu)},
  \end{equation}
  such that the meanings of $P_m$ and $\dd\mu$ are the same as in \cite[Formula (1.11)]{Ledoux07}. Then by the same arguments that leads to \cite[Formula (5.14)]{Ledoux07}, we have
  \begin{equation} \label{eq:largest_several_GUE}
    \det \left( \Idmatrix - \lambda K \right) = \prod^N_{i = 1} (1 - \lambda \rho_i) \leq e^{-\frac{1}{2} \sum^N_{i = 1} \rho_i} = \exp \left( -\lambda N \mu^N \left( (1 - \frac{t}{2}(N - k)^{-2/3}, \infty) \right) \right),
  \end{equation}
  where $\rho_i$ are the eigenvalues of $K$, and $\mu^N$ is the measure defined in \cite[Formula (1.4)]{Ledoux07}. We note that if we let $\lambda = 1$ in \eqref{eq:largest_several_GUE}, then \eqref{eq:largest_several_GUE} is equivalent to \cite[Formula (5.14)]{Ledoux07}. At last, using the estimate of $\mu^N((1 - \frac{t}{2}(N - k)^{-2/3}, \infty))$ given in \cite[Section 5.3]{Ledoux07}, we derive an estimate of $\det \left( \Idmatrix - \lambda K \right)$, which yields the desired estimate of $\Prob(\lambda^{(N - k)}_j < 2\sqrt{N - k} - t(N - k)^{-1/6})$ and $\Prob(\sigma_j < 2\sqrt{N} - tN^{-1/6})$. Finally, we note that essentially the idea of the proof above is in \cite{Widom02}.
\end{proof}

\begin{proof}[Proof of the rigidity of $\sigma_n$ for $n \leq CN^{1/10}$]
  As in \eqref{eq:ineq:xi_fluc_GUE}, we note that \eqref{eq:ineq:xi_fluc_GUE} is analogous to \eqref{eq:ineq:xi_fluc}, and can be proved by an analogous argument. Instead of \eqref{eq:Soshnikov_est}, we have that if $v^{(n)}_1(T)$ is the number of eigenvalues of an $n$-dimensional GUE random matrix in the interval $(2\sqrt{n} - n^{-1/6}T, +\infty)$, then as $n \to \infty$, $T \geq T_0$ a positive constant, and $T/n = o(1)$, by the result of \cite{Gustavsson05} \footnote{The mean estimate is given in \cite[Lemma 2.2]{Gustavsson05}, and the variance estimate is given in \cite[Lemma 2.3]{Gustavsson05} under an additional condition that $T \to \infty$ as $n \to \infty$. However, as pointed out by \cite[Remark under Theorem 6.3]{Landon-Sosoe19}, if we only need a crude estimate as in \eqref{eq:Gustavsson_est}, then the argument in \cite{Gustavsson05} works for all $T > T_0 > 0$.}
  \begin{equation} \label{eq:Gustavsson_est}
    \E(v^{(n)}_1(T)) = 2T^{3/2}/(3\pi) + \bigO(1), \quad \text{and} \quad \var(v^{(n)}_1(T)) = \bigO(\log T).
  \end{equation}
  Then we prove \eqref{eq:ineq:xi_fluc_GUE} by the same argument as the proof of \eqref{eq:ineq:xi_fluc}.
\end{proof}

\begin{proof}[The proof of the regidity of $\sigma_n$ as in \eqref{eq:rigidity_Yau}]
  This is a direct consequence of the interlacing property $\lambda^{(N - k)}_n \leq \sigma_n \leq \lambda^{(N - k)}_{n - k}$ and the rigidity of GUE eigenvalues in \cite[Theorem 2.2]{Erdos-Yau-Yin12}, which states the rigidity of eigenvalues for Wigner matrices that of which the GUE random matrices are a special case.
\end{proof}

\end{document}